\newcommand{\beq}{\begin{equation}}
\newcommand{\eeq}{\end{equation}}
\newcommand{\beqs}{\begin{equation*}}
\newcommand{\eeqs}{\end{equation*}}
\newcommand{\ba}{\begin{array}}
\newcommand{\ea}{\end{array}}
\newcommand{\beas}{\begin{eqnarray*}}
\newcommand{\eeas}{\end{eqnarray*}}
\newcommand{\bea}{\begin{eqnarray}}
\newcommand{\eea}{\end{eqnarray}}
\newcommand{\bal}{\begin{align}}
\newcommand{\eal}{\end{align}}
\newcommand{\bals}{\begin{align*}}
\newcommand{\eals}{\end{align*}}
\newcommand{\R}{\ensuremath{\mathbb R}}
\newcommand{\norm}[1]{\| {#1} \|}
\newcommand{\bds}{\begin{displaystyle}}
\newcommand{\eds}{\end{displaystyle}}
\renewcommand{\eqref}[1]{(\ref{#1})}
\def\longequals{\mathbin{=\kern-2pt=}}
\def\eqdef{\mathbin{\buildrel \rm def \over \longequals}}
\def\varep{\varepsilon}
\def\ddt{\frac{d}{dt}}
\newcommand{\remove}[1]{} 
\renewcommand{\remove}[1]{#1} 
\newtheorem{theorem}{Theorem}[section]
\newtheorem{lemma}[theorem]{Lemma}
\newtheorem{corollary}[theorem]{Corollary}
\newtheorem{proposition}[theorem]{Proposition}
\newtheorem{remark}[theorem]{Remark}
\newtheorem*{notation}{Notation}
\theoremstyle{remark}
\newtheorem{example}[theorem]{Example}
\numberwithin{equation}{section}
\newcommand{\esssup}{\mathop{\mathrm{ess\,sup}}}
\newcommand{\gamex}{\gamma}
\newcommand{\nuex}{\nu}
\newcommand{\muex}{\mu}
\newcommand{\Bfun}{B}
\newcommand{\alphazero}{\mathcal E}
\newcommand{\egexp}{\theta}
\newcommand{\Nf}{\mathcal K}
\newcommand{\Ulim}{\mathcal A}
\newcommand{\asdc}{Assume \eqref{strictdeg}}
\newcommand{\myB}{F}
\newcommand{\setV}{V}
\def\myclearpage{}
\def\mynewline{}
\definecolor{darkred}{rgb}{.70,.12,.20}
\definecolor{darkgreen}{rgb}{.20,.52,.14}
\title{Properties of Generalized Forchheimer Flows in Porous Media}
\author{Luan T. Hoang$^{\dag,*}$}
\author{Thinh T. Kieu$^\dag$} 
\author{Tuoc V. Phan$^\ddag$}
\address{$^\dag$ Department of Mathematics and Statistics, Texas Tech University, Box 41042, Lubbock, TX 79409--1042, U. S. A.}
\email{luan.hoang@ttu.edu}
\email{thinh.kieu@ttu.edu}
\address{$^\ddag$ Department of Mathematics, University of Tennessee, Knoxville, 227 Ayress Hall, 1403 Circle Drive, Knoxville, TN 37996 }
\email{phan@math.utk.edu}
\address{$^*$Corresponding author}
\date{\today}
\begin{document}

\begin{abstract}
The nonlinear Forchheimer equations are used to describe the dynamics of fluid flows in porous media when Darcy's law is not applicable.
In this article, we consider the generalized Forchheimer flows for slightly compressible fluids
and study the initial boundary value problem for the resulting degenerate parabolic equation for pressure 
with the time-dependent flux boundary condition.
We estimate $L^\infty$-norm for pressure and its time derivative, as well as other Lebesgue norms for its gradient and second spatial derivatives. The asymptotic estimates as time tends to infinity are emphasized.
We then show that the solution (in interior $L^\infty$-norms) and its gradient (in interior $L^{2-\delta}$-norms) depend continuously on the initial and boundary data, and  coefficients of the Forchheimer polynomials. These are proved for both finite time intervals and time infinity.
The De Giorgi and Ladyzhenskaya-Uraltseva iteration techniques are combined with uniform Gronwall-type estimates, specific monotonicity properties, 
suitable parabolic Sobolev embeddings and a new fast geometric convergence result.
\end{abstract}

\maketitle



\tableofcontents

\pagestyle{myheadings}\markboth{Luan T. Hoang, Thinh T. Kieu and  Tuoc V. Phan}
{Generalized Forchheimer Flows in Porous Media}

\clearpage
\section{Introduction}
\label{intro}

In literature, Darcy's equation is considered as law of hydrodynamics in porous media. This linear relation between the fluid velocity and pressure gradient was derived by Darcy in \cite{Darcybook}.
However, even in their early works, Darcy and Dupuit already observed deviations of fluid flows from this linear equation \cite{Darcybook, Dupuit1857}.
In early 1900s, Forchheimer proposed three models for nonlinear flows, the so-called two-term, three-term and power laws (cf. \cite{Forchh1901,ForchheimerBook}) . More experiments gave rise to more nonlinear models during 1940-60s (cf. \cite{BearBook,Nieldbook}). Despite that fact, most mathematical papers on fluids in porous media deal only with Darcy's law starting from 1960s. The mathematical investigations of Forchheimer equations and related Brinkman-Forchheimer equation started much later in 1990s and have been growing ever since (cf. \cite{ChadamQin,Qin1998,Payne1999b,Payne1999a,Franchi2003,CKU2005,CKU2006,Straughan2013}, see also \cite{Straughanbook}). Even so, most of these papers consider incompressible fluids only. In our previous works \cite{ABHI1,HI1,HI2,HIKS1},
we proposed and studied generalized Forchheimer equations for slightly compressible fluids in porous media. Such mathematical generalization is appropriate and useful. This is due to the nature of Forchheimer equations which are derived from experiments and have their physical parameters found by fitting real life data.
From mathematical point of view, it introduces a new class of degenerate parabolic equations  into studies of porous medium equations \cite{VazquezPorousBook}.
In our mentioned papers, we study the properties of  pressure in space $L^\alpha$ ($1\le \alpha<\infty$), of pressure gradient in space $L^{2-a}$  and of pressure's time derivative in $L^2$. Here $a$ is a number between $0$ and $1$ defined in terms the degree of the polynomial in the generalized Forchheimer equations (see \eqref{ab}). 
In this paper, we focus on properties of fluid flows in higher regularity spaces.
Specifically, we will study the pressure and its time derivative in space $L^\infty$, the pressure gradient in $L^s$ for any $1\le s<\infty$ and the pressure Hessian in $L^{2-\delta}$ for $\delta>0$. 
Moreover, our high priority is the long-time dynamical properties, including uniform estimates in time, asymptotic bounds and asymptotic stability.
Such topics of long-time dynamics of degenerate parabolic equation, particularly in $L^\infty$, is important, and the specific results are usually hard to obtain.
(See, for e.g.,  \cite{VazquezSmoothBook}, chapters 18--20 of \cite{VazquezPorousBook} for porous medium equations, \cite{RVV2009,RVV2010} for degenerate equations with Dirichlet boundary condition, \cite{LD2000} for systems.) 
In order to work in these much higher regularity spaces and obtain estimates for large time, more sophisticated techniques are called for.
We combine iteration techniques by De Giorgi  \cite{DeGiorgi57} and Ladyzhenskaya-Uraltseva \cite{LadyParaBook68}, which were primarily used for studying local properties of solutions to elliptic and parabolic problems, with those from long-time dynamics studies for nonlinear partial differential equations such as Navier-Stokes equations \cite{FMRTbook}. For our degenerate equations, we also use and refine relevant techniques in DiBenedetto's book \cite{DiDegenerateBook}.
Such a combination gives fruitful results on the estimates of solutions for large time as well as detailed continuous dependence of the solutions on time-dependent boundary data and coefficients of the Forchheimer polynomials. (The latter results are called structural stability.)
We also emphasize  that the mentioned general techniques from parabolic equations must be used in accordance with the structure of our equation, in this case, the important monotonicity and perturbed monotonicity in Lemma \ref{quasimono-lem} below.

In the current work, we focus on the case of Degree Condition, see  \eqref{deg} in the next section, for the following reasons. 
First, it already covers most commonly used  Forchheimer equations in practice, namely, the two-term, three-term and power laws.
Second, to take advantage of available estimates in our previous work \cite{HI2}. 
Third, to make clear our ideas and techniques without involving much more complicated technical details in case that the Degree Condition is not met (see \cite{HIKS1}); such case will be investigated in our future work.

The paper is organized as follows.
\mynewline
In section \ref{prelim}, we present the formulation of generalized Forchheimer equations and consequently obtain a degenerate parabolic equation for pressure $p$. Basic properties of this equation are reviewed and  suitable parabolic Sobolev embeddings are presented.
\mynewline
In section \ref{supestimate}, we study the initial boundary value problem (IBVP) for pressure with the time-dependent flux boundary data $\psi(x,t)$. 
We derive various estimates for the shifted solution $\bar p$ (see \eqref{pbargam}).
This section is divided into four subsections.
\mynewline
Subsection \ref{pressuresubsec} deals with $L^\infty$-estimates up to the boundary for $\bar p$. 
In  Theorem \ref{thm-1}, bounds are established for all time, large time (independent of initial data) and time infinity.
The precise estimates for both small and large data are achieved by using Lemma \ref{multiseq} in the Appendix -- a generalization of the classical fast geometric convergence of sequences of numbers.
\mynewline
Subsection \ref{Lsgrad-sec} deals with interior $L^s$-estimates  for $\nabla p$ for any $s\ge 1$, see Theorems \ref{cor311} and \ref{cor312}.
The key Sobolev embedding is Lemma \ref{LUK} with specific weight $K(|\nabla w|)$ related to our degenerate structure.
The main iteration step is \eqref{Kgrad1}.
\mynewline
Subsection \ref{Lpt-sec} deals with interior $L^\infty$-estimates  for $\bar p_t$, where De Giorgi's technique is applied to the time derivative with weighted parabolic Sobolev embedding.
The main estimates are in Theorem \ref{ptbar}, other particular large time and asymptotic estimates are in Theorems \ref{ptbar2} and \ref{smallqbar}.
\mynewline
Subsection \ref{seconderiv} deals with interior $L^{2-\delta}$-estimates with $\delta\in(0,a]$ for Hessian $\nabla^2 p$. 
The estimates in Theorems \ref{hessest}, \ref{HessThm3}  and \ref{HessLim} for second derivatives were not considered in our previous works. 
\mynewline
\mynewline
Sections \ref{datacont} and \ref{polycont} are devoted to the structural stability issues.
\mynewline
In section \ref{datacont}, we prove the continuous dependence of the solutions on the boundary data. Specifically, it is established for $\bar p$
 in interior $L^\infty$-norms  (see Theorem \ref{thm46}) and for $\nabla p$ in interior $L^{2-\delta}$-norms  (see Theorems \ref{GradThm1} and \ref{lem412} for finite time intervals, and Theorems \ref{newP},  \ref{newthmAbar2} and  \ref{GradThm2} for $t\to\infty$). 
The results show that even when each individual flux $\psi_1,\psi_2$ grows unbounded as time $t\to\infty$, the difference between two corresponding solutions $\bar p_1,\bar p_2$ can be small provided the difference $\Psi=\psi_1-\psi_2$ is small.
In order to obtain this, De Giorgi's iteration is combined, in Proposition \ref{cont:strictcond}, with the monotonicity available for our degenerate equation.
\mynewline
In section \ref{polycont}, we prove the continuous dependence of the solutions on the Forchheimer polynomials (see Theorems \ref{thm54}, \ref{thmsupP},  \ref{GradThm3} and \ref{lastthm}).
Here, the perturbed monotonicity is combined with De Giorgi's technique as presented in Proposition \ref{theo49}.
It is proved in Theorems \ref{thm55} and \ref{GradThm4} that the smallness of the difference $\bar P(x,t)$ between two solutions corresponding to two Forchheimer equations, when $t\to\infty$, can be controlled by the difference between coefficient vectors of the two Forchheimer polynomials.

\myclearpage
\section{Preliminaries}
\label{prelim}

We consider a fluid  in a porous medium having velocity $u(x,t)\in \R^n$, pressure $p(x,t)\in \R$ and density $\rho (x,t)\in \R^+=[0,\infty)$, with the spatial variable $x\in \R^n$ and time variable $t\in\R$.
The space dimension $n=3$ in physics problems, but here we consider any $n\ge 2$. 
Generalized Forchheimer equations, studied in \cite{ABHI1,HI1}, are of the form:
\beq\label{gForch} g(|u|)u=-\nabla p,\eeq
where $g(s)\ge 0$ is a function defined on $[0,\infty)$.
When 
\beqs
g(s)=\alpha,\ \alpha +\beta s,\ \alpha +\beta s+\gamma s^2,\ \alpha +\gamma_m s^{m-1},
\eeqs 
where $\alpha,\beta,\gamma,m,\gamma_m$ are empirical constants, we have Darcy's law, Forchheimer's two-term, three-term and power laws, respectively.
In this paper, the function $g$ in \eqref{gForch} is a generalized polynomial with non-negative coefficients, that is,
\beq\label{gsa} g(s) =a_0s^{\alpha_0}+a_1 s^{\alpha_1}+\ldots +a_N s^{\alpha_N},\ s\ge 0,\eeq
where  
$N\ge 1$, $\alpha_0=0<\alpha_1<\ldots<\alpha_N$ are real (not necessarily integral) numbers, the coefficients satisfy  $a_0,a_N>0$ and $a_1,\ldots,a_{N-1}\ge 0$.
The number $\alpha_N$ is the degree of $g$ and is denoted by $\deg(g)$.
Denote the vectors of powers and coefficients by $\vec \alpha=(\alpha_0,\ldots,\alpha_N)$ and
 $\vec a=(a_0,\ldots,a_N)$. 
The class of functions $g(s)$ as in \eqref{gsa} is denoted by FP($N,\vec \alpha$), which is the abbreviation of ``Forchheimer polynomials''. When the function $g$ in \eqref{gForch} belongs to FP($N,\vec \alpha$), it is referred to as the Forchheimer polynomial.

From relation \eqref{gForch} one can solve velocity $u$  in terms of pressure gradient $\nabla p$ and
derives a nonlinear version of Darcy's equation:
\beq\label{u-forma} u= -K(|\nabla p|)\nabla p.\eeq
The function $K:\R_+\to\R_+$  is defined by
\beq\label{Kdef} K(\xi)=\frac1{g(s(\xi))},
\text{ where } s=s(\xi)\ge 0 \text{ satisfies } sg(s)=\xi, \ \text{ for }\xi\ge 0. \eeq
We will use notation $g(s,\vec a)$, $K(\xi,\vec a)$, $s(\xi,\vec a)$ to denote the corresponding functions in \eqref{gsa} and \eqref{Kdef} when the dependence on $\vec a$ needs be specified.

Equation \eqref{gForch} replaces the momentum equation in fluid mechanics. In addition to this, we have the equation of  continuity
\beq\label{conti-eq} \frac{\partial \rho}{\partial t}+\nabla\cdot(\rho u)=0,\eeq
and the  equation of state which, for slightly compressible fluids, is
\beq\label{slight-compress} 
\frac{d\rho}{dp}=\frac{\rho}{\kappa},\quad \kappa>0.
\eeq
From equations \eqref{u-forma}, \eqref{conti-eq} and \eqref{slight-compress} one derives an equation for the pressure:
\beq\label{dafo-nonlin} \frac{\partial p}{\partial t}=\kappa\nabla \cdot (K(|\nabla p|)\nabla p) + K(|\nabla p|)|\nabla p|^2.\eeq
Since the constant $\kappa$ is very large  for most slightly compressible fluids in porous media \cite{Muskatbook},  we neglect the second term on the right-hand side of \eqref{dafo-nonlin}. This results in the following reduced equation
\beq\label{lin-p} \frac{\partial p}{\partial t} = \kappa \nabla\cdot (K(|\nabla p|)\nabla p).\eeq
Note that this reduction is commonly used in engineering.
By rescaling the time variable,  hereafter we assume that $\kappa=1$.

Let $g=g(s,\vec a)$ in FP($N,\vec \alpha$). 
The following numbers  are in our calculations:
\beq\label{ab} a=\frac{\alpha_N}{1+\alpha_N}\in(0,1),\ b= \frac{a}{2-a}= \frac{\alpha_N}{2+\alpha_N}\in(0,1),\eeq
\beq\label{Ag} \chi(\vec a)=\max\Big \{a_0,a_1,\ldots,a_N,\frac1{a_0},\frac1{a_N} \Big \}\in [1,\infty).\eeq
The following properties for $K(\xi,a)$ are proved Lemmas III.5 and III.9 of \cite{ABHI1}. 
For $\xi\ge 0$, we have 
\beq\label{K-est-2}
- a\, K(\xi,\vec a)\le  \xi\, \frac{\partial  K(\xi,\vec a)}{\partial\xi} \le 0.
\eeq
Consequently, $K(\xi,\vec a)$ is decreasing in the variable $\xi$ and hence
\beq\label{Kestzero}
K(\xi, \vec a)\le K(0,\vec a)=a_0^{-1}\le \chi(\vec a),\eeq
Moreover, $K(\xi,\vec a)\xi^m$ is increasing in $\xi$ for all $m\ge 1$.

For the type of degeneracy of $K(\xi,\vec a)$ in $\xi$, we recall the following facts.
 
\begin{lemma}[cf. \cite{HI1}, Lemma 2.1]\label{lem21}
Let $g(s,\vec a)$ be in class FP($N,\vec \alpha$).  One has for any $\xi\ge 0$ that
\beq\label{Kesta} 
\frac{ C_0^{-1} \chi(\vec a)^{-1-a} }{(1+\xi)^a}\le K(\xi, \vec a)\le \frac{ C_0 \chi(\vec a)^{1+a} }{(1+\xi)^a},
\eeq
and for any $m\ge 1$, $\delta>0$ that
\beq\label{Km}
c_0^{-1} \chi(\vec a)^{-1-a} \frac{\delta^a}{(1+\delta)^a} (\xi^{m-a}-\delta^{m-a}) \le  K(\xi,\vec a)\xi^m\le c_0 \chi(\vec a)^{1+a}\xi^{m-a},
\eeq
where $c_0=c_0(N,\alpha_N)>0$ depends on $N$ and $\alpha_N$ only.
In particular, when $m=2$, $\delta=1$, one has
\beq\label{Kestn}
2^{-a}c_0^{-1}\chi(\vec a)^{-1-a}(\xi^{2-a}-1)\le K(\xi,\vec a)\xi^2\le c_0 \chi(\vec a)^{1+a}\xi^{2-a}.
\eeq
\end{lemma}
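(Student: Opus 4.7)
My plan is to reduce everything to a size comparison between $s$ and $\xi$ via the defining relation $\xi=sg(s)$, combined with elementary bounds on $g$.

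\textbf{Step 1 (two-sided control of $g$).} From the definition \eqref{gsa} and \eqref{Ag} I would first record the purely algebraic estimate
\[
\chi(\vec a)^{-1}\bigl(1+s^{\alpha_N}\bigr)\le a_0+a_N s^{\alpha_N}\le g(s)\le \chi(\vec a)\sum_{i=0}^N s^{\alpha_i}\le (N+1)\chi(\vec a)\bigl(1+s^{\alpha_N}\bigr),
\]
using $a_0,a_N\ge\chi^{-1}$ for the left half and $a_i\le\chi$ together with $s^{\alpha_i}\le 1+s^{\alpha_N}$ for $\alpha_i\in[0,\alpha_N]$ for the right half. This shows $g(s)\sim\chi(\vec a)^{\pm 1}(1+s)^{\alpha_N}$ up to constants depending only on $N$ and $\alpha_N$.

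\textbf{Step 2 (inverting to bound $s$ by $\xi$).} Multiplying the bounds above by $s$ and using $\xi=sg(s)$ yields
\[
\chi(\vec a)^{-1}\bigl(s+s^{1+\alpha_N}\bigr)\le \xi\le (N+1)\chi(\vec a)\bigl(s+s^{1+\alpha_N}\bigr).
\]
Since $s+s^{1+\alpha_N}$ is comparable to $(1+s)^{1+\alpha_N}-1$ up to $\alpha_N$-dependent constants, and $1+\alpha_N=1/(1-a)$, I would split the cases $s\le 1$ vs.\ $s\ge 1$ (equivalently, the two regimes where either the linear or the top-degree term dominates) to obtain
\[
c\,\chi(\vec a)^{-(1-a)}(1+\xi)^{1-a}\le 1+s\le C\,\chi(\vec a)^{1-a}(1+\xi)^{1-a}.
\]
This is the main technical step, and the careful bookkeeping of the exponent of $\chi(\vec a)$ here is what I expect to be the main (tedious) obstacle; getting exactly $1-a$ rather than a larger power is essential for the final $1+a$ to come out right.

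\textbf{Step 3 (bounds on $K$).} Combining $K(\xi)=1/g(s)$ with the comparisons $g(s)\sim\chi^{\pm 1}(1+s)^{\alpha_N}$ and $(1+s)\sim\chi^{\pm(1-a)}(1+\xi)^{1-a}$ from Steps 1 and 2 produces
\[
K(\xi,\vec a)\sim \chi(\vec a)^{\pm(1+\alpha_N(1-a))}(1+\xi)^{-\alpha_N(1-a)}=\chi(\vec a)^{\pm(1+a)}(1+\xi)^{-a},
\]
since $\alpha_N(1-a)=a$ by \eqref{ab}. This is precisely \eqref{Kesta}.

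\textbf{Step 4 (estimates for $K\xi^m$).} The upper half of \eqref{Km} is immediate from \eqref{Kesta} and the trivial inequality $(1+\xi)^a\ge \xi^a$. For the lower half, from \eqref{Kesta} it suffices to prove
\[
\frac{\xi^m}{(1+\xi)^a}\ge \frac{\delta^a}{(1+\delta)^a}\bigl(\xi^{m-a}-\delta^{m-a}\bigr),\qquad m\ge 1,\;\delta,\xi>0.
\]
For $\xi\ge \delta$, I would use the monotonicity of $t\mapsto t/(1+t)$ to get $\xi/(1+\xi)\ge \delta/(1+\delta)$, whence $\xi^m/(1+\xi)^a=\xi^{m-a}\bigl(\xi/(1+\xi)\bigr)^a\ge (\delta/(1+\delta))^a\xi^{m-a}$, which is even stronger than required (it drops the $-\delta^{m-a}$ term). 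For $\xi<\delta$, since $m>a$ the right-hand side is non-positive, so the inequality is trivial. Specializing to $m=2$, $\delta=1$ gives \eqref{Kestn}. The constant $c_0=c_0(N,\alpha_N)$ collects the $\alpha_N$-dependent constants from the $s+s^{1+\alpha_N}$ vs.\ $(1+s)^{1+\alpha_N}-1$ comparison and the factor $(N+1)$ from Step 1.
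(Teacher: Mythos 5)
Your argument is correct, and it is the natural proof: bound $g(s)$ above and below by $\chi(\vec a)^{\pm 1}(1+s)^{\alpha_N}$ up to $(N,\alpha_N)$-constants, invert $\xi=sg(s)$ to trade $(1+s)$ for $\chi^{\pm(1-a)}(1+\xi)^{1-a}$, substitute back, and then reduce \eqref{Km} to the elementary inequality $\xi^m/(1+\xi)^a\ge(\delta/(1+\delta))^a(\xi^{m-a}-\delta^{m-a})$ via monotonicity of $t\mapsto t/(1+t)$ and the case split $\xi\gtrless\delta$. Note, however, that the paper itself gives no proof of this lemma --- it cites it as Lemma~2.1 of \cite{HI1} (and refers to Lemmas~III.5 and III.9 of \cite{ABHI1} for the allied derivative bound \eqref{K-est-2}) --- so there is no in-paper argument to compare against. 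Two small bookkeeping points worth being careful about if you write this out in full: (1) in Step~2 the comparison you actually need is between $1+\xi$ and $\chi^{\pm 1}(1+s)^{1+\alpha_N}$ (not between $\xi$ and $(1+s)^{1+\alpha_N}-1$); adding the $1$ on both sides and using $\chi\ge1$ makes the bound $1+\xi\ge\chi^{-1}\,c'(1+s)^{1+\alpha_N}$ go through cleanly, whereas the formulation you sketched needs a bit of care near $s=0$ where $\xi\to0$ but $1+\xi\to1$; (2) the constant $c_0$ in \eqref{Km} must absorb $C_0$ from \eqref{Kesta}, which is fine since $C_0$ itself depends only on $N$ and $\alpha_N$, but this should be said explicitly rather than left implicit.
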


Same as in \cite{ABHI1,HI1}, we define
\beq\label{Hxi} H(\xi,\vec a)=\int_0^{\xi^2}K(\sqrt s,\vec a)ds \quad \hbox{for } \xi\ge 0.\eeq

When vector $\vec a$ is fixed, we denote $K(\cdot,\vec a)$ and $H(\cdot,\vec a)$ by $K(\cdot)$ and $H(\cdot)$, respectively.
The function $H(\xi)$ can be compared with $\xi$ and $K(\xi)$ by 
\beq\label{Hcompare} 
K(\xi)\xi^2\leq H(\xi)\le 2K(\xi)\xi^2,\quad c_1(\xi^{2-a}-1)\leq H(\xi)\le  c_2\xi^{2-a},
\eeq
where $c_1,c_2>0$ depend on $\chi(\vec a)$.


Next, we recall important monotonicity properties.
For convenience, we use the following notation: let $\vec{x}=(x_1,x_2,\ldots)$ and $\vec{x}'=(x'_1,x'_2,\ldots)$ be two arbitrary vectors of the same length, including possible length $1$. We denote by $\vec{x}\vee \vec{x}'$ and $\vec{x}\wedge \vec{x}'$ their maximum and minimum vectors, respectively, with components
$(\vec{x}\vee \vec{x}')_j=\max\{x_j, x'_j\}$  and $(\vec{x}\wedge \vec{x}')_j=\min\{ x_j, x'_j\}$.

\begin{lemma}[cf. \cite{HI1}, Lemma 5.2]\label{quasimono-lem}
Let $g(s,\vec a)$ and $g(s,\vec a')$ belong to class FP($N,\vec \alpha$). Then for any $y,y'$ in $\R^n$, one has
\begin{multline}\label{quasimonotone}
(K(|y|,\vec a) y-K(|y'|,\vec a')y')\cdot (y-y')
\ge (1-a)\cdot K(|y|\vee|y'|,\vec  a\vee \vec a')\cdot |y-y'|^2 \\
- N\cdot \max\{ \chi(\vec a),\chi(\vec a')\}\cdot |\vec  a-\vec a'|\cdot  K(|y|\vee |y'|,\vec a\wedge \vec a')\cdot (|y|\vee |y'|)\cdot |y-y'|, 
\end{multline}
where $a\in(0,1)$ is defined in \eqref{ab}. 
Particularly, when $\vec  a=\vec a'$, we have
\beq\label{mono1}
(K(|y|,\vec a) y-K(|y'|,\vec a)y')\cdot (y-y') \ge (1-a)\cdot K(|y|\vee|y'|,\vec a)\cdot |y-y'|^2.
\eeq
\end{lemma}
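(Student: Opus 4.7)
The plan is to first establish the special case \eqref{mono1} by a direct Jacobian computation along the segment from $y'$ to $y$, and then derive the general inequality \eqref{quasimonotone} by a splitting that isolates a parameter-dependent perturbation of $K$.

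For \eqref{mono1}, I would set $F(y) = K(|y|, \vec a)\, y$ and $y_t = y' + t(y - y')$ for $t \in [0,1]$, so by the fundamental theorem of calculus
\begin{align*}
(F(y) - F(y')) \cdot (y - y') = \int_0^1 (y - y')^T DF(y_t) (y - y')\, dt.
\end{align*}
A direct computation yields $DF(y) = K(|y|)\, I + (K'(|y|)/|y|)\, y\, y^T$, so for any vector $v$,
\begin{align*}
v^T DF(y) v = K(|y|)|v|^2 + \frac{K'(|y|)}{|y|}(v \cdot y)^2.
\end{align*}
The inequalities $-a K(\xi) \le \xi K'(\xi) \le 0$ from \eqref{K-est-2}, combined with $(v \cdot y)^2 \le |v|^2 |y|^2$, bound the second term below by $-a K(|y|)|v|^2$; hence the integrand is at least $(1 - a) K(|y_t|) |v|^2$. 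Because $K(\cdot, \vec a)$ is decreasing and $|y_t| \le |y| \vee |y'|$, one has $K(|y_t|, \vec a) \ge K(|y| \vee |y'|, \vec a)$, and integrating in $t$ delivers \eqref{mono1}.

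For \eqref{quasimonotone}, I would use the telescoping decomposition
\begin{align*}
F(y, \vec a) - F(y', \vec a') = [F(y, \vec a) - F(y', \vec a)] + [F(y', \vec a) - F(y', \vec a')].
\end{align*}
Dotting the first bracket with $y - y'$ and applying the previous step gives a lower bound of $(1 - a) K(|y| \vee |y'|, \vec a) |y - y'|^2$. Since $g(s, \vec a)$ is increasing in each coordinate of $\vec a$, the implicitly defined $s(\xi, \vec a)$ is decreasing in each coordinate, hence so is $K = s/\xi$; thus $K(|y| \vee |y'|, \vec a) \ge K(|y| \vee |y'|, \vec a \vee \vec a')$. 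The second bracket equals $(K(|y'|, \vec a) - K(|y'|, \vec a')) y'$, whose contribution after dotting with $y - y'$ is controlled in absolute value by $|K(|y'|, \vec a) - K(|y'|, \vec a')| \cdot |y'| \cdot |y - y'|$.

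The remaining and main work is to prove the scalar estimate
\begin{align*}
|K(\xi, \vec a) - K(\xi, \vec a')| \le N \max\{\chi(\vec a), \chi(\vec a')\}\, |\vec a - \vec a'|\, K(\xi, \vec a \wedge \vec a'),
\end{align*}
which I would attack by interpolating $\vec a(\tau) = \vec a' + \tau(\vec a - \vec a')$ and using implicit differentiation of $s\, g(s, \vec a) = \xi$ to obtain $\partial_{a_j} K = -s^{1 + \alpha_j}/(\xi (g + s\, g_s))$; absorbing the numerator into $K(\xi, \vec a \wedge \vec a')$ then uses $g + s\, g_s \ge g$ together with elementary size bounds on $s^{\alpha_j}$ in terms of $\chi$. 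Finally, to promote the factor $|y'|$ to $|y| \vee |y'|$, I would invoke the property (stated just after \eqref{Kestzero}) that $K(\xi)\, \xi$ is nondecreasing in $\xi$, so $K(|y'|, \vec a \wedge \vec a')\, |y'| \le K(|y| \vee |y'|, \vec a \wedge \vec a')\, (|y| \vee |y'|)$. The principal obstacle is the parameter-dependence estimate: recovering precisely the factor $K(\xi, \vec a \wedge \vec a')$ rather than some less favorable combination requires careful bookkeeping in the implicit-function computation to guarantee that the smaller coefficient vector $\vec a \wedge \vec a'$ appears on the right-hand side.
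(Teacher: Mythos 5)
The paper does not prove Lemma~\ref{quasimono-lem}; it cites it directly from \cite{HI1}, Lemma~5.2. Your proof is therefore assessed on its own merits, and it is essentially correct and follows the natural argument one expects from such a monotonicity statement.

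A few observations. The computation establishing \eqref{mono1} via the segment integral, the Jacobian bound $v^T DF(y)v \ge (1-a)K(|y|)|v|^2$ from \eqref{K-est-2}, the convexity fact $|y_t|\le |y|\vee|y'|$, and the monotone decay of $K$ are all correct. For \eqref{quasimonotone}, the telescoping split $[F(y,\vec a)-F(y',\vec a)]+[F(y',\vec a)-F(y',\vec a')]$, the replacement $K(\cdot,\vec a)\ge K(\cdot,\vec a\vee\vec a')$, and the promotion $K(|y'|,\vec a\wedge\vec a')\,|y'|\le K(|y|\vee|y'|,\vec a\wedge\vec a')(|y|\vee|y'|)$ via the monotonicity of $\xi\mapsto K(\xi)\xi$ are all sound. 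What you label the ``principal obstacle''---getting $K(\xi,\vec a\wedge\vec a')$ on the right of the scalar estimate---is in fact resolved by exactly the coordinatewise monotonicity you already established: along $\vec a(\tau)=\vec a'+\tau(\vec a-\vec a')$ one has $\vec a(\tau)\ge \vec a\wedge\vec a'$ componentwise, hence $K(\xi,\vec a(\tau))\le K(\xi,\vec a\wedge\vec a')$, and similarly $\chi(\vec a(\tau))\le\max\{\chi(\vec a),\chi(\vec a')\}$. Combined with $|\partial_{a_j}K|=s^{\alpha_j}/[g(g+sg_s)]\le \big(s^{\alpha_j}/g\big)K\le\chi K$ (using $g+sg_s\ge g$, and $s^{\alpha_j}/g\le\max\{1/a_0,1/a_N\}$ by splitting $s\lessgtr 1$), the scalar estimate drops out, so there is no genuine obstruction left. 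The only cosmetic discrepancy is that summing over $j=0,\dots,N$ produces at worst a factor $N+1$ (or $\sqrt{N+1}$ via Cauchy--Schwarz) rather than exactly $N$; this is a constant that plays no role in the paper's use of the lemma.
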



When the fluid is confined in an open, bounded domain $U$ of $\R^n$, the Sobolev embeddings play an important role in our study. For $0<r<n$, we denote by $r^*$ the critical Sobolev exponent, i.e. $r^* = \frac{nr}{n-r}$. The following 
fact is used frequently
\beq\label{rfact}
r^*>2\Leftrightarrow \frac{nr}{n-r}>2  \Leftrightarrow r>\frac{2n}{n+2}\Leftrightarrow (2-r)n<2r.
\eeq
With this notation, we define the \textbf{Degree Condition} as one of the following equivalent conditions:
\beq\label{deg}     \deg(g)\le \frac{4}{n-2},\quad  a\le \frac 4{n+2},\quad  2\le (2-a)^*,\quad  2-a\ge \frac{2n}{n+2}.\tag{\textbf{DC}}
\eeq
Similarly, we define the \textbf{Strict Degree Condition} as one of the following equivalent conditions:
\beq\label{strictdeg}
    \deg(g)< \frac{4}{n-2},\quad a <  \frac 4{n+2},\quad 2 < (2-a)^*,\quad  2-a> \frac{2n}{n+2}.\tag{\textbf{SDC}}
\eeq

We will assume the Strict Degree Condition very often in this paper, but not always. Whenever this condition is met, the Sobolev space $W^{1,2-a}(U)$ is continuously embedded into $L^2(U)$.

We now consider  parabolic Sobolev embeddings. 
Let us denote throughout $Q_T=U\times(0,T)$.

\begin{lemma}\label{ParSob-1}
If $2n/(n+2)\le r \le 2$, $r<n$ and $p=r(n+2)/n$ then
\beq\label{Sineq1}
\| u\|_{L^p(Q_T)} \le C(1+T^{1/p})[[u]]_{2,r;T},
\eeq
where  $C=C(U,n,r)>0$ is independent of $T$ and 
\beq\label{normtriple}
[[u]]_{2,r;T}=\esssup_{t\in [0,T]} \| u(t)\|_{L^2(U)}+ \|\nabla u\|_{L^r(Q_T)}.
\eeq

In \eqref{Sineq1} above, we can remove $T^{1/p}$ whenever $u$ vanishes on the boundary of $U$.
\end{lemma}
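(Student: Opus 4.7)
The plan is a standard parabolic Gagliardo--Nirenberg interpolation: at each time $t$ I would interpolate $L^p(U)$ between $L^{r^*}(U)$ and $L^2(U)$, then integrate in time and absorb one factor into $\esssup_t \|u(t)\|_{L^2(U)}$. Under the stated hypotheses, $r^* \ge 2$ (equivalent to $r \ge 2n/(n+2)$ by \eqref{rfact}) and a short computation shows $p = r(n+2)/n \in [2, r^*]$, so the interpolation is a genuine convex combination. Solving $1/p = \theta/r^* + (1-\theta)/2$ gives $\theta = n/(n+2)$ and $1-\theta = 2/(n+2)$; the decisive identities $p\theta = r$ and $p(1-\theta) + r = p$ are what make the time integration close cleanly.

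Writing $M := \esssup_{t \in [0,T]}\|u(t)\|_{L^2(U)}$, the log-convexity of $L^q$ norms would give, for a.e.\ $t$,
\[
\|u(t)\|_{L^p(U)} \le C \|u(t)\|_{L^{r^*}(U)}^{\theta}\|u(t)\|_{L^2(U)}^{1-\theta}.
\]
Raising to the $p$-th power, integrating over $(0,T)$, and pulling the $L^2$ factor out using $p\theta = r$ yields
\[
\|u\|_{L^p(Q_T)}^p \le C\, M^{p(1-\theta)} \int_0^T \|u(t)\|_{L^{r^*}(U)}^{r}\,dt.
\]
Since $u$ need not vanish on $\partial U$, I would invoke the standard Sobolev embedding $W^{1,r}(U) \hookrightarrow L^{r^*}(U)$ on the bounded (Lipschitz) domain $U$ and control $\|u(t)\|_{L^r(U)}$ by $\|u(t)\|_{L^2(U)}$ via Hölder (since $r \le 2$ and $|U|<\infty$), obtaining
\[
\|u(t)\|_{L^{r^*}(U)} \le C\bigl(\|\nabla u(t)\|_{L^r(U)} + \|u(t)\|_{L^2(U)}\bigr).
\]
Integrating the $r$-th power in $t$ gives $\int_0^T \|u(t)\|_{L^{r^*}(U)}^r\,dt \le C\|\nabla u\|_{L^r(Q_T)}^r + C T M^r$, and substituting back (using $p(1-\theta)+r = p$) leaves
\[
\|u\|_{L^p(Q_T)}^p \le C\, M^{p(1-\theta)} \|\nabla u\|_{L^r(Q_T)}^{r} + C T M^p.
\]
To conclude, I would take the $p$-th root, apply $(a+b)^{1/p} \le a^{1/p}+b^{1/p}$, use $r/p = \theta$, and then Young's inequality with conjugate exponents $1/(1-\theta)$ and $1/\theta$ to bound $M^{1-\theta}\|\nabla u\|_{L^r(Q_T)}^{\theta}$ by $M + \|\nabla u\|_{L^r(Q_T)} \le [[u]]_{2,r;T}$; the remaining term contributes $CT^{1/p}M \le CT^{1/p}[[u]]_{2,r;T}$, giving the claim.

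For the final sentence of the lemma, if $u(\cdot,t) \in W^{1,r}_0(U)$ one substitutes Poincaré--Sobolev, $\|u(t)\|_{L^{r^*}(U)} \le C\|\nabla u(t)\|_{L^r(U)}$, for the full Sobolev embedding, so the $CTM^r$ contribution disappears and the factor $(1+T^{1/p})$ collapses to a constant. There is no genuine obstacle; the only mild points are (i) verifying $p \in [2, r^*]$ so the interpolation is valid, which is exactly $r \ge 2n/(n+2)$, and (ii) using boundedness of $U$ to pass from $L^r$ to $L^2$ in the non-vanishing trace step, which is precisely what forces the extra $T^{1/p}$ factor.
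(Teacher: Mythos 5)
Your proposal is correct and follows essentially the same path as the paper's proof: interpolate $L^p(U)$ between $L^2(U)$ and $L^{r^*}(U)$ with exponents $\alpha,\beta$ (your $1-\theta,\theta$), invoke the Sobolev embedding $W^{1,r}(U)\hookrightarrow L^{r^*}(U)$ (with the lower-order term absorbed into the $L^2$ norm via $r\le 2$ and $|U|<\infty$, which is precisely what produces the $T^{1/p}$ factor), raise to the $p$-th power and integrate in time using the key identity $\beta p=r$, and finally observe that the extra term vanishes when $u\in W^{1,r}_0(U)$ because Poincar\'e--Sobolev gives a clean gradient bound. The only cosmetic difference is at the final step: you close the estimate with an explicit Young's inequality on $M^{1-\theta}\|\nabla u\|_{L^r(Q_T)}^{\theta}$, whereas the paper simply bounds both $M$ and $\|\nabla u\|_{L^r(Q_T)}$ by $[[u]]_{2,r;T}$ directly, obtaining $[[u]]^{\alpha p}[[u]]^{\beta p}=[[u]]^p$ without Young; both routes are valid and give the same constant up to trivial changes.
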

\begin{proof}
The proof is standard, cf. \cite{LadyParaBook68,DiDegenerateBook}. 
The short proof is presented here for the sake of completeness, and also serves the next lemma.
For convenience we denote $[[\cdot]] \equiv [[\cdot]]_{2,r,T}$. 
Note that $2\le p \le r^*$. We write 
\beq\label{powers}
\frac 1 p=\frac \alpha 2+\frac\beta {r^*},\quad \alpha,\beta\ge 0,\quad \alpha+\beta=1.
\eeq
By interpolation inequality and Sobolev embedding, we have
\beqs
\|u(t)\|_{L^p(U)} \le \| u(t)\|_{L^2(U)}^\alpha \|u(t)\|_{L^{r^*}(U)}^\beta
\le C \| u(t)\|_{L^2(U)}^\alpha (\delta \|u(t)\|_{L^r(U)}^\beta +\norm{\nabla u(t)}_{L^r(U)}^\beta),
\eeqs
where $\delta=1$ in general, and $\delta=0$ in case $u$ vanishes on the boundary $\partial U$.
Note that $r\le 2$ then $\|u(t)\|_{L^r(U)}\le C\|u(t)\|_{L^2(U)}$.  Thus,
\beq\label{e1}
\|u(t)\|_{L^p(U)}  
\le C \delta \| u(t)\|_{L^2(U)} + C  \| u(t)\|_{L^2(U)}^\alpha \norm{\nabla u(t)}_{L^r(U)}^\beta.
\eeq
Raising \eqref{e1} to power $p$, integrating it in $t$ from $0$ to $T$ and using the fact that 
$\| u(t)\|_{L^2(U)} \le [[u]]$ a.e. in $[0,T]$,  we obtain 
\beq\label{e2}
 \norm{u}_{L^p(Q_T)}^p 
\le C \delta T [[u]]^{p} + C [[u]]^{\alpha  p} \int_0^T \|\nabla u(t)\|_{L^r(U)}^{\beta p} dt.
\eeq
From \eqref{powers}, we find $\beta=\frac n{n+2}$. Thus, $\beta p= r$ and we have from \eqref{e2} that
\beq\label{e3}
\begin{aligned}
 \norm{u}_{L^p(Q_T)}^p 
&\le C \delta T [[u]]^{p} + C [[u]]^{\alpha p} \Big(\int_0^T\int_U  |\nabla u(x,t)|^{r} dx dt\Big )^\frac{\beta p}{r} \\
&\le C \delta T [[u]]^{p} 
+ C [[u]]^{\alpha p}  [[u]]^{\beta p}
= C [[u]]^{p} (\delta T+1).
\end{aligned}
\eeq
Therefore, we obtain \eqref{Sineq1}.
\end{proof}

The following parabolic Sobolev  embedding with spatial weights will be useful in this paper.

\begin{lemma}\label{ParaSob-3}
 Given $W(x,t)>0$ on $Q_T$. Suppose two numbers $m$ and $r $ satisfy $2n/(n+2)\le r\le 2$, $r<n$ and $r<m < r^*$. 
Let 
\beq\label{pm}
 p= 2+m -\frac{2m}{r^*}.
\eeq 
Then 
\beq\label{weight1}
\|u\|_{L^p(Q_T)}
\le C  [[u]]_{2,m,W;T} \Big\{ T^{1/p}+  \esssup_{t\in[0,T]} \Big (\int_U W(x,t)^{-\frac r{m-r}}dx\Big )^\frac{m-r}{pr}\Big \},
\eeq
where
\beq\label{normquad}
[[u]]_{2,m,W;T}=\esssup_{t\in [0,T]} \|u(t)\|_{L^2(U)}+\Big(\int_0^T \int_U W(x,t)|\nabla u|^m dx dt\Big)^\frac 1 m.
\eeq
Consequently, under the Strict Degree Condition, when $m=2$ and $r=2-a$ we have
\beq\label{Wemb}
\|u\|_{L^p(Q_T)}
\le C  [[u]]_{2,2,W;T} \Big\{ T^{1/p}+  \esssup_{t\in[0,T]} \Big (\int_U W(x,t)^{-\frac{2-a}{a}}dx\Big )^\frac{2}{p(2-a)}\Big \},
\eeq
where $2<p<(2-a)^*$ given explicitly by
\beq\label{pkey}
p=4\Big(1-\frac 1{(2-a)^*}\Big).
\eeq

In \eqref{weight1} and \eqref{Wemb} above, we can remove $T^{1/p}$ whenever $u$ vanishes on the boundary of $U$.
\end{lemma}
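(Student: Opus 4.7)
The plan is to mirror the proof of Lemma \ref{ParSob-1}, inserting one additional ingredient: a weighted Hölder inequality to bridge the unweighted $L^r$-norm of $\nabla u$ (which Sobolev embedding naturally produces) and the weighted $L^m$-norm that sits inside $[[u]]_{2,m,W;T}$. Since $r\le 2<n$, the Sobolev embedding will give, a.e.\ in $t$, the estimate $\|u(t)\|_{L^{r^*}(U)}\le C(\delta\|u(t)\|_{L^2(U)}+\|\nabla u(t)\|_{L^r(U)})$, with $\delta=1$ in general and $\delta=0$ when $u$ vanishes on $\partial U$. The key new step will be to apply Hölder with conjugate exponents $m/r$ and $m/(m-r)$ to the decomposition $|\nabla u|^r=W^{-r/m}(W^{1/m}|\nabla u|)^r$; this is permitted because $r<m$, and it yields
\begin{equation*}
\|\nabla u(t)\|_{L^r(U)}\le A(t)\Bigl(\int_U W|\nabla u|^m\,dx\Bigr)^{\!1/m},\qquad A(t):=\Bigl(\int_U W(x,t)^{-\frac{r}{m-r}}dx\Bigr)^{\!\frac{m-r}{mr}}.
\end{equation*}

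Next I would interpolate $\|u(t)\|_{L^p}\le\|u(t)\|_{L^2}^\alpha\|u(t)\|_{L^{r^*}}^\beta$ with $\alpha+\beta=1$ and $1/p=\alpha/2+\beta/r^*$; the conditions $r<m<r^*$ and the definition $p=2+m-2m/r^*$ force $2\le p\le r^*$, so the interpolation is valid. A short algebraic check then establishes the pivotal identity $\beta p=m$ (hence $\alpha p=p-m$); this is precisely the arithmetic that makes the temporal integration close. Substituting the previous display into the Sobolev bound, raising to the $p$-th power, integrating in $t\in[0,T]$, and replacing $\|u(t)\|_{L^2(U)}$ by $\esssup_t\|u(t)\|_{L^2(U)}$ where possible will produce
\begin{equation*}
\|u\|_{L^p(Q_T)}^p\le C\delta^m T\,[[u]]^p + C\bigl(\esssup_t A(t)\bigr)^m [[u]]^{p-m}\!\int_0^T\!\!\int_U W|\nabla u|^m\,dx\,dt\le C[[u]]^p\Bigl(\delta^m T+\bigl(\esssup_t A(t)\bigr)^m\Bigr),
\end{equation*}
where $[[u]]=[[u]]_{2,m,W;T}$. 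Taking the $p$-th root and observing the identity $A(t)^{m/p}=(\int_U W^{-r/(m-r)}dx)^{(m-r)/(pr)}$ then yields \eqref{weight1}; the boundary-vanishing case corresponds to $\delta=0$ and eliminates the $T^{1/p}$ term.

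The second assertion \eqref{Wemb} will follow by specialization to $m=2$, $r=2-a$, which is permissible precisely because the Strict Degree Condition supplies $r<2<r^*$; the formula \eqref{pkey} for $p$ is then immediate from $p=2+m-2m/r^*$. Overall this is a clean extension of Lemma \ref{ParSob-1} with no genuinely new idea. The only step that deserves real care — and hence the main obstacle, such as it is — is the bookkeeping of exponents across the Hölder and interpolation steps: in particular the identity $\beta p=m$, which pins down the choice of $p$, and the routine but easily-miscounted conversion of $A(t)^{m/p}$ into the exponent $(m-r)/(pr)$ appearing in the statement.
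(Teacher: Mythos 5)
Your proof is correct and matches the paper's argument almost step for step: the weighted Hölder decomposition $|\nabla u|^r = W^{-r/m}(W^{1/m}|\nabla u|)^r$ is exactly the paper's estimate \eqref{stemp}, the interpolation with $1/p=\alpha/2+\beta/r^*$ and the pivotal identity $\beta p=m$ is exactly \eqref{powers} together with the computation after \eqref{e4}, and the final integration in $t$ with the $\delta$-flag handling the boundary case is the same. No genuinely different route is taken.
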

\begin{proof}
 In this proof we denote $[[\cdot]] = [[\cdot]]_{2,m,W;T}$. 
By H\"older's inequality,
\beq\label{stemp}
\|\nabla u\|_{L^r(U)} 
=  \Big( \int_U W^{\frac rm} |\nabla u|^r \cdot  W^{-\frac rm} dx \Big)^\frac1r 
\le \Big (\int_U W|\nabla u|^m dx\Big )^\frac 1m \Big (\int_U W^{-\frac r{m-r}}dx\Big )^{\frac1r-\frac1m}.
\eeq
By \eqref{rfact} and the condition on $r$ we have $r^*\ge 2$. With exponent $p$ in \eqref{pm}, we write
\beq\label{manyp} 
p-2= \frac{m(r^*-2)}{r^*},
\quad p-r^*= \frac{(r^*-2)(m-r^*)}{r^*}, 
\quad p-m=\frac{2(r^*-m)}{r^*},
\eeq
then we have $2\le p\le r^*$ and $p>m$.
Let $\alpha$ and $\beta$ be defined as in \eqref{powers}. 
Then applying \eqref{e1} and \eqref{stemp} yields 
\begin{multline}\label{e4}
 \norm{u}_{L^p(Q_T) }^p \le C \delta T [[u]]^p+C [[u]]^{\alpha p}
 \Big[ \int_0^T \Big (\int_U W|\nabla u|^m dx\Big )^\frac {\beta p} m  d\tau \Big]\\
\cdot \esssup_{[0,T]} \Big (\int_U W(x,t)^{-\frac r{m-r}}dx\Big )^{\beta p(\frac1r-\frac1m)} .
\end{multline}
From \eqref{powers} and \eqref{manyp}, we have 
$\beta =\frac{r^*(p-2)}{p(r^*-2)}=\frac  m p.$ 
Therefore, we rewrite \eqref{e4} as
\begin{align*}
 \norm{u}_{L^p(Q_T) }^p 
&\le C \delta T [[u]]^p+C [[u]]^{\alpha p}  \Big (\int_0^T \int_U W|\nabla u|^m dx  d\tau \Big )^\frac {\beta p} m\esssup_{[0,T]} \Big (\int_U W(x,t)^{-\frac r{m-r}}dx\Big )^\frac{m-r}r\\
&\le C \delta T [[u]]^p+C [[u]]^{\alpha p}  [[u]]^{\beta p} \esssup_{[0,T]} \Big (\int_U W(x,t)^{-\frac r{m-r}}dx\Big )^\frac{m-r}r\\
&\le C  [[u]]^p\Big\{ \delta T + \esssup_{[0,T]}\Big (\int_U W(x,t)^{-\frac r{m-r}}dx\Big )^\frac{m-r}r\Big\}.
\end{align*}
Thus, we obtain \eqref{weight1}.
In particular, when $m=2$ and $r=2-a$ then \eqref{pm} becomes \eqref{pkey}. 
Under the Strict Degree Condition, requirements on $r$ and $m$ are met.
Then \eqref{Wemb} follows  \eqref{weight1}.     
\end{proof}

Another type of embedding will be proved in Lemma \ref{LUK}.




\myclearpage
\section{Estimates of solutions}
\label{supestimate}

Let $U$ be a bounded, open, connected subset of $\R^n$ with boundary $\Gamma$ of class $C^2$.
We consider a fluid flow in $U$ that satisfies the generalized Forchheimer 
equation \eqref{gForch} with a fixed $g(s)=g(s,\vec a)\in FP(N,\vec \alpha)$.
We study the resulting parabolic equation for the pressure $p=p(x,t)$:
\beq\label{eqorig} \frac{\partial p}{\partial t}=\nabla \cdot (K(|\nabla p|)\nabla p), \quad x\in U,\ t>0.\eeq
In addition, we assume the flux condition on the boundary:
\beqs u\cdot \vec\nu =\psi(x,t),\quad x\in \Gamma,\ t>0,\eeqs
where $\vec\nu$ is the outward normal vector on $\Gamma$ and the flux $\psi(x,t)$ is known. Hence, by \eqref{u-forma} we have
\beq\label{BC} -K(|\nabla p|)\nabla p \cdot \vec\nu=\psi \quad \hbox{on}\quad \Gamma\times (0,\infty) .\eeq
The initial data 
\beq \label{In-Cond} p(x,0) = p_0(x) \text{ is given.}\eeq
We will focus on the IBVP \eqref{eqorig}, \eqref{BC} and \eqref{In-Cond}. By integrating \eqref{eqorig} over $U$, we easily find
\beqs \ddt \int_U p(x,t)dx = \int_\Gamma K(|\nabla p|)\nabla p\cdot \vec\nu d\sigma=-\int_\Gamma \psi(x,t)d\sigma,\quad t>0. \eeqs
(Here $d\sigma$ is the surface area element.)
By the continuity of $\int_U p(x,t)dx$ and $\int_\Gamma \psi(x,t)d\sigma$ on $[0,\infty)$, we assert
\beq\label{pavg-rel} \int_U p(x,t)dx =\int_U p(x,0)dx - \int_0^t\int_\Gamma \psi(x,\tau)d\sigma d\tau,\quad t\ge 0.\eeq

Let $\bar p(x,t)=p(x,t)-\frac1{|U|}\int_U p(x,t)dx$, for $x\in U$ and $t\ge 0$, where $|U|$ denotes the volume of $U$.
Then $\bar p$ satisfies the zero average condition 
\beq\label{pbar0}
\int_U \bar{p}(x,t) dx=0 \quad \hbox{for all }t\ge 0.
\eeq
It follows from \eqref{pavg-rel} that for $t\ge 0$,
\begin{equation}
\label{pbargam}
\bar{p}(x,t)=p(x,t)-\frac{1}{|U|}\int_U p_0(x) dx+\frac{1}{|U|}\int_0^t\int_\Gamma \psi(x,\tau) d\sigma d\tau. 
\end{equation}
We call $\bar p(x,t)$ the shifted solution. 
Then $\bar{p}$ satisfies the following IBVP
\beq\label{eqgamma}
\left \{ 
\begin{array}{lll}
& \dfrac{\partial \bar{p}}{\partial t}=\nabla \cdot (K(|\nabla \bar{p}|)\nabla \bar{p}) + \frac{1}{|U|}\int_\Gamma \psi(x,t) d\sigma,& \quad x\in  U, t>0,\\
& -K(|\nabla\bar{p}|)\nabla\bar{p} \cdot \vec\nu=\psi(x,t), &\quad x\in \Gamma, t>0,\\
& \bar p(x,0)=p_0(x)- \frac{1}{|U|}\int_U p_0(x) dx, &\quad x\in U.
\end{array} \right.
\eeq
Note that even in the linear case, i.e., $K(\xi)\equiv const.$ and $\psi(x,t)$ is uniformly bounded on $\Gamma\times[0,\infty)$, the solution $p(x,t)$ can still be unbounded as $t\to\infty$.
Therefore instead of estimating $p(x,t)$ directly, we will estimate $\bar p(x,t)$. 
Thanks to the explicit relation \eqref{pbargam} between $p$ and $\bar p$, the obtained results will have clear physical interpretations and the estimates for  $p(x,t)$ can be easily retrieved from those for $\bar p(x,t)$.

The existence of weak solutions can be treated by the theory of nonlinear monotone operators (cf. \cite{BrezisMonotone,s97,z90}, see also section 3 of \cite{HIKS1} for our proof for the Dirichlet boundary condition). The regularity of weak solutions is treated in \cite{DiDegenerateBook}.
For simplicity, we always assume that the initial data $p_0$ and boundary data $\psi$ are sufficiently smooth and the solution of \eqref{eqgamma} exists  \textit{for all} $t\ge 0$. Also, sufficient regularity of the solution is assumed.
For example, when we estimate $L^\infty$-norm of $\bar p$ we require $\bar p\in C(\bar U\times [0,\infty))$;
when we estimate $L^\infty$-norm of $\bar p_t$ we require $\bar p_t\in C( U\times (0,\infty))$;
when we estimate $L^s$-norm of $\nabla p$ we require $\nabla p\in C( [0,\infty),L_{loc}^s(U))$.

In the following subsections, we derive various estimates for pressure and its derivatives. 
These estimates are important by themselves and for the next sections when we study the stability of the solutions.

%
\begin{notation}
In estimates below, constants $C$'s always depend on the dimension $n$, domain $U$ and the Forchheimer polynomial $g(s,\vec a)$. Additional dependence will be specified as needed.

We use short-hand notation $\| f \|_{L^2}$, $\| f \|_{L^\infty}$ for the norms  $\| f \|_{L^2(U)}$, $\|f\|_{L^\infty(U)}$ whenever $f$ is defined on $U$. Similarly, if a function $\phi$ is defined on $\Gamma$, we use short-hand notation 
$\| \phi \|_{L^2}$, $\|\phi \|_{L^\infty}$ for the norms  $\| \phi \|_{L^2(\Gamma)}$, $\|\phi\|_{L^\infty(\Gamma)}$.

If $f(x,t)$ is a function of two variables, we denote by $f(t)$ the function $t\to f(\cdot,t)$, therefore $\|f(t)\|_{L^2}$ means $\|f(\cdot,t)\|_{L^2}$ and $\sup_{[0,T]}\| f\|_{L^2}$ means $\sup\{ \| f(\cdot,t)\|_{L^2}:t\in[0,T]\}$. 

Throughout, we denote $Q_T=U\times (0,T)$.
Also, we use the following notation for partial derivatives: $\partial p/\partial t= p_t$ and $\partial /\partial x_i=\partial_i$.
\end{notation}

\subsection{Estimates for pressure}\label{pressuresubsec}
We begin our subsection by introducing the following result on the $L^\infty$-estimates of for the $\bar p$  of  the IBVP \eqref{eqgamma}. 
Throughout the paper, we denote 
\beq \label{newmu}
{\muex_1}=(2-a)(n+2)/n.
\eeq
\begin{proposition} \label{p-T.est} \asdc. Then, 
there is a constant $C>0$ such that any $T>0$, the following inequality holds
\beq\label{pbaronly}
\sup_{[0,T]} \|\bar p\|_{L^\infty}\le C \Big\{ \|\bar p_0\|_{L^\infty} + (1+T)^\frac2{2-a} (\sup_{[0,T]} \|\psi\|_{L^\infty}+1)^\frac{1}{1-a} \Big\}.
\eeq
\end{proposition}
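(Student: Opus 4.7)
The plan is to prove the $L^\infty$ bound by a De Giorgi iteration on super-level sets of $\bar p$, applied separately to $\bar p$ and $-\bar p$. The three main ingredients are: the degenerate dissipation $K(|\nabla\bar p|)|\nabla\bar p|^2\gtrsim |\nabla\bar p|^{2-a}-1$ from Lemma \ref{lem21}, the parabolic Sobolev embedding of Lemma \ref{ParSob-1} with $r=2-a$ and $p=\muex_1$ (which requires SDC to yield $\muex_1>2$), and the multi-sequence fast geometric convergence of Lemma \ref{multiseq} from the appendix.

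First I would derive the energy inequality at level $k\ge 0$. Test \eqref{eqgamma} with $(\bar p-k)_+$, integrate over $U$, and use the flux boundary condition to turn $\int_\Gamma K(|\nabla\bar p|)\nabla\bar p\cdot\vec\nu(\bar p-k)_+\,d\sigma$ into $-\int_\Gamma \psi(\bar p-k)_+\,d\sigma$. Writing $A_k(t)=\{x\in U:\bar p(x,t)>k\}$, the degeneracy bound gives
\begin{equation*}
\tfrac12\ddt\|(\bar p-k)_+\|_{L^2}^2 + c\int_U|\nabla(\bar p-k)_+|^{2-a}\,dx \le c|A_k(t)| + \mathcal{B}_k(t),
\end{equation*}
where $\mathcal{B}_k$ collects $|\int_\Gamma \psi(\bar p-k)_+\,d\sigma|$ and the source $|U|^{-1}|\int_\Gamma\psi\,d\sigma|\int_U(\bar p-k)_+\,dx$. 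The boundary term is handled by the trace $W^{1,1}(U)\hookrightarrow L^1(\Gamma)$ together with H\"older: inserting $|A_k(t)|^{(1-a)/(2-a)}$ as a weight lets Young's inequality split off a small fraction of $\int_U|\nabla(\bar p-k)_+|^{2-a}\,dx$ at the cost of a term $C\|\psi\|_{L^\infty}^{(2-a)/(1-a)}|A_k(t)|$. The source term is controlled similarly (the zero-average condition \eqref{pbar0} gives $\int_U(\bar p-k)_+\,dx\le |A_k(t)|^{1/2}\|(\bar p-k)_+\|_{L^2}$, which a Gronwall step on the $L^2$ part absorbs).

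Integrating in $t$ and taking a supremum produces a triple-norm bound
\begin{equation*}
[[(\bar p-k)_+]]_{2,2-a;T}^{2-a}\le C\|(\bar p_0-k)_+\|_{L^2}^{2-a}+ C\bigl(1+\|\psi\|_{L^\infty}^{(2-a)/(1-a)}\bigr)\int_0^T|A_k(t)|\,dt.
\end{equation*}
Lemma \ref{ParSob-1} then converts this to an $L^{\muex_1}(Q_T)$ estimate for $(\bar p-k)_+$ with an overall factor $C(1+T^{1/\muex_1})^{\muex_1}$. For $k>\ell$ the Chebyshev-type inequalities
\begin{equation*}
(k-\ell)^{\muex_1}\!\int_0^T\!|A_k(t)|\,dt \le \int_{Q_T}(\bar p-\ell)_+^{\muex_1}\,dx\,dt, \qquad (k-\ell)^2\sup_t|A_k(t)|\le \sup_t\|(\bar p-\ell)_+\|_{L^2}^2
\end{equation*}
set up a recursion. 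Choosing $k_j=k_0+M(1-2^{-j})$, the coupled sequence $Y_j=\bigl(\sup_t|A_{k_j}(t)|,\ \int_0^T|A_{k_j}(t)|\,dt\bigr)$ satisfies an inequality of the form $Y_{j+1}\le C b^j Y_j^{1+\delta}$ with $\delta>0$ coming from the strict embedding $\muex_1>2$.

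The final step is to select
\begin{equation*}
k_0+M = C\Bigl\{\|\bar p_0\|_{L^\infty}+(1+T)^{2/(2-a)}\bigl(1+\sup_{[0,T]}\|\psi\|_{L^\infty}\bigr)^{1/(1-a)}\Bigr\},
\end{equation*}
calibrated so that the initial value $Y_0$ lies below the threshold required by Lemma \ref{multiseq}; the lemma then yields $Y_\infty=0$, i.e.\ $\bar p\le k_0+M$ a.e.\ on $Q_T$. Repeating the argument with $-\bar p$ produces the matching lower bound and gives \eqref{pbaronly}. The principal obstacle is the Young step inside $\mathcal{B}_k$: to obtain exactly the exponent $1/(1-a)$ on $\|\psi\|_{L^\infty}$ and $2/(2-a)$ on $1+T$, rather than a slightly suboptimal pair, one must balance the trace contribution of $(\bar p-k)_+$ against the $L^{2-a}$ gradient dissipation with the correct insertion of $|A_k|^{(1-a)/(2-a)}$, and this in turn forces the iteration to be run on a pair of coupled sequences — exactly the setting that the new fast-convergence result in Lemma \ref{multiseq} is designed to handle.
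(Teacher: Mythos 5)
Your overall De Giorgi strategy — test with $(\bar p-k)_+$ for $k\ge\|\bar p_0\|_{L^\infty}$, handle the flux term through the trace embedding and a Young step that costs $C\|\psi\|_{L^\infty}^{(2-a)/(1-a)}|A_k(t)|$, absorb the source via a $T$-dependent choice of $\varepsilon$, then apply Lemma~\ref{ParSob-1} with $r=2-a$ and iterate on dyadic levels $k_i=\Mscalar(2-2^{-i})$ — matches the paper's proof, and your final calibration of $\Mscalar$ is the right one. The gap is in how you describe the iteration and the role of Lemma~\ref{multiseq}.

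You write that the recursion is run on a \emph{pair} of coupled sequences $Y_j=(\sup_t|A_{k_j}(t)|,\int_0^T|A_{k_j}(t)|\,dt)$ satisfying a single-exponent inequality $Y_{j+1}\le Cb^jY_j^{1+\delta}$, and that this ``coupled'' setting is what Lemma~\ref{multiseq} is for. That misreads the lemma: \eqref{mABi} concerns a \emph{scalar} sequence hit by several terms $A_kB_k^iY_i^{1+\mu_k}$ with \emph{distinct exponents} $\mu_k$, not a vector-valued recursion. In the paper the quantity that iterates is the single scalar $\sigma_{k_i}=\int_0^T|S_{k_i}(t)|\,dt$; the energy estimate collapses $F_{k}\le C_0\alpha_T\sigma_{k}$, so there is no genuinely coupled pair to track. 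The reason Lemma~\ref{multiseq} rather than the classical Lemma~\ref{oriseq} is needed is different from what you say: the embedding \eqref{Sineq1} is applied to the quantity $F_k^{1/2}+F_k^{1/(2-a)}$ (coming from the mismatched homogeneities of $\sup_t\|\cdot\|_{L^2}^2$ and $\int\!\!\int|\nabla\cdot|^{2-a}$), and after inserting $F_k\lesssim\alpha_T\sigma_k$ this produces
\begin{equation*}
\sigma_{k_{i+1}}\le D\,2^{\muex_1 i}\bigl(\sigma_{k_i}^{\muex_1/2}+\sigma_{k_i}^{\muex_1/(2-a)}\bigr),
\end{equation*}
a scalar recursion with the two exponents $\egexp_1=\muex_1/(2-a)-1$ and $\egexp_2=\muex_1/2-1$. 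As stated, your outline never produces that two-exponent structure (you quote a single $\delta$), so it would not in fact need the new lemma at all; conversely, if you tried to run a vector iteration there is no version of Lemma~\ref{multiseq} available for it. Everything else in your sketch is sound.
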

\begin{proof}
We follow the celebrated De Giorgi's technique. Although, the method is standard (see, for example, \cite{LadyParaBook68}), calculations are tedious and new. Thus, for completeness, we provide its details here. For any $k\ge0$, let us denote 
\begin{align*}
& \bar p^{(k)} =\max\{\bar p-k,0\},
\quad S_{k}(t)=\{ x\in U: \bar p^{(k)}(x,t)\ge 0\}, \quad \sigma_k =\int_0^T |S_k(t)|dt, \\
& F_k    = \sup_{t\in [0,T]} \int_U |\bar p^{(k)}(x,t)|^2 dx + \int_0^T\int_U |\nabla\bar  p^{(k)}(x,t)|^{2-a} dxdt.
\end{align*}
Assume $k\ge \|\bar  p_0\|_{L^\infty}$, then $\bar p_0^{(k)}(x) =0$ a.e..
Multiplying the first equation of \eqref{eqgamma} by $\bar p^{(k)}$ and integrating 
the resultant over the domain $U$, we obtain
\begin{align*}
& \frac 12 \ddt \int_U |\bar p^{(k)}|^2 dx + \int_U K(|\nabla \bar p^{(k)}|)|\nabla \bar p^{(k)}|^2 dx
=\int_\Gamma \psi \bar p^{(k)} d\sigma  +\frac{1}{|U|}\int_\Gamma \psi(x,t) d\sigma\int_U \bar{p}^{(k)} dx.
\end{align*}
We bound $|\psi(x,t)|$ and $|\int_\Gamma \psi(x,t) d\sigma|$ by $C\|\psi(t)\|_{L^\infty}$, and apply the trace theorem to the boundary integral, and then H\"older's inequality to have
\begin{align*}
& \frac 12 \ddt \int_U |\bar p^{(k)}|^2 dx + \int_U K(|\nabla \bar p^{(k)}|)|\nabla \bar p^{(k)}|^2 dx
\le C \|\psi(t)\|_{L^\infty} \int_U \Big (|\bar p^{(k)}|+|\nabla \bar p^{(k)}| \Big ) dx \\
&\le C \|\psi(t)\|_{L^\infty} \left \{  \Big (\int_U |\bar p^{(k)}|^2dx\Big )^{1/2} |S_k(t)|^{1/2}+ \Big (\int_{S_k(t)} |\nabla \bar p^{(k)}|^{2-a}dx\Big )^\frac 1{2-a}|S_k(t)|^\frac{1-a}{2-a}\right \}.
\end{align*}
Using \eqref{Kestn} to compare $|\nabla \bar p^{(k)}|^{2-a}$ with $K(|\nabla \bar p^{(k)}|)|\nabla \bar p^{(k)}|^2  +1$, we obtain
\begin{align*}
& \frac 12 \ddt \int_U |\bar p^{(k)}|^2 dx + \int_U K(|\nabla \bar p^{(k)}|)|\nabla \bar p^{(k)}|^2 dx
\le C \|\psi(t)\|_{L^\infty} \Big \{  \Big (\int_U |\bar p^{(k)}|^2dx\Big )^{1/2} |S_k(t)|^{1/2}  \\
& \quad \quad + \Big (\int_{S_k(t)}K(|\nabla \bar p^{(k)}|)|\nabla \bar p^{(k)}|^2dx\Big )^\frac 1{2-a}|S_k(t)|^\frac{1-a}{2-a} + |S_k(t)|\Big \}.
\end{align*}
Let $\varepsilon>0$. By Young's inequality, we have
\begin{align*}
& \frac 12 \ddt \int_U |\bar p^{(k)}|^2 dx + \frac 12 \int_U K(|\nabla \bar p^{(k)}|)|\nabla \bar p^{(k)}|^2 dx
\le \varep  \int_U |\bar p^{(k)}|^2 dx  
\\
&\quad \quad+  C \Big\{  \varep^{-1} \|\psi(t)\|_{L^\infty}^2 +\|\psi(t)\|_{L^\infty}^\frac{2-a}{1-a} 
+\|\psi(t)\|_{L^\infty} \Big\}|S_k(t)|.
\end{align*}
Note that $p^{(k)}(0)=0$. For each $t \in [0,T)$, integrating the previous estimate on $(0, t)$, and the taking the supremum in $t$ yield
\begin{align*}
&\sup_{[0,T]} \int_U |\bar p^{(k)}|^2 dx + \int_0^T\int_U K(|\nabla \bar p^{(k)}|)|\nabla \bar p^{(k)}|^2 dxdt\\
&\le 4\varep \int_0^T \int_U |\bar p^{(k)}|^2 dx +C\int_0^T \Big( \varep^{-1} \|\psi(t)\|_{L^\infty}^2  
 + \|\psi(t)\|_{L^\infty}^\frac{2-a}{1-a} +\|\psi(t)\|_{L^\infty}\Big)|S_k(t)| dt\\
&\le 4\varep T \sup_{[0, T]} \int_U |\bar p^{(k)}|^2 dx +C\int_0^T \Big( \varep^{-1} \|\psi(t)\|_{L^\infty}^2  + \|\psi(t)\|_{L^\infty}^\frac{2-a}{1-a}+\|\psi(t)\|_{L^\infty}\Big)|S_k(t)| dt.
\end{align*}
By taking $\varep=1/(8T)$, we obtain
\begin{align*}
&\sup_{[0,T]} \int_U |\bar p^{(k)}|^2 dx + \int_0^T\int_U K(|\nabla \bar p^{(k)}|)|\nabla \bar p^{(k)}|^2 dxdt\\
&\le C \int_0^T\left[ T \|\psi(t)\|_{L^\infty}^2 + \|\psi(t)\|_{L^\infty}^\frac{2-a}{1-a} +  \|\psi(t)\|_{L^\infty} \right]|S_k(t)| dt\\
&\le C \int_0^T\left[ T \|\psi(t)\|_{L^\infty}^2 + \|\psi(t)\|_{L^\infty}^\frac{2-a}{1-a} +  1 \right]|S_k(t)| dt.
\end{align*}
Again, using \eqref{Kestn} to compare $K(|\nabla \bar p^{(k)}|)|\nabla \bar p^{(k)}|^2$ with $|\nabla \bar p^{(k)}|^{2-a}$ and using Young's inequality, 
one gets 
\begin{equation}\label{p-DG.cond}
F_k \eqdef \sup_{[0,T]} \int_U |\bar p^{(k)}|^2 dx + \int_0^T \int_U |\nabla\bar  p^{(k)}|^{2-a} dxdt \leq C_0 \alpha_T \sigma_k, 
\end{equation}
where $C_0$ is a constant depending on $a$, $n$ and $U$, and  $\alpha_T = T\|\psi\|_{L^\infty}^2 + \|\psi(t)\|_{L^\infty}^\frac{2-a}{1-a} +  1$. 

We  now iterate \eqref{p-DG.cond} to derive our desired estimate. Under the Strict Degree Condition \eqref{strictdeg}, ${\muex_1}>2$, hence, the parabolic Sobolev embedding in Lemma~\ref{ParSob-1} with $r=2-a$ implies 
\beq \label{Fk.co}
\| \bar p^{(k)}\|_{L^{\muex_1}(Q_T)}\le C(1+ T)^{1/{\muex_1}} (F_k^{1/2}+F_k^{1/(2-a)}).
\eeq

Next, let $M_0 \geq \|\bar p(\cdot, 0)\|_{L^\infty}$ be a fixed number which will be determined. 
For each $i \in \mathbb{N} \cup \{0\}$, let $k_i=M_0(2-2^{-i})$. 
Then $k_i$ is increasing in $i$, and therefore $S_{k_i}$,  $\sigma_{k_i}$ are decreasing. 
Also, note that $k_i \geq M_0\ge \|\bar p_0\|_{L^\infty}$ for all $i\ge 0$.  
We now define $\mathcal Q_k=\{ (x,t)\in U\times(0,T): \bar p(x,t)>k\}$. 

From \eqref{Fk.co} and the fact that
\beq \label{pk.co}
\|\bar  p^{(k_i)}\|_{L^{\muex_1}(Q_T)} \ge \|\bar  p^{(k_i)}\|_{L^{\muex_1}(\mathcal Q_{k_{i+1}})}\ge \norm{k_{i+1} -k_i }_{L^{\muex_1}(\mathcal Q_{k_{i+1}})}    \ge  (k_{i+1}-k_{i})\sigma_{k_{i+1}}^{1/{\muex_1}},
\eeq
it follows
\[
\sigma_{k_{i+1}}^{1/{\muex_1}}\le \frac{C(1+T)^{1/{\muex_1}}}{ k_{i+1}-k_{i}} \Big[F_{k_i}^{1/2}+F_{k_i}^{1/(2-a)}\Big].
\]
This together with \eqref{p-DG.cond} yield  
\[
\sigma_{k_{i+1}}^{1/{\muex_1}}\le C (1+T)^{1/{\muex_1}}\frac{ 2^i }{M_0}
\Big[ (\alpha_T \sigma_{k_i})^{1/2}+ (\alpha_T \sigma_{k_i})^{1/(2-a)}\Big].
\]
Equivalently,
\[
\sigma_{k_{i+1}}\le \frac{ C (1+T) \alpha_T^{{\muex_1}/(2-a)} }{M_0^{\muex_1}}2^{{\muex_1}i} 
 ( \sigma_{k_i}^{{\muex_1}/2}+ \sigma_{k_i}^{{\muex_1}/(2-a)}).
\]
Let us denote $Y_i = \sigma_{k_i}$, $D =  \frac{C (1+T) \alpha_T^{{\muex_1}/(2-a)} }{M_0^{\muex_1}}$, ${\theta_1} = {\muex_1}/(2-a) -1 >0$ and ${\theta_2} = {\muex_1}/2 - 1 >0 $. We now obtain   
\[  Y_{i+1} \leq D 2^{{\muex_1}i} [Y_i^{1+{\theta_1}} + Y_i^{1+{\theta_2}}].
\]
Note that $k_0=M_0\ge \|\bar p_0\|_{L^\infty}$ and 
\[ Y_0=\sigma_{M_0}\le |Q_T|= |U|T\le |U|(T+1). \]
To apply \eqref{Y0mcond} with $m=2$ in Lemma \ref{multiseq}, we choose $M_0$ sufficiently large such that  
$$Y_0 \leq C \min\{D^{-1/{\theta_2}}, D^{-1/{\theta_1}}\},\quad \text{or sufficiently,} $$
\begin{align*}
|U|(T+1) \le C \min\Big\{  ((1+T) \alpha_T^{{\muex_1}/(2-a)} M_0^{-{\muex_1}})^{-1/{\theta_2}}, ((1+T) \alpha_T^{{\muex_1}/(2-a)} M_0^{-{\muex_1}})^{-1/{\theta_1}}   \Big \}.
\end{align*}
It suffices to have
\begin{equation*} \label{M-p.cond}
\begin{split}
& M_0\ge C (1+T)^\frac{1+{\theta_2}}{{\muex_1}} \alpha_T^{1/(2-a)} = C (1+T)^\frac{1}{2} \alpha_T^\frac1{2-a}\quad  
\text{and} \\ 
& M_0\ge C (1+T)^\frac{1+{\theta_1}}{{\muex_1}} \alpha_T^{1/(2-a)} = C (1+T)^\frac{1}{2-a} \alpha_T^\frac1{2-a}.
\end{split}
\end{equation*}
These and the initial requirement $M_0 \geq \|\bar{p}_0\|_{L^\infty}$ are satisfied if 
we choose  
\[
M_0\ge C \Big\{ \|\bar p_0\|_{L^\infty} + (1+T)^\frac1{2-a} \alpha_T^\frac{1}{2-a} \Big\}.
\] 
Note that
$ (1+T)\alpha_T
\le C  (T+1)^2 (\sup_{[0,T]}\|\psi\|_{L^\infty}+1)^\frac{2-a}{1-a}$.
Therefore we finally select
\[
M_0=C_1 \Big\{ \|\bar p_0\|_{L^\infty} + (1+T)^\frac2{2-a} (\sup_{[0,T]}\|\psi\|_{L^\infty}+1)^\frac{1}{1-a} \Big\}.
\] 
Now, applying \eqref{Y0mcond} in Lemma \ref{multiseq}, we have $\sigma_{2M_0}=\displaystyle{\lim_{i \rightarrow \infty}Y_i} = 0$.
Thus, $\bar p(x,t)\le 2M_0$ in $Q_T$. Note that we used the continuity of $\bar p(x,t)$ on $\bar Q_T$. 
By using the same argument with $p$ replaced by $-p$ and $\psi$ replaced by $-\psi$, one can show that $-\bar p \le 2M_0$  in $Q_T$. 
Therefore, \eqref{pbaronly} follows and the proof is complete.
\end{proof}

We emphasize that the $L^\infty$-estimate of $\bar{p}$ in Proposition \ref{p-T.est} depends on $L^\infty$-norm of the initial data and on $T$. In the next proposition, we give a $L^\infty$-estimate which removes the former dependence. The result will be used flexibly to derive the time-uniform $L^\infty$-estimate of $\bar{p}$.

\begin{proposition} \label{local-L-infty} \asdc.  
Then, there is a constant $C>0 $ such that for any $T_0\ge 0$, $T>0$, $\delta\in (0,1]$ and $\theta\in (0,1)$, the following inequality holds
\begin{multline}\label{mainest-1}
\sup_{[T_0+\theta T,T_0+T]} \|\bar p\|_{L^\infty}\le C\Big\{ \sqrt{\alphazero}
+ (T+1)^\frac{n}{4-(n+2)a} \Big(1+\frac 1 { \delta^a \theta T}\Big)^\frac{n+2}{4-(n+2)a} \\
\cdot \Big(\|\bar p\|_{L^2(U\times (T_0,T_0+T))} + \|\bar p\|_{L^2(U\times (T_0,T_0+T))}^\frac{4}{4-(n+2)a}\Big) \Big\},
\end{multline}
where   
\beqs
\alphazero=\alphazero_{T_0,T} \eqdef \delta^{2-a}+ T\delta^{-a} \sup_{[T_0,T_0+T]}\|\psi\|_{L^\infty}^2 + \delta^{-\frac {a(2-a)}{1-a}} \sup_{[T_0,T_0+T]} \|\psi\|_{L^\infty}^\frac{2-a}{1-a}.
\eeqs
\end{proposition}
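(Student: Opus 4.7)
The strategy is a De~Giorgi iteration in the spirit of Proposition \ref{p-T.est}, but modified by a smooth time cutoff $\eta_i(t)$ that vanishes at the left endpoint of a shrinking sequence of time cylinders. The vanishing of $\eta_i$ at its left endpoint kills the initial-data term in the energy identity, hence removes the $\|\bar p_0\|_{L^\infty}$-dependence present in \eqref{pbaronly}; the price is an $\eta_i'$-contribution whose careful handling produces the $\|\bar p\|_{L^2(U\times(T_0,T_0+T))}$ factor on the right-hand side of \eqref{mainest-1}. The boundary flux $\psi$ is treated as in Proposition \ref{p-T.est} (trace theorem, H\"older, Young's inequality), combined with the $\delta$-form of \eqref{Km}, namely $\xi^{2-a}\le C\delta^{-a}K(\xi)\xi^2+\delta^{2-a}$: the gradient piece is absorbed into the LHS and the residues reproduce exactly the three summands of $\alphazero$ once $\varepsilon\sim\delta^a$ is chosen in Young's inequality.

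\textbf{Setup.} Fix $M_0>0$ to be determined. For $i\ge 0$ let $k_i=M_0(1-2^{-i-1})$, $T_i=T_0+\theta T(1-2^{-i})$, $Q_i=U\times(T_i,T_0+T)$, and choose $\eta_i\in C^1(\R)$ with $\eta_i\equiv 0$ on $(-\infty,T_i]$, $\eta_i\equiv 1$ on $[T_{i+1},\infty)$, and $|\eta_i'|\le C\,2^i/(\theta T)$; put $\sigma_i=|Q_i\cap\{\bar p>k_i\}|$. Testing \eqref{eqgamma} against $\bar p^{(k_i)}\eta_i^2$ and integrating on $U\times(T_i,t)$ for $t\in[T_{i+1},T_0+T]$, the initial contribution vanishes; after the absorption step sketched above one obtains, for
\[
F_i := \sup_{[T_{i+1},T_0+T]}\int_U (\bar p^{(k_i)})^2\, dx + \int\!\!\int_{Q_{i+1}}|\nabla \bar p^{(k_i)}|^{2-a}\, dx\, dt,
\]
an estimate of the schematic form $F_i \le C\alphazero\,\sigma_i + E_i$, where $E_i$ is the $\eta_i'$-contribution.

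\textbf{Handling $E_i$ and closing the iteration.} The delicate point is that $E_i$ does not shrink with $i$ on its own. I would control it via the pointwise bound $\bar p^{(k_i)}\le\bar p^{(k_{i-1})}$ and the Chebyshev-type inequality $(\bar p^{(k_i)})^2\chi_{\{\bar p>k_i\}}\le (k_i-k_{i-1})^{-(\muex_1-2)}(\bar p^{(k_{i-1})})^{\muex_1}$, together with the parabolic Sobolev embedding (Lemma \ref{ParSob-1} with $r=2-a$) giving $\|\bar p^{(k_{i-1})}\|_{L^{\muex_1}(Q_{i-1})}\le C(1+T)^{1/\muex_1}(F_{i-1}^{1/2}+F_{i-1}^{1/(2-a)})$; this converts $E_i$ into a decaying function of $F_{i-1}$ together with a power of $\|\bar p\|_{L^2(Q_0)}$. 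Combining with $\sigma_{i+1}\le (k_{i+1}-k_i)^{-\muex_1}\|\bar p^{(k_i)}\|_{L^{\muex_1}(Q_{i+1})}^{\muex_1}$ yields a recurrence of the form $Y_{i+1}\le C\, b^i(Y_i^{1+\theta_1}+Y_i^{1+\theta_2})$ with $\theta_1=\muex_1/(2-a)-1$ and $\theta_2=\muex_1/2-1$, into which Lemma \ref{multiseq} feeds directly. Requiring $M_0$ to dominate both $\sqrt{\alphazero}$ and the $L^2(Q_0)$-quantity precisely as in \eqref{mainest-1}---the exponents $n/(4-(n+2)a)=1/(\muex_1-2)$ and $4/(4-(n+2)a)$ arise here from matching the two powers in the recurrence---forces $Y_i\to 0$, hence $\bar p\le M_0$ on $U\times(T_0+\theta T,T_0+T)$. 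A symmetric argument applied to $-\bar p$ completes the proof. The chief obstacle is the control of $E_i$ just described; matching the exact $\delta$-powers in $\alphazero$ is routine but requires careful coordination of the Young-inequality parameters with the $(1+\delta)^a$-factor from \eqref{Km}.
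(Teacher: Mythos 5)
Your overall strategy (De~Giorgi iteration with a shrinking time cutoff to remove the $\|\bar p_0\|_{L^\infty}$-dependence, combined with the $\delta$-form of \eqref{Km} to produce $\alphazero$) is the right one and matches the paper's intent. But the concrete iteration you propose would not produce the specific exponents in \eqref{mainest-1}, and there is a misconception underlying it.

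The crucial choice the paper makes, and which you miss, is to iterate not on the level-set measure $\sigma_i$ but on $Y_i=\|\bar p^{(k_i)}\|_{L^2(A_i)}$, with $k_0=0$ so that $Y_0\le\|\bar p\|_{L^2(Q_T)}$ directly. After Chebyshev and the parabolic Sobolev embedding this yields $Y_{i+1}\le D\,4^i(Y_i^{1+\mu_2}+Y_i^{1+\mu_3})$ with $\mu_2=1-2/\mu_1$, $\mu_3=2/(2-a)-2/\mu_1$ and $D\propto M_0^{-\mu_2}$. Because the power of $M_0$ in $D$ is exactly $-\mu_2$, the condition $Y_0\lesssim D^{-1/\mu_2}$ becomes \emph{linear} in $\|\bar p\|_{L^2}$, and the $\mu_3$-branch contributes the exponent $\mu_3/\mu_2=4/(4-(n+2)a)$. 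Your iteration on $\sigma_i$ (or $F_i$) with the Proposition~\ref{p-T.est}-exponents $\theta_1=\mu_1/(2-a)-1$, $\theta_2=\mu_1/2-1$ produces $D\propto M_0^{-\mu_1}$ and, combining with $\sigma_0\lesssim M_0^{-2}\|\bar p\|_{L^2}^2$, leads to a condition of the form $M_0\gtrsim(\cdots)\|\bar p\|_{L^2}^{(\mu_1-2)/(2(\mu_1-1))}$. That exponent is neither $1$ nor $4/(4-(n+2)a)$, so this route proves a genuinely different (non-equivalent) inequality, not \eqref{mainest-1}.

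A secondary issue: your claim that ``$E_i$ does not shrink with $i$ on its own'' is incorrect, and the elaborate mechanism you invoke (Chebyshev at power $\mu_1-2$ linking $E_i$ to $F_{i-1}$) is unnecessary and creates a mixed-order recurrence that does not feed cleanly into Lemma~\ref{multiseq}. With the correct iterate $Y_i$, the $\zeta_t$-contribution is bounded directly by $C\,2^i(\theta T)^{-1}\delta^{-a}\|\bar p^{(k_{i+1})}\|_{L^2(A_{i+1,i})}^2\le C\,2^i(\theta T)^{-1}\delta^{-a}Y_i^2$, which decays precisely as fast as the level-set term $\alphazero|A_{i+1,i}|\le C\,4^iM_0^{-2}\alphazero\,Y_i^2$, giving the single constant $C_\delta=(\theta T)^{-1}\delta^{-a}+\alphazero M_0^{-2}$ that the paper uses. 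In short: change the iteration variable to $\|\bar p^{(k_i)}\|_{L^2(A_i)}$, and the proof goes through; as written, the exponent bookkeeping does not close.
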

\begin{proof} 
Without loss of generality, we assume $T_0=0$. Again, we use De Giorgi's iteration technique. But, unlike in the previous one, we will iterate using the $L^2$-norm of $\bar{p}$. The proof is therefore significantly different from that of Proposition \ref{p-T.est} and we give it here in full details. 

For each $k\ge 0$, let $\bar p^{(k)}$ be as 
in the proof of the previous theorem. Also, let $\zeta=\zeta(t)$ be a non-negative cut-off function on $[0,T]$ 
with $\zeta(0)=0$. Multiplying \eqref{eqgamma} by $\bar p^{(k)} \zeta$ and integrating over $U$,  we have
\beq\label{prediff}
 \frac12 \int_U \frac{\partial |\bar p^{(k)}|^2}{\partial t} \zeta dx +\int_U K(|\nabla\bar  p^{(k)}|) |\nabla\bar  p^{(k)}|^2 \zeta dx
=\int_\Gamma \psi \bar p^{(k)} \zeta d\sigma +  \frac{1}{|U|}\int_\Gamma \psi(x,t) d\sigma\int_U \bar p^{(k)} \zeta dx.
\eeq

Denote by $\chi_k(x,t)$ the characteristic function of the set $\{ (x,t)\in \in U\times (0,T): \bar p^{(k)}(x,t) > 0\}$. 
Then, using \eqref{Km} in Lemma \ref{lem21} to estimate $K(|\nabla\bar  p^{(k)}|) |\nabla\bar  p^{(k)}|^2\ge C\delta^a |\nabla\bar  p^{(k)}|^{2-a} - C\delta^2$,  and estimating the right-hand side of \eqref{prediff} the same way as in the proof of Proposition \ref{p-T.est}, we have
\begin{align*}
&  \frac12 \ddt \int_U |\bar p^{(k)}|^2 \zeta dx  -  \frac12 \int_U |\bar p^{(k)}|^2 \zeta_t dx +C\delta^a \int_U |\nabla \bar p^{(k)}|^{2-a} \zeta dx -C\delta^2 \int_U \chi_k \zeta dx\\
& \le C \|\psi(t)\|_{L^\infty} \int_U  (|\bar p^{(k)}| \zeta +|\nabla \bar p^{(k)}|\zeta)  dx.
\end{align*}
Let $\varepsilon >0$. Then it follows from Young's inequality that 
\begin{align*}
&  \frac12 \ddt \int_U |\bar p^{(k)}|^{2} \zeta dx   +C_1\delta^a \int_U |\nabla \bar p^{(k)}|^{2-a} \zeta dx \\
& \le \frac12 \int_U |\bar p^{(k)}|^2 \zeta_t dx + C\delta^2\int_U \chi_k(t)\zeta dx 
+ \varep \int_U |\bar p^{(k)}|^2 \zeta  dx + C \varep^{-1}  \|\psi\|_{L^\infty}^2 \int_U \chi_k \zeta dx\\
&\quad + \frac{ C_1 \delta^{a} }2 \int_U |\nabla \bar p^{(k)}|^{2-a} \zeta  dx
+ C \delta^{-\frac{a}{1-a}} \|\psi(t)\|_{L^\infty}^\frac{2-a}{1-a} \int_U \chi_k \zeta dx.
\end{align*}
Now, integrating this inequality on $(0, t)$ and taking supremum of the resultant on $[0,T]$, we obtain
\begin{align*}
& \frac{1}{2} \sup_{[0,T]} \int_U |\bar p^{(k)}|^2 \zeta dx   +\frac{C_1\delta^a}2  \int_0^T\int_U |\nabla \bar p^{(k)}|^{2-a} \zeta dxdt \\
& \le  \int_0^T\int_U |\bar p^{(k)}|^2 \zeta_t dxdt
+ 2 \varep T \sup_{[0,T]} \int_U |\bar p^{(k)}|^2 \zeta  dx \\
&\quad \quad +C \Big[ \delta^2 + \varep^{-1} \|\psi\|_{L_t^\infty(0,T; L_x^\infty)}^2 +\delta^{-\frac a{1-a}} \|\psi\|_{L_t^\infty(0,T; L_x^\infty)}^\frac{2-a}{1-a}\Big] \int_0^T\int_U  \chi_k\zeta dxdt.
\end{align*}
By choosing $\varepsilon=1/(8T)$, we obtain
\beq\label{prena}
\begin{aligned}
& \sup_{[0,T]} \int_U |\bar p^{(k)}|^2 \zeta dx   + \int_0^T\int_U |\nabla \bar p^{(k)}|^{2-a} \zeta dx dt\\
&\leq   C\delta^{-a}\int_0^T\int_U |\bar p^{(k)}|^2 \zeta_t dx dt  +C\alphazero \int_0^T\int_U  \chi_k\zeta dx dt ,
\end{aligned}
\eeq
where
$\alphazero=\delta^{2-a}+ T\delta^{-a} \sup_{[0,T]} \|\psi\|_{L^\infty}^2 + \delta^{-\frac{a(2-a)}{1-a}}  \sup_{[0,T]} \|\psi\|_{L^\infty}^\frac{2-a}{1-a}.$
We will iterate this relation with $k=k_{i+1}$ and $\zeta=\zeta_i$. The choice of $k_i$ and $\zeta_i(t)$ is as follows. 
Let  $t_i =\theta T( 1- 2^{-i})$ for all $i\ge 0$, and  let $\zeta_i$ be a piecewise linear function with 
$\zeta_i(t)=0$ for $t\le t_i$, $\zeta_i(t)=1$ for $t\ge t_{i+1}$ and 
\[|\zeta_{it}(t)|\le \frac 1{t_{i+1}-t_i}= \frac{ 2^{i+1} }{\theta T} \quad\forall\ t\in [0,T].\]
Next, let $M_0$ be a fixed positive number which will be determined, and let $k_i= M_0(1-2^{-i})$, for $i\ge 0$.
Note that, because of our choices,  $k_0=0$ and $k_{i+1}-k_i = 2^{-i-1}M_0$ and  $t_0=0<t_1<\ldots<\theta T$.
We also denote
$A_{i,j} =\{(x,t):u(x,t)>k_i,\ t\in(t_j,T)\}$ and $A_i=A_{i,i}$.
Then, by applying \eqref{prena} with $k=k_{i+1}$ and $\zeta=\zeta_i$, we obtain
\beq\label{pren1}
\begin{aligned}
&  \sup_{[0,T]} \int_U |\bar p^{(k_{i+1})}|^2 \zeta_i dx   + \int_0^T\int_U |\nabla \bar p^{(k_{i+1})}|^{2-a} \zeta_i dx dt \\
& \le  \delta^{-a}\int_0^T\int_U |\bar p^{(k_{i+1})}|^2 (\zeta_{i})_t dxdt
 +C\alphazero \Big (\int_0^T \int_U \chi_{k_{i+1}} \zeta_i dxdt\Big ).
\end{aligned}
\eeq
Now, we define
$F_{i} = \sup_{[t_{i+1},T]} \int_U |\bar p^{(k_{i+1})}|^{2} \zeta_i  dx   
+ \int_{t_{i}}^T\int_U |\nabla \bar p^{(k_{i+1})}|^{2-a} \zeta_i  dx dt. $
Then it follows from \eqref{pren1} that
\beqs
F_{i} 
 \le   \delta^{-a} \int_{t_i}^{t_{i+1}} \int_U |\bar p^{(k_{i+1})}|^2 (\zeta_i)_t dxdt
+C \alphazero \Big (\int_{t_{i+1}}^T\int_U \chi_{k_{i+1}} dx dt\Big ).
\eeqs
Therefore,
\beq \label{firstFn}
F_{i}\le C  2^i (\theta T)^{-1} \delta^{-a} \|\bar{p}^{(k_{i+1})}\|_{L^2(A_{i+1,i})}^2 +C\alphazero |A_{i+1,i}|.
\eeq
Since
$ \|\bar  p^{(k_i)}\|_{L^2(A_i)}\ge  \|\bar  p^{(k_{i})}\|_{L^2(A_{i+1,i})}\ge (k_{i+1}-k_i) |A_{i+1,i}|^{1/2}$, 
we see that 
\begin{equation} \label{An.n}
  |A_{i+1,i}| \le (k_{i+1}-k_i)^{-2} \|\bar  p^{(k_i)}\|_{L^2(A_i)}^{2} = 4^{i} M_0^{-2}\|\bar  p^{(k_i)}\|_{L^2(A_i)}^{2}.
\end{equation}
From \eqref{firstFn} and \eqref{An.n}, we get
\beq\label{Fn}
F_{i} \le  C  \delta^{-a} (\theta T)^{-1} 2^i  \|\bar p^{(k_{i+1})}\|_{L^2(A_{i+1,i})}^2
 +\alphazero 4^{i} M_0^{-2}\| \bar p^{(k_i)}\|_{L^2(A_i)}^{2}
\le   C C_\delta 4^{i} \|\bar  p^{(k_i)}\|_{L^2(A_i)}^{2},
\eeq
where    
$C_\delta=\delta^{-a}(\theta T)^{-1}+\alphazero M_0^{-2}.$
Under the Strict Degree Condition, $2<{\muex_1}<(2-a)^*$, then applying Lemma~\ref{ParSob-1} with $r=2-a$ yields 
\begin{align*} 
&  \|\bar  p^{(k_{i+1})}\|_{L^{\mu_1}(A_{i+1,i+1})} =\Big(\int_{t_{i+1}}^T \int_U |\bar  p^{(k_{i+1})}|^{\muex_1} dx dt  \Big)^{1/{\muex_1}}\\
                                             &\le C(1 +(T-t_{i+1})^{1/{\muex_1}} )\Big[ \sup_{[t_{i+1}, T]} \norm{\bar  p^{(k_{i+1})} }_{L^2(U)} +  \Big(\int_{t_{i+1}}^T \int_U |\nabla \bar  p^{(k_{i+1})} |^{2-a} dx)dt\Big)^{1/(2-a)}\Big]  \\
                                            &\le C(1 +T)^{1/{\muex_1}} \Big[ \sup_{[t_{i+1}, T]} \norm{\bar  p^{(k_{i+1})}\zeta_i }_{L^2(U)} +  \Big(\int_{t_{i+1}}^T \int_U |\nabla \bar  p^{(k_{i+1})} |^{2-a} \zeta_i dxdt\Big)^{1/(2-a)}\Big].
\end{align*}
Since $t_i\le t_{i+1}$, it follows that
\begin{equation} \label{Lr-p}
 \| \bar p^{(k_{i+1})}\|_{L^{\muex_1}(A_{i+1,i+1})}
\le C (T+1)^{1/{\muex_1}} (F_{i}^{1/2}+F_{i}^{1/(2-a)}).
\end{equation} 
Then, it follows from H\"{o}lder's inequality and \eqref{Lr-p} that
\begin{align*}
& \|\bar  p^{(k_{i+1})}\|_{L^2(A_{i+1,i+1})}
 \le \|\bar  p^{(k_{i+1})}\|_{L^{\muex_1}(A_{i+1,i+1})} |A_{i+1,i+1}|^{1/2-1/{\muex_1}}\\
& \le \|\bar  p^{(k_{i+1})}\|_{L^{\muex_1}(A_{i+1,i+1})} |A_{i+1,i}|^{1/2-1/{\muex_1}}
 \le C (T+1)^{1/{\muex_1}} (F_{i}^{1/2}+F_{i}^{1/(2-a)})  |A_{i+1,i}|^{1/2-1/{\muex_1}}.
\end{align*}
This and \eqref{An.n}, \eqref{Fn} give
\begin{align*}
& \|\bar  p^{(k_{i+1})}\|_{L^2(A_{i+1,i+1})}\\
&\le 4^i C   (T+1)^{1/{\muex_1}} \Big ( C_\delta^{1/2}\|\bar  p^{(k_i)}\|_{L^2(A_i)}+C_\delta^{1/(2-a)}\|\bar  p^{(k_i)}\|_{L^2(A_i)}^{2/(2-a)}\Big ) M_0^{-1+2/{\muex_1}} \| p^{(k_i)}\|_{L^2(A_i)}^{1-2/{\muex_1}}\\
&\le 4^i C  (T+1)^{1/{\muex_1}} M_0^{-1+2/{\muex_1}}  \Big (C_\delta^{1/2} \|\bar  p^{(k_i)}\|_{L^2(A_i)}^{2-2/{\muex_1}}+ C_\delta^{1/(2-a)}\|\bar  p^{(k_i)}\|_{L^2(A_i)}^{1-2/{\muex_1}+2/(2-a)}\Big ).
\end{align*}
Now, let $Y_i=\|\bar  p^{(k_i)}\|_{L^2(A_i)}$, we get  
\begin{equation} \label{Yn-uni}
Y_{i+1}\le D 4^i \Big (Y_i^{2-2/{\muex_1}}+Y_i^{1-2/{\muex_1}+2/(2-a)} \Big )= D 4^i \Big (Y_i^{1+\muex_2}+Y_i^{1+\muex_3} \Big ),
\end{equation}
where  $D=C  (T+1)^{1/{\muex_1}}  \big( C_\delta^{1/2}  + C_\delta^{1/(2-a)} \big) M_0^{-\muex_2}$, and   
\beq\label{muzero}
\muex_2= 1-\frac 2 {\muex_1} =\frac{4-a(n+2)}{(2-a)(n+2)},\quad \muex_3 = 2\Big (\frac{1}{2-a} - \frac{1}{{\muex_1}} \Big)=\frac{4}{(2-a)(n+2)}.
\eeq  

To be able to apply  \eqref{Y0mcond} with $m=2$ in Lemma~\ref{multiseq}, we will choose $M_0$ sufficiently large such that
\beq\label{condY1} 
Y_0  \leq C\min\{D^{-1/\muex_2}, D^{-1/\muex_3} \}.
\eeq
Since $Y_0\le \| \bar{p}\|_{L^2(U\times (0,T))}$, it suffices to choose $M_0$  such that  
\[ 
M_0 \ge  C (T+1)^\frac1{{\muex_1}\muex_2} (C_\delta^\frac12+ C_\delta^\frac 1{2-a} )^\frac1{\muex_2}  \|\bar{p}\|_{L^2(U\times (0,T))}, 
M_0\ge C (T+1)^\frac1{{\muex_1}\muex_2}  (C_\delta^\frac12+ C_\delta^\frac 1{2-a} )^\frac1{\muex_2}  \|\bar{p}\|_{L^2(U\times (0,T))}^{\muex_3/\muex_2}.
\]
Observe that, if $M_0\ge \sqrt {\alphazero}$  then  $C_\delta\le 1 + \frac 1 {\delta^a \theta T }\in (1, \infty) $. Thus, \eqref{condY1} holds for
\[
M_0=  C\Big\{ \sqrt{\alphazero}+ (T+1)^\frac1{{\muex_1}\muex_2} \big(1+\frac 1 { \delta^a \theta T}\big)^\frac1{(2-a)\muex_2}\Big( \|\bar p\|_{L^2(U\times (0,T))} +   \|\bar p\|_{L^2(U\times (0,T))}^{\muex_3/\muex_2} \Big)\Big\}.
\]
With this choice of $M_0$, applying \eqref{Y0mcond} with $m=2$ in Lemma~\ref{multiseq} to \eqref{Yn-uni}, we obtain 
$\displaystyle{\lim_{i\to\infty}}Y_i=0$. Consequently,
$
\int_{\theta T}^T\int_U |\bar p^{(M_0)}|^2 dxdt=0.
$
This implies $\bar p(x,t)\le M_0$ in $U\times (\theta T,T)$.  
To prove the lower bound $\bar{p}\ge -M_0$, we replace $p$ by $-p$ and $\psi$ by $-\psi$ and 
use the same argument.  We conclude
\[
|\bar p(x,t)|\le M_0 \quad  \text{in } U\times (\theta T,T). 
\]
The proof of \eqref{mainest-1} is complete. 
\end{proof}

Our next goal is to combine Propositions \ref{p-T.est} and \ref{local-L-infty} with $L^2$-estimates of $\bar{p}$ in \cite{HI2}
to derive the uniform (in time) $L^\infty$-estimate of $\bar{p}$. For our purpose, we need to introduce some notation. 
Firstly, let us define two functions $f(t)$ and $\tilde f(t)$ for $t\ge 0$ by
\beq\label{b}
\begin{split}
f(t) & =\|\psi(t)\|_{L^\infty}^2+\|\psi(t)\|_{L^\infty}^{\frac{2-a}{1-a}} \quad \text{and}\quad 
\tilde f(t)=\|\psi_t(t)\|_{L^\infty}^2+\|\psi_t(t)\|_{L^\infty}^{\frac{2-a}{1-a}}.
\end{split}
\eeq
Assume throughout that $\psi(\cdot,t)$ and $\psi_t(\cdot,t)$ belong $C([0,\infty),L^\infty(\Gamma))$, hence $f(t)$ and $\tilde f(t)$ belong to $C([0,\infty))$.
Whenever $f'(t)$ is mentioned, we implicitly assume that $f\in C^1((0,\infty))$. 
Let $M_f(t)$ be a continuous, increasing majorant of $f(t)$ on $[0,\infty)$. 
Let
\beq\label{Abeta}  A=\limsup_{t\to\infty} f(t)\quad \text{and}\quad \beta=\limsup_{t\to\infty} [f'(t)]^-.\eeq
Again, whenever $\beta$ is used in  subsequently statements, 
it is understood that $f(t)\in C^1((0,\infty))$.

Also, for a function $u(x,t)$ defined on $U\times [0,\infty)$ we denote
\beq \label{J.def}
 J_H[u](t)=\int_U H(|\nabla u(x,t)|)dx,
\eeq
where $H$ is the function defined by \eqref{Hxi}. Note from \eqref{Hcompare} that
\beq\label{JHcom}
c_1\int_U|\nabla u(x,t)|^{2-a} dx - c_3 \le J_H[u](t)\le c_2\int_U|\nabla u(x,t)|^{2-a} dx,\quad c_3>0.
\eeq

We now recall relevant estimates from \cite{HI2}.

\begin{theorem}[cf. \cite{HI2}, Theorem 4.3]\label{p-estimate} The following  estimates hold 
\begin{itemize}
\item[\textup{(i)}] For $t\ge 0$,
\beq\label{p-bar-bound1}
\norm{\bar p(t)}_{L^2}^2 +\int_0^t J_H[p](\tau)d\tau\le \norm{\bar p_0}_{L^2}^2 +C\int_0^t f(\tau)d\tau.
\eeq
\item[\textup{(ii)}] Assume that the Degree Condition holds,  then 
\beq\label{p-bar-bound}
\norm{\bar p(t)}_{L^2}^2 \le  \norm{\bar p_0}_{L^2}^2 +C(1+ M_f(t)^\frac{2}{2-a})\quad \text{for all } t\ge 0.
\eeq
Moreover, if $A<\infty$ then
\beq\label{nonzerobeta} \limsup_{t\to\infty} \norm{\bar p(t)}_{L^2}^2 \le C (A+A^\frac2{2-a}),\eeq
and if  $\beta<\infty$ then there is $T>0$ such that 
\beq\label{unboundp} \norm{\bar p(t)}_{L^2}^2 \le C\big(1+\beta^\frac{1}{1-a}+f(t)^\frac{2}{2-a}\big) \quad \text{for all }  t>T.\eeq
\end{itemize}
\end{theorem}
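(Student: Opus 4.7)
The plan is to derive a single energy identity, combine it with the degeneracy estimates of Lemma~\ref{lem21} to obtain a nonlinear differential inequality for $y(t)=\|\bar p(t)\|_{L^2}^2$, and then extract each of the three pointwise bounds in (ii) from that inequality by comparison arguments.

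For part (i) I would multiply the first equation in \eqref{eqgamma} by $\bar p$, integrate over $U$, and use the zero-average identity \eqref{pbar0} to annihilate the contribution of $|U|^{-1}\int_\Gamma \psi\,d\sigma$. The divergence term produces $-\int_U K(|\nabla\bar p|)|\nabla\bar p|^2\,dx$, which by \eqref{Hcompare} dominates $\tfrac12 J_H[p](t)$ and, by \eqref{Kestn}, also controls $c\|\nabla\bar p\|_{L^{2-a}}^{2-a}-C$. The boundary term $\int_\Gamma \psi\bar p\,d\sigma$ is handled using the trace estimate $\|\bar p\|_{L^1(\Gamma)}\le C(\|\bar p\|_{L^2}+\|\nabla\bar p\|_{L^{2-a}})$ and Young's inequality, so that $\|\psi\|_{L^\infty}\|\nabla\bar p\|_{L^{2-a}}$ can be split as $\varepsilon\|\nabla\bar p\|_{L^{2-a}}^{2-a}+C\|\psi\|_{L^\infty}^{(2-a)/(1-a)}$; the remaining $\|\psi\|_{L^\infty}\|\bar p\|_{L^2}$ is bounded by $\varepsilon\|\bar p\|_{L^2}^2+C\|\psi\|_{L^\infty}^2$ and absorbed via Poincar\'e. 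Integrating from $0$ to $t$ yields \eqref{p-bar-bound1}.

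For part (ii), under \eqref{deg} the Sobolev embedding $W^{1,2-a}(U)\hookrightarrow L^2(U)$ together with Poincar\'e (valid since $\bar p$ has zero average) gives $\|\bar p\|_{L^2}^{2-a}\le C J_H[p](t)+C$. Substituting this into the energy identity derived above yields the sublinear ODE
\beqs
y'(t)+c\,y(t)^{(2-a)/2}\le C f(t)+C.
\eeqs
To deduce \eqref{p-bar-bound}, I set the threshold $y^\ast(t)=C_0\bigl(1+M_f(t)^{2/(2-a)}\bigr)$ so that $y(t)>y^\ast(t)$ forces $y'(t)\le 0$; since $M_f$ is nondecreasing, a barrier argument then gives $y(t)\le y(0)+y^\ast(t)$. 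Estimate \eqref{nonzerobeta} follows by passing to $\limsup$ using $f(t)\le A+\varepsilon$ for $t$ sufficiently large and letting $\varepsilon\to 0$. Finally, for \eqref{unboundp} the assumption $\beta<\infty$ prevents $f$ from dropping faster than rate $\beta+\varepsilon$ on all large intervals; this permits replacing the majorant $M_f$ in the threshold argument by $f(t)$ itself on a moving comparison window whose length is controlled by $\beta$, producing the sharper pointwise bound with the extra $\beta^{1/(1-a)}$ term.

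The main obstacle is the ODE analysis in (ii): since $(2-a)/2<1$, classical Gr\"onwall does not close, and one must instead track $y$ against the moving instantaneous equilibrium $(Cf/c)^{2/(2-a)}$ of the nonlinear inequality. The sharp estimate \eqref{unboundp} is the most delicate, since replacing the increasing majorant $M_f$ by the raw, possibly oscillating $f$ demands the quantitative monotonicity hypothesis $\beta<\infty$ to preclude abrupt drops of $f$ between the current time and the last crossing of the threshold.
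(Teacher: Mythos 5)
You should note first that Theorem~\ref{p-estimate} is not proved in the present manuscript: it is quoted from \cite{HI2} (Theorem~4.3 there) as a prerequisite, so there is no in-paper argument to compare against. Your outline does hit the expected ingredients --- cancellation of the source term by the zero-average property \eqref{pbar0}, energy identity with the degenerate weight, Sobolev--Poincar\'e under the Degree Condition, and ODE comparison --- but two steps as written would not deliver the stated conclusions.

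In part~(i), you trace via $\|\bar p\|_{L^1(\Gamma)}\le C(\|\bar p\|_{L^2}+\|\nabla\bar p\|_{L^{2-a}})$ and propose to absorb the resulting $\varepsilon\|\bar p\|_{L^2}^2$ ``via Poincar\'e.'' Nothing on the left of the energy identity controls $\|\bar p\|_{L^2}^2$: the dissipation $\int_U K(|\nabla\bar p|)|\nabla\bar p|^2\,dx$ is comparable only to $\|\nabla\bar p\|_{L^{2-a}}^{2-a}$ (up to an additive constant), not to $\|\nabla\bar p\|_{L^2}^2$; part~(i) does not assume the Degree Condition, so $W^{1,2-a}\hookrightarrow L^2$ is unavailable; and even with it, Sobolev--Poincar\'e would give $\|\nabla\bar p\|_{L^{2-a}}^2$, which is \emph{not} dominated by $\|\nabla\bar p\|_{L^{2-a}}^{2-a}$ when the gradient is large. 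The unabsorbed $\varepsilon\int_0^t\|\bar p\|_{L^2}^2$ then spoils \eqref{p-bar-bound1}. The fix is to trace through $W^{1,1}$: using the $L^1$ Poincar\'e--Wirtinger for zero-average functions and H\"older, one has $\|\bar p\|_{L^1(\Gamma)}\le C\|\bar p\|_{W^{1,1}(U)}\le C\|\nabla\bar p\|_{L^1(U)}\le C\|\nabla\bar p\|_{L^{2-a}(U)}$, so that the boundary term requires only the single Young step $C\|\psi\|_{L^\infty}\|\nabla\bar p\|_{L^{2-a}}\le\varepsilon\|\nabla\bar p\|_{L^{2-a}}^{2-a}+C\|\psi\|_{L^\infty}^{(2-a)/(1-a)}$, which is absorbed into the dissipation.

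In part~(ii), the estimate \eqref{nonzerobeta} is strictly sharper than your ODE gives. From $K(\xi)\xi^2\ge c(\xi^{2-a}-1)$ you obtain $y'+cy^{(2-a)/2}\le Cf+C$ with an irreducible additive constant; comparison then yields $\limsup y\le C(1+A)^{2/(2-a)}$, which for small $A$ is of order $C$, not $C(A+A^{2/(2-a)})$. Passing to $\limsup$ in \eqref{p-bar-bound} is also insufficient, since the $y(0)$ and the $+1$ in $y^\ast$ do not disappear. What is needed is a degeneracy-aware lower bound obtained by splitting $U$ into $\{|\nabla\bar p|\le1\}$ and $\{|\nabla\bar p|>1\}$: using \eqref{Kesta} on the first set and \eqref{Kestn} on the second, together with H\"older and the Sobolev--Poincar\'e step, one gets $z\eqdef\int_U K(|\nabla\bar p|)|\nabla\bar p|^2\,dx\ge c\min\{y,y^{(2-a)/2}\}$. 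Feeding $\|\nabla\bar p\|_{L^{2-a}}\le C(z^{1/2}+z^{1/(2-a)})$ back into the boundary estimate and applying Young's inequality \emph{against $z$ itself} yields $y'+c\min\{y,y^{(2-a)/2}\}\le Cf$ with no additive constant, from which $\limsup y\le C\max\{A,A^{2/(2-a)}\}$ follows; this is \eqref{nonzerobeta}. The same sharpened inequality is also what makes the pointwise bound \eqref{unboundp} achievable, and your moving-window barrier argument for \eqref{unboundp} is the right idea in spirit, but it needs to be run on this sharper inequality (and worked out quantitatively: how fast $y$ relaxes versus how fast $f^{2/(2-a)}$ can drop under the constraint $[f']^-\lesssim\beta$) before it can be counted as a proof.
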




%

We finalize our main estimates for $L^\infty$-norm in the following theorem.
The exponent $\muex_2$ appearing below is already defined in \eqref{muzero}, and we also denote
\beq\label{mu1}
\muex_4= \frac{4}{(2-a)(4-a(n+2))}.
\eeq

\begin{theorem}\label{thm-1}
\asdc. Then there is a constant $C>0$ such that the following statements hold true. 
\begin{itemize}
\item[\textup{(i)}] For $t>0$,
\beq\label{p1}
\|\bar p(t)\|_{L^\infty} \le C\Big(1+t^{-\frac 1{\muex_2(2-a)}}\Big) \Big\{ 1+ \|\bar p_0\|_{L^2}^{\muex_4(2-a)} +M_f(t)^{\muex_4} \Big\},
\eeq
\beq \label{pmax}
\|\bar p(t)\|_{L^\infty} \le C\Big( 1+ \|\bar p_0\|_{L^\infty}+\|\bar p_0\|_{L^2}^{\muex_4(2-a)} +M_f(t)^{\muex_4} \Big).
\eeq

\item[\textup{(ii)}] If $A<\infty$ then
\beq\label{p-2} 
\limsup_{t\to\infty} \| \bar p(t)\|_{L^\infty} \le C \big(  1+  A^{\muex_4}  \big).
\eeq

\item[\textup{(iii)}] If  $\beta<\infty$ then there is $T>0$ such that 
\beq\label{p-3} \| \bar p(t)\|_{L^\infty} \le C\Big\{ 1   +\beta^\frac{\muex_4(2-a)}{2(1-a)}  + \sup_{[t-2,t]} \|\psi\|_{L^\infty}^\frac{\muex_4(2-a)}{1-a} \Big\}   \quad \hbox{for all }  t>T.\eeq
\end{itemize}
\end{theorem}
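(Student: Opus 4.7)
The plan is to apply Proposition \ref{local-L-infty} on a sliding time window containing the evaluation time $t$, and to substitute the $L^2$-estimates of $\bar p$ from Theorem \ref{p-estimate} for the $L^2$-factor on the right-hand side of \eqref{mainest-1}. The numerical identities $1/(\muex_2(2-a))=(n+2)/(4-(n+2)a)$, $\muex_4(2-a)=4/(4-(n+2)a)$ and $\muex_4\ge 1/(2-a)$ are the keys: the prefactor $(1+1/(\delta^a\theta T))^{(n+2)/(4-(n+2)a)}$ in \eqref{mainest-1} generates the $t^{-1/(\muex_2(2-a))}$ singular factor in \eqref{p1}, and raising a bound of the form $\sqrt{t}\,(1+\|\bar p_0\|_{L^2}+M_f(t)^{1/(2-a)})$ to the power $4/(4-(n+2)a)$ produces precisely the exponents $\muex_4(2-a)$ and $\muex_4$ in the statement, while $\sqrt{\alphazero}$ is absorbed into a $1+M_f(t)^{\muex_4}$-type expression via $\muex_4\ge 1/(2-a)$.

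For (i), I split into $t\in(0,1]$ and $t\ge 1$. In the first case take $T_0=0$, $T=t$, $\theta=1/2$, $\delta=1$ in Proposition \ref{local-L-infty}: the $(1+2/t)$-factor supplies the singularity, while \eqref{p-bar-bound} yields $\|\bar p\|_{L^2(Q_t)}\le C\sqrt{t}\,(1+\|\bar p_0\|_{L^2}+M_f(t)^{1/(2-a)})$. In the second case, repeat with the shifted window $T_0=t-1$, $T=1$, $\theta=1/2$, for which every $T$-dependent prefactor is an absolute constant and \eqref{p-bar-bound} applies on $U\times(t-1,t)$. For \eqref{pmax}: on $t\in(0,1]$ invoke Proposition \ref{p-T.est} on $[0,t]$ directly (replacing the singular factor by $\|\bar p_0\|_{L^\infty}$ and using $\|\psi\|_{L^\infty}^{1/(1-a)}\le C(1+f^{1/(2-a)})\le C(1+M_f(t)^{1/(2-a)})$), and on $t\ge 1$ use \eqref{p1} with the singular factor bounded by $2$.

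For (ii), since $A<\infty$, for every $\varepsilon>0$ there exists $T_\varepsilon$ with $f(\tau)\le A+\varepsilon$ for $\tau\ge T_\varepsilon$, and by \eqref{nonzerobeta}, $\|\bar p(\tau)\|_{L^2}^2\le C(A+A^{2/(2-a)})+\varepsilon$ for $\tau$ eventually large. Apply Proposition \ref{local-L-infty} with $T_0=t-1$, $T=1$, $\theta=1/2$, $\delta=1$ and $t\ge T_\varepsilon+1$: every $T$-dependent prefactor is an absolute constant, and raising the $L^2$-factor to the power $\muex_4(2-a)$ gives $\|\bar p(t)\|_{L^\infty}\le C(1+A^{\muex_4})+C\varepsilon'$. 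Taking $\limsup_{t\to\infty}$ and then $\varepsilon\to 0$ proves \eqref{p-2}.

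For (iii), the key observation is that $\beta<\infty$ controls the oscillation of $f$: for large $t$ and $\tau\in[t-2,t]$ one has $f(\tau)=f(t)-\int_\tau^t f'(s)\,ds\le f(t)+C\beta$, so $\sup_{[t-2,t]}f\le f(t)+C\beta$. Combined with \eqref{unboundp} this gives $\|\bar p\|_{L^2(U\times(t-2,t))}^2\le C(1+\beta^{1/(1-a)}+f(t)^{2/(2-a)})$. Applying Proposition \ref{local-L-infty} with $T_0=t-2$, $T=2$, $\theta=1/2$, $\delta=1$, the $L^2$-factor raised to $\muex_4(2-a)$ contributes the summands $\beta^{\muex_4(2-a)/(2(1-a))}$ and $f(t)^{\muex_4}$; together with the pointwise conversion $f(\tau)^{\muex_4}\le C(1+\|\psi(\tau)\|_{L^\infty}^{\muex_4(2-a)/(1-a)})$ (obtained by splitting $\|\psi\|_{L^\infty}\gtrless 1$ in the definition \eqref{b} of $f$), this yields \eqref{p-3}. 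The main obstacle I anticipate is the exponent bookkeeping: verifying that every subcritical exponent arising from $\sqrt{\alphazero}$ and from the lower-order summands in the $L^2$-factor is absorbed by the dominant $\muex_4$-type term on the right-hand side of each conclusion.
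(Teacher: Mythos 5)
Your proposal is correct and follows essentially the same strategy as the paper: apply Proposition~\ref{local-L-infty} on $[0,t]$ for small $t$ and on a unit (or width-two) window ending at $t$ for $t\ge1$, feed in the $L^2$-bounds \eqref{p-bar-bound}, \eqref{nonzerobeta}, \eqref{unboundp}, and use Proposition~\ref{p-T.est} for \eqref{pmax} when $t<1$, with $\muex_4\ge 1/(2-a)$ absorbing the $\sqrt{\alphazero}$ and subcritical terms. The only cosmetic difference is in part (iii), where the paper applies the inequality on $[t-1,t]$ and then relaxes $\sup_{[t-1,t]}$ to $\sup_{[t-2,t]}$ rather than using a width-two window outright.
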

\begin{proof} 
(i) We prove \eqref{p1} first.
For $t\in(0,1)$, applying \eqref{mainest-1} to $T_0=0$, $T=t$, $\delta=1$ and $\theta =1/2$ we obtain
\begin{align*}
\|\bar p(t)\|_{L^\infty}
&\le C\Big\{ (t+1)^{\frac 1{2}}(1+ \sup_{[0,t]}\|\psi\|_{L^\infty})^\frac{2-a}{2(1-a)})\\
&\quad+ (t+1)^\frac1{\muex_1 \muex_2} \big(1+ 2 t^{-1}\big)^\frac1{(2-a)\muex_2} \Big(\|\bar p\|_{L^2(U\times (0,t))} + \|\bar p\|_{L^2(U\times (0,t))}^{\frac{4}{4-a(n+2)}}\Big) \Big\}\\
&\le C (1+ t^{-1})^\frac1{(2-a)\muex_2} \Big(1+\sup_{[0,t]}\|\psi\|_{L^\infty}^\frac{2-a}{2(1-a)} + \sup_{[0,t]}\|\bar p\|_{L^2}^{\frac{4}{4-a(n+2)}}\Big) \Big\}.
\end{align*}
By estimate \eqref{p-bar-bound},
\begin{align*}
\|\bar p(t)\|_{L^\infty}&\le C t^{-\frac 1{\muex_2(2-a)}}\Big\{1+ M_f(t)^\frac12+   \Big(1+ \norm{\bar p_0}_{L^2} + M_f(t)^{\frac 1{2-a}}\Big)^{\frac{4}{4-a(n+2)}} \Big\}\\
&\le C t^{-\frac 1{\muex_2(2-a)}}\Big\{ 1+ \norm{\bar p_0}_{L^2}^\frac{4}{4-a(n+2)} +M_f(t)^\frac{4}{(2-a)(4-a(n+2))} \Big\}.
\end{align*}
Therefore, we obtain \eqref{p1} for $t\in(0,1)$.
For $t\in[1,\infty)$, applying \eqref{mainest-1} to $T_0=t-1$, $T=1$, $\delta=1$, and $\theta=1/2$, we have
\beqs
\|\bar p(t)\|_{L^\infty}\le C\Big\{  1+\sup_{[t-1,t]} \|\psi\|_{L^\infty}^\frac{2-a}{2(1-a)} + \sup_{[t-1,t]} \norm{\bar p}_{L^2} + \sup_{[t-1,t]} \norm{\bar p(t)}_{L^2}^{\muex_4(2-a)} \Big\}.
\eeqs
Note that $\muex_4(2-a)> 1$, then H\"older's inequality gives
\beq\label{opp}
\|\bar p(t)\|_{L^\infty}\le C\Big\{  1+\sup_{[t-1,t]} \|\psi\|_{L^\infty}^\frac{2-a}{2(1-a)} + \sup_{[t-1,t]} \norm{\bar p}_{L^2}^{\muex_4(2-a)} \Big\}.
\eeq
Then, again, combining \eqref{opp} with \eqref{p-bar-bound} and using H\"older's inequality yield \eqref{p1} for $t\in[1,\infty)$.

To prove \eqref{pmax}, we note that in the case $t\ge 1$, \eqref{p1} implies \eqref{pmax}.
Now, for $t\in (0,1),$ we have  from inequality \eqref{pbaronly} that
\beq\label{pp}
\|\bar p(t)\|_{L^\infty}\le C \big( 1+\| \bar p_0\|_{L^\infty} + \sup_{[0,t]} \|\psi\|_{L^\infty}^\frac{1}{1-a} \big).
\eeq
Since $\|\psi(t)\|_{L^\infty}\le M_f(t)^\frac{1-a}{2-a}\le M_f(t)^{\mu_4}$, then \eqref{pmax} follows  \eqref{pp}.

(ii) Since $A<\infty$, we see that
\beq\label{limF}
\limsup_{t\to\infty} (\sup_{[t-1,t]} \|\psi\|_{L^\infty})^\frac{2-a}{2(1-a)}\le \limsup_{t\to\infty}  f(t)^\frac12 = A^\frac12. 
\eeq  
It follows \eqref{opp},  \eqref{nonzerobeta} and \eqref{limF} that
\beqs 
\limsup_{t\to\infty} \| \bar p(t)\|_{L^\infty} \le C \Big\{  1+A^\frac12+(A^\frac12+ A^{\frac 1{2-a}})^{\muex_4(2-a)}  \Big\},
\eeqs
which implies \eqref{p-2}. 

(iii) Because  $\beta<\infty$, using \eqref{unboundp} in \eqref{opp} there is $T>0$ such that  for all $t>T$,
\beqs\label{p-30} 
\| \bar p(t)\|_{L^\infty} \le C\Big\{ 1+ \sup_{[t-1,t]} \|\psi\|_{L^\infty}^\frac{2-a}{2(1-a)} 
      +\big[1+\beta^\frac{1}{2(1-a)}+\sup_{[t-1,t]} \|\psi\|_{L^\infty}^\frac{1}{1-a}\big]^{\muex_4(2-a)}    \Big\},\eeqs
and \eqref{p-3} follows.
\end{proof}

Concerning $\bar p(t)$ being small, as $t\to\infty$, rather than just being bounded,  we have the following result.

\begin{theorem}\label{thmsplit}
\asdc. For any $\varep>0$, there is $\delta_0>0$ such that
\beq\label{limsup:pbar0}
\text{if}\quad \limsup_{t\to\infty}\|\psi(t)\|_{L^\infty}<\delta_0 \quad\text{then}\quad \limsup_{t\to\infty} \|\bar p(t)\|_{L^\infty} < \varep.
\eeq
Consequently,  
\beq\label{limsup:pbar}
\text{if}\quad \lim_{t\to\infty}\|\psi(t)\|_{L^\infty}=0\quad\text{then}\quad \lim_{t\to\infty} \|\bar p(t)\|_{L^\infty} =0.
\eeq
\end{theorem}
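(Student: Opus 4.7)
The plan is to apply Proposition \ref{local-L-infty} on a short time window at each large $t$, keeping the parameter $\delta$ in \eqref{mainest-1} as a free parameter to be tuned against the smallness of $\psi$. Specifically, for each $t \ge 1$ and every $\delta \in (0,1]$, taking $T_0 = t-1$, $T = 1$, $\theta = 1/2$ in \eqref{mainest-1} gives
\beqs
\|\bar p(t)\|_{L^\infty} \le C\Big\{ \sqrt{\alphazero_{t-1,1}} + K_\delta \Big(\sup_{[t-1,t]}\|\bar p\|_{L^2} + \sup_{[t-1,t]}\|\bar p\|_{L^2}^{\muex_4(2-a)}\Big) \Big\},
\eeqs
where $K_\delta := C(1 + 2\delta^{-a})^{(n+2)/(4-a(n+2))}$ is independent of $t$, and
\beqs
\alphazero_{t-1,1} = \delta^{2-a} + \delta^{-a}\sup_{[t-1,t]}\|\psi\|_{L^\infty}^2 + \delta^{-a(2-a)/(1-a)}\sup_{[t-1,t]}\|\psi\|_{L^\infty}^{(2-a)/(1-a)}.
\eeqs

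Given $\varepsilon > 0$, I choose the parameters in two stages. First, I fix $\delta = \delta(\varepsilon) \in (0,1]$ small enough that $C\,\delta^{(2-a)/2} < \varepsilon/4$; this freezes $K_\delta$ at a value depending only on $\varepsilon$. Next, with $\delta$ locked in, I choose $\delta_0 \in (0,\delta]$ small enough to ensure two things: (a) every $\eta \le \delta_0$ satisfies $C\bigl(\delta^{-a}\eta^2 + \delta^{-a(2-a)/(1-a)}\eta^{(2-a)/(1-a)}\bigr)^{1/2} < \varepsilon/4$; and (b) invoking \eqref{nonzerobeta} from Theorem \ref{p-estimate}, which bounds $\limsup_{t\to\infty}\|\bar p(t)\|_{L^2}^2$ by $C(A + A^{2/(2-a)})$ with $A = \limsup_{t\to\infty} f(t) \le \delta_0^2 + \delta_0^{(2-a)/(1-a)}$, the quantity $K_\delta\bigl(\limsup\|\bar p\|_{L^2} + \limsup\|\bar p\|_{L^2}^{\muex_4(2-a)}\bigr)$ is less than $\varepsilon/2$. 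Under the hypothesis $\limsup_{t\to\infty}\|\psi(t)\|_{L^\infty} < \delta_0$, for all sufficiently large $t$ the supremum $\sup_{[t-1,t]}\|\psi\|_{L^\infty}$ does not exceed $\delta_0$, so by (a) the first summand of the displayed bound is eventually $<\varepsilon/2$, and by (b) (together with the continuity of $\|\bar p(\cdot,t)\|_{L^2}$, which equates $\limsup_{t\to\infty}\sup_{[t-1,t]}\|\bar p\|_{L^2}$ with $\limsup_{t\to\infty}\|\bar p(t)\|_{L^2}$) the second summand is eventually $<\varepsilon/2$. Taking $\limsup_{t\to\infty}$ yields \eqref{limsup:pbar0}. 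The statement \eqref{limsup:pbar} is then an immediate corollary: if $\|\psi(t)\|_{L^\infty} \to 0$, then every $\delta_0 > 0$ eventually dominates $\|\psi(t)\|_{L^\infty}$, so applying \eqref{limsup:pbar0} for each $\varepsilon > 0$ forces $\limsup\|\bar p(t)\|_{L^\infty} = 0$.

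The main obstacle is the antagonism between shrinking $\delta$ and the negative powers $\delta^{-a}$, $\delta^{-a(2-a)/(1-a)}$, and $K_\delta$ that appear once $\delta$ leaves the $\delta^{2-a}$ summand: driving $\delta\to 0$ is exactly what is needed to kill that term in $\alphazero$, but it simultaneously blows up every other coefficient in the estimate. The two-stage selection resolves this by locking $\delta$ first using only $\varepsilon$, and then sending $\|\psi\|_{L^\infty}$ to zero faster than the now-fixed negative powers of $\delta$ can compensate. This is why \eqref{nonzerobeta} is the right upstream input: it produces a $\limsup\|\bar p\|_{L^2}$ that shrinks \emph{quantitatively} with $\limsup\|\psi\|_{L^\infty}$, so the smallness can be pushed through the fixed constant $K_\delta$.
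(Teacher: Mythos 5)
Your proof is correct and follows essentially the same route as the paper's: apply Proposition \ref{local-L-infty} on $[t-1,t]$ with $\theta=1/2$, keep $\delta$ free, use \eqref{nonzerobeta} to make $\limsup\|\bar p\|_{L^2}$ shrink with $\delta_0$, then perform the two-stage selection (first fix $\delta$ from $\varep$ to control the $\delta^{2-a}$ term, then shrink $\delta_0$ past the frozen negative powers of $\delta$). The reasoning, including the observation that the quantitative $L^2$-decay from \eqref{nonzerobeta} is the key upstream input, matches the paper's argument.
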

\begin{proof} 
Applying \eqref{mainest-1} to $T_0=t-1$, $T=1$ and $\theta=1/2$, we have 
\beqs
\| \bar p(t)\|_{L^\infty} \le C\Big\{ \sqrt{\alphazero(t)}+ (1+ \delta^{-a})^{\frac 1{(2-a)\muex_2}} \Big(\|\bar p\|_{L^2(U\times (t-1,t))} + \|\bar p\|_{L^2(U\times (t-1,t))}^\frac{4}{4-a(n+2)}\Big) \Big\},
\eeqs
where
$\alphazero(t) = \delta^{2-a}+\delta^{-a}\sup_{[t-1,t]}\|\psi\|_{L^\infty}^2+ \delta^{-\frac {a(2-a)}{1-a}} \sup_{[t-1,t]}\|\psi\|_{L^\infty}^\frac{2-a}{1-a}.$
Then, it follows that
\beq\label{prelim0}
\|\bar p(t)\|_{L^\infty}\le C\sqrt{\alphazero(t)} + C (1+\delta^{-a})^\frac1{(2-a)\muex_2} \Big\{\sup_{[t-1, t]} \norm{\bar p}_{L^2} +\sup_{[t-1, t]} \norm{\bar p}_{L^2}^{\frac{4}{4-a(n+2)}}\Big\}.
\eeq
Let $\delta_0\in(0,1)$, then 
$A\le \delta_0^2+\delta_0^\frac{2-a}{1-a}\le 2\delta_0^2$,
and therefore, by \eqref{nonzerobeta},
\beqs
\limsup_{t\to\infty} (\sup_{[t-1, t]} \norm{\bar p}_{L^2} )\le C(A^\frac12+A^\frac1{2-a})\le C \delta_0.
\eeqs
Hence, we obtain
\begin{align*}
&\limsup_{t\to\infty}(1+\delta^{-a})^\frac1{(2-a)\muex_2} \Big\{\sup_{[t-1, t]} \norm{\bar p}_{L^2} +\sup_{[t-1, t]} \norm{\bar p}_{L^2}^{\frac{4}{4-a(n+2)}}\Big\}
\le C (1+\delta^{-a})^\frac1{(2-a)\muex_2}\delta_0.
\end{align*}
Also,
\beqs
\limsup_{t\to\infty}  \alphazero(t)\le  \delta^{2-a} +\delta^{-a}\delta_0^2 + \delta^{-\frac {a(2-a)}{1-a}} \delta_0^\frac{2-a}{1-a} \le \delta^{2-a} + 2\delta^{-\frac {a(2-a)}{1-a}} \delta_0^2. 
\eeqs
Therefore, for any $\delta>0$, we have from \eqref{prelim0} that
\beq\label{limwithdelta}
\limsup_{t\to\infty} \|\bar p(t)\|_{L^\infty} \le C\big(\delta^{1-\frac a 2} +[ \delta^{-\frac {a(2-a)}{2(1-a)}} +(1+\delta^{-a})^\frac1{(2-a)\muex_2} ]\delta_0\big).
\eeq 
Now, choose $\delta$ sufficiently small such that $2C\delta^{1-a/2} <\varepsilon$. Then we can choose $\delta_0>0$ even sufficiently smaller so that
$
 [\delta^{-\frac {a(2-a)}{2(1-a)}} +(1+\delta^{-a})^\frac1{(2-a)\muex_2}]\delta_0 < \delta^{1-a/2}.
$
From this, the desired estimates \eqref{limsup:pbar0} follows \eqref{limwithdelta}.
The statement \eqref{limsup:pbar} obviously is a consequence of \eqref{limsup:pbar0}. The proof is complete.
\end{proof}

\subsection{Estimates for pressure's gradient}
\label{Lsgrad-sec}

In this subsection, we establish the interior $L^s$-estimate of $\nabla p$, for all $s>0$. 
We follow the approach in \cite{LadyParaBook68}. 
First is a basic estimate for $\nabla p$ which prepares for our iteration later.

\begin{lemma} \label{s.it-L} For each $s\geq 0$, there is a constant $C>0$ depending on $s$ such that  for any $T>0$ and smooth cut-off function 
$\zeta(x) \in C_c^\infty(U)$, the following estimate holds
\begin{equation}\label{preLs2}
\begin{aligned}
 & \sup_{[0,T]} \int_U |\nabla p(x, t)|^{2s+2} \zeta^2 dx
+C  \int_0^T\int_U K(|\nabla p|) |\nabla^2 p|^2   |\nabla p|^{2s}  \zeta^2 dx dt \\
&\le  \int_U |\nabla p_0(x)|^{2s+2} \zeta^2 dx
+  C \int_0^T \int_U K(|\nabla p|)  |\nabla p|^{2s+2}|\nabla \zeta|^2 dx dt.
\end{aligned} 
\end{equation}
\end{lemma}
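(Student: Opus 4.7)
The estimate \eqref{preLs2} is a Bernstein-type energy estimate for $\nabla p$ adapted to the degenerate operator with modulus $K(|\nabla p|)$. My plan is to differentiate the pressure equation $p_t=\nabla\cdot(K(|\nabla p|)\nabla p)$ in $x_j$, obtaining $(\partial_j p)_t=\nabla\cdot(\partial_j(K(|\nabla p|)\nabla p))$, then multiply by $\partial_j p\cdot|\nabla p|^{2s}\zeta^2$, integrate over $U$, and sum over $j=1,\dots,n$. The compact support of $\zeta$ eliminates all boundary terms when integrating by parts, and the time derivative on the left-hand side produces $\tfrac{1}{2(s+1)}\tfrac{d}{dt}\int_U|\nabla p|^{2s+2}\zeta^2\,dx$.

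Applying the product rule to $\nabla(\partial_j p\cdot|\nabla p|^{2s}\zeta^2)$ splits the spatial term into three pieces. Writing $L_j:=(\nabla^2 p\,\nabla p)_j$ and using the identity
$$\partial_j(K\partial_i p)=\frac{K'(|\nabla p|)}{|\nabla p|}L_j\partial_i p+K\partial_j\partial_i p,$$
a direct computation shows that the first two pieces reduce to integrating
$$K|\nabla^2 p|^2|\nabla p|^{2s}\zeta^2+\frac{K'}{|\nabla p|}|L|^2|\nabla p|^{2s}\zeta^2+2s|\nabla p|^{2s-2}\Bigl[\tfrac{K'}{|\nabla p|}(\nabla p\cdot L)^2+K|L|^2\Bigr]\zeta^2.$$
The crucial input from \eqref{K-est-2} is $\xi K'(\xi)\ge -aK(\xi)$; together with the elementary bounds $|L|^2\le|\nabla^2 p|^2|\nabla p|^2$ and $(\nabla p\cdot L)^2\le|\nabla p|^2|L|^2$, each bracketed expression is $\ge (1-a)K$ times $|\nabla^2 p|^2$ or $|L|^2$, respectively. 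Keeping only the first coercive contribution (and discarding the non-negative $|L|^2$ leftover), one obtains $(1-a)\int_U K|\nabla^2 p|^2|\nabla p|^{2s}\zeta^2\,dx$ on the good side.

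The remaining piece, coming from $\nabla(\zeta^2)=2\zeta\nabla\zeta$, is estimated by Cauchy--Schwarz and Young's inequality: \eqref{K-est-2} also gives the pointwise bound $|\partial_j(K\partial_i p)|\le (1+a)K|\nabla^2 p|$, so this piece is controlled by $\tfrac{1-a}{2}\int_U K|\nabla^2 p|^2|\nabla p|^{2s}\zeta^2\,dx+C\int_U K|\nabla p|^{2s+2}|\nabla\zeta|^2\,dx$. Absorbing the first half into the coercive term, then integrating in time over $(0,t_0)$ for $t_0\in(0,T]$ and taking the supremum, yields \eqref{preLs2}. The main obstacle is bookkeeping the two quadratic forms in $\nabla^2 p$: one must verify that the grouping arising from $\nabla(\partial_j p)$ \emph{and} the one from $\nabla(|\nabla p|^{2s})$ both inherit the $(1-a)$-coercivity from the degenerate monotonicity of $K$; once this is seen, the rest is routine (at $s=0$ the $|\nabla p|^{2s-2}$ factor causes no trouble since the whole term carries a factor of $2s$).
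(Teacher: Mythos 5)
Your proposal is correct and follows essentially the same route as the paper: differentiating the equation in $x_j$ and testing against $\partial_j p\,|\nabla p|^{2s}\zeta^2$ is, after one integration by parts, identical to the paper's choice of multiplying $p_t=\nabla\cdot(K(|\nabla p|)\nabla p)$ by $-\nabla\cdot(|\nabla p|^{2s}\nabla p\,\zeta^2)$, and the three resulting pieces match the paper's $I_1,I_3,I_2$ exactly. The only cosmetic difference is that you write the Hessian $\nabla_y(K(|y|)y)=\tfrac{K'(|y|)}{|y|}\,y\otimes y+K(|y|)I$ out in components and verify the two quadratic forms ($K|\nabla^2 p|^2+\tfrac{K'}{|\nabla p|}|L|^2\ge(1-a)K|\nabla^2 p|^2$ and the $|L|^2$-form in $I_3$) by hand via $\xi K'(\xi)\ge -aK(\xi)$, whereas the paper packages the same computation as the abstract coercivity bound $z^T\nabla_y(K(|y|)y)z\ge(1-a)K(|y|)|z|^2$; both are the same estimate.
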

\begin{proof} 
In this proof, we use Einstein's summation convention, that is, when an index variable appears twice in a single term it implies summation of that term over all the values of the index. 
Multiplying the equation \eqref{eqorig} by 
$-\nabla \cdot (|\nabla p|^{2s}\nabla p \zeta^2)$ and integrating the resultant over $U$, we obtain
\begin{align*}
 \frac{1}{2s+2} \ddt \int_U |\nabla p|^{2s+2} \zeta^2 dx
= -\int_U \partial_i (K(|\nabla p|)\partial_i p) \, \partial_j (|\nabla p|^{2s} \partial_j p \zeta^2) dx.
\end{align*}
This equality and the integration by parts yield  
\begin{align*}
& \frac{1}{2s+2} \ddt \int_U |\nabla p|^{2s+2} \zeta^2 dx
= -\int_U \partial_j [K(|\nabla p|)\partial_i p] \, \partial_i [|\nabla p|^{2s} \partial_j p \zeta^2] dx\\
&= -\int_U \partial_j [K(|\nabla p|)\partial_i p] \, \partial_i \partial_j p \,  |\nabla p|^{2s}  \zeta^2 dx
-\int_U \partial_j [K(|\nabla p|)\partial_i p]\,  \partial_j p \,  |\nabla p|^{2s} \partial_i [\zeta^2] dx\\
&\quad -\int_U \partial_j [K(|\nabla p|)\partial_i p] \,\partial_j p   \, [2s |\nabla p|^{2s-2} \partial_i\partial_m p\partial_m p]\,   \zeta^2 dx.
\end{align*}	
We rewrite it in the following form
\begin{align*}
& \frac{1}{2s+2} \ddt \int_U |\nabla p|^{2s+2} \zeta^2 dx
= -\int_U \Big[\partial_{y_l} (K(|y|) y_i)\Big|_{y=\nabla p} \partial_j\partial_l p\Big]\partial_j \partial_i p  |\nabla p|^{2s}  \zeta^2 dx\\
&\quad -2\int_U\Big[ \partial_{y_l} (K(|y|) y_i)\Big|_{y=\nabla p} \partial_j\partial_l p \Big] \partial_j p \, |\nabla p|^{2s} \, \zeta \partial_i  \zeta dx\\
&\quad -2s\int_U \Big[\partial_{y_l} (K(|y|) y_i)\Big|_{y=\nabla p} \partial_j\partial_l p \Big]\partial_j p   \, (|\nabla p|^{2s-2} \partial_i\partial_m p\partial_m p)\,     \zeta^2 dx.
\end{align*}	
We denote the three terms  on the right-hand side by $I_1$, $I_2$ and $I_3$, and estimate each of them.
By \eqref{K-est-2} one can easily prove for any $y,z\in \R^n$ that
\begin{equation*}
 z^T \nabla (K(|y|)y) z\ge (1-a)K(|y|)|z|^2\quad \text{and} \quad  |\nabla (K(|y|)y)|\le (1+a)K(|y|).
\end{equation*}
It follows that 
\begin{align*}
 I_1
&=  -\int_U \partial_{y_l} [K(|y|) y_i]\Big|_{y=\nabla p} \partial_l(\partial_j p) \partial_i (\partial_j p) \,  |\nabla p|^{2s}  \zeta^2 dx\\
&\le -(1-a)\sum_j \int_U K(|\nabla p|) |\nabla(\partial_j p)|^2   |\nabla p|^{2s}  \zeta^2 dx\\
& = -(1-a)  \int_U K(|\nabla p|) |\nabla^2 p|^2 \,  |\nabla p|^{2s}  \zeta^2 dx.
\end{align*}
Moreover, we have 
\begin{equation*}
 |I_2|\le 2(1+a)\int_U K(|\nabla p|) |\nabla^2 p| |\nabla p|^{2s+1} \zeta  |\nabla \zeta| dx, 
\end{equation*}
and
\begin{align*}
 I_3
&= -2s\int_U \partial_{y_l} [K(|y|) y_i]\Big|_{y=\nabla p} (\partial_l\partial_j p\partial_j p)  (\partial_i \partial_m p \partial_m p)  \,  |\nabla p|^{2s-2}   \zeta^2 dx\\
&= -2s\int_U \partial_{y_l} [K(|y|) y_i]\Big|_{y=\nabla p} \Big(\partial_l \frac12|\nabla p|^2\Big)  \Big(\partial_i \frac12|\nabla p|^2\Big)\,   |\nabla p|^{2s-2}   \zeta^2 dx\\
&\le  -2(1-a)s\int_U K(| \nabla p|)\Big | \nabla \Big(\frac12|\nabla p|^2\Big)\Big |^2  |\nabla p|^{2s-2}   \zeta^2 dx\le 0.
\end{align*}
Combining these estimates together with Young's inequality, we see that 
\begin{align*}
 & \frac{1}{2s+2} \ddt \int_U |\nabla p|^{2s+2} \zeta^2 dx
+(1-a)  \int_U K(|\nabla p|) |\nabla^2 p|^2   |\nabla p|^{2s}  \zeta^2 dx\\
&\le 2(1+a)\int_U K(|\nabla p|) |\nabla^2 p| |\nabla p|^{2s+1} \zeta|\nabla \zeta| dx\\
& \le \frac{1-a}{2} \int_{U'} K(|\nabla p|) |\nabla^2 p|^2   |\nabla p|^{2s}  \zeta^2 dx + C \int_U K(|\nabla p|) |\nabla p|^{2s+2}|\nabla \zeta|^2 dx.
\end{align*}
Thus,
\beq\label{gradid}
\begin{aligned}
 & \frac{1}{2s+2} \ddt \int_U |\nabla p|^{2s+2} \zeta^2 dx
+\frac{1-a}{2}  \int_U K(|\nabla p|) |\nabla^2 p|^2   |\nabla p|^{2s}  \zeta^2 dx\\
&\le  C \int_U K(|\nabla p|) |\nabla p|^{2s+2}|\nabla \zeta|^2 dx.
\end{aligned}
\eeq
Integrating this inequality in time yields \eqref{preLs2}.
\end{proof}

In order to iterate \eqref{preLs2} in $s$ we need an embedding similar to Lemma 5.4 on page 93 in \cite{LadyParaBook68}.
For our degenerate equation, the following version has the key weight function $K(|\nabla w|)$. 

\begin{lemma}\label{LUK} For each $s\geq1$, there exists a constant $C>0$ depending on $s$ such that  for each smooth cut-off function 
$\zeta(x) \in C_c^\infty(U)$, the following inequality holds 
\begin{equation*}
\begin{aligned}
\int_U K(|\nabla w|) |\nabla w|^{2s+2} \zeta^2 dx
 \le  C \sup_{\text{\rm supp} \zeta } |w|^2  & \left \{ \int_U K(|\nabla w|)|\nabla w|^{2s-2} |\nabla^2 w|^2 \zeta^2 dx \right. \\
& \quad \quad \quad + \left. \int_U K(|\nabla w|)|\nabla w|^{2s}  |\nabla \zeta|^2 dx \right\},
\end{aligned} 
\end{equation*}
for every sufficiently regular function $w(x)$ such that the right hand side is well-defined.
\end{lemma}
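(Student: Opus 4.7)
The plan is an integration by parts that extracts one factor of $w$ from the integrand, so that the factor $M^2 := \sup_{\mathrm{supp}\,\zeta}|w|^2$ can be pulled out in front. Writing
$$I := \int_U K(|\nabla w|)|\nabla w|^{2s+2}\zeta^2\,dx = \int_U K(|\nabla w|)|\nabla w|^{2s}(\partial_i w)(\partial_i w)\zeta^2\,dx,$$
I would transfer $\partial_i$ off the second $\partial_i w$ onto the remaining factors. Since $\zeta$ is compactly supported in $U$, no boundary contributions arise, and
$$I = -\int_U w\,\nabla(K(|\nabla w|)|\nabla w|^{2s})\cdot\nabla w\,\zeta^2\,dx - \int_U w\,K(|\nabla w|)|\nabla w|^{2s}\,\Delta w\,\zeta^2\,dx - 2\int_U w\,K(|\nabla w|)|\nabla w|^{2s}\,\zeta\,\nabla w\cdot\nabla\zeta\,dx.$$

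For the first two terms I need to control the derivatives of the composite weight $G(\xi):=K(\xi)\xi^{2s}$. The key input is \eqref{K-est-2}, which gives $|\xi K'(\xi)|\le aK(\xi)$ and hence $|G'(\xi)|\le (a+2s)K(\xi)\xi^{2s-1}$. Combined with the pointwise bound $\bigl|\nabla|\nabla w|\bigr|\le|\nabla^2 w|$ and $|\Delta w|\le\sqrt{n}\,|\nabla^2 w|$, this shows that the first two integrals on the right-hand side are dominated in absolute value by $C\int_U |w|\,K(|\nabla w|)|\nabla w|^{2s}|\nabla^2 w|\,\zeta^2\,dx$, while the third is dominated by $C\int_U |w|\,K(|\nabla w|)|\nabla w|^{2s+1}\,\zeta\,|\nabla\zeta|\,dx$.

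The endgame is Cauchy--Schwarz with a symmetric splitting of the weight. Setting
$$A := \int_U K(|\nabla w|)|\nabla w|^{2s-2}|\nabla^2 w|^2\zeta^2\,dx\quad\text{and}\quad B := \int_U K(|\nabla w|)|\nabla w|^{2s}|\nabla\zeta|^2\,dx,$$
I would split $K = K^{1/2}\cdot K^{1/2}$ and $|\nabla w|^{2s}=|\nabla w|^{s-1}\cdot|\nabla w|^{s+1}$ (respectively $|\nabla w|^{2s+1}=|\nabla w|^{s}\cdot|\nabla w|^{s+1}$) so that one Cauchy--Schwarz factor reproduces $I^{1/2}$ and the other reproduces $M\,A^{1/2}$ (respectively $M\,B^{1/2}$), after using $|w|\le M$ on $\mathrm{supp}\,\zeta$. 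This yields $I\le CM\bigl(A^{1/2}+B^{1/2}\bigr)I^{1/2}$, from which the claimed bound $I\le CM^2(A+B)$ follows upon dividing by $I^{1/2}$ (the case $I=0$ being trivial).

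The main delicacy, and the reason this lemma is not a verbatim transcription of Lemma~5.4 in \cite{LadyParaBook68}, is that the weight $K(|\nabla w|)$ itself depends on $\nabla w$ and degenerates as $|\nabla w|\to\infty$. Precisely the structural inequality $|\xi K'(\xi)|\le aK(\xi)$ from \eqref{K-est-2} is what makes the scheme work: it ensures that differentiating the composite weight reproduces the same weight back, up to a factor $|\nabla w|^{-1}|\nabla^2 w|$, which is the exact scaling needed for Cauchy--Schwarz to close the estimate symmetrically without spawning uncontrolled weight ratios.
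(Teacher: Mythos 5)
Your proof is correct and follows essentially the same route as the paper's: integrate by parts to peel off one factor of $w$, use \eqref{K-est-2} to control the derivative of the composite weight $K(\xi)\xi^{2s}$ so that only $K|\nabla w|^{2s-1}|\nabla^2 w|$ emerges, and then absorb the resulting $I^{1/2}$ factor (the paper does this via Young's inequality, $I\le\frac12 I+CM^2(A+B)$, whereas you use Cauchy--Schwarz and divide by $I^{1/2}$; the two are equivalent once $I<\infty$). The only cosmetic difference is that the paper expands $\nabla\bigl(K(|\nabla w|)|\nabla w|^{2s}\bigr)$ into its $K'$ and $|\nabla w|^{2s}$ pieces before estimating, while you treat it as a single composite $G'(\xi)$.
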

\begin{proof} 
Again, the Einstein's summation convention is used in this proof. 

Let $I =  \int_U K(|\nabla w|) |\nabla w|^{2s+2} \zeta^2 dx$.
From direct calculations, we see that 
\begin{align*}
I 
&=\int_U K(|\nabla w|) |\nabla w|^{2s} \partial_i w\partial_i w \zeta^2 dx
 = - \int_U \partial_i (K(|\nabla w|) |\nabla w|^{2s} \partial_i w \zeta^2)\cdot  w dx\\
& =  - \int_U\Big( K'(|\nabla w|)\frac{\partial_i\partial_j w\partial_j w}{|\nabla w|} \Big) |\nabla w|^{2s} \partial_i w\cdot  \zeta^2 \cdot w dx - \int_U K(|\nabla w|)|\nabla w|^{2s} \Delta w \cdot \zeta^2  \cdot w dx\\
&\quad - 2s \int_U K(|\nabla w|)\Big(|\nabla w|^{2s-2} \partial_i\partial_j w \partial_j w\Big)\cdot  \partial_i w \cdot \zeta^2  \cdot w dx
-2 \int_U K(|\nabla w|)|\nabla w|^{2s} \partial_i w \zeta \partial_i  \zeta \cdot w dx.
\end{align*}
From this and \eqref{K-est-2}, it follows that 
\begin{align*}
I&\le a \int_U K(|\nabla w|)|\nabla^2 w|  |\nabla w|^{2s}  \zeta^2 |w| dx  
+ \int_U K(|\nabla w|)|\nabla w|^{2s} |\Delta w| \zeta^2  |w| dx\\
&\quad + 2 s \int_U K(|\nabla w|)|\nabla w|^{2s-2} |\nabla^2  w|  |\nabla w|^2 \zeta^2  |w| dx
+2 \int_U K(|\nabla w|)|\nabla w|^{2s} |\nabla w| \zeta |\nabla \zeta| |w|dx\\
& \le  C \int_U K(|\nabla w|)|\nabla w|^{2s} |\nabla^2 w| \zeta^2  |w| dx
+ C\int_U K(|\nabla w|)|\nabla w|^{2s+1}  \zeta  |\nabla \zeta| |w| dx.
\end{align*}
This last inequality and the Young's inequality imply that 
\beqs
I \le  \frac{1}{2} I
  +C  \Big \{ \int_U K(|\nabla w|)|\nabla w|^{2s-2} |\nabla^2 w|^2 \zeta^2 |w|^2 dx
+  \int_U K(|\nabla w|)|\nabla w|^{2s}  |\nabla \zeta|^2 |w|^2dx \Big \}.
\eeqs
Therefore, we obtain
\begin{equation*} 
 I  \le  C \sup_{{\rm supp} \zeta} |w|^2   \Big \{ \int_U K(|\nabla w|)|\nabla w|^{2s-2} |\nabla^2 w|^2 \zeta^2 dx
 + \int_U K(|\nabla w|)|\nabla w|^{2s}  |\nabla \zeta|^2 dx \Big\}.
\end{equation*}
This completes the proof.
\end{proof}

We combine Lemmas \ref{s.it-L} and \ref{LUK} now. By applying Lemma \ref{LUK} to $\bar p$ with $s+1$ in place of $s$, we have 
\begin{equation*} 
\begin{aligned}
\int_0^T \int_U K(|\nabla p|) |\nabla p|^{2s+4} \zeta^2 dx dt 
\le  C \sup_{ \rm{supp}\zeta } |\bar p|^2 & \left \{ \int_0^T\int_U K(|\nabla p|)|\nabla p|^{2s} |\nabla^2 p|^2 \zeta^2 dx dt \right.\\
& + \left. \int_0^T \int_U K(|\nabla p|)|\nabla p|^{2s+2}  |\nabla \zeta|^2 dx dt \right \}.
\end{aligned} 
\end{equation*}
This last inequality and \eqref{preLs2} imply for $s\ge 0$ that
\beq\label{Kgrad1}
\begin{aligned}
\int_0^T \int_U K(|\nabla p|) |\nabla p|^{2s+4} \zeta^2 dx dt
& \le  C \sup_{{\rm supp}\zeta} |\bar p|^2  \left[  \int_U |\nabla p_0(x)|^{2s+2} \zeta^2 dx \right. \\
&  \quad +  \left. \int_0^T \int_U K(|\nabla p|) |\nabla p|^{2s+2}|\nabla \zeta|^2 dx dt \right].
\end{aligned}
\eeq 
Also, from \eqref{preLs2}, we have for $s\ge 2$ that
\begin{equation}\label{preLs3}
\begin{aligned}
 & \sup_{[0,T]} \int_U |\nabla p(x, t)|^{s} \zeta^2 dx
\le  \int_U |\nabla p_0(x)|^{s} \zeta^2 dx
+  C \int_0^T \int_U K(|\nabla p|)  |\nabla p|^{s}|\nabla \zeta|^2 dx dt.
\end{aligned} 
\end{equation}

We are ready to iterate the relation \eqref{Kgrad1}. Hereafterward, we consider $U'\Subset U$, that is, $U'$ is an open set compactly contained in $U$.

\begin{proposition}\label{interp}
For $U'\Subset \setV \subset U$ and $s\ge 2$ there exists a constant $C>0$ depending on $U'$, $V$ and $s$ such that for any $T>0$, we have
\begin{equation}\label{Kgrad50}
\begin{split}
&\int_0^T \int_{U'} K(|\nabla p|) |\nabla p|^{s} dx dt 
\le C  \Big(1+\sup_{ [0,T]} \|\bar p\|_{L^\infty(\setV)}\Big)^{s-2}  \\
&\quad \cdot \Big\{ \int_{U} |\nabla p_0(x)|^{2} dx +\int_{U} |\nabla p_0(x)|^{s-2} dx + \int_0^T \int_{U} K(|\nabla p|) |\nabla p|^2 dx dt\Big\}.
\end{split}
\end{equation}
\end{proposition}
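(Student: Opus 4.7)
The strategy is to iterate \eqref{Kgrad1}. Changing parameter via $\sigma=(t-4)/2$ in \eqref{Kgrad1}, we obtain, for every real $t\ge 4$ and every cutoff $\zeta$, the recursion
\begin{equation*}
\int_0^T\!\int_U K|\nabla p|^{t}\zeta^{2}\,dxdt \le CM^{2}\Bigl[\int_U |\nabla p_0|^{t-2}\zeta^{2}\,dx+\int_0^T\!\int_U K|\nabla p|^{t-2}|\nabla \zeta|^{2}\,dxdt\Bigr],
\end{equation*}
with $M:=1+\sup_{[0,T]}\|\bar p\|_{L^\infty(\mathrm{supp}\,\zeta)}$. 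Set $J:=\lfloor(s-2)/2\rfloor$, so that $s-2J\in[2,4)$, and fix a nested chain $U'=W_0\Subset W_1\Subset\cdots\Subset W_{J+1}\Subset V$. At each step $k=1,\dots,J$, pick $\zeta_k\in C_c^\infty(W_k)$ with $\zeta_k\equiv 1$ on $W_{k-1}$ and apply the recursion with $t=s-2(k-1)\ge 4$. Telescoping yields
\begin{equation*}
\int_0^T\!\int_{U'}K|\nabla p|^s\,dxdt\le CM^{2J}\Bigl[\sum_{k=1}^{J}\int_U|\nabla p_0|^{s-2k}\,dx+\int_0^T\!\int_{W_J}K|\nabla p|^{s-2J}\,dxdt\Bigr],
\end{equation*}
with the constants absorbing $J$ and the chosen cutoffs.

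When $s$ is an even integer, $s-2J=2$ and the remainder term is exactly the base quantity on the right-hand side of the proposition, and $M^{2J}=M^{s-2}$ matches. When $s-2J\in(2,4)$, I would make one additional use of the recursion with $t=4$ and a cutoff supported in $V$ equal to $1$ on $W_{J+1}$, obtaining $\int_0^T\!\int_{W_{J+1}}K|\nabla p|^{4}\le CM^{2}\mathcal D$ where $\mathcal D:=\int_U|\nabla p_0|^{2}+\int_0^T\!\int_U K|\nabla p|^{2}$. H\"older's inequality then gives
\begin{equation*}
\int_0^T\!\int_{W_{J}}K|\nabla p|^{s-2J}\le\Bigl(\int_0^T\!\int_{W_{J+1}}K|\nabla p|^{2}\Bigr)^{\alpha}\Bigl(\int_0^T\!\int_{W_{J+1}}K|\nabla p|^{4}\Bigr)^{\beta},
\end{equation*}
with $\alpha+\beta=1$ and $2\alpha+4\beta=s-2J$, so $\beta=(s-2J-2)/2$. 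Critically, I keep the product form rather than Young-linearizing it: since $\int K|\nabla p|^2\le\mathcal D$, we get $(\int K|\nabla p|^2)^\alpha\le\mathcal D^\alpha$, and the product is bounded by $CM^{2\beta}\mathcal D^{\alpha+\beta}=CM^{s-2J-2}\mathcal D$. Multiplying by the $M^{2J}$ from the iteration produces the precise total $M^{s-2}$.

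It remains to collapse the intermediate sum $\sum_{k=1}^J\int_U|\nabla p_0|^{s-2k}$ into $\int_U|\nabla p_0|^2+\int_U|\nabla p_0|^{s-2}$. The exponents $s-2k$ lie in $[s-2J,s-2]\subset[2,s-2]$; for $s>4$, $L^p$-interpolation followed by Young's inequality (whose exponents sum to $1$) yields $\int_U|\nabla p_0|^{s-2k}\le C(\int_U|\nabla p_0|^2+\int_U|\nabla p_0|^{s-2})$. The endpoint cases $s=2$ (trivial), $s\in(2,4)$ (only the sub-case applies, with $\int|\nabla p_0|^{s-2}$ freely added on the right), and $s=4$ (one iteration only, with $s-2=2$) are immediate. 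The main obstacle is the third step above: resisting the Young-linearization of the H\"older interpolation in the sub-case, as that would inflate the contribution from $M^{s-2J-2}$ to $M^{2}$ and thus the total from $M^{s-2}$ to $M^{2J+2}$, which strictly exceeds $M^{s-2}$ whenever $s$ is not an even integer.
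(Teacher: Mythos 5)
Your proof is correct and follows essentially the same strategy as the paper's: iterate the Caccioppoli-type relation \eqref{Kgrad1} over a nested chain of cut-offs, reach a base exponent in $[2,4)$, and for the non-integral remainder interpolate $\int K|\nabla p|^{s-2J}$ between $\int K|\nabla p|^2$ and $\int K|\nabla p|^4$, keeping the H\"older product intact so the powers of $M$ combine to exactly $M^{s-2}$. The paper organizes the iteration bottom-up (writing $s=2m+s_0$, establishing \eqref{grad15}, then resolving the residual $K|\nabla p|^{2+s_0}$ term by the same interpolation), whereas you iterate top-down from $s$; these are the same argument up to re-indexing, and both avoid the Young-linearization pitfall you correctly flag.
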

\begin{proof} Let $m \in \mathbb{N}, m \geq 2$ be a fixed number.  Then, let $\{U_k\}_{k=0}^m$ be a family of open, smooth domains in $U$ such that  
 $U'=U_m\Subset U_{m-1}\Subset U_{m-2}\Subset \ldots \Subset U_1 \Subset U_0=\setV\subset U$. For each $k =1,2,\cdots, m$, let $\zeta_k(x)$ be a smooth cut-off function 
 which is equal to one on $U_k$ and zero on $U\setminus U_{k-1}$. There is a positive constant $C>0$ depending on $\zeta_k$, $k=1,2,\ldots,m$, such that 
$ |\nabla \zeta_k| \leq C$, for all $k = 1, 2,\cdots, m.$
 Also, for each integer $k\ge 1$ and $s_0 \geq 0$,  we define 
$$X_k=\int_0^T \int_{U_k} K(|\nabla p|) |\nabla p|^{2k+s_0}  dxdt, \quad A_k=\int_{U_k} |\nabla p_0(x)|^{2k+s_0} dx.$$
Then, by applying \eqref{Kgrad1} with $\zeta =\zeta_k$ and $2s+2=2k+s_0$, we see that 
\beq\label{Xk1}
X_{k+1}\le C_{k,m,s_0} N_0 (A_k + X_k), \quad k = 1,2,\cdots m-1,
\eeq
where $N_0=\sup_{\setV\times[0,T]} |\bar p|^2=\sup_{[0,T]}\|\bar p\|_{L^\infty(V)}^2$. 
Letting $C_m=\max\{C_{k,m,s_0}:k=1,2,\ldots , m-1\}$, we have from \eqref{Xk1} that
\beqs
X_{k+1}\le C_m N_0 (A_k + X_k), \quad k = 1,2,\cdots m-1,
\eeqs
This inequality particularly yields  
\beqs
\begin{split}
X_m&\le (C_m N_0) A_{m-1}+(C_m N_0)^2 A_{m-2}+\cdots+(C_m N_0)^{m-1} A_1 + (C_m N_0)^{m-1}  X_1\\
&\le C[( N_0 A_{m-1}+N_0^2 A_{m-2}+\cdots+N_0^{m-1} A_1) + N_0^{m-1} X_1], 
\end{split}
\eeqs
with $C = C(m)$ depending on all sets $U_k$ and functions $\zeta_k$, for $k=1,2,\ldots,m-1$.
In other words, we have proved that for each integer $m \geq 2$ and real number $s\ge 0$, there is $C = C(m,s_0)$ such that 
\begin{equation*}\label{grad1}
\begin{split}
\int_0^T \int_{U'} K(|\nabla p|) |\nabla p|^{2m+s_0} dx dt & \le C \sum_{i=1}^{m-1} N_0^i\int_{U_{m-i}} |\nabla p_0(x)|^{2(m-i)+s_0} dx \\
& \quad +C N_0^{m-1}\int_0^T \int_{U_1} K(|\nabla p|) |\nabla p|^{2+s_0} dx dt.
\end{split}
\end{equation*}
By using Young's inequality, one can rewrite this inequality as 
\begin{multline}\label{grad15}
\int_0^T \int_{U'} K(|\nabla p|) |\nabla p|^{2m+s_0} dx dt \le C (N_0+N_0^{m-1}) \\
\cdot \Big\{ \int_{U} |\nabla p_0(x)|^{2+s_0} dx +\int_{U} |\nabla p_0(x)|^{2m+s_0-2} dx
+\int_0^T \int_{U_1} K(|\nabla p|) |\nabla p|^{2+s_0} dx dt\Big\}.
\end{multline}
In particular,  with $m=2$ and $s_0=0$, \eqref{grad15} becomes 
\begin{align}\label{Kgrad3}
 \int_0^T \int_{U'} K(|\nabla p|) |\nabla p|^4 dx
&\le C N_0 \Big\{ \int_{U} |\nabla p_0(x)|^2 dx
+   \int_0^T \int_{U_1} K(|\nabla p|) |\nabla p|^2 dx dt\Big\}.
\end{align}
This implies \eqref{Kgrad50} when $s =4$. In case $ s \in (2,4)$, let $\alpha$ and $\beta$ be two positive numbers  such that 
\beq\label{salbe}
\frac 1 s =\frac \alpha{2}+\frac\beta {4}\quad\text{and} \quad \alpha+\beta=1. 
\eeq
Then, using interpolation inequality, we get 
\begin{align*}
\Big(\int_0^T\int_{U'} K(|\nabla p|)|\nabla p|^s dx dt\Big)^{\frac 1 s} &\le \Big(\int_0^T\int_{U_1} K(|\nabla p|) |\nabla p|^{2} dx dt\Big)^{\frac \alpha {2}} \Big(\int_0^T\int_{U'} K(|\nabla p|) |\nabla p|^{4} dx dt\Big)^{\frac \beta {4}}
\end{align*}
From this and \eqref{Kgrad3}, it follows that  
\begin{align*}
\int_0^T\int_{U'} K(|\nabla p|)|\nabla p|^s dx dt
&\le C N_0^{\frac {\beta s} {4}}  \Big\{ \int_{U} |\nabla p_0(x)|^2 dx
+   \int_0^T \int_{U_1} K(|\nabla p|) |\nabla p|^2 dx\Big\}.
\end{align*}
From \eqref{salbe} follows $\beta s/4=s/2-1$. Thus, we have 
\beqs\label{Kgrad4}
\int_0^T\int_{U'} K(|\nabla p|)|\nabla p|^s dx dt
\le C N_0^{\frac {s} {2}-1}  \Big\{ \int_{U} |\nabla p_0(x)|^2 dx
+   \int_0^T \int_{U} K(|\nabla p|) |\nabla p|^2 dx\Big\}.
\eeqs
This implies \eqref{Kgrad50} for $s\in(2,4)$. Therefore, we have proved \eqref{Kgrad50} with $s \in (2, 4]$. Now, for $s>4$, there are a number $s_0\in (0, 2]$ and an integer $m\ge 2$ such that $s=s_0+2m$. From \eqref{grad15}, we have 
\begin{align*}
&\int_0^T \int_{U'} K(|\nabla p|) |\nabla p|^{s} dx dt 
\le C (N_0+N_0^{m-1}) \Big\{ \int_{U} |\nabla p_0(x)|^{2+s_0} dx +\int_{U} |\nabla p_0(x)|^{2m+s_0-2} dx\\
&\quad  +\int_0^T \int_{U_1} K(|\nabla p|) |\nabla p|^{2+s_0} dx dt\Big\}\\
&\le C (N_0+N_0^\frac{s-s_0-2}{2}) \Big\{ \int_{U} |\nabla p_0(x)|^{2+s_0} dx +\int_{U} |\nabla p_0(x)|^{s-2} dx
+\int_0^T \int_{U_1} K(|\nabla p|) |\nabla p|^{2+s_0} dx dt\Big\}.
\end{align*}
Since $s_0+2\in(2,4]$, it follows from the last inequality and estimate \eqref{Kgrad50} already proved for the case $s \in (2,4]$ with $U_1$ in place of $U'$ and $s_0+2$ in place of $s$ that 
\begin{align*}
&\int_0^T \int_{U'} K(|\nabla p|) |\nabla p|^{s} dx dt 
\le C (N_0+N_0^\frac{s-s_0-2}{2}) \Big\{ \int_{U} |\nabla p_0(x)|^{2+s_0} dx +\int_{U} |\nabla p_0(x)|^{s-2} dx\\
&\quad + N_0^{\frac {s_0} {2}}  \Big( \int_{U} |\nabla p_0(x)|^2 dx
+   \int_0^T \int_{U} K(|\nabla p|) |\nabla p|^2 dx dt \Big)\Big\}\\
&\le C (1+N_0^\frac{s-2}{2}) \Big\{ \int_{U} |\nabla p_0(x)|^{2} dx +\int_{U} |\nabla p_0(x)|^{s-2} dx+ \int_0^T \int_{U} K(|\nabla p|) |\nabla p|^2 dxdt \Big\}.
\end{align*}
From this, the desired estimate \eqref{Kgrad50} follows and the proof is complete.
\end{proof}

Now, we give specific  estimates for the $L^s$-norm of $\nabla p$ in terms of initial and boundary data.
We denote
\beq\label{Nf1}
 \Nf_{1,T} = 1+\sup_{[0,T]}\|\psi\|_{L^\infty}.
\eeq
\begin{theorem}  \label{thm312}
\asdc. Then for $s\ge 2$, there is a positive constant $C$ depending on $U'$ and $s$  such that for any $T >0$
\begin{equation}\label{grad20}
\int_0^T \int_{U'} K(|\nabla p|) |\nabla p|^{s} dx dt 
\le C L_1(s)  (T+1)^{\frac{2(s-2)}{2-a}+1} \Nf_{1,T}^{\frac{s-a}{1-a}},
\end{equation}
\begin{equation}\label{grad20pw}
\sup_{[0,T]}\int_{U'} |\nabla p(x,t)|^{s} dx
\le C L_2(s)  (T+1)^{\frac{2(s-2)}{2-a}+1} \Nf_{1,T}^{\frac{s-a}{1-a}},
\end{equation}
where
\begin{align*}
L_1(s)&=L_1(s;[p_0])\eqdef (1+ \|\bar p_0\|_{L^\infty})^{s-2}  \Big ( 1+\int_U |\nabla p_0(x)|^{\max\{2,s-2\}} dx \Big),\\
L_2(s)&=L_2(s;[p_0])\eqdef (1+ \|\bar p_0\|_{L^\infty})^{s-2}  \Big ( 1+\int_U |\nabla p_0(x)|^{s} dx \Big).
\end{align*}
\end{theorem}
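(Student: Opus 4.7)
The plan is to deduce both estimates from Proposition \ref{interp} (specifically \eqref{Kgrad50}), together with the uniform $L^\infty$-bound on $\bar p$ from Proposition \ref{p-T.est} and the basic energy estimate of Theorem \ref{p-estimate}(i). The structure of \eqref{Kgrad50} is ideal: it reduces the $L^s$-control of $\nabla p$ to three ingredients already handled in the paper, namely $\sup_{[0,T]}\|\bar p\|_{L^\infty(V)}$, pure initial-data quantities, and the baseline energy $\int_0^T\int_U K(|\nabla p|)|\nabla p|^2\,dx\,dt$.

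For \eqref{grad20}, I fix an intermediate smooth subdomain $V$ with $U'\Subset V\Subset U$ and apply \eqref{Kgrad50}. Proposition \ref{p-T.est} immediately gives
\[
1+\sup_{[0,T]}\|\bar p\|_{L^\infty(V)} \le C(1+\|\bar p_0\|_{L^\infty})(1+T)^{2/(2-a)}\Nf_{1,T}^{1/(1-a)};
\]
raising this to the $(s-2)$-power produces the factor $(1+\|\bar p_0\|_{L^\infty})^{s-2}(1+T)^{2(s-2)/(2-a)}\Nf_{1,T}^{(s-2)/(1-a)}$, which accounts for the $T$-exponent $2(s-2)/(2-a)$. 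For the baseline energy, I use $K(|\nabla p|)|\nabla p|^2\le H(|\nabla p|)$ from \eqref{Hcompare} followed by \eqref{p-bar-bound1} to obtain
\[
\int_0^T\int_U K(|\nabla p|)|\nabla p|^2\,dx\,dt \le \|\bar p_0\|_{L^2}^2 + C\int_0^T f(\tau)\,d\tau.
\]
The Poincar\'e inequality applied to the zero-mean function $\bar p_0$ absorbs $\|\bar p_0\|_{L^2}^2$ into $\int_U|\nabla p_0|^2\,dx$, which merges into the $\int_U|\nabla p_0|^{\max\{2,s-2\}}\,dx$ term from \eqref{Kgrad50}, and the trivial estimate $\int_0^T f(\tau)\,d\tau\le CT\Nf_{1,T}^{(2-a)/(1-a)}$ supplies the remaining factor. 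Using the identity $(s-2)/(1-a)+(2-a)/(1-a)=(s-a)/(1-a)$ closes the exponent arithmetic and yields \eqref{grad20}.

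For \eqref{grad20pw}, I invoke the pointwise-in-time inequality \eqref{preLs3}. Choosing a smooth cut-off $\zeta\in C_c^\infty(U)$ with $\zeta\equiv 1$ on $U'$ and supported in some $V'\Subset U$, \eqref{preLs3} reduces the estimate to bounding $\int_U|\nabla p_0|^s\,dx$ (precisely the extra ingredient that upgrades $L_1(s)$ to $L_2(s)$) and $\int_0^T\int_{V'} K(|\nabla p|)|\nabla p|^s\,dx\,dt$. The latter is controlled by the just-proved \eqref{grad20} applied with $V'$ in place of $U'$, choosing a further intermediate domain between $V'$ and $U$.

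The main obstacle is bookkeeping the $T$ and $\Nf_{1,T}$ exponents: one must verify that the sup-norm factor and the baseline-energy factor assemble precisely into $(T+1)^{2(s-2)/(2-a)+1}\Nf_{1,T}^{(s-a)/(1-a)}$, and that the initial-data pieces collapse into $L_1(s)$ and $L_2(s)$. Beyond this, the only analytic observation needed is the Poincar\'e inequality applied to $\bar p_0$; no new $L^\infty$-estimate or embedding theorem is required.
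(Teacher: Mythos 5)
Your proposal is correct and mirrors the paper's own proof almost step for step: both rely on \eqref{Kgrad50} of Proposition \ref{interp} fed with \eqref{pbaronly} from Proposition \ref{p-T.est}, the baseline energy bound \eqref{p-bar-bound1} combined with \eqref{Hcompare}, Poincar\'e's inequality applied to $\bar p_0$ (together with Young's inequality) to merge the $L^2$-term into $L_1(s)$, and then \eqref{preLs3} plus the just-established \eqref{grad20} to produce \eqref{grad20pw}. The exponent arithmetic you describe is exactly how the paper assembles $(T+1)^{2(s-2)/(2-a)+1}\Nf_{1,T}^{(s-a)/(1-a)}$.
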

\begin{proof} From the inequality \eqref{pbaronly} in Proposition \ref{p-T.est}, it follows that 
\beqs\label{Lamdbound}
\sup_{[0,T]} \|\bar p\|_{L^\infty} \le C \Big\{ \|\bar p_0\|_{L^\infty} + (T+1)^\frac2{2-a}\Nf_{1,T}^\frac1{1-a} \Big\}.
\eeqs
Thanks to \eqref{p-bar-bound1} of Theorem \ref{p-estimate}, we have 
\beq\label{JH2-a}
\int_0^T \int_U K(|\nabla p|) |\nabla p|^2 dxdt\le \norm{\bar p_0}_{L^2}^2 +C\int_0^T f(\tau)d\tau.
\eeq
These inequalities together with \eqref{Kgrad50} of Proposition \ref{interp} yield
\begin{multline*}
\int_0^T \int_{U'} K(|\nabla p|) |\nabla p|^{s} dx dt 
\le C \Big ( \|\bar p_0\|_{L^\infty}  + (T+1)^\frac2{2-a}\Nf_{1,T}^\frac1{1-a}  \Big )^{s-2} \\
\cdot\Big (\|\bar p_0\|_{L^2}^2+ \int_{U} \big(|\nabla p_0(x)|^{2}+|\nabla p_0(x)|^{s-2}\big) dx+\int_0^T f(\tau)d\tau\Big).
\end{multline*}
Applying Poincar\'e's inequality to $\bar p_0$, and using Young's inequality, we obtain 
\begin{multline}\label{gpr1}
\int_0^T \int_{U'} K(|\nabla p|) |\nabla p|^{s} dx dt 
\le C \Big ( \|\bar p_0\|_{L^\infty}  + (T+1)^\frac2{2-a}\Nf_{1,T}^\frac1{1-a}  \Big )^{s-2} \\
\cdot\Big ( 1+\int_U |\nabla p_0(x)|^{\max\{2,s-2\}} dx+\int_0^T f(\tau)d\tau\Big),
\end{multline}
Then \eqref{grad20} follows.
It follows from \eqref{preLs3} and \eqref{grad20} that
\begin{multline*}
\sup_{[0,T]}\int_{U'} |\nabla p(x,t)|^{s} dx
\le \int_{U'} |\nabla p_0(x)|^{s} dx + C \Big ( \|\bar p_0\|_{L^\infty}  + (T+1)^\frac2{2-a}\Nf_{1,T}^\frac1{1-a}  \Big )^{s-2} \\
\cdot\Big (1+\int_{U} \big(|\nabla p_0(x)|^{\max\{2,s-2\}} dx+\int_0^T f(\tau)d\tau\Big),
\end{multline*}
Then by Young's inequality we obtain 
\begin{multline}\label{gpr2}
\sup_{[0,T]}\int_{U'} |\nabla p(x,t)|^{s} dx
\le C \Big ( \|\bar p_0\|_{L^\infty}  + (T+1)^\frac2{2-a}\Nf_{1,T}^\frac1{1-a}  \Big )^{s-2} \\
\cdot\Big (1+  \int_{U} |\nabla p_0(x)|^{s} dx+\int_0^T f(\tau)d\tau\Big).
\end{multline}
Then \eqref{grad20pw} follows.
\end{proof}

Recall that $f$ is defined in \eqref{b} and $M_f$ is a continuous increasing majorant of $f$. Our next result is similar to Theorem \ref{thm312} but the estimates do not contain the power growth in $T$. 

\begin{theorem}\label{cor311}  
\asdc. For $s\ge 2$, there is a positive constant $C$ depending on $U'$ and $s$  such that for any $T >1$ we have
\beq\label{Kgrad55}
\int_0^T \int_{U'} K(|\nabla p|) |\nabla p|^{s} dx dt 
\le C L_3(s)  \Big(1+M_f(T) \Big)^{\muex_4(s-2)}\Big\{1+ \int_0^T f(t)dt\Big\},
\eeq
\beq\label{Kgrad55pw}
\sup_{[0,T]}\int_{U'} |\nabla p(x,t)|^{s} dx
\le C  L_4(s)\Big(1+M_f(T) \Big)^{\muex_4(s-2)}\Big\{1+ \int_0^T f(t)dt\Big\},
\eeq
where
\begin{align*}
L_3(s)&= L_3(s;[p_0])\eqdef  \Big( 1+ \|\bar p_0\|_{L^\infty}+\|\bar p_0\|_{L^2}^{\muex_4(2-a)}\Big)^{s-2} \Big\{ 1+\int_{U} |\nabla p_0(x)|^{\max\{2,s-2\}} dx\Big\},\\
L_4(s)&=L_4(s;[p_0])\eqdef \Big( 1+ \|\bar p_0\|_{L^\infty}+\|\bar p_0\|_{L^2}^{\muex_4(2-a)}\Big)^{s-2} \Big\{ 1+\int_{U} |\nabla p_0(x)|^{s} dx\Big\}. 
\end{align*}

\end{theorem}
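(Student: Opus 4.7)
The plan is to re-run the scheme used in the proof of Theorem \ref{thm312}, but to replace the $L^\infty$-bound \eqref{pbaronly} (which forced polynomial growth in $T$) by the sharper, time-uniform bound \eqref{pmax} from Theorem \ref{thm-1}(i). This single substitution is what converts the $T$-polynomial into the majorant $(1+M_f(T))^{\muex_4(s-2)}$ appearing in $L_3,L_4$.

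For \eqref{Kgrad55}, I would first apply the interior estimate \eqref{Kgrad50} of Proposition \ref{interp} with $U'\Subset \setV \Subset U$ to get
\[
\int_0^T\!\!\int_{U'} K(|\nabla p|)|\nabla p|^s\,dxdt \le C\bigl(1+\sup_{[0,T]}\|\bar p\|_{L^\infty(\setV)}\bigr)^{s-2}\{I_0+J(T)\},
\]
with $I_0=\int_U(|\nabla p_0|^2+|\nabla p_0|^{s-2})dx$ and $J(T)=\int_0^T\int_U K(|\nabla p|)|\nabla p|^2\,dxdt$. For the supremum I would invoke \eqref{pmax}, together with the monotonicity of $M_f$, to bound
\[
\sup_{[0,T]}\|\bar p\|_{L^\infty}\le C\bigl(1+\|\bar p_0\|_{L^\infty}+\|\bar p_0\|_{L^2}^{\muex_4(2-a)}+M_f(T)^{\muex_4}\bigr).
\]
For $J(T)$ I would combine the pointwise comparison $K(\xi)\xi^2\le H(\xi)$ from \eqref{Hcompare} with estimate \eqref{p-bar-bound1} of Theorem \ref{p-estimate} to obtain $J(T)\le \|\bar p_0\|_{L^2}^2+C\int_0^T f(\tau)d\tau$. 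A Poincaré inequality on $\bar p_0$ combined with Young's inequality then reduces $I_0+J(T)$ to $C\bigl(1+\int_U|\nabla p_0|^{\max\{2,s-2\}}dx\bigr)\bigl(1+\int_0^T f(\tau)d\tau\bigr)$.

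The remaining step is purely algebraic. Writing $E_0:=1+\|\bar p_0\|_{L^\infty}+\|\bar p_0\|_{L^2}^{\muex_4(2-a)}\ge 1$, I would use $(E_0+M_f(T)^{\muex_4})^{s-2}\le C_s\bigl(E_0^{s-2}+M_f(T)^{\muex_4(s-2)}\bigr)$ (via subadditivity of $x\mapsto x^{s-2}$ when $s\in[2,3]$ and convexity when $s\ge 3$), together with $E_0\ge 1$ and $(1+M_f(T))^{\muex_4(s-2)}\ge\max\{1,M_f(T)^{\muex_4(s-2)}\}$, to conclude $(1+\sup_{[0,T]}\|\bar p\|_{L^\infty})^{s-2}\le CE_0^{s-2}(1+M_f(T))^{\muex_4(s-2)}$. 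Multiplying this by the bracketed factor produces precisely the right-hand side of \eqref{Kgrad55}.

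For the pointwise-in-time bound \eqref{Kgrad55pw}, I would fix an intermediate open set $U'\Subset\tilde U\Subset U$ and a smooth cutoff $\zeta$ that equals one on $U'$ and is supported in $\tilde U$. Applying \eqref{preLs3} yields
\[
\sup_{[0,T]}\int_{U'}|\nabla p(t)|^s dx\le \int_{\tilde U}|\nabla p_0|^s dx+C\int_0^T\!\!\int_{\tilde U}K(|\nabla p|)|\nabla p|^s\,dxdt.
\]
The space-time integral on the right is controlled by the just-proved \eqref{Kgrad55} applied with $\tilde U$ in place of $U'$; the pure initial piece $\int_{\tilde U}|\nabla p_0|^s dx$ is absorbed into the factor $1+\int_U|\nabla p_0|^s dx$ of $L_4(s)$, and since $\max\{2,s-2\}\le s$ the gradient-data factor of $L_3$ is dominated by that of $L_4$. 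The main obstacle I foresee is the exponent bookkeeping in the algebraic factorization step — correctly producing the clean multiplicative form $(1+M_f(T))^{\muex_4(s-2)}$ rather than a sum of separately growing terms requires care — but no new analytic ingredient is needed beyond Proposition \ref{interp}, Theorem \ref{thm-1}(i), Theorem \ref{p-estimate}(i), and the energy-type identity \eqref{preLs3}.
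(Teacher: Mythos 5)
Your proposal is correct and is essentially the same argument the paper uses: the paper's proof of Theorem~\ref{cor311} consists precisely of replacing the bound \eqref{pbaronly} by \eqref{pmax} inside the scheme of Theorem~\ref{thm312} (i.e., Proposition~\ref{interp} plus \eqref{p-bar-bound1} plus \eqref{preLs3}), and then doing the same algebraic factorization you describe to reach the displayed form with $L_3,L_4$ and $(1+M_f(T))^{\muex_4(s-2)}$. Your handling of the pointwise-in-time bound via \eqref{preLs3} and the just-proved space-time estimate on a larger intermediate set is also exactly the route the paper takes for \eqref{Kgrad55pw}.
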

\begin{proof}
The proof is the same as in Theorem \ref{thm312} with the use of estimate \eqref{pmax} in place of \eqref{pbaronly}.
Instead of \eqref{grad20} and \eqref{grad20pw}, we have, respectively,
\beq\label{gpr3}
\begin{aligned}
\int_0^T \int_{U'} K(|\nabla p|) |\nabla p|^{s} dx dt 
&\le C  \Big( 1+ \|\bar p_0\|_{L^\infty}+\|\bar p_0\|_{L^2}^{\muex_4(2-a)} +M_f(T)^{\muex_4} \Big)^{s-2}\\
&\quad \cdot \Big\{ 1+\int_{U} |\nabla p_0(x)|^{\max\{2,s-2\}} dx+ \int_0^T f(t)dt\Big\},
\end{aligned}
\eeq
\beq\label{gpr4}
\begin{aligned}
\sup_{[0,T]}\int_{U'} |\nabla p(x,t)|^{s} dx
&\le  C  \Big( 1+ \|\bar p_0\|_{L^\infty}+\|\bar p_0\|_{L^2}^{\muex_4(2-a)} +M_f(T)^{\muex_4} \Big)^{s-2}\\
&\quad \cdot \Big\{ 1+\int_{U} |\nabla p_0(x)|^{s} dx+ \int_0^T f(t)dt\Big\}.
\end{aligned}
\eeq
Then \eqref{Kgrad55} and \eqref{Kgrad55pw} follow, respectively.
\end{proof}

In \eqref{Kgrad55} and \eqref{Kgrad55pw}, letting $T\to\infty$, we obtain the following.

\begin{corollary}\label{corMM}
Under the Strict Degree Condition, if 
\beq\label{xgood}
\Upsilon_1 \eqdef 1+\sup_{[0,\infty)}f <\infty \quad \text{and}\quad \Upsilon_2 \eqdef 1+\int_0^\infty f(t)dt<\infty,
\eeq 
then for any $s\ge 2$, there is a constant $C=C(U',s)>0$ such that 
\beq \label{Kgrad60}
\int_0^\infty \int_{U'} K(|\nabla p|) |\nabla p|^{s} dx dt
\le C  L_3(s) \Upsilon_1^{\muex_4(s-2)} \Upsilon_2,
\eeq
\beq \label{Kgrad60pw}
\sup_{[0,\infty)}\int_{U'} |\nabla p(x,t)|^{s} dx
\le  C  L_4(s) \Upsilon_1^{\muex_4(s-2)} \Upsilon_2.
\eeq
\end{corollary}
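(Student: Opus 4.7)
The proof is essentially a passage to the limit $T\to\infty$ in the two finite-time estimates \eqref{Kgrad55} and \eqref{Kgrad55pw} of Theorem \ref{cor311}, using the finiteness assumptions in \eqref{xgood} to extract $T$-independent bounds on the right-hand sides.

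The plan is as follows. First I would fix the majorant $M_f$ appearing in Theorem \ref{cor311}; since $f\in C([0,\infty))$ and $\sup_{[0,\infty)}f<\infty$, one may take $M_f(T)=\sup_{[0,T]} f$, which (since $f$ is continuous and bounded) is a continuous, nondecreasing majorant of $f$ on $[0,\infty)$, and which satisfies
\begin{equation*}
1+M_f(T)\le 1+\sup_{[0,\infty)}f=\Upsilon_1\qquad\text{for every }T>0.
\end{equation*}
At the same time, the nonnegativity of $f$ gives
\begin{equation*}
1+\int_0^T f(t)\,dt\le 1+\int_0^\infty f(t)\,dt=\Upsilon_2 \qquad\text{for every }T>0.
\end{equation*}

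Plugging these two uniform bounds into \eqref{Kgrad55} and \eqref{Kgrad55pw}, we obtain, for every $T>1$,
\begin{equation*}
\int_0^T\!\!\int_{U'} K(|\nabla p|)|\nabla p|^s\,dx\,dt\le C\,L_3(s)\,\Upsilon_1^{\muex_4(s-2)}\,\Upsilon_2,
\end{equation*}
\begin{equation*}
\sup_{[0,T]}\int_{U'}|\nabla p(x,t)|^s\,dx\le C\,L_4(s)\,\Upsilon_1^{\muex_4(s-2)}\,\Upsilon_2,
\end{equation*}
with a constant $C=C(U',s)$ independent of $T$. For \eqref{Kgrad60}, I let $T\to\infty$ and apply the monotone convergence theorem to the integral on the left. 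For \eqref{Kgrad60pw}, the right-hand side of the second bound is independent of $T$, so taking the supremum over all $T>1$ (and noting that $T\in(0,1]$ is trivially handled, e.g., by absorbing into $L_4(s)$ via $\int_{U'}|\nabla p_0|^s\,dx$ and continuity at $t=0$) yields the supremum on $[0,\infty)$.

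There is no real obstacle here: the content of the corollary is already contained in Theorem \ref{cor311}, and the only bookkeeping is the choice of a continuous increasing majorant $M_f$ that remains bounded by $\Upsilon_1$ for all $T$, which is immediate once $\sup_{[0,\infty)}f<\infty$. The mild subtlety is that $M_f$ is fixed in the statement before the hypothesis \eqref{xgood} is imposed; this is harmless because Theorem \ref{cor311} holds for any such majorant, so one simply applies it with the specific choice $M_f(T)=\sup_{[0,T]} f$.
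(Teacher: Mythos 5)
Your proof is correct and coincides with the paper's argument: the paper simply remarks that the corollary follows by letting $T\to\infty$ in \eqref{Kgrad55} and \eqref{Kgrad55pw}, which is exactly what you do, with the helpful additional bookkeeping of choosing $M_f(T)=\sup_{[0,T]}f$ so that $1+M_f(T)\le\Upsilon_1$ and $1+\int_0^T f\,dt\le\Upsilon_2$ uniformly in $T$.
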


Using the preceding theorems and the properties of function $K(\cdot)$, we can derive the following direct estimates for $\int_0^T\int_{U'}|\nabla p|^s dxdt$. 

\begin{theorem}  \label{cor312} 
\asdc. For $s\ge 2-a$, there is 
a constant $C>0$ such that for any $T>0$ we have
\beq\label{grad39}
\int_0^T \int_{U'} |\nabla p|^{s} dx dt \le C \le C L_1(s+a)  (T+1)^{\frac{2s}{2-a}-1} \Nf_{1,T}^{\frac{s}{1-a}},
\eeq
and, alternatively, 
\beq\label{Kgrad70}
\int_0^T \int_{U'} |\nabla p|^{s} dx dt 
\le CT + C L_3(s+a)  \Big(1+M_f(T) \Big)^{\muex_4(s+a-2)}\Big\{1+ \int_0^T f(t)dt\Big\},
\eeq
where the positive numbers $L_1(\cdot)$ and $L_3(\cdot)$ are defined in Theorems \ref{thm312} and \ref{cor311}.
\end{theorem}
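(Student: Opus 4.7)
The plan is to deduce both inequalities from the weighted $L^{s+a}$-bounds already established in Theorems \ref{thm312} and \ref{cor311} by using the lower estimate for $K(\xi)\xi^{s+a}$ supplied by Lemma \ref{lem21} to remove the weight $K(|\nabla p|)$. Concretely, I apply inequality \eqref{Km} with exponent $m=s+a$ (which satisfies $m\ge 2\ge 1$ since $s\ge 2-a$) and $\delta=1$ to obtain the pointwise inequality
\beqs
\xi^{s}\le C\, K(\xi,\vec a)\,\xi^{s+a}+C,\qquad \xi\ge 0,
\eeqs
with $C>0$ depending only on $s$ and $\vec a$. Substituting $\xi=|\nabla p(x,t)|$ and integrating over $U'\times(0,T)$ gives the master inequality
\beqs
\int_0^T\!\!\int_{U'}|\nabla p|^{s}\,dx\,dt \le C\int_0^T\!\!\int_{U'} K(|\nabla p|)\,|\nabla p|^{s+a}\,dx\,dt + C|U'|\,T.
\eeqs

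For \eqref{grad39}, I apply Theorem \ref{thm312} to the right-hand side with $s$ replaced by $s+a\ge 2$. The exponent of $(T+1)$ then becomes $\frac{2(s+a-2)}{2-a}+1=\frac{2s+a-2}{2-a}=\frac{2s}{2-a}-1$, while the exponent of $\Nf_{1,T}$ becomes $\frac{(s+a)-a}{1-a}=\frac{s}{1-a}$, matching the form of \eqref{grad39}. Since $s\ge 2-a$ forces $\frac{2s}{2-a}-1\ge 1$, the additive $C|U'|T$ is absorbed into the main term. For \eqref{Kgrad70}, I distinguish two cases. When $T>1$, I apply Theorem \ref{cor311} with $s$ replaced by $s+a$; the output matches \eqref{Kgrad70} precisely, and the $CT$ term accommodates the additive $C|U'|T$ from the master inequality. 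When $0<T\le 1$, I invoke \eqref{grad39} (already valid for every $T>0$) and bound its right-hand side by that of \eqref{Kgrad70}, using that on a bounded $T$-range both $\Nf_{1,T}$ and $M_f(T)$ control $\sup_{[0,T]}\|\psi\|_{L^\infty}$, while the $CT$ summand absorbs residual constants.

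The proof is essentially algebraic once the master inequality is in place; no new iteration or embedding is required, because the heavy lifting---De Giorgi iteration in Proposition~\ref{interp} together with the weighted Sobolev embedding of Lemma~\ref{LUK}---has already been carried out to produce Theorems \ref{thm312} and \ref{cor311}. The only step requiring care is the bookkeeping of the shifted exponents, in particular confirming the identity $\frac{2(s+a-2)}{2-a}+1=\frac{2s}{2-a}-1$ and verifying that the hypothesis $s+a\ge 2$ required by the upstream theorems is exactly the one provided by $s\ge 2-a$.
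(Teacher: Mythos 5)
Your proof is correct and follows the same route as the paper: reduce the unweighted $L^s$-estimate to the weighted $L^{s+a}$-estimate via the lower bound in \eqref{Km} (yielding exactly the paper's relation \eqref{KgradsRel}), then invoke Theorems \ref{thm312} and \ref{cor311} with $s$ replaced by $s+a$, with the exponent arithmetic checking out. One small remark on your extra case-split for $T\le 1$: while it is true that Theorem \ref{cor311} is stated for $T>1$, inspecting its proof (which rests on \eqref{pmax}, valid for all $t>0$) shows that \eqref{Kgrad55} actually holds for every $T>0$; using that observation is cleaner than trying to dominate \eqref{grad39} by the right-hand side of \eqref{Kgrad70}, whose comparison between $\Nf_{1,T}^{s/(1-a)}$ and $(1+M_f(T))^{\muex_4(s+a-2)}$ is not immediate.
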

\begin{proof} 
By relation \eqref{Km},
\beq\label{KgradsRel}
\int_0^T \int_{U'} |\nabla p|^{s} dx dt \le C T + \int_0^T\int_{U'} K(|\nabla p|)|\nabla p|^{s+a} dx dt.
\eeq
The last integral is estimated by applying \eqref{grad20} with $s+a$ replacing $s$.
As a result, we obtain  \eqref{grad39}.
Now, using relation \eqref{KgradsRel} and applying \eqref{Kgrad55} for $s+a$ in place of $s$, we obtain \eqref{Kgrad70}. 
\end{proof}

\subsection{Estimates for pressure's time derivative}
\label{Lpt-sec}
In this subsection, we derive the interior $L^\infty$-norm of $\bar p_t$. 
Throughout this subsection $U'\Subset U$.
Under the Strict Degree Condition, let 
\beq \label{sdef}
{\muex_5}=4\Big(1-\frac 1 {(2-a)^*}\Big),\quad
\muex_6=1-\frac2 {\muex_5},\quad  
\muex_7=\frac 2 {{\muex_5}(2-a)}.
\eeq
Then ${\muex_5}\in (2,(2-a)^*)$ and $\muex_6,\muex_7\in(0,1).$

\begin{proposition} \label{Lqbar} \asdc. 
There is a constant $C =C(U')>0$ such that for any $T_0\ge 0$, $T>0$ and $\theta\in(0,1)$, we have
\begin{multline}\label{qbarbound}
\sup_{[T_0+\theta T,T_0+T]}\|\bar p_t\|_{L^\infty(U')}\le C\Big\{\big[\lambda (1+(\theta T)^{-1/2} )\big]^\frac{1}{\muex_6} +\big[ \lambda T^{1/2} \sup_{[T_0,T_0+T]}\|\psi_t\|_{L^\infty} \big]^\frac1{\muex_6+1}\Big\}\\
\cdot \Big( \|\bar p_t\|_{L^2(U\times (T_0,T_0+T))} + \|\bar p_t\|_{L^2(U\times (T_0,T_0+T))}^\frac{\muex_6}{\muex_6+1}\Big),
\end{multline}
where
$\lambda=   1 + \sup_{[T_0,T_0+T]}\left(\int_U |\nabla p(x,t)|^{2-a}dx  \right)^{\muex_7},$ 
and
\beq\label{pt:bound}
\sup_{[T_0+\theta T,T_0+T]}\|p_t\|_{L^\infty(U')}\le C\Big(1+\frac 1{\theta T} \Big)^\frac{1}{\muex_6} \Big( 1+\sup_{[T_0,T_0+T]}\norm{\nabla p}_{L^{2-a}(U)}^{\frac 2{{\muex_5}-2}}\Big) \norm{ p_t}_{L^2(U\times(T_0, T_0+T))}.
\eeq 
\end{proposition}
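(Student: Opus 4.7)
The plan is to adapt the De Giorgi iteration used in Propositions~\ref{p-T.est} and~\ref{local-L-infty}, applied now to the time derivative $q=\bar p_t$ (resp.\ $q=p_t$), and combined with the \emph{weighted} parabolic Sobolev embedding \eqref{Wemb} of Lemma~\ref{ParaSob-3} with weight $W=K(|\nabla p|)$ in place of the unweighted version used earlier. The quantity $\lambda$ in the statement is exactly the weight factor produced by that embedding.

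Differentiating the first equation of \eqref{eqgamma} in $t$ and using that $\bar p_t-p_t$ depends only on $t$, $q$ satisfies
\[
q_t=\partial_i\bigl(A_{ij}(x,t)\,\partial_j q\bigr)+G(t),\qquad A_{ij}=\partial_{y_j}\bigl(K(|y|)y_i\bigr)\big|_{y=\nabla p(x,t)},
\]
with $G(t)=|U|^{-1}\int_\Gamma\psi_t(x,t)\,d\sigma$ when $q=\bar p_t$ and $G\equiv 0$ when $q=p_t$. From \eqref{K-est-2}, $A$ inherits the degenerate coercivity $\xi^T A\xi\ge(1-a)K(|\nabla p|)|\xi|^2$. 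Fix a spatial cutoff $\eta\in C_c^\infty(U)$ with $\eta\equiv 1$ on $U'$, and for $M_0>0$ to be chosen set $k_i=M_0(1-2^{-i})$, $t_i=T_0+\theta T(1-2^{-i})$, with piecewise-linear time cutoff $\zeta_i$ equal to $0$ on $[0,t_i]$ and to $1$ on $[t_{i+1},T_0+T]$, so that $|\zeta_i'|\le 2^{i+1}/(\theta T)$. Testing the equation for $q$ against $q^{(k_{i+1})}\zeta_i\eta^2$, using the coercivity, Young's inequality, and the bound $|\int_U G\,q^{(k_{i+1})}\eta^2dx|\le C\|\psi_t\|_{L^\infty}\|q^{(k_{i+1})}\eta\|_{L^2}|S_{k_{i+1}}(t)|^{1/2}$ for the source, while absorbing the harmless $|\nabla\eta|^2q^2$ contribution (recall $K\le C$), produces the energy estimate
\[
F_i\le\frac{C\,2^i}{\theta T}\,\|q^{(k_i)}\|_{L^2(A_i)}^2\;+\;C\,T\sup_{[T_0,T_0+T]}\|\psi_t\|_{L^\infty}^2\,|A_{i+1,i}|,
\]
where $F_i$ is the standard combination of $\sup_{t\ge t_{i+1}}\|q^{(k_{i+1})}\eta(t)\|_{L^2}^2$ and the $K$-weighted Dirichlet energy of $q^{(k_{i+1})}\eta$, and $A_{i+1,i}=\{(x,t):q>k_{i+1},\,t_i<t<T_0+T\}$, $A_i=A_{i,i}$.

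The crucial step is to apply Lemma~\ref{ParaSob-3} to $u=q^{(k_{i+1})}\eta$ (which vanishes on $\partial U$) with $m=2$, $r=2-a$, $W=K(|\nabla p|)$. Under the Strict Degree Condition, Lemma~\ref{lem21} yields $K^{-(2-a)/a}\le C(1+|\nabla p|)^{2-a}$, so the weight factor $\sup_t(\int_U K^{-(2-a)/a}dx)^{\muex_7}$ is controlled by $\lambda$, giving
\[
\|q^{(k_{i+1})}\eta\|_{L^{\muex_5}(Q)}\le C\lambda\,F_i^{1/2}.
\]
Combining this with H\"older's inequality (exponent $\muex_6/2=1/2-1/\muex_5$), the Chebyshev bound $|A_{i+1,i}|\le 4^iM_0^{-2}Y_i^2$ with $Y_i=\|q^{(k_i)}\|_{L^2(A_i)}$, and the two-term bound on $F_i$, produces a recursion of the form
\[
Y_{i+1}\le C^i\Bigl[\lambda(\theta T)^{-1/2}M_0^{-\muex_6}+\lambda T^{1/2}\sup_{[T_0,T_0+T]}\|\psi_t\|_{L^\infty}\,M_0^{-(1+\muex_6)}\Bigr]Y_i^{1+\muex_6}.
\]
Applying the fast geometric convergence of Lemma~\ref{multiseq}, $Y_i\to 0$ provided $M_0$ is large enough to defeat each bracket term separately tested against $Y_0\le\|q\|_{L^2(U\times(T_0,T_0+T))}$. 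Solving the two constraints separately on $M_0$ produces exactly the two summands in \eqref{qbarbound}, and the symmetric argument on $-q$ furnishes the lower bound. Estimate \eqref{pt:bound} follows by running the same iteration on $q=p_t$: the source $G$ vanishes, so the second bracket term disappears, only the time-cutoff contribution survives, and $\lambda^{1/\muex_6}$ is rewritten in the form $1+\sup\|\nabla p\|_{L^{2-a}}^{2/(\muex_5-2)}$ via the identity $(2-a)\muex_7/\muex_6=2/(\muex_5-2)$.

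The main obstacle is the coherent tracking of the degenerate weight $W=K(|\nabla p|)$ throughout the iteration: the linearized operator loses ellipticity precisely where $|\nabla p|$ is large, and the whole strategy depends on capturing this loss in the single parameter $\lambda$ via the sharp degeneracy bound of Lemma~\ref{lem21}, together with the a priori gradient control from Theorem~\ref{p-estimate} tacit in $\lambda$. The two-summand structure of \eqref{qbarbound} is an algebraic consequence of the coexistence in the energy estimate for $F_i$ of two distinct sources, the time cutoff (yielding a $Y_i^2$ term) and the interior forcing $G(t)$ (yielding an $|A_{i+1,i}|$ term), each contributing a different power of $M_0$ to the iteration constant.
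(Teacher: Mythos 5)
Your overall strategy matches the paper's: linearize the equation for $q=\bar p_t$ (your chain-rule form $q_t=\partial_i(A_{ij}\partial_j q)+G$ with $A_{ij}=\partial_{y_j}(K(|y|)y_i)|_{y=\nabla p}$ is equivalent to the paper's splitting of $(K(|\nabla p|)\nabla p)_t$, and both use \eqref{K-est-2} for coercivity and boundedness), run De Giorgi in the truncation level and time, and invoke the weighted embedding \eqref{Wemb} with $W=K(|\nabla p|)$, whose weight factor is exactly $\lambda$. The algebraic identity $(2-a)\muex_7/\muex_6=2/(\muex_5-2)$ you use to pass from \eqref{qbarbound} to \eqref{pt:bound} is also correct.

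However, there is a genuine structural gap in the spatial localization. You fix a single cutoff $\eta\in C_c^\infty(U)$ with $\eta\equiv 1$ on $U'$ and iterate only $k_i$ and the time cutoff $\zeta_i$; the level sets $A_{i,j}$ you define and the iterates $Y_i=\|q^{(k_i)}\|_{L^2(A_i)}$ carry no spatial localization. This cannot close. If $Y_i$ is unweighted (as you wrote), the embedding produces control only of $\|q^{(k_{i+1})}\eta\|_{L^{\muex_5}}$, and after H\"older you obtain $\|q^{(k_{i+1})}\eta\|_{L^2(A_{i+1})}$ --- which is strictly smaller than $Y_{i+1}=\|q^{(k_{i+1})}\|_{L^2(A_{i+1})}$ because $\eta<1$ off a compact subset; you never recover $Y_{i+1}$ on the left of the recursion. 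If instead you redefine $Y_i=\|q^{(k_i)}\eta\|_{L^2(A_i)}$, then the $|\nabla\eta|^2$ contribution in the energy inequality, $\int\!\!\int (q^{(k_{i+1})})^2|\nabla\eta|^2\zeta_i\,dx\,dt$, is supported on $\operatorname{supp}\nabla\eta$ where $\eta<1$, and therefore cannot be bounded by $Y_i^2$; it only admits the $i$-independent bound $C\|q\|_{L^2}^2$, which kills the fast geometric decay required by Lemma~\ref{multiseq}. Either way the iteration does not close with a fixed $\eta$. The paper avoids both problems by nesting the cutoffs in space as well as time: fix $x_0\in U'$, $\rho=\operatorname{dist}(x_0,\partial U)$, shrinking balls $U_i=B(x_0,\rho_i)$ with $\rho_i=\tfrac14\rho(1+2^{-i})$, cylinders $\mathcal Q_i=U_i\times(t_i,T)$, and cutoffs $\zeta_i\equiv 1$ on $\mathcal Q_{i+1}$, $\zeta_i\equiv 0$ off $\mathcal Q_i$, with $|\nabla\zeta_i|\le C2^i/\rho$. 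Then $\nabla\zeta_i$ lives inside $\mathcal Q_i$ where $A_{i+1,i}$ is defined, $\zeta_i\equiv 1$ on $A_{i+1,i+1}\subset\mathcal Q_{i+1}$ so the $\eta$ drops out in H\"older's step, and the geometric blow-up $4^i/\rho^2$ of $|\nabla\zeta_i|^2$ is exactly what Lemma~\ref{multiseq} is designed to absorb; one then covers $U'$ by finitely many balls $B(x_0,\rho/4)$ at the end.

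A minor additional remark: in your displayed energy estimate you kept only $C2^i/(\theta T)$ for the cutoff contribution and dropped the $O(1)$ (or $O(4^i)$, once the spatial cutoffs are nested) term coming from $|\nabla\zeta_i|^2$; that term must remain, and it is responsible for the ``$1+$'' in the factor $(1+(\theta T)^{-1/2})$ of \eqref{qbarbound}. With the nested-cylinder localization substituted for the fixed $\eta$, the rest of your argument reproduces the paper's proof.
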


\begin{proof} Without loss of generality, we assume $T_0=0$.
We prove \eqref{qbarbound} first.
Let $ q=p_t$ and
\beqs
\bar q=q-\frac1{|U|}\int_U qdx =p_t -\frac1{|U|} \frac d{dt}\int_U pdx=p_t + \frac1{|U|} \int_\Gamma \psi(x,t) d\sigma=\frac{\partial}{\partial t} \bar p=\bar p_t.
\eeqs
Then it follows from \eqref{eqorig} that $\bar q$ solves
\beq\label{eqt}
\frac{\partial  \bar q}{\partial t}=\nabla \cdot \big(K(|\nabla p|)\nabla p\big)_t  + \frac1{|U|} \int_\Gamma \psi_t d\sigma.
\eeq 

For $k\ge 0$, let $\bar q^{(k)}=\max\{\bar q-k,0\}$
and $\chi_k(x,t)$  be the characteristic function of set $\{(x,t) \in U\times(0,T): \bar q(x,t)>k\}$.
On $S_k(t)$,  we have $(\nabla p)_t=(\nabla\bar p)_t=\nabla\bar q=\nabla\bar q^{(k)}$. 

Let $\zeta=\zeta(x,t)$ be the cut-off function on $U\times [0,T]$ satisfying $\zeta(\cdot,0)=0$ and  $\zeta(\cdot,t)$ having compact support in $U$. 
We will use test function $\bar q^{(k)}\zeta^2$, noting that $\nabla (\bar q^{(k)}\zeta^2) =\zeta [\nabla (\bar q^{(k)}\zeta)+\bar q^{(k)}\nabla \zeta]$.

Multiplying \eqref{eqt} by $\bar q^{(k)}\zeta^2 $ and integrating the resultant on $U$, we get 
\begin{align*}
\frac 12\ddt \int_U |\bar q^{(k)}\zeta|^2 dx & 
= \int_U |\bar q^{(k)}|^2 \zeta \zeta_t  dx - \int_U (K(|\nabla p|))_t\nabla p \cdot[ \nabla (\bar q^{(k)}\zeta)+ \bar q^{(k)}\nabla \zeta]\zeta  dx \\
& \quad 
- \int_U K(|\nabla p|) (\nabla p)_t \cdot [ \nabla (\bar q^{(k)}\zeta)+ \bar q^{(k)}\zeta\nabla]\zeta  dx  +\frac1{|U|}\int_\Gamma \psi_t  d\sigma \int_U \bar q^{(k)}\zeta^2 dx. 
\end{align*}
Put $z=\zeta[\nabla (\bar q^{(k)}\zeta)+ \bar q^{(k)}\nabla \zeta]$.
We simplify the third term on the right-hand side of the last inequality as
\begin{multline*}
(\nabla p)_t \cdot z 
=\zeta \nabla \bar q^{(k)}\cdot [\nabla (\bar q^{(k)}\zeta)+ \bar q^{(k)}\nabla \zeta]\\
=[\nabla (\bar q^{(k)}\zeta)- \bar q^{(k)}\nabla \zeta]\cdot [\nabla (\bar q^{(k)}\zeta)+ \bar q^{(k)}\nabla \zeta] 
= |\nabla (\bar q^{(k)}\zeta)|^2 - |\bar q^{(k)} \nabla \zeta|^2 .
\end{multline*}
For the second term of right-hand side, using \eqref{K-est-2} we have  
\begin{align*}
|(K(|\nabla p|))_t\nabla p \cdot z|\
&=|K'(|\nabla p|)| \frac{|\nabla p\cdot \nabla p_t|}{|\nabla p|} |\nabla p \cdot z| 
\le a K(|\nabla p|) |\nabla \bar q||z|.
\end{align*}
Moreover,
\begin{align*}
|\nabla \bar q| |z|
&=|\zeta \nabla\bar  q^{(k)}| | \nabla (\bar q^{(k)}\zeta)+ q^{(k)}\nabla \zeta |
\le (|\nabla (\bar q^{(k)}\zeta)| + |\bar q^{(k)}||\nabla \zeta|)^2 \\
&= |\nabla (\bar q^{(k)}\zeta)|^2 + 2|\bar q^{(k)}| |\nabla \zeta| |\nabla (\bar q^{(k)}\zeta)| + |\bar q^{(k)} \nabla \zeta|^2.
\end{align*}
It follows that
\begin{align*}
& \frac 12\ddt \int_U |\bar q^{(k)} \zeta|^2 dx + 
(1-a) \int_U K(|\nabla p|) |\nabla (\bar q^{(k)} \zeta)|^2 dx \\
& \le \int_U |\bar q^{(k)}|^2 \zeta  |\zeta_t| dx + (1+a) \int_U K(|\nabla p|)|\bar q^{(k)} \nabla \zeta|^2 dx\\
& \quad+2a\int_U   K(|\nabla p|)| \bar q^{(k)} \nabla \zeta|  |\nabla(\bar q^{(k)}\zeta )|dx 
+C\|\psi_t(t)\|_{L^\infty} \int_U |\bar q^{(k)}|\zeta^2 dx.
\end{align*}
Let $\varepsilon>0$. By Cauchy's inequality,
\begin{align*}
2aK(|\nabla p|)| \bar q^{(k)} \nabla \zeta|  |\nabla(\bar q^{(k)}\zeta )|
&\le \frac {1-a}{2} K(|\nabla p|)|\nabla(\bar q^{(k)}\zeta )|^2 + \frac{2a^2}{1-a}  K(|\nabla p|)|\bar  q^{(k)} \nabla \zeta|^2,\\
C \|\psi_t(t)\|_{L^\infty} |\bar q^{(k)}|\zeta^2 
&\le \varepsilon |\bar q^{(k)}\zeta|^2 + C\varepsilon^{-1}\|\psi_t(t)\|_{L^\infty}^2 \zeta^2.
\end{align*}
Therefore,
\begin{align*}
& \frac 12\ddt \int_U |\bar q^{(k)} \zeta|^2 dx + \frac{1-a}2 \int_U K(|\nabla p|) |\nabla (\bar q^{(k)} \zeta)|^2 dx 
  \le \int_U |\bar q^{(k)}|^2 \zeta  |\zeta_t| dx\\
&\quad  + C \int_U K(|\nabla p|)|\bar q^{(k)} \nabla \zeta|^2 dx + \varep \int_U |\bar q^{(k)}\zeta|^2 dx + C\varep^{-1}\|\psi_t(t)\|_{L^\infty}^2 \int_U \chi_k \zeta^2 dx.
\end{align*}
Now, integrating this inequality in time from $0$ to $t$ and taking supremum on $[0,T]$,  we obtain 
\begin{multline*}
 \sup_{[0,T]} \int_U |\bar q^{(k)}\zeta |^2 dx + \int_0^T \int_U K(|\nabla p|) |\nabla (\bar q^{(k)}\zeta) |^2 dx dt\\
 \le C \Big [\int_0^T \int_U |\bar q^{(k)}|^2 \zeta |\zeta_t| dx dt 
 + \int_0^T\int_U K(|\nabla p|) |\bar q^{(k)} \nabla \zeta|^2 dxdt   \Big]\\
 + C_1\varep T \sup_{[0,T]}\int_U |\bar q^{(k)}\zeta|^2 dx + C\varep^{-1}\sup_{[0,T]}\|\psi_t(t)\|_{L^\infty}^2 \int_0^T \int_U \chi_k \zeta^2 dxdt.
 \end{multline*}
  Using the fact that the functions $K$ and $\zeta$ are bounded and taking $\varep=1/(2C_1 T)$, we obtain  
\begin{equation} \label{m-Lp-t}
\begin{split}
& \sup_{[0,T]} \int_U |\bar q^{(k)} \zeta|^2 dx 
+  \int_0^T \int_U K(|\nabla p|)|\nabla (\bar q^{(k)} \zeta)|^2 dx dt\\
&\quad \le C\left [ \int_0^T  \int_U |\bar q^{(k)}|^2 (\zeta |\zeta_t|  +|\nabla \zeta|^2) dxdt \right]
+ C T \sup_{[0,T]}\|\psi_t(t)\|_{L^\infty}^2 \int_0^T \int_U \chi_k \zeta^2 dxdt.
\end{split}
 \end{equation}
 Applying inequality \eqref{Wemb}  in Lemma~\ref{ParaSob-3} to $\bar q^{(k)} \zeta$ with the weight $W(x,t)=K(|\nabla p(x,t)|)$, we have 
  \begin{align*}
&   \norm{\bar q^{(k)}\zeta}_{L^{\muex_5}(Q_T)}
 \le C   \left \{  \sup_{[0,T]}\Big(\int_U K(|\nabla p|)^{-\frac {2-a} a} dx \Big)^{\frac 2 {s(2-a)} } \right \}\\
  &\quad  \cdot \left\{  \sup_{[0,T]} \Big(\int_U |\bar q^{(k)}\zeta|^2 dx \Big)^{1/2}+\Big(\int_0^T \int_U K(|\nabla p|)|\nabla (\bar q^{(k)}\zeta)|^2 dx dt\Big)^{1/2} \right\}\\
& \le C \sup_{[0,T]}\Big(\int_U (1+|\nabla p|)^{2-a} dx  \Big)^{\muex_7} 
  \cdot \left\{\sup_{[0,T]} \int_U |\bar q^{(k)}\zeta |^2 dx + \int_0^T \int_U K(|\nabla p|)|\nabla (\bar q^{(k)}\zeta)|^2 dx dt \right\}^{1/2}.
 \end{align*} 
Therefore, by \eqref{m-Lp-t} 
  \beq\label{q:bound}
  \norm{\bar q^{(k)}\zeta}_{L^{\muex_5}(Q_T)} \le C\lambda \Big(\int_0^T  \int_U  |\bar q^{(k)}|^2 (|\zeta_t|\zeta + |\nabla \zeta|^2)dx dt + T \sup_{[0,T]}\|\psi_t(t)\|_{L^\infty}^2 \int_0^T \int_U \chi_k \zeta^2 dxdt\Big)^{1/2}.
   \eeq
    
Let $x_0$ be any given point in $U$. Denote  $\rho={\rm dist}(x_0,\partial U)>0$. 
Let $M_0 >0$ be fixed which will be determined later.
For $i\ge 0$, define 
\[
k_i= M_0(1-2^{-i}),\quad
t_i =\theta T( 1- 2^{-i}),\quad 
\rho_i= \frac 1 4\rho(1+ 2^{-i}).
\] 
Then 
$t_0=0<t_1<\ldots<\theta T$ and $\rho_0=\rho/2>\rho_1>\ldots >\rho/4 >0.$
Note that 
\beqs 
\lim_{i\to\infty}t_i=\theta T\quad\text{and}\quad 
\lim_{i\to\infty}\rho_i=\rho/4.
\eeqs
Let $U_i =\{x : \norm{x-x_0}< \rho_i\} $ then $ U_{i+1} \Subset U_i$ for $i=0,1,2,\ldots$. 
For $i,j\ge  0$, we denote 
\beq\label{defQnAnm} 
\begin{aligned}
\mathcal Q_i &=\{(x,t):  x\in U_i,\ t\in(t_i,T)    \}, \quad 
  A_{i,j} &=\{(x,t)\in\mathcal Q_i:  \bar q(x,t)>k_i,\ t\in(t_j, T)    \}.
\end{aligned}
\eeq
For each $\mathcal Q_i$, we use a cut-off function $\zeta_i(x,t)$ which is piecewise linear in $t$ and satisfies $\zeta_i\equiv 1$ on $\mathcal Q_{i+1}$ and $\zeta_i\equiv 0$ on $Q_T\setminus\mathcal Q_{i}$.
Then there is $C>0$ such that  
\beq\label{der:cutoff}
|(\zeta_i)_t|\le \frac C{t_{i+1}-t_i} = \frac {C2^{i+1}}{\theta T} \quad \text { and } \quad
|\nabla \zeta_i | \le \frac {C}{\rho_{i}-\rho_{i+1}} = \frac {C 2^{i+1}}{4\rho}\quad\text{for all } i\ge 0.\eeq
Define 
$F_{i} = \norm{\bar q^{(k_{i+1})}\zeta_i }_{L^{\muex_5}(A_{i+1, i})}$. 
Applying \eqref{q:bound} with  $k=k_{i+1}$  and $\zeta=\zeta_i$ gives
\beq\label{qre2}
F_i \le C\lambda  \Big\{ \int_0^T \int_U |\bar q^{(k_{i+1})}|^2 \Big(\zeta_i |(\zeta_i)_t| +|\nabla \zeta_i|^2\Big) dxdt +T \sup_{[0,T]}\|\psi_t(t)\|_{L^\infty}^2  |A_{i+1,i}|\Big\}^{ 1/2}.
\eeq
Using derivative estimates \eqref{der:cutoff} for $\zeta$,  we obtain  
\beq \label{Fnbound}
\begin{aligned}
F_{i}
&\le C\lambda  \Big( \big [2^i(\theta T)^{-1/2}+ 2^i\big]  \|\bar q^{(k_{i+1})}\|_{L^2(A_{i+1,i})} + T^{1/2} \sup_{[0,T]}\|\psi_t(t)\|_{L^\infty} |A_{i+1,i}|^{1/2}\Big)\\
&\le  C\lambda  2^i(1+\frac 1 {\theta T})^{1/2} \|\bar q^{(k_i)} \|_{L^2(A_i)} + C\lambda T^{1/2} \sup_{[0,T]}\|\psi_t(t)\|_{L^\infty} |A_{i+1,i}|^{1/2}.
\end{aligned}
\eeq
Then, it follows from H\"{o}lder's inequality, \eqref{qre2} 
and \eqref{Fnbound} that
\beq\label{qki+1}
\begin{aligned}
 \|\bar q^{(k_{i+1})} \|_{L^2(A_{i+1, i+1})}
& \le \|\bar q^{(k_{i+1})}\|_{L^{\muex_5}(A_{i+1,i+1})} |A_{i+1,i+1}|^{1/2-1/{\muex_5}}\\
& \le \|\bar q^{(k_{i+1})}\zeta_i \|_{L^{\muex_5}(A_{i+1,i+1})} |A_{i+1,i}|^{1/2-1/{\muex_5}}
 \le C  F_{i}  |A_{i+1,i}|^{1/2-1/{\muex_5}}.
\end{aligned}
\eeq
Note that 
$ \|\bar q^{(k_i)}\|_{L^2(A_i)}\ge  \|\bar q^{(k_{i})}\|_{L^2(A_{i+1,i})}\ge (k_{i+1}-k_i) |A_{i+1,i}|^{1/2}$. 
Thus, 
\beq\label{Aii}
  |A_{i+1,i}| \le (k_{i+1}-k_i)^{-2} \|\bar q^{(k_i)}\|_{L^2(A_i)}^{2} \le C 4^{i} M_0^{-2}\|\bar q^{(k_i)}\|_{L^2(A_i)}^{2}.
\eeq
Then it follows \eqref{qki+1}, \eqref{Fnbound} and \eqref{Aii} that
\begin{align*}
\|\bar q^{(k_{i+1})}\|_{L^2(A_{i+1,i+1})}&\le C\lambda 
\Big\{2^i (1+\frac 1{\theta T} )^{1/2}\|\bar q^{(k_i)}\|_{L^2(A_i)}+ T^{1/2} \sup_{[0,T]}\|\psi_t(t)\|_{L^\infty}2^{i}M_0^{-1} \| q^{(k_i)}  \|_{L^2(A_i)}\Big\}\\
&\quad \cdot 2^{i-\frac {2i} {\muex_5} }M_0^{-1+2/{\muex_5}} \| \bar q^{(k_i)}  \|_{L^2(A_i)}^{1-2/{\muex_5}}\\
&\le C 4^i \lambda \Big\{ (1+\frac 1{\theta T} ) M_0^{-1+2/{\muex_5}} +   T^{1/2} \sup_{[0,T]}\|\psi_t(t)\|_{L^\infty} M_0^{-2+2/{\muex_5}}\Big\}\norm{\bar q^{(k_i)}}_{L^2(A_i)}^{2-2/{\muex_5}}.
\end{align*}
Let $Y_i=\| \bar q^{(k_i)} \|_{L^2(A_i)}$, $B =4$ and
\begin{align*}
D_1&=C\lambda (1+\frac 1{\theta T} )^{1/2} M_0^{-1+2/{\muex_5}}=C\lambda(1+\frac 1{\theta T} )^{1/2}M_0^{-\muex_6},\\
D_2&=C\lambda T^{1/2} \sup_{[0,T]}\|\psi_t(t)\|_{L^\infty} M_0^{-2+2/{\muex_5}}=C\lambda T^{1/2} \sup_{[0,T]}\|\psi_t(t)\|_{L^\infty} M_0^{-1-\muex_6}.
\end{align*}
We obtain 
$Y_{i+1}\le  B^i(D_1 Y_i^{1+\muex_6}+D_1 Y_i^{1+\muex_6})$ for all $i\ge 0$.
We now determine $M_0$ so that 
\beq\label{Y0qcond}
 Y_0 \leq (2D_1)^{-1/\muex_6}B^{-1/{\muex_6^2}}, \quad  Y_0 \leq (2D_2)^{-1/\muex_6}B^{-1/{\muex_6^2}}.
\eeq
This condition is met if
\[ M_0\ge C \big[ \lambda (1+(\theta T)^{-1/2} ) \big]^{1/\muex_6} Y_0,
\quad  M_0\ge C \big[ \lambda T^{1/2} \sup_{[0,T]}\|\psi_t(t)\|_{L^\infty} \big]^\frac1{\muex_6+1} Y_0^\frac{\muex_6}{\muex_6+1}.\]
Since $Y_0\le \| \bar q\|_{L^2(U\times (0,T))}$, it suffices to choose $M_0$ as
\beq\label{Mzero} 
M_0 = C\Big\{\big[\lambda (1+(\theta T)^{-1/2} )\big]^{1/\muex_6} +\big[ \lambda T^{1/2} \sup_{[0,T]}\|\psi_t(t)\|_{L^\infty} \big]^\frac1{\muex_6+1}\Big\}
\Big( \|\bar q\|_{L^2(U\times (0,T))}^\frac{\muex_6}{\muex_6+1}+\|\bar q\|_{L^2(U\times (0,T))}\Big).
\eeq
Then by condition \eqref{Y0qcond}, applying \eqref{Y0mcond} in Lemma \ref{multiseq} for $m=2$ gives 
$\displaystyle{\lim_{i\to\infty}}Y_i=0$. (Alternatively, Lemma \ref{oriseq} can be used in this case.)
Hence, 
\beq\label{intq}
\int_{\theta T}^T\int_{B(x_0,\rho/4)} |\bar  q^{(M_0)}|^2 dxdt=0.
\eeq
Since $\bar q(x,t)\in C(U\times (0,\infty)$, it follows from \eqref{intq} that  $\bar q(x,t)\le M_0$ in $B(x_0,\rho/4)\times (\theta T,T)$.
Replace $q$ by $-q$ and $\psi$ by $-\psi$ and 
use the same argument we obtain $|\bar q(x,t)|\le M_0$ in $B(x_0,\rho/4)\times (\theta T,T)$.
Now by covering $U'$ by finitely many such balls $B(x_0,\rho/4)$, we come to conclusion
\beq\label{ptM0}
|\bar q(x,t)|\le M_0 \quad  \text{in } U'\times (\theta T,T). 
\eeq
By the choice of $M_0$, we obtain \eqref{qbarbound} from \eqref{ptM0}.

In the above proof of \eqref{qbarbound}, we can work with $q$ instead of $\bar q$, with
\beq\label{ebarqt}
\frac{\partial  q}{\partial t}=\nabla \cdot \big(K(|\nabla p|)\nabla p\big)_t,
\eeq 
instead of equation \eqref{eqt}, then the term $\|\psi_t\|_{L^\infty}$ can be removed and we obtain the desired estimate \eqref{pt:bound} for $p_t$.
\end{proof}

\begin{remark}
The main difference between estimate of $\bar p_t$ in \eqref{qbarbound} and the estimate of $p_t$ in \eqref{pt:bound} is the involvement of $\psi_t$.
Though we cannot derive \eqref{pt:bound} from \eqref{qbarbound}, in the following development we will focus on $\bar p_t$ only.
\end{remark}

 The following estimates can be easily derived (by the mean of Young's inequality) from corresponding ones in \cite{HI2}. They will be used in finding $L^\infty$-estimates for $\bar p_t$ in terms of initial data and boundary data.   
We use the following notation:
\begin{align}
&m_1(t)=1+ \norm{\bar p_0}_{L^2}^2  + M_f(t)^\frac2{2-a}+\int_{t-1}^t \tilde f(\tau)d\tau,
&&m_2(t)=1 +  A^\frac2{2-a}+\int_{t-1}^t \tilde f(\tau)d\tau,\\
\label{Atildef}
&m_3(t)=1+\beta^\frac1{1-a}
+\sup_{[t-1,t]} f^\frac{2}{2-a} +\int_{t-1}^t \tilde f(\tau)d\tau,
&&\Ulim_1=A+A^\frac2{2-a}+\limsup_{t\to\infty}\int_{t-1}^t \tilde f(\tau)d\tau.
\end{align}

\begin{theorem}[cf. \cite{HI2} Theorems 4.4  and 4.5]\label{H-bound-lemma}
{\rm (i)} One has for all $t\ge 0$ that
\beq
\label{H-bound-0}
J_H[p](t)+\norm{\bar p_t}_{L^2(U\times(0,t))}^2
\le C\Big(  \norm{\bar p_0}_{L^2}^2 +J_H[p](0)+  (t+1) \sup_{[0,t]} f +\int_{0}^t\tilde f(\tau)d\tau\Big),
\eeq
and for all $t\ge 1$ that
\beq \label{H-bound-6}
J_H[p](t)+\norm{\bar p_t}_{L^2(U\times(t-1/2,t))}^2
\le C\Big( \norm{\bar p(t-1)}_{L^2}^2 + \sup_{[t-1,t]}f+ \int_{t-1}^t  \tilde f(\tau)d\tau\Big).
\eeq

Now, assume that the Degree Condition holds. 

{\rm (ii)} For $0<t_0< 1$ and $t\ge t_0$, one has
\beq\label{Jpt-boundA0} 
 \norm{\bar p_t(t)}_{L^2}^2  \le C\, t_0^{-1} L_5(t_0)+ C \int_0^t f(\tau)+ \tilde f(\tau)d\tau,
\eeq
 where $L_5(t_0)=L_5(t_0;[p_0,\psi])\eqdef 1+\norm{\bar p_0}_{L^2}^2 +\|\nabla p_0\|_{L^{2-a}}^{2-a} + M_f(t_0)^\frac2{2-a}+\int_0^{t_0}\tilde f(\tau)d\tau$.

For all $t\ge 1$, one has 
\beq\label{H-bound-b}
J_H[p](t),\ \norm{\bar p_t(t)}_{L^2}^2 \le C m_1(t) \quad \text{for all }t\ge 1.
\eeq

{\rm (iii)} If $A<\infty$ then there is $T>1$ such that
\beq\label{boundedlarge} 
J_H[p](t),\ \norm{\bar p_t(t)}_{L^2}^2  \le  C m_2(t) \quad \text{for all } t>T, 
\eeq
\beq\label{limsupH1} 
\limsup_{t\to\infty} J_H[p](t),\ \limsup_{t\to\infty}  \norm{\bar p_t(t)}_{L^2}^2 \le  C \Ulim_1.\eeq

{\rm (iv)} If $\beta<\infty$ then  there is $T>1$ such that
\beq\label{unbounJ} 
J_H[p](t),\ \norm{\bar p_t(t)}_{L^2}^2  \le C m_3(t) \quad \text{for all } t>T.\eeq
\end{theorem}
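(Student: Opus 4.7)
The plan is to prove (i) from a direct energy identity obtained by testing \eqref{eqgamma} with $\bar p_t$; parts (ii)--(iv) then come from differentiating the equation in $t$ and testing the resulting equation with $\bar p_t$ itself, invoking the coercivity built into Lemma \ref{quasimono-lem}, with (iii) and (iv) reducing to (ii) via the $L^2$-bounds of Theorem \ref{p-estimate}.

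For (i), multiplying the first equation of \eqref{eqgamma} by $\bar p_t$ and integrating over $U$ kills the constant flux contribution on the right, because $\int_U \bar p_t\,dx = 0$ by \eqref{pbar0}. An integration by parts in space, together with the boundary condition and the identity $K(|\nabla\bar p|)\nabla\bar p\cdot\nabla\bar p_t = \tfrac12\partial_t H(|\nabla\bar p|)$ that follows directly from \eqref{Hxi}, yields the pointwise-in-time identity $\|\bar p_t\|_{L^2}^2 + \tfrac12 \ddt J_H[p] = -\int_\Gamma \psi\,\bar p_t\,d\sigma$. Integrating in $t$ and then integrating by parts in $t$ on the boundary term replaces $\int_0^t\!\int_\Gamma \psi\,\bar p_t\,d\sigma\,d\tau$ by $\int_\Gamma \psi(t)\bar p(t)\,d\sigma - \int_\Gamma \psi(0)\bar p_0\,d\sigma - \int_0^t\!\int_\Gamma \psi_t\,\bar p\,d\sigma\,d\tau$. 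Trace inequality, Young's inequality, and the $L^2$-estimate \eqref{p-bar-bound1} for $\bar p$ then absorb these into the $\sup_{[0,t]} f$ and $\int_0^t \tilde f$ terms appearing in \eqref{H-bound-0}. The localized version \eqref{H-bound-6} is obtained by repeating the same calculation after multiplying by a temporal cut-off supported in $(t-1,t)$ that vanishes at $t-1$, and bounding the extra $\int|\bar p_t|^2 \zeta_t$ term by the previously obtained $L^2_tL^2_x$-estimate.

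For (ii), I would differentiate \eqref{eqgamma} in $t$ and test with $q := \bar p_t$. The diffusion term is $(K(|\nabla p|)\nabla p)_t$, and after integration by parts against $q$ it is bounded below by $(1-a)\int_U K(|\nabla p|)|\nabla q|^2\,dx$, via the pointwise matrix inequality $z^T\nabla_y[K(|y|)y]z \ge (1-a) K(|y|)|z|^2$ already invoked in the proof of Lemma \ref{s.it-L}. The boundary contribution $-\int_\Gamma \psi_t\,q\,d\sigma$ is controlled by the trace theorem, the embedding $W^{1,2-a}(U)\hookrightarrow L^2(U)$ that holds under the Degree Condition, a H\"older step with the degenerate weight $K(|\nabla p|)^{1/2}$, and Young's inequality with conjugate exponents $2-a$ and $(2-a)/(1-a)$; this is precisely what produces the two exponents in \eqref{b} that define $\tilde f$. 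The singular short-time estimate \eqref{Jpt-boundA0} follows by multiplying by a monotone temporal weight such as $\min\{t,t_0\}$, integrating on $(0,t_0)$ and inserting \eqref{H-bound-0}; the pointwise bound \eqref{H-bound-b} then comes from a uniform-Gronwall argument on each window $(t-1,t)$, with the averaged bound \eqref{H-bound-6} providing the required control on $\int_{t-1}^t \|\bar p_t\|_{L^2}^2\,d\tau$.

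Parts (iii) and (iv) reduce to (ii) by substituting the asymptotic $L^2$-bounds \eqref{nonzerobeta} and \eqref{unboundp} for $\|\bar p(t-1)\|_{L^2}^2$ into the right-hand side of \eqref{H-bound-6} and propagating them through the uniform-Gronwall step, after which passage to $\limsup_{t\to\infty}$ yields \eqref{limsupH1} with $\Ulim_1$ absorbing the three contributions $A$, $A^{2/(2-a)}$, and $\limsup_{t\to\infty}\int_{t-1}^t \tilde f$. The main obstacle is the treatment of the boundary term in the differentiated equation: the trace of $\bar p_t$ cannot be absorbed directly, and the degenerate weight $K(|\nabla p|)$ in the coercive term precludes a straightforward $H^1$-trace. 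One must simultaneously exploit the $(1-a)$-coercivity from Lemma \ref{quasimono-lem}, the Degree Condition embedding, and a carefully calibrated Young step with conjugate exponent $(2-a)/(1-a)$; keeping these three ingredients in balance is exactly what forces the precise form of $\tilde f$ and, consequently, of the right-hand sides of \eqref{H-bound-b}, \eqref{boundedlarge} and \eqref{unbounJ}.
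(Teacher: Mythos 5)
Your plan follows the natural route that the cited source uses: test \eqref{eqgamma} with $\bar p_t$, exploiting $\int_U\bar p_t\,dx=0$ and the identity $K(|\nabla\bar p|)\nabla\bar p\cdot\nabla\bar p_t=\tfrac12\partial_t H(|\nabla\bar p|)$, integrate the boundary term by parts in time, and for (ii)--(iv) differentiate the equation in $t$ and re-test with $\bar p_t$, using the $(1-a)$-coercivity and reducing (iii), (iv) to (ii) by plugging in the asymptotic $L^2$-bounds of Theorem~\ref{p-estimate}. That this is indeed the mechanism behind the cited theorem is corroborated by the differential inequality (5.13) of [HI2] that the paper quotes in the proof of Theorem~\ref{smallqbar}. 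Two small points deserve correction. First, in the localization step for \eqref{H-bound-6}, the extra term created by a temporal cut-off $\zeta$ is $\int H(|\nabla\bar p|)\zeta_t$ (arising from $\tfrac12\partial_t H$ hitting $\zeta$) together with a boundary piece $\int_\Gamma\psi\bar p\,\zeta_t\,d\sigma$, not $\int|\bar p_t|^2\zeta_t$: the $\|\bar p_t\|_{L^2}^2$ term in the identity is undifferentiated, so it cannot produce a $\zeta_t$. The $\int H(|\nabla\bar p|)\zeta_t$ term is then absorbed via the time-integrated $J_H$-bound of \eqref{p-bar-bound1} applied on $[t-1,t]$, which is exactly what brings $\|\bar p(t-1)\|_{L^2}^2$ onto the right of \eqref{H-bound-6}. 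Second, when bounding $\int_0^t\int_\Gamma\psi_\tau\bar p\,d\sigma\,d\tau$ you must use the trace--Poincar\'e--Wirtinger estimate $\|\bar p\|_{L^1(\Gamma)}\le C\|\nabla\bar p\|_{L^{2-a}(U)}$ (which requires neither the Degree Condition nor the pointwise $L^2$-bound) together with the \emph{time-integrated} $\int_0^t J_H$-control from \eqref{p-bar-bound1}; relying on the pointwise $\|\bar p(\tau)\|_{L^2}$-bound alone would yield $t^2\sup_{[0,t]}f$ rather than the linear growth $(t+1)\sup_{[0,t]}f$ appearing in \eqref{H-bound-0}.
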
  

%
%
%
%

We now state our main estimates of $\bar p_t$. 
In addition to $\Nf_{1,t}$ defined by \eqref{Nf1}, we will also use 
\beq\label{Nf2}
\Nf_{2,t}=1+\int_{0}^t  \tilde f(\tau)d\tau.
\eeq

\begin{theorem}\label{ptbar} \asdc. 
For $t>0$, one has
\beq\label{tpos}
\begin{aligned}
\norm{\bar p_t(t)}_{L^\infty(U')} 
&\le C  \Big\{ \norm{\bar p_0}_{L^2}^2+\|\nabla p_0\|_{L^{2-a}}^{2-a}+(1+t)\Nf_{1,t}^\frac{2-a}{1-a} +\Nf_{2,t}\Big\}^{\frac{\muex_7}{\muex_6}+\frac 1 2} \\
&\quad \cdot\Big\{1+ t^{-\frac 1{2\muex_6}} + t^{\frac 1 {2(\muex_6+1)}}  \sup_{[0,t]}\|\psi_t\|_{L^\infty}^\frac1{\muex_6+1}\Big\}.
\end{aligned}
\eeq
For $t\ge 3/2$, one has 
\beq\label{tlarge}
\begin{aligned}
\norm{\bar p_t(t)}_{L^\infty(U')} 
\le C\Big( 1+\norm{\bar p_0}_{L^2}^2 +M_f(t)^{\frac 2{2-a} }+ \int_{t-3/2}^t \tilde f(\tau)d\tau \Big)^{\frac{\muex_7}{\muex_6} +\frac 1 2} \Big(1 +\sup_{[t-1/2,t]}\|\psi_t\|_{L^\infty}^\frac1{\muex_6+1}\Big).
\end{aligned}
\eeq
Above, the positive constant $C$ depends on $U'$.
\end{theorem}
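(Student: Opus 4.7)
The plan is to deduce both \eqref{tpos} and \eqref{tlarge} from the quantitative interior estimate \eqref{qbarbound} in Proposition \ref{Lqbar}, by controlling the two data-dependent ingredients appearing there: the quantity $\lambda$, which involves $\sup\int_U|\nabla p|^{2-a}$ over the working time window, and the $L^2$-norm of $\bar p_t$ over the same window. The bridge from $\lambda$ to the functional $J_H[p]$ is provided by inequality \eqref{JHcom}, which gives $\int_U|\nabla p(t)|^{2-a}dx \le C(J_H[p](t)+1)$; thereafter the energy bounds of Theorem \ref{H-bound-lemma} supply the required estimates for both $J_H[p]$ and $\|\bar p_t\|_{L^2}$.

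For \eqref{tpos} I would apply \eqref{qbarbound} with $T_0=0$, $T=t$, $\theta=1/2$, so that the supremum on the left dominates $\|\bar p_t(t)\|_{L^\infty(U')}$, while the explicit prefactors yield the $t$-dependent bracket $\{1+t^{-1/(2\muex_6)}+t^{1/(2(\muex_6+1))}\sup_{[0,t]}\|\psi_t\|_{L^\infty}^{1/(\muex_6+1)}\}$ of \eqref{tpos}. Writing $E(t)$ for the quantity in braces in \eqref{tpos}, inequality \eqref{H-bound-0} combined with $J_H[p](0)\le C\|\nabla p_0\|_{L^{2-a}}^{2-a}$ and $\sup_{[0,t]}f\le C\Nf_{1,t}^{(2-a)/(1-a)}$ (valid since $\Nf_{1,t}\ge 1$ and $(2-a)/(1-a)\ge 2$) gives both $\sup_{[0,t]}J_H[p]\le CE(t)$ (the right-hand side of \eqref{H-bound-0} being increasing in $t$) and $\|\bar p_t\|_{L^2(U\times(0,t))}^2\le CE(t)$. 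Hence $\lambda\le C(1+E(t)^{\muex_7})$. Substituting into \eqref{qbarbound} produces a sum of four terms of the shape $E(t)^{\alpha+\beta}\cdot(\text{explicit prefactor})$ with $\alpha\in\{\muex_7/\muex_6,\,\muex_7/(\muex_6+1)\}$ arising from $\lambda$ and $\beta\in\{1/2,\,\muex_6/(2(\muex_6+1))\}$ arising from $\|\bar p_t\|_{L^2}$. Since $E(t)\ge 1$ and $\muex_7/\muex_6+1/2$ is the largest of the four sums of exponents, all terms can be absorbed into $E(t)^{\muex_7/\muex_6+1/2}$, matching \eqref{tpos}.

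For \eqref{tlarge} with $t\ge 3/2$ the key is to replace \eqref{H-bound-0} by the local estimate \eqref{H-bound-6}, which avoids initial-data dependence. I would apply \eqref{qbarbound} on the short window $T_0=t-1/2$, $T=1/2$, $\theta=1/2$, so that $\theta T=1/4$ makes the prefactor $1+(\theta T)^{-1/2}$ a pure constant and the $\|\psi_t\|_{L^\infty}$-prefactor reduces to $\sup_{[t-1/2,t]}\|\psi_t\|_{L^\infty}^{1/(\muex_6+1)}$, as in \eqref{tlarge}. For every $\tau\in[t-1/2,t]$ the assumption $t\ge 3/2$ guarantees $\tau\ge 1$, so \eqref{H-bound-6} is applicable and yields $J_H[p](\tau)\le C\bigl(\|\bar p(\tau-1)\|_{L^2}^2+\sup_{[t-3/2,t]}f+\int_{t-3/2}^t\tilde f\bigr)$, since $[\tau-1,\tau]\subset[t-3/2,t]$. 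Bounding $\|\bar p(\tau-1)\|_{L^2}^2$ through \eqref{p-bar-bound} by $C\bigl(1+\|\bar p_0\|_{L^2}^2+M_f(t)^{2/(2-a)}\bigr)$ and applying \eqref{H-bound-6} once more at time $t$ to control $\|\bar p_t\|_{L^2(U\times(t-1/2,t))}^2$ by the same expression, I get $\lambda\le C(1+E_1^{\muex_7})$ and $\|\bar p_t\|_{L^2}^2\le CE_1$, where $E_1$ is the bracketed quantity in \eqref{tlarge}. The identical algebraic packaging as in the first case then delivers \eqref{tlarge}.

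The principal technical obstacle, I anticipate, is not the individual estimates---which follow rather mechanically from \eqref{qbarbound}, \eqref{JHcom}, and Theorem \ref{H-bound-lemma}---but the careful bookkeeping that collapses the four mixed fractional powers of $\lambda$ and $\|\bar p_t\|_{L^2}$ into the single exponent $\muex_7/\muex_6+1/2$, combined with the matching of time windows so that the applicability constraint $\tau\ge 1$ of \eqref{H-bound-6} and the appearance of exactly $\int_{t-3/2}^t\tilde f$ (rather than some longer integral like $\int_0^t\tilde f$) on the right-hand side of \eqref{tlarge} are simultaneously arranged.
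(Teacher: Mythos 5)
Your proposal is correct and follows essentially the same route as the paper: apply Proposition~\ref{Lqbar} with $T_0=0$, $T=t$, $\theta=1/2$ for \eqref{tpos} and with $T_0=t-1/2$, $T=1/2$, $\theta=1/2$ for \eqref{tlarge}, then control $\lambda$ and $\|\bar p_t\|_{L^2}$ by \eqref{H-bound-0} in the first case and by \eqref{H-bound-6} together with \eqref{p-bar-bound} in the second, collapsing the resulting powers via $E\ge 1$ into the single exponent $\muex_7/\muex_6+1/2$. The only (cosmetic) difference is that you make explicit the step $\int_U|\nabla p|^{2-a}\,dx\le C(J_H[p]+1)$ from \eqref{JHcom}, which the paper uses implicitly when invoking Theorem~\ref{H-bound-lemma}.
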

\begin{proof}
Applying \eqref{qbarbound} with $T_0=0$, $T=t$ and $\theta=1/ 2$, an then applying Young's inequality to the term 
 $\|\bar p_t\|_{L^2(U\times (0,t))}^{\muex_6/(\muex_6+1)}$, we have 
\begin{align*}
\norm{\bar p_t(t)}_{L^\infty(U')} 
&\le C (1+\sup_{[0,t]}\|\nabla p\|_{L^{2-a}}^{2-a})^\frac{\muex_7}{\muex_6} \\
&\quad \cdot  \Big(\Big[1+\sqrt{2/t\,}  \Big]^\frac{1}{\muex_6} +\big[ t^{1/2} \sup_{[0,t]}\|\psi_t\|_{L^\infty} \big]^\frac1{\muex_6+1}\Big) \Big( 1+\|\bar p_t\|_{L^2(U\times (0,t))}\Big).
\end{align*}
Using \eqref{H-bound-0} to estimate $\sup_{[0,t]}\|\nabla p\|_{L^{2-a}}^{2-a}$ and $\norm{\bar p_t}_{L^2(U\times(0,t))}$ in the previous inequality, we find that
\begin{align*}
&\norm{\bar p_t(t)}_{L^\infty(U')} 
\le C \Big\{ 1+\norm{\bar p_0}_{L^2}^2 +\|\nabla p_0\|_{L^{2-a}}^{2-a} +(1+t)\Nf_{1,t}^\frac{2-a}{1-a} +\Nf_{2,t}\Big\}^{ \frac{\muex_7}{\muex_6}} \\
 &\quad \cdot\Big\{1+t^{-\frac 1{2\muex_6}} + t^{\frac 1 {2(\muex_6+1)}} \sup_{[0,t]}\|\psi_t\|_{L^\infty}^\frac1{\muex_6+1}\Big\}
\Big(1+\norm{\bar p_0}_{L^2}^2 +\|\nabla p_0\|_{L^{2-a}}^{2-a} + (1+t)\Nf_{1,t}^\frac{2-a}{1-a} +\Nf_{2,t}\Big)^\frac12 .
\end{align*}
Then inequality \eqref{tpos} follows.

Now, consider $t\ge 3/2$. 
Applying estimate \eqref{qbarbound} to the interval $[t-1/2,t]$, i.e., $T_0=t-1/2$ and $T=1/2$,  with $\theta=1/2$, we obtain
\begin{align*}
\norm{\bar p_t(t)}_{L^\infty(U')}
&\le C\Big( 1+\sup_{[t-1/2,t]}\norm{\nabla p}_{L^{2-a}(U)}^{2-a}\Big)^{\frac{\muex_7}{\muex_6}} \Big( 1 +\sup_{[t-1/2,t]}\|\psi_t(t)\|_{L^\infty} ^\frac1{\muex_6+1}\Big)\\
&\quad\cdot\Big( \|\bar p_t\|_{L^2(U\times (t-1/2,t))}^\frac{\muex_6}{\muex_6+1}+\|\bar p_t\|_{L^2(U\times (t-1/2,t))}\Big).
\end{align*}
Let $m(t)=\sup_{[t-3/2,t]}f+ \int_{t-3/2}^t \tilde f(\tau)d\tau$. 
Utilizing estimate \eqref{H-bound-6}, we have
\begin{multline}\label{t32}
\norm{\bar p_t(t)}_{L^\infty(U')} 
\le C\Big( 1+\sup_{\tau\in[t-3/2,t-1]} \norm{\bar p(\tau)}_{L^2}^2 + m(t)\Big)^{\frac{\muex_7}{\muex_6}}
 \Big(1 +\sup_{[t-1/2,t]}\|\psi_t(t)\|_{L^\infty}^\frac1{\muex_6+1}\Big)\\
\cdot  \Big\{ \Big( \norm{\bar p(t-1)}_{L^2}^2 + m(t) \Big)^\frac{\muex_6}{\muex_6+1}+\Big( \norm{\bar p(t-1)}_{L^2}^2 + m(t) \Big) \Big\}^{\frac12}.
\end{multline}
By Young's inequality, 
\begin{align*}
|\bar p_t(x,t)|&\le C\Big\{ 1+\sup_{\tau\in[t-3/2,t-1]} \norm{\bar p(\tau)}_{L^2}^2 + m(t) \Big\}^{\frac{\muex_7}{\muex_6} +\frac 1 2}
 \Big\{1 +\sup_{[t-1/2,t]}\|\psi_t(t)\|_{L^\infty}^\frac1{\muex_6+1}\Big\}.
\end{align*}
Then inequality \eqref{tlarge} is obtained by using \eqref{p-bar-bound} to estimate $\| \bar p(\tau)\|_{L^2}$ for $\tau\le t-1$.    
\end{proof}

Let 
\beq\label{Ulim2}
\Ulim_2=\limsup_{t\to\infty} m_2(t)=1+A^\frac{2}{2-a}+\limsup_{t\to\infty}\int_{t-1}^t\tilde f(\tau)d\tau,
\eeq
\beq\label{A3def}
\Ulim_3=1+A^{\frac2{2-a}}+ \limsup_{t\to\infty} \|\psi_t(t)\|^\frac{2-a}{1-a}.
\eeq
We have from \eqref{Atildef}, \eqref{Ulim2} and \eqref{A3def} the relation:
\beq\label{Urel}
\Ulim_1\le C\Ulim_2\le C'\Ulim_3.
\eeq


\begin{theorem}\label{ptbar2}
\asdc.

 {\rm (i)} If $A<\infty$ then
\beq\label{limsup-pt} 
\begin{aligned}
\limsup_{t\to\infty} \norm{\bar p_t(t)}_{L^\infty(U')}
\le C\big(\Ulim_1 +\Ulim_1^\frac{\muex_6}{\muex_6+1}\big)^{1/2} 
\Ulim_2^{\frac{\muex_7}{\muex_6}}  \Big(1 +\limsup_{t\to\infty}\|\psi_t(t)\|_{L^\infty}^\frac1{\muex_6+1}\Big).
\end{aligned}
 \eeq
Consequently, denoting $\muex_8=\frac{\muex_7}{\muex_6}+\frac 12+\frac{1-a}{(2-a)(\muex_6+1)}$, one has
\beq\label{limsup-pt10} 
\limsup_{t\to\infty} \norm{\bar p_t(t)}_{L^\infty(U')}\le C \Ulim_3^{\muex_8}.
\eeq

{\rm (ii)} If  $\beta<\infty$, then there is $T>0$ such that for all $t>T$,
\beq\label{ptbeta}
 \norm{\bar p_t(t)}_{L^\infty(U')}
\le C\Big(1+\beta^\frac1{1-a}+\sup_{\tau\in[t-3/2,t]} f(\tau)^\frac2{2-a}+\int_{t-3/2}^t \tilde f(\tau)d\tau\Big)^{\frac {\muex_7}{\muex_6} +\frac 1 2}\Big(1 +\sup_{[t-1/2,t]}\|\psi_t(t)\|_{L^\infty}^\frac1{\muex_6+1}\Big) .
 \eeq

Above, the positive constant $C$ depends on $U'$.
\end{theorem}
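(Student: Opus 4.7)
The plan is to revisit the intermediate estimate \eqref{t32} from the proof of Theorem \ref{ptbar}, which was obtained by applying \eqref{qbarbound} on the time interval $[t-1/2,t]$ and then invoking \eqref{H-bound-6} to control $\sup_{[t-1/2,t]}\|\nabla p\|_{L^{2-a}}^{2-a}$ and $\|\bar p_t\|_{L^2(U\times(t-1/2,t))}$ by $\|\bar p(t-1)\|_{L^2}^2 + m(t)$, where $m(t)=\sup_{[t-3/2,t]}f+\int_{t-3/2}^t\tilde f(\tau)d\tau$. Rather than inserting the rough global bound \eqref{p-bar-bound} used in Theorem \ref{ptbar}, I will feed in the sharper asymptotic information from Theorems \ref{p-estimate} and \ref{H-bound-lemma} that is available under each hypothesis.

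For part (i), assume $A<\infty$ and take $\limsup_{t\to\infty}$ in \eqref{t32}. By \eqref{nonzerobeta}, $\limsup_{t\to\infty}\|\bar p(\tau)\|_{L^2}^2\le C(A+A^{2/(2-a)})$ uniformly on $[t-3/2,t-1]$, while $\limsup_{t\to\infty} m(t)\le C A+\limsup_{t\to\infty}\int_{t-1}^t\tilde f\,d\tau\le C\Ulim_2$ by definition. Combined with \eqref{limsupH1}, which gives $\limsup_{t\to\infty}\bigl(\|\bar p(t-1)\|_{L^2}^2+m(t)\bigr)\le C\Ulim_1$, substitution into \eqref{t32} yields
\[
\limsup_{t\to\infty}\|\bar p_t(t)\|_{L^\infty(U')}\le C\,\Ulim_2^{\muex_7/\muex_6}\bigl(\Ulim_1^{\muex_6/(\muex_6+1)}+\Ulim_1\bigr)^{1/2}\Bigl(1+\limsup_{t\to\infty}\|\psi_t\|_{L^\infty}^{1/(\muex_6+1)}\Bigr),
\]
which is \eqref{limsup-pt}. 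For \eqref{limsup-pt10}, I use the chain \eqref{Urel} to bound $\Ulim_1,\Ulim_2\le C\Ulim_3$, and the elementary estimate $\limsup\|\psi_t\|_{L^\infty}^{1/(\muex_6+1)}\le C\Ulim_3^{(1-a)/((2-a)(\muex_6+1))}$ coming from the definition of $\Ulim_3$. Summing the resulting exponents of $\Ulim_3$ gives precisely $\muex_8=\tfrac{\muex_7}{\muex_6}+\tfrac12+\tfrac{1-a}{(2-a)(\muex_6+1)}$.

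For part (ii), assume $\beta<\infty$. By \eqref{unboundp} there is $T>0$ such that for all $t>T+3/2$ and all $\tau\in[t-3/2,t-1]$, $\|\bar p(\tau)\|_{L^2}^2\le C\bigl(1+\beta^{1/(1-a)}+f(\tau)^{2/(2-a)}\bigr)\le C\bigl(1+\beta^{1/(1-a)}+\sup_{[t-3/2,t]}f^{2/(2-a)}\bigr)$. Both $\sup_{\tau\in[t-3/2,t-1]}\|\bar p(\tau)\|_{L^2}^2$ and $\|\bar p(t-1)\|_{L^2}^2+m(t)$ are therefore controlled by the same quantity $1+\beta^{1/(1-a)}+\sup_{[t-3/2,t]}f^{2/(2-a)}+\int_{t-3/2}^t\tilde f\,d\tau$. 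Inserting these bounds into \eqref{t32} and absorbing the lower power via Young's inequality exactly as in the derivation of \eqref{tlarge} produces \eqref{ptbeta}.

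The calculations are essentially bookkeeping once \eqref{t32} is available; the main delicate point is to remember that \eqref{t32}, not the final statement of Theorem \ref{ptbar}, is the right starting place, since the tighter $L^2$-bounds \eqref{nonzerobeta} and \eqref{unboundp} are what produce the desired sharp asymptotic constants, and then keeping track of the arithmetic of the exponents (in particular verifying $\tfrac{\muex_7}{\muex_6}+\tfrac12+\tfrac{1-a}{(2-a)(\muex_6+1)}=\muex_8$) to obtain \eqref{limsup-pt10}.
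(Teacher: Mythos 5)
Your argument is correct and follows essentially the same route as the paper: both start from the intermediate estimate \eqref{t32}, feed in the sharper asymptotic $L^2$-bounds \eqref{nonzerobeta} and \eqref{limint} (resp.\ \eqref{unboundp} in part (ii)), and finish with the relation \eqref{Urel} to obtain \eqref{limsup-pt} and \eqref{limsup-pt10}. One small slip: the invocation of \eqref{limsupH1} is unnecessary and not quite apposite (that estimate bounds $J_H[p]$ and $\|\bar p_t\|_{L^2}$, not $\|\bar p\|_{L^2}$); the bound $\limsup_{t\to\infty}\bigl(\|\bar p(t-1)\|_{L^2}^2+m(t)\bigr)\le C\Ulim_1$ already follows from \eqref{nonzerobeta} together with \eqref{limint}, exactly as you compute in the preceding sentence, so this does not affect the validity of the argument.
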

\begin{proof}
 (i) We follow the proof in Theorem \ref{ptbar}. 
Observe that
\beq\label{limint}
\limsup_{t\to\infty} \int_{t-3/2}^t \tilde f(\tau)d\tau
\le \limsup_{t\to\infty} \int_{t-2}^{t-1} \tilde f(\tau)d\tau + \limsup_{t\to\infty} \int_{t-1}^t \tilde f(\tau)d\tau
\le 2 \limsup_{t\to\infty} \int_{t-1}^t \tilde f(\tau)d\tau.
\eeq
The limit estimates \eqref{nonzerobeta} and \eqref{limint} yield 
\beqs
\limsup_{t\to\infty} \Big(\sup_{[t-3/2,t-1]} \|\bar p\|_{L^2}+m(t)\Big)\le C \Ulim_1. 
\eeqs
Combining this with \eqref{t32}, we obtain
\beqs
\limsup_{t\to\infty} \norm{\bar p_t(t)}_{L^\infty(U')}\le C\big( 1+\Ulim_1\big)^\frac{\muex_7}{\muex_6}  \Big(1 +\limsup_{t\to\infty}\big[\sup_{[t-1/2,t]}\|\psi_t\|_{L^\infty}^\frac1{\muex_6+1}\big]\Big) \big(\Ulim_1^\frac{\muex_6}{\muex_6+1} +\Ulim_1 \big)^{\frac12}.
 \eeqs
Then \eqref{limsup-pt} follows this and relation \eqref{Urel}. 
Elementary calculations will give \eqref{limsup-pt10} from \eqref{limsup-pt}.

(ii) We use \eqref{t32} and Young's inequality again, this time, with estimate \eqref{unboundp}; it results in \eqref{ptbeta}.
\end{proof}

While \eqref{limsup-pt} gives an asymptotic estimate for $\bar p_t$ in the case $A<\infty$, the next result covers the case $A=\infty$.

\begin{theorem}\label{smallqbar}
\asdc. 
Let $L_6=\norm{\bar p_0}_{L^2}^2 $ in general case, and $L_6=\beta^\frac1{1-a}$ in case $\beta<\infty$, and define
\beq\label{hdef}
N(t)=1+ L_6 +M_f(t)^{\frac 2 {2-a}}+\int_{t-2}^t \tilde f(\tau) d\tau,\quad
h(t)= N(t)^{\frac{\muex_7}{\muex_6}} \Big(1 +\sup_{[t-1,t]}\|\psi_t\|_{L^\infty}^\frac1{\muex_6+1}\Big). 
\eeq
If 
\beq\label{hNcond}
 \lim_{t\to\infty} h(t)^\frac{2(\muex_6+1)}{\muex_6}  e^{-d_1\int_2^t N(\tau)^{-b}d\tau} =0 
\quad \text{and} \quad  
\lim_{t\to \infty} \frac {h'(t)}{h(t)}N^b(t) =0 ,
\eeq
where $b$ is defined in \eqref{ab} and $d_1>0$ appears in \eqref{recallpt} below,
then
\beq\label{limqbar}
\limsup_{t\to\infty} \norm{\bar p_t(t)}_{L^\infty(U')}
\le C\limsup_{t\to\infty}
\Big\{ h(t) \Big(   N(t)^{b}\norm {\psi_t(t)}_{L^\infty}  +\Big[ N(t)^{b} \norm { \psi_t(t)}_{L^\infty} \Big]^\frac{\muex_6}{\muex_6+1}\Big)\Big\},  
\eeq
where $C>0$ depends on $U'$.
\end{theorem}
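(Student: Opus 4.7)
The plan is to reduce the interior $L^\infty$ bound for $\bar p_t$ to a refined, forcing-driven $L^2$ bound, then to solve a degenerate Gronwall-type ODE whose dissipation coefficient is $\sim N(t)^{-b}$.

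First I would apply Proposition~\ref{Lqbar} on the window $[t-1/2,t]$ with $\theta=1/2$. This gives
$$\|\bar p_t(t)\|_{L^\infty(U')}\le C\bigl(1+\sup_{[t-1/2,t]}\|\nabla p\|_{L^{2-a}}^{2-a}\bigr)^{\mu_7/\mu_6}\bigl(1+\|\psi_t\|_{L^\infty([t-1/2,t];L^\infty)}^{1/(\mu_6+1)}\bigr)\Bigl(\|\bar p_t\|_{L^2}^{\mu_6/(\mu_6+1)}+\|\bar p_t\|_{L^2}\Bigr),$$
where the $L^2$ norms are taken over $U\times (t-1/2,t)$. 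Using \eqref{Hcompare} together with \eqref{H-bound-b} (in the general case) or \eqref{unbounJ} (in the case $\beta<\infty$, which replaces $\|\bar p_0\|_{L^2}^2$ by $\beta^{1/(1-a)}$), the prefactor involving $\|\nabla p\|_{L^{2-a}}^{2-a}$ is dominated by $C N(t)^{\mu_7/\mu_6}$, so the whole prefactor is bounded by $C h(t)$.

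Next comes the heart of the argument: a refined decay estimate for $\|\bar p_t(t)\|_{L^2}^2$. I would establish (and label as \eqref{recallpt}) a differential inequality of the form
$$\frac{d}{dt}\|\bar p_t(t)\|_{L^2}^2+d_1\, N(t)^{-b}\,\|\bar p_t(t)\|_{L^2}^2\le C\,\|\psi_t(t)\|_{L^\infty}^2,$$
where the factor $N(t)^{-b}$ arises from the lower bound of $K(|\nabla p|)$ in \eqref{Km}, combined with the uniform bound $J_H[p](t)\le CN(t)$ from Theorem~\ref{H-bound-lemma} through H\"older. Solving it with integrating factor $\exp\bigl(d_1\int_2^t N(\tau)^{-b}d\tau\bigr)$ yields
$$\|\bar p_t(t)\|_{L^2}^2\le e^{-d_1\int_2^t N^{-b}}\|\bar p_t(2)\|_{L^2}^2 +C\int_2^t e^{-d_1\int_\tau^t N(s)^{-b}ds}\|\psi_t(\tau)\|_{L^\infty}^2\,d\tau.$$

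Finally, I would exploit the two hypotheses in \eqref{hNcond}. The first hypothesis makes $h(t)^{2(\mu_6+1)/\mu_6}e^{-d_1\int_2^t N^{-b}}\to0$, so the transient term contributes nothing to the $\limsup$. For the convolution term, the condition $h'(t)/h(t)\cdot N(t)^b\to 0$ says that the auxiliary function $h(\tau)^2\|\psi_t(\tau)\|_{L^\infty}^2 N(\tau)^b$ varies much more slowly than the kernel's local mass $N^b$, which permits a uniform Gronwall style argument (integration by parts against $-\frac{d}{d\tau}e^{-d_1\int_\tau^t N^{-b}}$) to conclude that
$$\int_2^t e^{-d_1\int_\tau^t N(s)^{-b}ds}\|\psi_t(\tau)\|_{L^\infty}^2 d\tau\le C\,N(t)^b\,\|\psi_t(t)\|_{L^\infty}^2+o\bigl(h(t)^{-2}\bigr)$$
asymptotically. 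Substituting this back into the $L^\infty$--$L^2$ bound above, taking the $(\cdot)^{1/2}$ and $(\cdot)^{\mu_6/(\mu_6+1)}$ of the two contributions in the final factor, multiplying by the $h(t)$ prefactor, and passing to $\limsup_{t\to\infty}$ yields exactly \eqref{limqbar}.

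The main obstacle is the asymptotic analysis of the convolution against the degenerate decay kernel $e^{-d_1\int N^{-b}}$: the two assumptions in \eqref{hNcond} are precisely calibrated so that (i) initial data are forgotten at a rate faster than $h(t)$ grows, and (ii) the tail of the convolution is dominated by a single evaluation of $\|\psi_t\|_{L^\infty}^2 N^b$ near time $t$, which is what delivers the right-hand side of \eqref{limqbar}.
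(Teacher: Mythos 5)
Your strategy coincides with the paper's: apply Proposition~\ref{Lqbar} on a unit time window to reduce to an $L^2$ bound for $\bar p_t$, derive a scalar Gronwall inequality \eqref{recallpt} whose dissipation coefficient is $\sim N(t)^{-b}$, solve via an integrating factor, and then pass to $\limsup$ using Lemma~\ref{difflem2} (the paper cites \eqref{recallpt} directly from \cite{HI2}, inequality (5.13), rather than rederiving it).

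However, the differential inequality you write down is missing a crucial factor. You claim
\[
\frac{d}{dt}\|\bar p_t\|_{L^2}^2+d_1\,N(t)^{-b}\|\bar p_t\|_{L^2}^2\le C\|\psi_t(t)\|_{L^\infty}^2,
\]
whereas the correct form (the paper's \eqref{recallpt}) has forcing $C\|\psi_t(t)\|_{L^\infty}^2\,N(t)^{b}$. The $N(t)^{b}$ factor is not a convenience: when you test the equation for $\bar p_t$ against $\bar p_t$, the boundary term $\int_\Gamma\psi_t\,\bar p_t\,d\sigma$ must be absorbed into the degenerate dissipation $\int_U K(|\nabla p|)\,|\nabla\bar p_t|^2\,dx$. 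Cauchy--Schwarz gives $\int_U|\nabla\bar p_t|\,dx\le\big(\int_U K|\nabla\bar p_t|^2\big)^{1/2}\big(\int_U K^{-1}\big)^{1/2}$, and $\int_U K(|\nabla p|)^{-1}dx\lesssim\int_U(1+|\nabla p|)^{a}dx\lesssim N(t)^{b}$ by H\"older together with $J_H[p]\lesssim N(t)$; Young's inequality then produces $\varepsilon^{-1}\|\psi_t\|_{L^\infty}^2 N(t)^{b}$, not $\varepsilon^{-1}\|\psi_t\|_{L^\infty}^2$. With your (incorrect) version, the Gronwall solution and Lemma~\ref{difflem2} would yield $\limsup\|\bar p_t\|_{L^2}^2\lesssim\limsup N(t)^{b}\|\psi_t\|_{L^\infty}^2$, hence $\limsup\|\bar p_t\|_{L^2}\lesssim\limsup N(t)^{b/2}\|\psi_t\|_{L^\infty}$, and after plugging into Proposition~\ref{Lqbar} you would end up with $N(t)^{b/2}$ instead of $N(t)^{b}$ in \eqref{limqbar}. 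That is a strictly stronger claim that your argument does not actually justify. With the corrected forcing $C\|\psi_t\|_{L^\infty}^2 N(t)^b$, the application of Lemma~\ref{difflem2} (with $f(\tau)=\|\psi_t(\tau)\|_{L^\infty}^2 N(\tau)^b$ and $g=d_1N^{-b}$) produces the $N(t)^{2b}$ inside the $L^2$-squared bound, which after taking the $1/2$ and $\muex_6/(2(\muex_6+1))$ powers gives exactly the exponents in \eqref{limqbar}. The rest of your plan (handling of the transient term via the first hypothesis in \eqref{hNcond}, and the prefactor $h(t)$ via \eqref{H-bound-b}/\eqref{unbounJ}) is sound and matches the paper.
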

\begin{proof}
Applying \eqref{qbarbound} to the interval $[t-1,t]$ with $\theta=\frac 1 2$, we have
\begin{multline}\label{ptinterval}
\norm{\bar p_t(t)}_{L^\infty(U')} \le C \Big[ \Big( 1+\sup_{[t-1,t]} J_H[p](\tau)\Big)^\frac{\muex_7}{\muex_6} 
\Big(1+\sup_{[0,T]} \|\psi_t(t)\|_{L^\infty}^\frac1{\muex_6+1}\Big)\Big]\\
\cdot \Big( \sup_{[t-1,t]} \|\bar p_t\|_{L^2(U)}^\frac{\muex_6}{\muex_6+1}+\sup_{[t-1,t]} \|\bar p_t\|_{L^2(U)}\Big).
\end{multline}
In the following $T>2$ is sufficiently large.  By \eqref{H-bound-b} in the general case,  and by \eqref{unbounJ} in case $\beta<\infty$, we have
\beqs
J_H[p](\tau) \le C\Big(1+ L_6 +M_f(t)^{\frac 2 {2-a}}+\int_{t-1}^t \tilde f(\tau) d\tau\Big), \quad \tau>T. 
\eeqs
Therefore,
\beq\label{nablaP1b} 
\sup_{[t-1,t]} J_H[p](\tau) \le C N(t). 
\eeq
Recall inequality (5.13) in \cite{HI2}: there is $d_1>0$ such that for $t>$ we have
    \beq\label{recallpt}
      \frac {d} {dt} \norm{\bar p_t(t)}^2 \le -d_1N(t)^{-b} \norm{\bar p_t(t)}^2 +C \norm { \psi_t(t)}^2_{L^\infty} N(t)^b.
    \eeq
Therefore, for $t'\in (T,\infty)$
\beqs
 \norm{\bar p_t(t')}^2 \le   e^{-d_1\int_T^{t'} N(\tau)^{-b}d\tau} \norm{\bar p_t(T)}^2+ \int_T^{t'}  e^{-d_1 \int_\tau^{t'} N(s)^{-b}ds} \norm { \psi_t(\tau)}^2_{L^\infty} N(\tau)^b d\tau. 
\eeqs  
Then taking supremum in $t'$ over the interval $[t-1,t]$ yields
\begin{align*}
 \sup_{[t-1,t]} \norm{\bar p_t}^2 
\le e^{-d_1\int_T^{t-1} N^{-b}(\tau)d\tau} \norm{\bar p_t(T)}^2 + e^{-d_1\int_T^{t-1} N^{-b}(\tau)d\tau} \int_T^t e^{d_1 \int_T^\tau N^{-b}(\theta)d\theta} \norm{\psi_t(\tau)}_{L^\infty}^2 N(\tau)^{b} d\tau .
\end{align*}
Thanks to the fact $N(t)\ge 1$, we have 
\beqs
e^{-d_1\int_T^{t-1} N(\tau)^{-b}d\tau}= e^{-d_1\int_T^{t} N(\tau)^{-b}d\tau} e^{d_1\int_{t-1}^t N(\tau)^{-b}d\tau} \le e^{d_1} e^{-d_1\int_T^{t} N(\tau)^{-b}d\tau}.
\eeqs  
Hence,
\begin{multline}\label{supptint}
 \sup_{[t-1,t]} \norm{\bar p_t}^2\\
\le C \Big(e^{-d_1\int_T^{t} N^{-b}(\tau)d\tau} \norm{\bar p_t(T)}^2 + e^{-d_1 \int_T^{t} N^{-b}(\tau)d\tau}  \int_T^t e^{d_1 \int_T^\tau N^{-b}(\theta)d\theta} \norm{\psi_t(\tau)}_{L^\infty}^2 N(\tau)^b d\tau\Big)\\
=C\Big(e^{-d_1\int_T^{t} N^{-b}(\tau)d\tau} \norm{\bar p_t(T)}^2 +  \int_T^t e^{-d_1\int_\tau^t N^{-b}(\theta)d\theta} \norm{\psi_t(\tau)}_{L^\infty}^2 N(\tau)^b d\tau \Big).
\end{multline}
By \eqref{ptinterval}, \eqref{nablaP1b} and \eqref{supptint}, we obtain 
\beqs
\begin{aligned}
\norm{\bar p_t(t)}_{L^\infty(U')}
& \le C h(t)
\Big\{  \Big(e^{-d_1\int_T^{t} N(\tau)^{-b}d\tau} \norm{\bar p_t(T)}^2+ \int_T^t  e^{-d_1 \int_\tau^{t} N(s)^{-b} ds} \norm { \psi_t(\tau)}^2_{L^\infty} N(\tau)^b d\tau \Big)^{\frac{\muex_6}{2(\muex_6+1)}}\\
&\quad+\Big( e^{-d_1\int_T^{t} N(\tau)^{-b}d\tau} \norm{\bar p_t(T)}^2+ \int_T^t  e^{-d_1 \int_\tau^{t} N(s)^{-b} ds} \norm { \psi_t(\tau)}^2_{L^\infty} N(\tau)^b d\tau \Big)^\frac12\Big\}. 
\end{aligned}
\eeqs   
Thus,
\begin{align*}
&\norm{\bar p_t(t)}_{L^\infty(U')}
 \le C\Big(  h(t)^{\frac{2(\muex_6+1)}{\muex_6} }e^{-d_1 \int_T^t N(\tau)^{-b}d\tau} \norm{\bar p_t(T)}^2\\
&\quad + h(t)^{\frac{2(\muex_6+1)}{\muex_6} }\int_T^t  e^{-d_1 \int_\tau^t N(s)^{-b} ds} \norm { \psi_t(\tau)}^2_{L^\infty} N(\tau)^b d\tau\Big)^{\frac{\muex_6}{2(\muex_6+1)}} \\
&\quad+C\Big(h(t)^2 e^{- d_1 \int_T^t N(\tau)^{-b}d\tau} \norm{\bar p_t(T)}^2+ h^2(t)\int_T^t  e^{-d_1 \int_\tau^t N(s)^{-b} ds} \norm { \psi_t(\tau)}^2_{L^\infty} N(\tau)^b d\tau\Big)^{\frac 12}   .
\end{align*}
Under condition \eqref{hNcond}, applying Lemma~\ref{difflem2} we obtain
\beqs
\begin{aligned}
\limsup_{t\to\infty} \norm{\bar p_t(t)}_{L^\infty } &\le C\Big\{\limsup_{t\to\infty}\Big[ \norm { \psi_t(\tau)}^2_{L^\infty} h(t)^{\frac{2(\muex_6+1)}{\muex_6} } N(t)^{2b}\Big]\Big\}^{\frac{\muex_6} {2(\muex_6+1)} }\\
&\quad+C\Big\{\limsup_{t\to\infty} \Big[\norm { \psi_t(\tau)}^2_{L^\infty} h(t)^2  N(t)^{2b}\Big]\Big\}^{\frac 1 2}.  
\end{aligned}
\eeqs
Therefore, we obtain \eqref{limqbar}.  
\end{proof}

Note from \eqref{limsup-pt} that $\lim_{t\to\infty} \|\bar p_t(t)\|_{L^\infty(U')}= 0$  provided 
$$\lim_{t\to\infty}\|\psi(t)\|_{L^\infty}= 0\quad \text{and}\quad \lim_{t\to\infty}\|\psi_t(t)\|_{L^\infty}= 0.$$ 
In the following, we can drop the first limit condition.

\begin{corollary}\label{ZeroCor}
Under the Strict Degree Condition, if $\|\psi(t)\|_{L^\infty}$ is uniformly bounded on $[0,\infty)$ and $\|\psi_t(t)\|_{L^\infty}\to 0$ as $t\to\infty$, then
\beq\label{zerolim}
\|\bar p_t(t)\|_{L^\infty(U')}\to 0 \quad \text{as } t\to\infty.
\eeq
\end{corollary}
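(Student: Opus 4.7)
The plan is to apply the local estimate \eqref{qbarbound} on the short window $[t-1,t]$ and to pair it with an $L^2$-decay of $\bar p_t(t)$ obtained from the differential inequality \eqref{recallpt}. One might hope to invoke Theorem \ref{smallqbar} directly, but its hypothesis \eqref{hNcond} requires differentiability of $h(t)$, which is not granted by the mere continuity of $\|\psi_t(t)\|_{L^\infty}$; reconstructing the relevant piece of that proof by hand is cleaner and avoids a smoothing step.

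First I would establish that all ambient quantities appearing in \eqref{qbarbound} and \eqref{recallpt} are uniformly bounded for $t$ large. Since $\|\psi(t)\|_{L^\infty}$ is uniformly bounded on $[0,\infty)$, the function $f$ (and hence some choice of $M_f$) is bounded and $A<\infty$. Since $\|\psi_t(t)\|_{L^\infty}\to 0$, one has $\tilde f(t)\to 0$, so $\int_{t-2}^{t}\tilde f\,d\tau\to 0$. Theorem \ref{H-bound-lemma}(iii) then gives $\sup_{t\ge T}J_H[p](t)<\infty$ for $T$ large, and by \eqref{JHcom} this bounds $\sup_{[t-1,t]}\int_U|\nabla p|^{2-a}\,dx$. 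Consequently the quantity $\lambda$ of Proposition \ref{Lqbar} applied on $[t-1,t]$, and the quantity $N(t)$ of \eqref{hdef}, are both uniformly bounded for $t$ large, with $1\le N(t)\le C_0$.

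With $N(t)\le C_0$, inequality \eqref{recallpt} reduces to a linear ODE for $y(t):=\|\bar p_t(t)\|_{L^2}^2$ of the form $y'(t)\le -\gamma\, y(t)+C'\|\psi_t(t)\|_{L^\infty}^2$ with a fixed $\gamma>0$. Duhamel's formula yields
\begin{equation*}
 y(t)\le e^{-\gamma(t-T)}y(T)+C'\int_T^t e^{-\gamma(t-s)}\|\psi_t(s)\|_{L^\infty}^2\,ds,
\end{equation*}
and splitting the integral past any time at which $\|\psi_t(s)\|_{L^\infty}^2<\varepsilon$ shows $\limsup_{t\to\infty}y(t)\le C'\varepsilon/\gamma$; as $\varepsilon>0$ is arbitrary, $y(t)\to 0$, and hence $\|\bar p_t\|_{L^2(U\times(t-1,t))}\to 0$.

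Finally I would feed these conclusions into \eqref{qbarbound} with $T_0=t-1$, $T=1$, $\theta=1/2$: the bracketed prefactor is uniformly bounded (it uses $\lambda\le C$ and $\|\psi_t\|_{L^\infty}\to 0$), while the $L^2$ factor $\|\bar p_t\|_{L^2(U\times(t-1,t))}+\|\bar p_t\|_{L^2(U\times(t-1,t))}^{\muex_6/(\muex_6+1)}$ tends to $0$. This delivers \eqref{zerolim}. The main obstacle is the third step: securing a uniform lower bound $\gamma>0$ on the dissipation rate in \eqref{recallpt}, which rests precisely on the \emph{uniform} boundedness of $\|\psi\|_{L^\infty}$ (and thus of $N$)---the extra hypothesis that distinguishes the present corollary from the trivial reduction to the case $\|\psi(t)\|_{L^\infty}\to 0$ already covered via \eqref{limsup:pbar}.
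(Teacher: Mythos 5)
Your proof is correct and takes essentially the same route as the paper's, which likewise replaces the possibly non-smooth $N(t)$ and $h(t)$ by the constant upper bound $C_0$ inside Theorem \ref{smallqbar} (thereby trivializing the differentiability requirement in \eqref{hNcond}) and then reads off \eqref{zerolim} from \eqref{limqbar}. Unfolding the Duhamel argument for \eqref{recallpt} and feeding the resulting $L^2$-decay of $\bar p_t$ into \eqref{qbarbound} is the same mechanism, made explicit.
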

\begin{proof}
 In this case, $N(t)$ and $h(t)$ are uniformly bounded on $[2,\infty)$ by a constant $C_0$. In the Theorem \ref{smallqbar}, we can replace $N(t)$ and $h(t)$ by this $C_0$. Then conditions in \eqref{hNcond} are met and \eqref{zerolim} follows \eqref{limqbar}. 
\end{proof}

\subsection{Estimates for pressure's second derivatives}
\label{seconderiv}

We estimate the Hessian $\nabla^2 p=\big(\frac{\partial^2 p}{\partial x_i \partial x_j}\big)_{i,j=1,2,\ldots,n}$.
Throughout this subsection $U'\Subset U$.

\begin{proposition}\label{hessprop}
Let $U'\Subset \setV \Subset U$ and $\delta\in(0,a]$.
Then for $t>0$,
\beq\label{delunite}
\int_{U'} |\nabla^2 p(x,t)|^{2-\delta} dx \le C \Big(1+ \int_{U} |\nabla p(x,t)|^{(2-\delta)a/\delta}  dx\Big) \Big(1+  \|\bar p_t(t)\|_{L^\infty( \setV )}^2\Big),
\eeq
where the positive constant $C$ depends on $U'$, $V$ and $\delta$.
\end{proposition}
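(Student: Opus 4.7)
The plan is to combine a H\"older-type splitting with the pointwise-in-time gradient energy identity \eqref{gradid}, supplemented by an integration-by-parts trick that trades the time derivative of the gradient energy for a spatial integral involving $\bar p_t$. Fix a cutoff $\zeta\in C_c^\infty(U)$ with $\zeta\equiv 1$ on $U'$ and $\mathrm{supp}\,\zeta\Subset V$. First I would apply H\"older's inequality with conjugate exponents $2/(2-\delta)$ and $2/\delta$ to the product $|\nabla^2 p|^{2-\delta} = (K|\nabla^2 p|^2)^{(2-\delta)/2}\cdot K^{-(2-\delta)/2}$ to obtain
\[
\int_{U'} |\nabla^2 p|^{2-\delta}\, dx \le \Bigl(\int_U K(|\nabla p|)|\nabla^2 p|^2\zeta^2\, dx\Bigr)^{(2-\delta)/2}\Bigl(\int_U K(|\nabla p|)^{-(2-\delta)/\delta}\, dx\Bigr)^{\delta/2}.
\]
The second factor is immediately controlled via \eqref{Kesta} (which gives $K(\xi)^{-1}\le C(1+\xi)^a$) by $C\bigl(1+\int_U |\nabla p|^{a(2-\delta)/\delta}\, dx\bigr)^{\delta/2}$, so the proof reduces to a pointwise-in-time bound on the weighted Hessian integral $\int_U K|\nabla^2 p|^2\zeta^2\, dx$.

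For this I would start from \eqref{gradid} with $s=0$, which holds pointwise in $t$:
\[
\tfrac{1}{2}\tfrac{d}{dt}\int_U |\nabla p|^2\zeta^2\, dx + \tfrac{1-a}{2}\int_U K|\nabla^2 p|^2\zeta^2\, dx \le C\int_U K|\nabla p|^2 |\nabla\zeta|^2\, dx.
\]
Since $p-\bar p$ depends only on $t$, one has $\nabla p_t=\nabla\bar p_t$, and spatial integration by parts converts the time-derivative term into an $x$-integral against $\bar p_t$:
\[
\tfrac{d}{dt}\int_U |\nabla p|^2\zeta^2\, dx = -2\int_U \bar p_t\,\Delta p\,\zeta^2\, dx - 4\int_U \bar p_t\,\zeta\,\nabla p\cdot\nabla\zeta\, dx.
\]
This is the central step and explains why only $\|\bar p_t\|_{L^\infty(V)}$ (and not $\|p_t\|_{L^\infty}$ or any norm of $\psi$) enters the final bound.

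Substituting back and using $|\Delta p|\le\sqrt{n}\,|\nabla^2 p|$, I would estimate the $\bar p_t\Delta p$ term by Cauchy--Schwarz with split weights $K^{\pm 1/2}$ and then absorb the resulting $\int K|\nabla^2 p|^2\zeta^2$ via $\epsilon$-Young. The remaining auxiliary integrals $\int_V K^{-1}$, $\int_V(1+|\nabla p|)^a$, $\int_V |\nabla p|$ and $\int_U K|\nabla p|^2|\nabla\zeta|^2$ are all controlled through \eqref{Kesta}--\eqref{Kestn}; the hypothesis $\delta\in(0,a]$ is precisely what guarantees that each of $a$, $1$, and $2-a$ is dominated by $a(2-\delta)/\delta$, so Young's inequality unifies them under that single exponent. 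The outcome is
\[
\int_U K|\nabla^2 p|^2\zeta^2\, dx \le C\bigl(1+\|\bar p_t(t)\|_{L^\infty(V)}^2\bigr)\Bigl(1+\int_U |\nabla p(x,t)|^{a(2-\delta)/\delta}\, dx\Bigr),
\]
and inserting this into the H\"older splitting, while using $(2-\delta)/2\le 1$ to replace $(1+\|\bar p_t\|^2)^{(2-\delta)/2}$ by $1+\|\bar p_t\|^2$, yields \eqref{delunite}.

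The main obstacle is obtaining the pointwise-in-time bound on $\int_U K|\nabla^2 p|^2\zeta^2\, dx$: the identity \eqref{gradid} normally feeds into a time-integrated argument (as in the proof of Lemma \ref{s.it-L}), producing only time-averaged control, so the key new ingredient is the integration-by-parts that replaces the awkward $\tfrac{d}{dt}\int|\nabla p|^2\zeta^2\, dx$ with an $x$-integral involving $\bar p_t$, exactly the quantity controlled in $L^\infty$ by hypothesis.
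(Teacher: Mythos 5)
Your proof is correct and follows essentially the same approach as the paper: both reduce the bound on $\int_{U'}|\nabla^2 p|^{2-\delta}\,dx$ to a pointwise-in-time bound on $\int_U K(|\nabla p|)|\nabla^2 p|^2\zeta^2\,dx$, and both obtain that bound by combining the gradient energy identity \eqref{gradid} with the integration-by-parts identity $\tfrac{1}{2}\tfrac{d}{dt}\int_U|\nabla p|^2\zeta^2\,dx=-\int_U\bar p_t\,\nabla\cdot(\nabla p\,\zeta^2)\,dx$ (the paper's \eqref{silly} with $s=0$), which is the key step you correctly identify. The only cosmetic difference is that you open with H\"older's inequality to get a product of factors raised to powers $(2-\delta)/2$ and $\delta/2$ whereas the paper uses Young's inequality to get a sum; both lead to the same conclusion after noting $(2-\delta)/2\le 1$ and using $\delta\le a$ to dominate the exponents $2-a$, $1$, and $a$ by $(2-\delta)a/\delta$.
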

\begin{proof}
Young's inequality gives   
\begin{multline}\label{del3}
\int_{U'} |\nabla^2 p|^{2-\delta} dx \le \int_{U'} K(|\nabla p|) |\nabla^2 p|^2 dx + \int_{U'} K(|\nabla p|)^{-(2-\delta)/\delta} dx\\
\le \int_{U'} K(|\nabla p|) |\nabla^2 p|^2 dx + C\int_{U'} \big[1+|\nabla p|^{(2-\delta)a/\delta}\big] dx.
\end{multline}
Note that for $s\ge 0$, we have
\beq\label{silly}
\frac{1}{2s+2} \ddt \int_U |\nabla\bar p|^{2s+2} \zeta^2 dx= - \int_U \bar p_t  \nabla \cdot (|\nabla\bar p|^{2s}\nabla\bar p \zeta^2)  dx.
\eeq
Since $\nabla \bar p =\nabla p$, replacing $p$ by $\bar p$ in \eqref{gradid} and using \eqref{silly} with $s=0$, we have for any $\zeta(x)\in C_c^\infty(U)$ that 
\begin{align*}
 \int_U K(|\nabla p|) |\nabla^2 p|^2   \zeta^2 dx\le  C \int_U K(|\nabla p|) |\nabla p|^{2}|\nabla \zeta|^2 dx
+ C   \int_U \bar p_t \nabla \cdot (\nabla p \zeta^2) dx.
\end{align*}
Let $\varepsilon>0$. For the last integral,
\begin{align*}
& \Big|  \int_U \bar p_t \nabla \cdot (\nabla p \zeta^2) dx \Big|
\le C \int_U |\bar p_t| |\nabla ^2  p| \zeta^2 + |\bar p_t||\nabla p| \zeta |\nabla \zeta|  dx \\
&\le \varep \int_U K(|\nabla p|) |\nabla^2 p|^2     \zeta^2 dx
+ C\varep^{-1}\int_U |\bar p_t|^2  K(|\nabla p|)^{-1} \zeta^2 dx
+C \int_U  |\bar p_t||\nabla p| \zeta |\nabla \zeta|  dx.
\end{align*}
Therefore,
\beq\label{gradineq1}
\begin{aligned}
& (1-\varep) \int_U K(|\nabla p|) |\nabla^2 p|^2   \zeta^2 dx
\le   C \int_U K(|\nabla p|) |\nabla p|^{2}|\nabla \zeta|^2 dx\\
&\quad\quad\quad\quad+C\varep^{-1}\int_U |\bar p_t|^2 (1+|\nabla   p|)^a \zeta^2 dx
+C \int_U  |\bar p_t||\nabla p| \zeta |\nabla \zeta|  dx.
\end{aligned}
\eeq
Constructing appropriate $\zeta$ with $\zeta\equiv 1$ on $U'$ and supp $\zeta \subset \setV$, we obtain from \eqref{gradineq1} that
\begin{align*}
&  \int_{U'} K(|\nabla p|) |\nabla^2 p|^2 dx
\le C\int_{ \setV }|\nabla p|^{2-a}dx+    C \|\bar p_t(t)\|_{L^\infty( \setV )}^2 \int_{ \setV } (1+|\nabla   p|)^a  dx
\\
&+ C \|\bar p_t(t)\|_{L^\infty( \setV )} \int_{ \setV } |\nabla   p| dx \le C\int_{ \setV }|\nabla p|^{2-a}dx+  C \Big(1+ \int_{ \setV } |\nabla p|   dx\Big) (1+  \|\bar p_t\|_{L^\infty(\setV)}^2).
\end{align*}
Thus, by Young's inequality, we have
\beq\label{gradinqe2}
  \int_{U'} K(|\nabla p|) |\nabla^2 p|^2 dx
\le C \Big(1+ \int_{ \setV } |\nabla p|^{2-a}   dx\Big) (1+  \|\bar p_t\|_{L^\infty(\setV)}^2).
\eeq
Combining \eqref{del3} with \eqref{gradinqe2}, we obtain
\begin{multline*}
\int_{U'} |\nabla^2 p(x,t)|^{2-\delta} dx \le C \Big(1+ \int_{U} |\nabla p(x,t)|^{2-a}  dx\Big) \Big(1+  \|\bar p_t(t)\|_{L^\infty( \setV )}^2\Big)\\
+ C\int_{U'} |\nabla p(x,t)|^{(2-\delta)a/\delta} dx.
\end{multline*}
Then \eqref{delunite} follows.
\end{proof}

We consider two  cases $\delta=a$ and $\delta<a$ separately.
We define two exponents
\beq 
\muex_9=\frac{2\muex_7}{\muex_6}+2+\frac 1 {\muex_6+1}\quad\text{and}\quad \muex_{10}=\frac{2-a}{1-a}\big(\frac{2\muex_7}{\muex_6}+2\big) + \frac2{\muex_6+1}.
\eeq

\begin{theorem}\label{hessest}
\asdc.

{\rm (i)} If $t>0$ then 
\begin{multline}\label{hess1b}
 \int_{U'} |\nabla^2 p(x,t)|^{2-a} dx 
 \le C L_7  (1+ t^{-\frac 1{\muex_6}}) 
(t+1)^{\muex_9} 
\Nf_{1,t}^{\frac{2-a}{1-a}\big(\frac{2\muex_7}{\muex_6}+2\big)} \Big\{ 1 +  \sup_{[0,t]}\|\psi_t\|_{L^\infty}\Big\}^{\muex_{10}},
\end{multline}
where $L_7=( 1+\norm{\bar p_0}_{L^2}^2 +\|\nabla p_0\|_{L^{2-a}}^{2-a} )^{\frac{2\muex_7}{\muex_6}+ 2}$  and $\Nf_{1,t}$ is defined in \eqref{Nf1}.

If $t\ge 3/2$ then 
\beq\label{h2bcomp}
 \begin{aligned}
 \int_{U'} |\nabla^2 p(x,t)|^{2-a} dx 
 \le  C L_8 \{ 1 +M_f(t)\}^{\frac2{2-a}\big(\frac{2\muex_7}{\muex_6}+ 2\big)}
 \cdot\Big\{1+\sup_{[t-3/2, t]} \norm{\psi_t}_{L^\infty}\Big\}^{\muex_{10}},
\end{aligned}
\eeq
where $L_8=( 1+\norm{\bar p_0}_{L^2}^2 )^{\frac{2\muex_7}{\muex_6}+ 2}$.

{\rm (ii)} If $A<\infty$ then 
\beq\label{limsupHessb}
\limsup_{t\to\infty}\int_{U'} |\nabla^2 p(x,t)|^{2-a} dx
\le  C \Ulim_3^{2\muex_8+1},
\eeq
where $\Ulim_3$ is defined by \eqref{A3def}.

{\rm (iii)} If  $\beta<\infty$, then there is $T>0$ such that for all $t>T$,
\beq\label{Hess3b}
 \begin{split}
 \int_{U'} |\nabla^2 p(x,t)|^{2-a} dx &\le C\Big(1+\beta^\frac1{1-a}+\sup_{[t-3/2,t]} f^\frac2{2-a}\Big)^{\frac{2\muex_7}{\muex_6} + 2}
 \big(1 +\sup_{[t-3/2,t]}\|\psi_t\|_{L^\infty}\big)^{\muex_{10}}.
\end{split}
\eeq

Above, the positive constant $C$ depends on $U'$.
\end{theorem}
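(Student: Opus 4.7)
My plan is to read off each conclusion of Theorem \ref{hessest} by taking the just-proved Proposition \ref{hessprop} at the endpoint case $\delta=a$ and plugging in the existing bounds on the two factors on the right-hand side, namely $\int_U |\nabla p|^{2-a}\,dx$ (controlled through $J_H[p]$ via the equivalence \eqref{JHcom}) and $\|\bar p_t\|_{L^\infty(V)}^2$ (controlled by the results of subsection \ref{Lpt-sec}). With $\delta=a$ the exponent $(2-\delta)a/\delta$ collapses to $2-a$, so Proposition \ref{hessprop} gives
\beqs
\int_{U'}|\nabla^2 p(x,t)|^{2-a}\,dx
\le C\Big(1+\int_U|\nabla p(x,t)|^{2-a}\,dx\Big)\Big(1+\|\bar p_t(t)\|_{L^\infty(V)}^2\Big),
\eeqs
for a slightly enlarged interior set $V$ with $U'\Subset V\Subset U$. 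All remaining work is to substitute and collect exponents.

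For part (i), when $t>0$ I would bound the first factor by \eqref{H-bound-0} together with \eqref{JHcom}, which yields
\beqs
\int_U|\nabla p(\cdot,t)|^{2-a}\,dx\le C\big(1+\norm{\bar p_0}_{L^2}^2 +\|\nabla p_0\|_{L^{2-a}}^{2-a}+(t+1)\sup_{[0,t]}f+\tsum_{2,t}\big),
\eeqs
and bound the second factor by squaring \eqref{tpos}. Multiplying and absorbing $\tsum_{2,t}\le C(1+t)(1+\sup\|\psi_t\|_{L^\infty})^{(2-a)/(1-a)}$ and $\sup f\le C\Nf_{1,t}^{(2-a)/(1-a)}$ produces a product of three factors whose powers of $(1+t)$, $\Nf_{1,t}$, and $\sup_{[0,t]}\|\psi_t\|_{L^\infty}$ can be read off as $\muex_9$, $\tfrac{2-a}{1-a}(\tfrac{2\muex_7}{\muex_6}+2)$, and $\muex_{10}$ respectively; the extra $(1+t^{-1/\muex_6})$ comes from squaring $1+t^{-1/(2\muex_6)}$ in \eqref{tpos}. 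This gives \eqref{hess1b}. For $t\ge 3/2$, I would instead use \eqref{H-bound-b} (or \eqref{H-bound-6} combined with \eqref{p-bar-bound}) to control $J_H[p]$ without the linear-in-$t$ factor, and square \eqref{tlarge} for $\|\bar p_t\|_{L^\infty}^2$; the same bookkeeping yields the cleaner form \eqref{h2bcomp}.

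Part (ii) follows by taking $\limsup_{t\to\infty}$ on both factors: \eqref{limsupH1} gives $\limsup J_H[p]\le C\Ulim_1\le C'\Ulim_3$, hence $\limsup\int_U|\nabla p|^{2-a}\,dx\le C\Ulim_3$; and \eqref{limsup-pt10} yields $\limsup\|\bar p_t\|_{L^\infty(V)}^2\le C\Ulim_3^{2\muex_8}$. The product is $C\Ulim_3^{2\muex_8+1}$, which is \eqref{limsupHessb}. Part (iii) is the same game under $\beta<\infty$: feed \eqref{unbounJ} into the first factor and \eqref{ptbeta} (squared) into the second; the explicit powers of $\beta^{1/(1-a)}$, $\sup f^{2/(2-a)}$ and $\sup\|\psi_t\|_{L^\infty}$ then match \eqref{Hess3b} after Young's inequality.

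The entire argument is therefore an exponent-tracking exercise rather than a genuinely new estimate. The only mildly delicate point I foresee is auditing that every auxiliary term (the $\tsum_{2,t}$ factor from \eqref{H-bound-0}, the $\int_{t-3/2}^t \tilde f$ in \eqref{tlarge}, the extra constants in \eqref{JHcom}, and the `$1+$' shift in \eqref{hess1b}) can be absorbed into the advertised factors of $\Nf_{1,t}$, $M_f(t)$, $\sup\|\psi_t\|_{L^\infty}$, etc., without inflating the stated exponents $\muex_9$, $\muex_{10}$, $2\muex_8+1$. Once that bookkeeping is done and the two factors from Proposition \ref{hessprop} are combined via $(X\cdot Y)^{\text{power}}$ and Young's inequality, each of \eqref{hess1b}, \eqref{h2bcomp}, \eqref{limsupHessb} and \eqref{Hess3b} drops out directly.
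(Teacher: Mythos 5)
Your proposal is correct and follows exactly the paper's own route: apply Proposition~\ref{hessprop} at $\delta=a$ to obtain \eqref{Hesspb}, then feed in \eqref{H-bound-0}/\eqref{H-bound-b}/\eqref{limsupH1}/\eqref{unbounJ} for the $\nabla p$ factor and \eqref{tpos}/\eqref{tlarge}/\eqref{limsup-pt10}/\eqref{ptbeta} (squared) for the $\bar p_t$ factor, after which the exponents of $(1+t)$, $\Nf_{1,t}$, $M_f$, $\sup\|\psi_t\|_{L^\infty}$ and $\Ulim_3$ collect to $\muex_9$, $\muex_{10}$, and $2\muex_8+1$ exactly as claimed. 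The only blemish is notational — you wrote an undefined token in place of $\Nf_{2,t}$ — but the mathematics matches the paper's proof line for line.
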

\begin{proof}
(i) For $t>0$, we obtain  from \eqref{delunite} with $\delta=a$ that 
\beq\label{Hesspb}
\int_{U'} |\nabla^2 p(x,t)|^{2-a} dx \le C(1+ \norm{\nabla p}^{2-a}_{L^{2-a}})(1 +  \|\bar p_t\|_{L^\infty( \setV )}^{2}).
\eeq
Then using estimates \eqref{H-bound-0} and \eqref{tpos} in \eqref{Hesspb}, we obtain
\begin{multline*}
 \int_{U'} |\nabla^2 p(x,t)|^{2-a} dx 
 \le C \Big\{ 1+\norm{\bar p_0}_{L^2}^2+\|\nabla p_0\|_{L^{2-a}}^{2-a} +(1+t)\sup_{[0,t]} f +\int_{0}^t  \tilde f(\tau)d\tau\Big\}^{\frac{2\muex_7}{\muex_6}+2} \\
\quad \cdot\Big\{1+ t^{-\frac 1{2\muex_6}} + t^{\frac 1 {2(\muex_6+1)}}  \sup_{[0,t]}\|\psi_t\|_{L^\infty}^\frac1{\muex_6+1}\Big\}^2.
\end{multline*}
Then \eqref{hess1b} follows.
If $t\ge 3/2$ then using \eqref{H-bound-b} and \eqref{tlarge} in \eqref{Hesspb} we obtain 
 \begin{align*}
 \int_{U'} |\nabla^2 p(x,t)|^{2-a} dx 
 \le  C\Big\{ 1+\norm{\bar p_0}_{L^2}^2 +M_f(t)^\frac2{2-a}+ \int_{t-3/2}^t \tilde f(\tau)d\tau \Big\}^{\frac{2\muex_7}{\muex_6}+ 2}\\
 \cdot\Big\{1+\sup_{[t-1/2, t]} \norm{\psi_t}_{L^\infty}^{\frac 1{\muex_6+1}}\Big\}^2.
\end{align*}
Then \eqref{h2bcomp} follows.

(ii) If $A<\infty$ then using \eqref{limsupH1} and \eqref{limsup-pt10} in \eqref{Hesspb} we obtain
\beqs
\limsup_{t\to\infty}\int_{U'} |\nabla^2 p(x,t) |^{2-a} dx \le C \Ulim_2 \Ulim_3^{2\muex_8}.
\eeqs
This yields \eqref{limsupHessb}.

(iii) If $\beta<\infty$ then using \eqref{unbounJ} and \eqref{ptbeta} in \eqref{Hesspb} we obtain \eqref{Hess3b}.
\end{proof}

Next, we treat the case $\delta\in(0,a)$ for which we define the following exponents
\begin{align}
&\nuex_1= \max\{2,\frac{(2-\delta)a}\delta\},
&& 
\nuex_2=2+\frac{2\muex_7}{\muex_6}+\frac 1 {\muex_6+1}+\frac{2(\nuex_1-2)}{2-a},\\
&\nuex_3=\frac{2-a}{1-a}\Big(\frac{2\muex_7}{\muex_6}+1\Big)+\frac{\nuex_1-a}{1-a},
&&
\nuex_4=\frac 2{2-a}\Big(\frac{2\muex_7}{\muex_6} + 1\Big)+\muex_4(\nuex_1-2),
\\
&\muex_{11}=\frac{2-a}{1-a}\Big(\frac{2\muex_7}{\muex_6}+1\Big)+\frac2{\muex_6+1},
&&\muex_{12}=\frac{2\muex_7}{\muex_6} + 1 +\frac{1-a}{2-a}\frac 2{\muex_6+1}.
\end{align}

\begin{theorem}\label{HessThm3}
 \asdc.  Let $\delta$ be any number in $(0,a)$.

 For $t>0$, we have
\beq\label{hess1c}
 \int_{U'} |\nabla^2 p(x,t)|^{2-\delta} dx 
 \le  C L_9 (1+ t^{-\frac 1{\muex_6}}) (t+1)^{\nuex_2}  \Nf_{1,t}^{\nuex_3} ( \sup_{[0,t]}\|\psi_t\|_{L^\infty}+1)^{\muex_{11}},
\eeq
where
$L_9=(1+ \norm{\bar p_0}_{L^2}^2+\|\nabla p_0\|_{L^{2-a}}^{2-a})  L_2(\nuex_1).$

For $t\ge 3/2$, we have
\beq\label{hess2c}
 \int_{U'} |\nabla^2 p(x,t)|^{2-\delta} dx 
\le C L_{10} (1+M_f(t))^{\nuex_4} \Big( 1+\sup_{[t-3/2,t]} \tilde f\Big)^{\muex_{12}}\Big\{ 1+ \int_0^t f(\tau)d\tau\Big\},
\eeq
where
$L_{10}=( 1+\norm{\bar p_0}_{L^2}^2)L_4(\nuex_1).$

Above, the positive constant $C$ depends on $U'$ and $\delta$, and $L_2(\cdot)$, $L_4(\cdot)$ are defined in Theorems \ref{thm312} and \ref{cor311}. 
\end{theorem}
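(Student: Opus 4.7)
The plan is to combine the pointwise bound of Proposition \ref{hessprop} with the gradient and time-derivative estimates already established. With $\delta\in(0,a)$, inequality \eqref{delunite} controls $\int_{U'}|\nabla^2 p(x,t)|^{2-\delta}dx$ by the product of $1+\int_U|\nabla p|^{(2-\delta)a/\delta}dx$ and $1+\|\bar p_t(t)\|_{L^\infty(V)}^2$. Since $\delta<a$, the exponent $(2-\delta)a/\delta$ exceeds $2-a$, so we set $\nuex_1=\max\{2,(2-\delta)a/\delta\}$ and control the gradient integral by an $L^{\nuex_1}$ estimate for $\nabla p$. What remains is a careful bookkeeping of exponents, carried out first on a finite interval and then in the large-$t$ regime.

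For \eqref{hess1c}, I would substitute estimate \eqref{grad20pw} of Theorem \ref{thm312} with $s=\nuex_1$ into the gradient factor, and the square of estimate \eqref{tpos} of Theorem \ref{ptbar} into the time-derivative factor. The gradient contribution supplies $L_2(\nuex_1)(t+1)^{\frac{2(\nuex_1-2)}{2-a}+1}\Nf_{1,t}^{(\nuex_1-a)/(1-a)}$; the squared $L^\infty$-norm of $\bar p_t$ supplies a power $\frac{2\muex_7}{\muex_6}+1$ of $1+\norm{\bar p_0}_{L^2}^2+\|\nabla p_0\|_{L^{2-a}}^{2-a}+(1+t)\Nf_{1,t}^{(2-a)/(1-a)}+\Nf_{2,t}$, times the auxiliary factor $1+t^{-1/\muex_6}+t^{1/(\muex_6+1)}\sup_{[0,t]}\|\psi_t\|_{L^\infty}^{2/(\muex_6+1)}$. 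The crude bound $\Nf_{2,t}\le 1+t(1+\sup\|\psi_t\|_{L^\infty})^{(2-a)/(1-a)}$ converts $\Nf_{2,t}$ into contributions to $(t+1)$ and $1+\sup\|\psi_t\|_{L^\infty}$, and regrouping yields the exponents $\nuex_2$ on $(t+1)$, $\nuex_3$ on $\Nf_{1,t}$, and $\muex_{11}$ on $1+\sup\|\psi_t\|_{L^\infty}$.

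For \eqref{hess2c} the same skeleton applies, using \eqref{Kgrad55pw} of Theorem \ref{cor311} in place of \eqref{grad20pw} and \eqref{tlarge} in place of \eqref{tpos}. The gradient factor now contributes $L_4(\nuex_1)(1+M_f(t))^{\muex_4(\nuex_1-2)}\{1+\int_0^t f\}$, while the squared $\bar p_t$ factor contributes a power $\frac{2\muex_7}{\muex_6}+1$ of $1+\norm{\bar p_0}_{L^2}^2+M_f(t)^{2/(2-a)}+\int_{t-3/2}^t\tilde f$, times $(1+\sup_{[t-1/2,t]}\|\psi_t\|_{L^\infty}^{1/(\muex_6+1)})^2$. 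Bounding $\int_{t-3/2}^t\tilde f\,d\tau$ by $C\sup_{[t-3/2,t]}\tilde f$, and using $\|\psi_t\|_{L^\infty}^{(2-a)/(1-a)}\le\tilde f$ to convert the residual $\sup\|\psi_t\|_{L^\infty}^{2/(\muex_6+1)}$ into a power of $1+\sup_{[t-3/2,t]}\tilde f$, then combining the $M_f(t)$ powers from both factors, delivers the exponents $\nuex_4$ on $1+M_f(t)$ and $\muex_{12}$ on $1+\sup_{[t-3/2,t]}\tilde f$.

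The principal difficulty is purely algebraic: no new analytical ingredient is needed beyond Proposition \ref{hessprop} and Theorems \ref{thm312}, \ref{cor311}, \ref{ptbar}. The only mild obstacle I anticipate is ensuring that every factor lands on the exponent specified in the definitions of $\nuex_2,\nuex_3,\nuex_4,\muex_{11},\muex_{12}$, particularly the consolidation of $\psi_t$-dependent terms via the inequality $\|\psi_t\|_{L^\infty}^{(2-a)/(1-a)}\le\tilde f$ and the crude majorization of $\Nf_{2,t}$ by a product of $(t+1)$ and a power of $1+\sup\|\psi_t\|_{L^\infty}$.
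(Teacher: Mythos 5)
Your proposal matches the paper's own argument: both invoke Proposition \ref{hessprop} (estimate \eqref{delunite}) with $s=\nuex_1$, substitute \eqref{grad20pw} and the square of \eqref{tpos} for the finite-time bound, and substitute \eqref{Kgrad55pw} and \eqref{tlarge} for the large-time bound, closing with exactly the same algebraic consolidations of $\Nf_{2,t}$ and $\|\psi_t\|_{L^\infty}$ via $\|\psi_t\|_{L^\infty}\le C(1+\tilde f)^{(1-a)/(2-a)}$. The proposal is correct and follows the paper's approach.
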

\begin{proof}
We use estimate \eqref{delunite} and utilizing \eqref{grad20pw} with $s=\nuex_1$ and \eqref{tpos} to obtain
\begin{multline*}
 \int_{U'} |\nabla^2 p(x,t)|^{2-\delta} dx 
 \le C \Big\{ 1+\norm{\bar p_0}_{L^2}^2+\|\nabla p_0\|_{L^{2-a}}^{2-a}+(1+t)\Nf_{1,t}^\frac{2-a}{1-a} +\Nf_{2,t}\Big\}^{\frac{2\muex_7}{\muex_6}+1} \\
\quad \cdot\Big\{1+ t^{-\frac 1{2\muex_6}} + t^{\frac 1 {2(\muex_6+1)}}  \sup_{[0,t]}\|\psi_t\|_{L^\infty}^\frac1{\muex_6+1}\Big\}^2 
 L_2(\nuex_1)  (t+1)^{\frac{2(\nuex_1-2)}{2-a}+1} \Nf_{1,t}^{\frac{\nuex_1-a}{1-a}}.
\end{multline*}
Then \eqref{hess1c} follows. For $t\ge 3/2$, using \eqref{Kgrad55pw} for $s=\nuex_1$ instead of \eqref{grad20pw} and using \eqref{tlarge} instead of \eqref{tpos}, we obtain
\begin{multline*}
 \int_{U'} |\nabla^2 p(x,t)|^{2-\delta} dx 
 \le C \Big( 1+\norm{\bar p_0}_{L^2}^2 +M_f(t)^{\frac 2{2-a} }+ \int_{t-3/2}^t \tilde f(\tau)d\tau \Big)^{\frac{2\muex_7}{\muex_6} + 1 } \Big(1 +\sup_{[t-1/2,t]}\|\psi_t\|_{L^\infty}^\frac1{\muex_6+1}\Big)^2\\
\cdot L_4(\nuex_1)\Big(1+M_f(t) \Big)^{\muex_4(\nuex_1-2)}\Big\{1+ \int_0^t f(\tau)d\tau\Big\}.
\end{multline*}
Note that  $\|\psi_t(t)\|_{L^\infty}\le C(1+\tilde f(t))^\frac{1-a}{2-a}$.
Then we obtain \eqref{hess2c}.
\end{proof}

For asymptotic estimates, we have the following.

\begin{theorem}\label{HessLim}
\asdc.  
Suppose $\Upsilon_1$ and $\Upsilon_2$ defined in Corollary \ref{corMM} are finite numbers.
Then for any $\delta\in (0,a)$, we have
\beq\label{limhessU}
 \limsup_{t\to\infty} \int_{U'} |\nabla p(x,t)|^{2-\delta} dx
\le C  L_4(\nuex_1) \Upsilon_1^{\muex_4(\nuex_1-2)} \Upsilon_2
 \Ulim_3^{2\muex_8},
\eeq
where $C>0$ depends on $U'$ and $\delta$.
\end{theorem}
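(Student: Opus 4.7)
The plan is to combine three earlier results in a direct way: Proposition \ref{hessprop} reduces an estimate of the Hessian to estimates of $|\nabla p|$ in some $L^s$-norm and $\bar p_t$ in $L^\infty$; Corollary \ref{corMM} supplies the uniform-in-time interior $L^s$-bound on $\nabla p$ under the hypotheses $\Upsilon_1,\Upsilon_2<\infty$; and Theorem \ref{ptbar2}\,(i) supplies the asymptotic $L^\infty$-bound on $\bar p_t$. The proof of Theorem \ref{HessLim} should essentially be the ``$t\to\infty$'' version of Theorem \ref{HessThm3}, now running through the uniform-in-time interior estimates of Corollary \ref{corMM} rather than the $T$-dependent versions.

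First I would fix an intermediate open set $V'$ with $U'\Subset V'\Subset U$ so that Proposition \ref{hessprop} (applied on the pair $(U',V')$) and Corollary \ref{corMM} (applied on $V'$) can both be invoked on the same nested configuration. Set $\nuex_1=\max\{2,(2-\delta)a/\delta\}$, which is the exponent used already in Theorem \ref{HessThm3}, and note that by Young's inequality $|\nabla p|^{(2-\delta)a/\delta}\le 1+|\nabla p|^{\nuex_1}$. Inequality \eqref{delunite}, together with enlarging the domain of the $\nabla p$-integral from $U$ (as written) to $V'$ (as in the proof of Proposition \ref{hessprop}), then yields the pointwise-in-$t$ estimate
\begin{equation*}
\int_{U'}|\nabla^2 p(x,t)|^{2-\delta}\,dx \le C\Bigl(1+\int_{V'}|\nabla p(x,t)|^{\nuex_1}\,dx\Bigr)\bigl(1+\|\bar p_t(t)\|_{L^\infty(V')}^2\bigr).
\end{equation*}

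Next I would bound each factor. Since $\Upsilon_1=1+\sup_{[0,\infty)}f<\infty$ and $\Upsilon_2=1+\int_0^\infty f<\infty$, Corollary \ref{corMM} applies with $s=\nuex_1\ge 2$ on $V'$, giving the time-uniform bound
\begin{equation*}
\sup_{t\ge 0}\int_{V'}|\nabla p(x,t)|^{\nuex_1}\,dx \;\le\; C\,L_4(\nuex_1)\,\Upsilon_1^{\muex_4(\nuex_1-2)}\,\Upsilon_2.
\end{equation*}
For the second factor, the hypothesis $\Upsilon_1<\infty$ in particular forces $A=\limsup_{t\to\infty}f(t)\le\sup f<\infty$, so Theorem \ref{ptbar2}\,(i) yields $\limsup_{t\to\infty}\|\bar p_t(t)\|_{L^\infty(V')}\le C\,\Ulim_3^{\muex_8}$, and hence $\limsup_{t\to\infty}(1+\|\bar p_t(t)\|_{L^\infty(V')}^2)\le C\,\Ulim_3^{2\muex_8}$ since $\Ulim_3\ge 1$. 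Taking limsup in the displayed pointwise-in-$t$ inequality and multiplying the two bounds gives exactly \eqref{limhessU}.

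I do not expect any serious obstacle: the real work was done in Proposition \ref{hessprop}, Corollary \ref{corMM}, and Theorem \ref{ptbar2}, so this is an assembly step. The one minor technical point to watch is bookkeeping of the nested domains $U'\Subset V'\Subset U$, since Proposition \ref{hessprop} as written involves the integral of $|\nabla p|^{(2-\delta)a/\delta}$ on $U$, but its proof actually localizes this integral to the intermediate set (denoted $\setV$ there); one must use the localized version so that Corollary \ref{corMM} (which is genuinely interior) can be applied. The other mild point is handling the case $(2-\delta)a/\delta<2$ via $\nuex_1=\max\{2,(2-\delta)a/\delta\}$, which is why $\nuex_1$ appears in both the exponent and the factor $L_4(\nuex_1)$ in the final bound.
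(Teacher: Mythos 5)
Your proposal is correct and follows the same route as the paper: the paper's proof is a one-sentence assembly of \eqref{delunite} (Proposition \ref{hessprop}), \eqref{Kgrad60pw} (Corollary \ref{corMM} with $s=\nuex_1$), and \eqref{limsup-pt10} (Theorem \ref{ptbar2}\,(i)), exactly the three ingredients you identify. Two small observations worth noting: (a) the left-hand side of \eqref{limhessU} as printed reads $|\nabla p|^{2-\delta}$, but in view of \eqref{delunite} and the fact that this result sits in the subsection on second derivatives, it is a typo for $|\nabla^2 p|^{2-\delta}$, which is what you (correctly) proved; and (b) you are right that the $\int_U$ in the statement of \eqref{delunite} must in practice be read as the localized $\int_{\setV}$ from its proof before Corollary \ref{corMM}'s interior bound can be applied — the paper does this implicitly, as it already did in the proof of Theorem \ref{HessThm3}. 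Your remark that $\Upsilon_1<\infty$ forces $A<\infty$, which licenses the use of Theorem \ref{ptbar2}\,(i), is the one hypothesis check the paper leaves unstated, and it is worth writing down.
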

\begin{proof}
Taking limit superior of \eqref{delunite} as $t\to\infty$ and using \eqref{Kgrad60pw} for $s=\nuex_1$, and using \eqref{limsup-pt10}, we obtain
\eqref{limhessU}.
\end{proof}

\myclearpage
\section{Dependence on initial and boundary data}
\label{datacont}

In this section, we prove the continuous dependence of solutions  $p(x,t)$ of the IBVP \eqref{eqorig}, \eqref{BC} and \eqref{In-Cond} with respect to the $L^\infty$-norm on the initial data and  boundary data. The results are established for either finite time intervals or at time infinity.  

Let $p_1(x,t)$ and $p_2(x,t)$ be two solutions of the IBVP \eqref{eqgamma} having fluxes $\psi_1$ and $\psi_2$, and  initial data $p_1(x,0)$ and $p_2(x,0)$, respectively. 
Let $\Psi=\psi_1-\psi_2$, $P=p_1-p_2$, and $\bar P=P-|U|^{-1}\int_U P dx$. Then by \eqref{pbargam},
\beqs
\bar P(x,t)
=\bar p_1(x,t)-\bar p_2(x,t)
=P(x,t)- \frac{1}{|U|}\int_U P(x,0) dx + \frac{1}{|U|}\int_0^t\int_\Gamma \Psi(x,\tau) d\sigma d\tau.
\eeqs
We will estimate $\norm{\bar P}_{L^\infty(U')}$ where the subset $U'$ satisfies $U'\Subset U$ throughout the section.
From \eqref{eqgamma} follows
\beq\label{eq-1} \frac{\partial\bar P}{\partial t}=\nabla \cdot \big (K(|\nabla \bar p_1|)\nabla \bar p_1  \big ) - \nabla \cdot \big  (K(|\nabla \bar p_2|)\nabla \bar p_2 \big )
+\frac{1}{|U|}\int_\Gamma \Psi(x,t) d\sigma. \eeq

The following quantity will be used throughout this section:
\beqs
\Lambda(t) =1 + \int_U \Big(|\nabla p_1(x,t)|^{2-a} + |\nabla p_2(x,t)|^{2-a}\Big) dx, 
\eeqs

\subsection{Results for pressure}
\label{sec41}

First we establish $L^\infty$-estimates for $\bar P$ in terms of its $L^2$- and $W^{1,s}$- norms.
We recall that the exponents $\muex_5$, $\muex_6$ and $\muex_7$ are defined in \eqref{sdef}.

 \begin{proposition}\label{cont:strictcond}  \asdc.
Let $ U'\Subset \setV \Subset U$. 
Let $\mu> \muex_6^{-1}=\frac {\muex_5} {{\muex_5}-2}$ and denote 
\beq \label{etadef}
{\gamex_1}=\muex_6 -\frac1\mu =1- \frac 2 {\muex_5} -\frac 1 \mu\in(0,1).
\eeq
Then we have for any $T_0\ge 0$, $T>0$ and $\theta\in(0,1)$  that 
 \beq\label{Pinfty1}
\sup_{[T_0+\theta T,T_0+T]} \|\bar P\|_{L^\infty(U')}\le C\, {\mathcal C}_{T_0,T,\theta}\Big( \norm{\bar P}_{L^2(U\times (T_0,T_0+T) )}^{\frac{{\gamex_1}}{{\gamex_1}+1}}+ \norm{\bar P}_{L^2(U\times (T_0,T_0+T) )}\Big),
 \eeq
where the positive constant $C$ is independent of $T_0$, $T$ and $\theta$, 
 \beq\label{Cthe3}
     \mathcal C_{T_0,T,\theta}= (\lambda T^{1/2} \vartheta)^{ \frac{1}{{\gamex_1}+1}} +  \Big(\lambda\Big[1+ \frac1{\theta T}\Big]^{1/2}\Big)^\frac1{\muex_6}  + \Big(\lambda T^{1/2}\sup_{[T_0,T_0+T]}\norm{\Psi}\Big)^{ \frac{1}{\muex_6+1}},
 \eeq
with
\begin{align}
\label{lamdef2}
&\lambda = \lambda_{T_0,T} \eqdef  \sup_{t\in [T_0,T_0+T]} \Lambda(t) ^{\muex_7},\\
\label{updef}
& \vartheta =\vartheta_{T_0,T} \eqdef \Big[ \int_{T_0}^{T_0+T}\int_{\setV} \Big(|\nabla \bar p_1|^{2\mu(1-a)} + |\nabla \bar p_2|^{2\mu(1-a)} \Big)dx dt\Big]^\frac{1}{2\mu}. 
\end{align}
\end{proposition}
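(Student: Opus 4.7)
The plan is to adapt the De~Giorgi iteration of Proposition \ref{local-L-infty} to the difference equation \eqref{eq-1} for $\bar P$, using the monotonicity \eqref{mono1} of Lemma \ref{quasimono-lem} to supply the dissipation. Since $p_1$ and $p_2$ use the same coefficient vector $\vec a$, \eqref{mono1} yields the pointwise estimate
\begin{equation*}
\bigl(K(|\nabla p_1|)\nabla p_1-K(|\nabla p_2|)\nabla p_2\bigr)\cdot\nabla\bar P\ \ge\ (1-a)\,W\,|\nabla\bar P|^2,\qquad W:=K(|\nabla p_1|\vee|\nabla p_2|).
\end{equation*}
This identifies $W$ as the natural weight for the weighted parabolic Sobolev embedding \eqref{Wemb} of Lemma \ref{ParaSob-3}; by \eqref{Kesta}, $\int_U W^{-(2-a)/a}\,dx\le C\Lambda(t)$, so the supremum factor in \eqref{Wemb} is bounded by $\lambda$ from \eqref{lamdef2}.

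I would use a scheme similar to Proposition \ref{local-L-infty} but now with nested spatial cutoffs as well (the interior character of the estimate forces this): levels $k_i=M_0(1-2^{-i})$, times $t_i=T_0+\theta T(1-2^{-i})$, and nested subdomains $U'\Subset\cdots\Subset V$, with cutoff $\zeta_i$ that is one on the $i{+}1$-st inner region and vanishes outside the $i$-th, so $|\zeta_{i,t}|\le C\,2^i/(\theta T)$ and $|\nabla\zeta_i|\le C\,2^i$. Testing \eqref{eq-1} against $\bar P^{(k_{i+1})}\zeta_i^2$ and expanding $\nabla(\bar P^{(k_{i+1})}\zeta_i^2)=\zeta_i^2\nabla\bar P^{(k_{i+1})}+2\zeta_i\bar P^{(k_{i+1})}\nabla\zeta_i$, the monotonicity above absorbs the first piece into $(1-a)\int W|\nabla\bar P^{(k_{i+1})}|^2\zeta_i^2\,dx$, while the right-hand side collects three distinct source contributions: the time-cutoff term scaling like $2^i(\theta T)^{-1}\|\bar P^{(k_{i+1})}\|_{L^2(A_{i+1,i})}^2$, the flux term $C\,T\sup\|\Psi\|^2\,|A_{i+1,i}|$ coming from $\tfrac{1}{|U|}\int_\Gamma\Psi\,d\sigma$, and the cross term generated by $\nabla\zeta_i$. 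For the last, I use the pointwise bound $|K(|y|)y-K(|y'|)y'|\le C[(1+|y|)^{1-a}+(1+|y'|)^{1-a}]$ (from \eqref{Kesta}) followed by H\"older's inequality with exponent $2\mu$ on $V\times[T_0,T_0+T]$, which controls it by $C\cdot 2^i\,\vartheta\,\|\bar P^{(k_{i+1})}\|_{L^q(A_{i+1,i})}$ with $q=2\mu/(2\mu-1)$; the hypothesis $\mu>1/\muex_6$ is equivalent to $q<\muex_5$ and is exactly what allows the subsequent interpolation.

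After applying \eqref{Wemb} with weight $W$ to $\bar P^{(k_{i+1})}\zeta_i$, interpolating the $L^q$ norm of $\bar P^{(k_{i+1})}$ between $L^2$ and $L^{\muex_5}$ on $A_{i+1,i}$, and using $|A_{i+1,i}|\le 4^i M_0^{-2}Y_i^2$ (as in \eqref{An.n}) with $Y_i:=\|\bar P^{(k_i)}\|_{L^2(A_i)}$, one arrives at a recursion of the form
\begin{equation*}
Y_{i+1}\ \le\ C\,B^i\,\bigl[D_1\,Y_i^{1+\muex_6}+D_2\,Y_i^{1+{\gamex_1}}\bigr],
\end{equation*}
where $B$ is a fixed base, $D_1=M_0^{-\muex_6}\lambda\bigl((1+(\theta T)^{-1})^{1/2}+T^{1/2}\sup\|\Psi\|\bigr)$ collects both sources producing the pure $L^2$--$L^{\muex_5}$ interpolation exponent $\muex_6$, and $D_2=M_0^{-{\gamex_1}-1/\mu}\lambda T^{1/2}\vartheta$ carries the additional H\"older loss ${\gamex_1}=\muex_6-1/\mu$. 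Choosing $M_0$ equal to a fixed constant times $\mathcal C_{T_0,T,\theta}\bigl(\|\bar P\|_{L^2(U\times(T_0,T_0+T))}^{{\gamex_1}/({\gamex_1}+1)}+\|\bar P\|_{L^2(U\times(T_0,T_0+T))}\bigr)$ makes $Y_0$ satisfy the smallness hypothesis \eqref{Y0mcond} of Lemma~\ref{multiseq} (with $m=2$) for both exponents simultaneously, hence $Y_i\to 0$ and $\bar P\le M_0$ on $U'\times[T_0+\theta T,T_0+T]$; swapping $p_1\leftrightarrow p_2$ produces the matching lower bound.

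The principal obstacle is precisely the cross term generated by $\nabla\zeta_i$: unlike in Proposition \ref{local-L-infty}, whose purely temporal cutoff produces no such contribution, the interior estimate here forces spatial cutoffs and so a nonlinear term $(K(|\nabla p_1|)\nabla p_1-K(|\nabla p_2|)\nabla p_2)\cdot\nabla\zeta_i\,\bar P^{(k_{i+1})}$ that cannot be absorbed by the dissipation alone. The auxiliary parameter $\mu$ and the mixed-gradient quantity $\vartheta$ allow bounding it at the price of the smaller interpolation exponent ${\gamex_1}=\muex_6-1/\mu$; the strict inequality $\mu>1/\muex_6$ is exactly what guarantees ${\gamex_1}>0$ so that Lemma~\ref{multiseq} remains applicable, and reconciling the two interpolation exponents within a single choice of $M_0$ is what produces the two distinct powers of $\|\bar P\|_{L^2(U\times(T_0,T_0+T))}$ in \eqref{Pinfty1}.
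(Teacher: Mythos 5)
Your high-level plan is the right one and matches the paper's: use the monotonicity \eqref{mono1} for the dissipation, feed the weight $W=K(|\nabla p_1|\vee|\nabla p_2|)$ into the parabolic Sobolev embedding \eqref{Wemb} (with $\lambda$ controlling the sup factor), iterate with nested truncation levels and cutoffs, invoke Lemma \ref{multiseq} to terminate, and correctly identify the $\nabla\zeta_i$ cross term as the new obstacle that forces the auxiliary parameter $\mu$ and the quantity $\vartheta$. However, your precise handling of that cross term does not lead to the recursion you claim.

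You propose to estimate $\int_{T_0}^{T_0+T}\int_U(|\nabla p_1|^{1-a}+|\nabla p_2|^{1-a})\,|\bar P^{(k_{i+1})}|\,\zeta_i|\nabla\zeta_i|\,dx\,dt$ by a single H\"older with conjugate exponents $2\mu$ and $q=2\mu/(2\mu-1)$, giving $C\,2^i\vartheta\,\|\bar P^{(k_{i+1})}\|_{L^q(A_{i+1,i})}$. This produces a term that is \emph{linear} in $\bar P^{(k_{i+1})}$ on the right-hand side of the energy inequality, while the left-hand side energy $J$ and the other sources (the $\zeta_t$ and $\Psi$ terms) are quadratic. If you carry this through the Sobolev step $F_i\lesssim\lambda J^{1/2}$, using $\|\bar P^{(k_{i+1})}\|_{L^q}\le\|\bar P^{(k_{i+1})}\|_{L^2}|A_{i+1,i}|^{1/q-1/2}$ (note $q<2$, so there is no genuine interpolation between $L^2$ and $L^{\mu_5}$ here; your parenthetical claim that ``$\mu>1/\mu_6$ is equivalent to $q<\mu_5$'' is false, since $q<2<\mu_5$ automatically), you arrive at a $\vartheta$-term with $\vartheta^{1/2}$ rather than $\vartheta$, an $M_0$-power $-(3/2-1/(2\mu)-2/\mu_5)$, and a $Y_i$-exponent $2-1/(2\mu)-2/\mu_5$; these do \emph{not} produce the $M_0^{-(1+\gamma_1)}\lambda T^{1/2}\vartheta\, Y_i^{1+\gamma_1}$ term you write, and hence do not reconstruct the Proposition's $(\lambda T^{1/2}\vartheta)^{1/(\gamma_1+1)}$ and the power $\gamma_1/(\gamma_1+1)$ on $\|\bar P\|_{L^2}$.

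The fix is what the paper actually does: apply Young/Cauchy \emph{first}, writing $I_3\le\varepsilon\int|\bar P^{(k)}\zeta|^2\,dx+C\varepsilon^{-1}\int(|\nabla p_1|^{1-a}+|\nabla p_2|^{1-a})^2\chi_k|\nabla\zeta|^2\,dx$, absorb the $\varepsilon$-piece with $\varepsilon\sim 1/T$, and only then apply H\"older with exponent $\mu$ to the remaining gradient-squared integral against the set-measure, giving $C T\,\vartheta^2|A_{i+1,i}|^{1-1/\mu}$. This quadratic form is dimensionally consistent with $J$, and after the Sobolev step and \eqref{An.n} it delivers precisely the $Y_i^{1+\gamma_1}$ term with coefficient $\lambda T^{1/2}\vartheta M_0^{-(1+\gamma_1)}$. (A minor cosmetic point: the paper then invokes Lemma \ref{multiseq} with $m=3$, keeping the two $\mu_6$-exponent sources separate; collapsing them into one $D_1$ as you do and using $m=2$ is fine.)
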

\begin{proof} 
Without loss of generality, we assume $T_0=0$.
Let $\zeta(x,t)=\phi(x)\varphi(t)$ be a cut-off function with $\varphi(0)=0$ and supp\,$\phi \subset \setV$. 
Same as in Proposition \ref{p-T.est} we define
\begin{align*}
 \bar P^{(k)} =\max\{\bar P-k,0\},
\quad S_{k}(t)=\{ x\in U: \bar P^{(k)}(x,t)\ge 0\},
\end{align*}
and denote by $\chi_k(x,t)$ the characteristic function of $S_k(t)$.

Multiplying equation \eqref{eq-1}  by $\bar P^{(k)} \zeta^2 $, integrating it over $U$ and using integration by parts, we have
\begin{align*}
&\frac 1 2\ddt \int_U |\bar  P^{(k)}\zeta|^2 dx 
=\int_U |\bar P^{(k)}|^2\zeta \zeta_t dx  \\
&\quad -\int_U \Big(K(\nabla \bar p_1|)\nabla \bar p_1-K(|\nabla \bar p_2|)\nabla \bar p_2 \Big ) \cdot \nabla (\bar P^{(k)} \zeta^2)  dx
+\frac{1}{|U|}\int_\Gamma \Psi(x,t) d\sigma\int_U\bar  P^{(k)}\zeta^2  dx.
\end{align*}
Elementary calculations yield
\begin{align*}
&\frac 1 2\ddt \int_U |\bar  P^{(k)}\zeta|^2 dx 
\le \int_U |\bar P^{(k)}|^2\zeta |\zeta_t| dx  -\int_U \Big(K(\nabla \bar p_1|)\nabla \bar p_1-K(|\nabla \bar p_2|)\nabla \bar p_2 \Big ) \cdot \nabla  \bar P^{(k)} \zeta^2  dx\\
&\quad  -2 \int_U \Big(K(\nabla \bar p_1|)\nabla \bar p_1-K(|\nabla \bar p_2|)\nabla \bar p_2 \Big ) \cdot   \bar P^{(k)} \zeta \nabla \zeta   dx +\frac{1}{|U|}\int_\Gamma \Psi(x,t) d\sigma\int_U\bar  P^{(k)}\zeta^2  dx.
\end{align*}
Denoting the last four summands by $I_i$, $i=1,2,3,4$, respectively, we rewrite the above as
\begin{equation}\label{EqPprime}
\frac 1 2\ddt \int_U |\bar  P^{(k)}\zeta|^2 dx \le I_1+I_2+I_3+I_4.
\end{equation}
Let $\xi(x,t)=|\nabla \bar p_1|\vee |\nabla \bar p_2|$.  
We consider $I_2$. Let $J(x,t)=\Big(K(\nabla \bar p_1|)\nabla \bar p_1-K(|\nabla \bar p_2|)\nabla \bar p_2 \Big ) \cdot \nabla  \bar P^{(k)} $.
On the set $U\setminus S_k(t)$, since $\nabla \bar P^{(k)}=0$ a.e., we have $J(x,t)=0$ a.e..
On the set $S_k(t)$, we have $\nabla \bar P^{(k)}=\nabla \bar P$ a.e., and, by the monotonicity \eqref{mono1}, we have for almost all $x\in S_k(t)$ that
\begin{align*}
J(x,t)
& = \big( K(|\nabla \bar p_1|)\nabla \bar p_1-K(\nabla \bar p_2|)\nabla \bar p_2 \big)\cdot (\nabla \bar  p_1 -\nabla \bar p_2)\\
&\ge (1-a)K(|\nabla \bar p_1|\vee |\nabla \bar p_2|) |\nabla \bar P|^2
= (1-a)K(\xi) |\nabla P^{(k)}|^2.
\end{align*}
Therefore, 
\beq\label{oriI2}
I_2\le -  (1-a)  \int_U  K(\xi)|\nabla \bar P^{(k)}|^2 \zeta^2dx.
\eeq
Note that 
\beqs
|\zeta\nabla \bar P^{(k)}|^2 = |\nabla(\bar P^{(k)} \zeta)- \bar P^{(k)}\nabla  \zeta |^2 = |\nabla (\bar P^{(k)} \zeta) |^2 - 2\nabla (\bar P^{(k)} \zeta) \cdot (\bar P^{(k)} \nabla \zeta ) + |\bar P^{(k)} \nabla \zeta|^2 .
\eeqs
Thus, \eqref{oriI2} gives
\begin{align*}
I_2
&\le -  (1-a) \int_U  K(\xi)|\nabla (\bar P^{(k)} \zeta) |^2dx +2(1-a)\int_U  K(\xi)|\nabla (\bar P^{(k)} \zeta)| | P^{(k)}\nabla \zeta |dx\\
&\quad -(1-a)\int_U  K(\xi)|\bar P^{(k)} \nabla\zeta|^2dx .
\end{align*}
Using Cauchy's inequality for the second term on the right-hand side of the previous inequality gives 
\beq\label{I2}
\begin{aligned}
I_2
&\le -  \frac {1-a} 2 \int_U  K(\xi)|\nabla (\bar P^{(k)} \zeta) |^2dx + (1-a)\int_U  K(\xi)|\bar P^{(k)} \nabla\zeta|^2dx \\
&\le -  \frac {1-a} 2 \int_U  K(\xi)|\nabla (\bar P^{(k)} \zeta) |^2dx + C\int_U  |\bar P^{(k)} \nabla\zeta|^2dx .
\end{aligned}
\eeq
For the last inequality, we used the fact that $K(\xi)$ is bounded above.
For $I_3$ and $I_4$,  we have   for any $\varepsilon>0$ that
\begin{align}
\label{I3} I_3
&\le C \int_U |\bar  P^{(k)}  |  (|\nabla \bar p_1|^{1-a} + |\nabla \bar p_2|^{1-a}) \zeta |\nabla \zeta| dx \nonumber \\
&\le \varep \int_U |\bar  P^{(k)} \zeta |^2 dx + C \varep^{-1}\int_U (|\nabla \bar p_1|^{1-a} + |\nabla \bar p_2|^{1-a})^2 \chi_k |\nabla \zeta|^2 dx,\\ 
\label{I4} I_4&\le\varep \int_U |\bar  P^{(k)} \zeta |^2 dx + C\varep^{-1}\|\Psi(t)\|_{L^\infty}^2\int_U  \chi_k \zeta^2 dx.
\end{align}
Combining \eqref{EqPprime}, \eqref{I2}, \eqref{I3} and \eqref{I4} yields 
\begin{align*}
& \frac 1 2\ddt \int_U |\bar  P^{(k)}\zeta |^2  dx
+  \frac{1-a}2 \int_U K(\xi) |\nabla \bar P^{(k)}\zeta|^{2-a}dx
\le C \int_U | \bar P^{(k)} |^2 (|\zeta_t|\zeta   +  |\nabla \zeta|^2)  dx \\
&\quad+2\varep  \int_U |\bar P^{(k)}\zeta|^2 dx +  C\varep^{-1} \|\Psi(t)\|_{L^\infty}^2 \int_U  \chi_k  \zeta^2 dx 
+C\varep^{-1}\int_U (|\nabla \bar p_1|^{1-a} + |\nabla \bar p_2|^{1-a})^2  \chi_k |\nabla \zeta|^2 dx.
\end{align*}
Integrating in time, taking the supremum in $t$ over $[0,T]$, and selecting $\varep =1/(16T)$ we find that
\begin{multline*}
\sup_{[0,T]}\norm{\bar  P^{(k)}\zeta}_{L^2(U)}^2
+  \int_0^T \int_U K(\xi)  |\nabla \bar P^{(k)} \zeta |^2dx dt\le C\int_0^T \int_U | \bar P^{(k)}|^2 (|\zeta_t|\zeta  +|\nabla \zeta|^2 )dxdt\\
\quad +CT\sup_{[0,T]}\|\Psi\|_{L^\infty}^2 \int_0^T \int_U  \chi_k \zeta^2 dx dt+CT \int_0^T\int_U (|\nabla \bar p_1|^{1-a} + |\nabla \bar p_2|^{1-a})^2   \chi_k |\nabla \zeta|^{2} dx dt.
\end{multline*}
Applying H\"older's inequality to the last double integral yields
\beq\label{Pkpre}
\begin{aligned}
&\sup_{[0,T]}\norm{\bar  P^{(k)}\zeta}_{L^2(U)}^2
+  \int_0^T \int_U K(\xi)  |\nabla \bar P^{(k)} \zeta |^2dx dt\\
&\le C\int_0^T \int_U | \bar P^{(k)}|^2 (|\zeta_t|\zeta  +|\nabla \zeta|^2 )dx
 +CT\sup_{[0,T]}\|\Psi\|_{L^\infty}^2 \int_0^T \int_U \chi_k \zeta^2 dxdt\\
&\quad  + CT\Big( \int_0^T\int_{\setV} (|\nabla \bar p_1|^{1-a} + |\nabla \bar p_2|^{1-a})^{2\mu}  |\nabla \zeta|^{2\mu} dx dt\Big)^{1/\mu} 
\Big(\iint_{Q_T\cap {\rm supp}\zeta}\chi_k  dxdt\Big)^{1-1/\mu}.
\end{aligned}
\eeq
Under the Strict Degree Condition, by applying  Sobolev inequality \eqref{Wemb} with $W(x,t)=K(\xi(x,t))$, we have 
\beqs
\|\bar  P^{(k)}\zeta\|_{L^{\muex_5}(Q_T)}  
\le C\lambda \Big(\sup_{[0,T]}\norm{\bar  P^{(k)}\zeta}_{L^2(U)}^2
+  \int_0^T \int_U K(\xi)  |\nabla \bar P^{(k)} \zeta |^2dx dt\Big)^{1/2},
\eeqs
where $\lambda$ is defined by \eqref{lamdef2}. Combining with \eqref{Pkpre}, we have
\begin{multline}\label{Pkzeta}
\|\bar  P^{(k)}\zeta\|_{L^{\muex_5}(Q_T)}  
\le C\lambda \Big[ \Big(\int_0^T \int_U | \bar P^{(k)}|^2 (|\zeta_t|+ |\nabla \zeta|^2)dxdt\Big)^{1/2}\\
+ T^{1/2} \sup_{[0,T]} \|\Psi\|_{L^\infty}\Big(\iint_{Q_T\cap{\rm supp}\zeta} \chi_k  dxdt\Big)^{1/2}
+T^{1/2}\vartheta \Big(\iint_{Q_T\cap{\rm supp}\zeta} \chi_k  dxdt\Big)^{\frac 12(1-1/\mu)}\Big],
\end{multline}
where $\vartheta$ is defined by \eqref{updef}.

Let $M_0 >0$ be fixed  which we will determine later.
For integers $i\ge 0$, let $k_i= M_0(1-2^{-i})$ and where $\zeta_i$ be defined as in \eqref{der:cutoff} and the sets  $\mathcal Q_i$, $A_{i,j}$ be  defined similar to \eqref{defQnAnm} replacing $\bar p_t$ by $\bar P$. 

Define $F_{i} = \norm{\bar P ^{(k_{i+1})}\zeta_i}_{L^{\muex_5}(A_{i+1,i}) }$.
Applying \eqref{Pkzeta} with $k=k_{i+1}$ and $\zeta=\zeta_i$ we have
 \beq\label{more1}
  \begin{aligned}
  F_{i}
&\le C\lambda \Big[ \big(2^{i/2} (\theta T)^{-1/2} + 2^i\big) \norm{ \bar P^{(k_{i+1})}}_{L^2(A_{i+1, i})}\\ 
&\quad +2^iT^{1/2}\vartheta |A_{i+1,i}|^{\frac 12(1-1/\mu)}  + T^{1/2}\sup_{[0,T]}\|\Psi\|_{L^\infty} |A_{i+1,i}|^{1/2} \Big].
  \end{aligned}
 \eeq
Estimating the same way as in \eqref{qki+1}, we have
\beq\label{more2}
 \|\bar P^{(k_{i+1})}\zeta_i \|_{L^2(A_{i+1,i+1})} \le C F_{i}  |A_{i+1,i}|^{1/2-1/{\muex_5}}.
\eeq
Also, similar to \eqref{Aii}, we have 
\beq\label{more3}
  |A_{i+1,i}| \le C 4^{i} M_0^{-2}\|\bar P^{(k_i)}\|_{L^2(A_i)}^{2}.  
\eeq
From estimates \eqref{more1}, \eqref{more2}, and the boundedness of $\zeta_t$, we obtain
\begin{align*}
&\|\bar P^{(k_{i+1})}\|_{L^2(A_{i+1})} 
=\|\bar P^{(k_{i+1})}\zeta_i\|_{L^2(A_{i+1})} 
\le C\lambda \Big\{ 2^{i}( (\theta T)^{-1/2} + 1) \norm{ \bar P^{(k_{i+1})}}_{L^2(A_{i+1, i})}\\ 
&\quad +2^iT^{1/2}\vartheta |A_{i+1,i}|^{\frac 12(1-1/\mu)}  + T^{1/2}\sup_{[0,T]}\|\Psi\|_{L^\infty}|A_{i+1,i}|^{1/2} \Big\} |A_{i+1,i}|^{1/2-1/{\muex_5}}.
\end{align*}
Using \eqref{more3} gives
\begin{align*}
\|\bar P^{(k_{i+1})}\|_{L^2(A_{i+1})} 
&\le C\lambda \Big\{ 2^{i}( (\theta T)^{-1/2} + 1) \norm{ \bar P^{(k_{i+1})}}_{L^2(A_{i+1, i})} 
 + 4^iT^{1/2}\vartheta  M_0^{-1+1/\mu} \|\bar P^{(k_i)}\|_{L^2(A_i)}^{1-1/\mu}\\
&\quad   + T^{1/2}2^iM_0^{-1}\sup_{[0,T]}\|\Psi\|_{L^\infty} \|\bar P^{(k_i)}\|_{L^2(A_i)} \Big\} 2^{i} M_0^{-(1-\frac 2 {\muex_5})}\|\bar P^{(k_i)}\|_{L^2(A_i)}^{1-\frac 2 {\muex_5}}.
\end{align*}

Put $Y_i=\| \bar P^{(k_i)} \|_{L^2(A_i)}$, the previous inequality gives        
\begin{align*}
&Y_{i+1}\le C\lambda  M_0^{-1+\frac 2 {\muex_5}}8^i
\Big\{\vartheta  T^\frac12 M_0^{-1+\frac1\mu} Y_i^{1-\frac1\mu} + \Big[(1+ \frac1{\theta T})^\frac12+T^\frac12\sup_{[0,T]}\|\Psi\|_{L^\infty} M_0^{-1}\Big]Y_i   \Big\} Y_i^{1-\frac 2 s}\\
&\le C  8^i
\Big\{\lambda\vartheta  T^\frac12 M_0^{-1-{\gamex_1}} Y_i^{1+{\gamex_1}} 
+ \lambda\Big[(1+ \frac1{\theta T})^\frac12 M_0^{-\muex_6}\Big]Y_i^{1+\muex_6}
+\lambda T^\frac12\sup_{[0,T]}\|\Psi\|_{L^\infty}M_0^{-1-\muex_6}Y_i^{1+\muex_6}   \Big\}.
\end{align*}
Since $Y_0\le \norm{\bar P}_{L^2(U\times (0,T) )} $, aiming at applying Lemma~\ref{multiseq} with $m=3$ we choose $M_0$ sufficiently large such that 
  \[
  \begin{split}
  \norm{\bar P}_{L^2(U\times (0,T) )}
\le C\min\Big\{
&(\lambda T^{1/2} \vartheta)^{- 1/{\gamex_1}}M_0^\frac{1+{\gamex_1}}{{\gamex_1}},  (\lambda(1+ \frac1{\theta T})^{1/2})^{-1/\muex_6}M_0, \\
& (\lambda T^{1/2}\sup_{[0,T]}\|\Psi\|_{L^\infty})^{- 1/\muex_6} M_0^\frac{1+\muex_6}{\muex_6}  \Big\}.
  \end{split}
  \]
Thus, we require    
 \begin{align*}
  M_0&\ge C(\lambda T^{1/2}\vartheta)^{ \frac{1}{{\gamex_1}+1}} \norm{\bar P}_{L^2(U\times (0,T) )}^{\frac{{\gamex_1}}{{\gamex_1}+1}},\quad
   M_0\ge C (\lambda(1+ \frac1{\theta T})^{1/2})^{1/\muex_6} \norm{\bar P}_{L^2(U\times (0,T) )}, \\
   M_0&\ge C(\lambda T^{1/2} \sup_{[0,T]}\|\Psi\|_{L^\infty})^{ \frac{1}{\muex_6+1}} \norm{\bar P}_{L^2(U\times (0,T) )}^{\frac{\muex_6}{\muex_6+1}}. 
  \end{align*}
 We choose 
 \begin{align}\label{M0}
  M_0 \ge C \Big((\lambda T^{1/2} \vartheta)^{ \frac{1}{{\gamex_1}+1}} + (\lambda(1+ \frac1{\theta T})^{1/2})^{1/\muex_6}  + (\lambda T^{1/2} \sup_{[0,T]}\norm{\Psi}_{L^\infty})^{ \frac{1}{\muex_6+1}} \Big)\nonumber\\ 
\cdot\Big( \norm{\bar P}_{L^2(U\times (0,T) )}^{\frac{{\gamex_1}}{{\gamex_1}+1}}+ \norm{\bar P}_{L^2(U\times (0,T) )} + \norm{\bar P}_{L^2(U\times (0,T) )}^{\frac{\muex_6}{\muex_6+1}}\Big), 
 \end{align}
 for some constant $C>0$.  
Since $\lambda\ge 1$ and ${\gamex_1}<\muex_6<1$, we select
\beq\label{firstM}
\begin{aligned}
 M_0 &= C \Big((\lambda T^{1/2} \vartheta)^{ \frac{1}{{\gamex_1}+1}} +  (\lambda(1+ \frac1{\theta T})^{1/2})^{1/\muex_6}  + ( \lambda T^{1/2}\sup_{[0,T]}\norm{\Psi})^{ \frac{1}{\muex_6+1}} \Big) \\
&\quad \cdot \Big( \norm{\bar P}_{L^2(U\times (0,T) )}^{\frac{{\gamex_1}}{{\gamex_1}+1}}+ \norm{\bar P}_{L^2(U\times (0,T) )}\Big). 
\end{aligned}
\eeq
Applying Lemma \ref{multiseq} and using the same arguments as in Proposition \ref{local-L-infty}, we obtain \eqref{Pinfty1}.
 \end{proof}

We now derive a variation of Proposition \ref{cont:strictcond}. Let $T_0=0$.
In the proof of Proposition \ref{cont:strictcond}, let $\zeta =\zeta(x) $ be the cut-off function such that $\zeta$ vanishes in neighborhood of the boundary $\Gamma$. 
If $k\ge \|\bar P_0\|_{L^\infty}$, then $\bar P^{(k)}(0)=0$ and, hence, \eqref{Pkzeta} holds true with $\zeta_t=0$. 
In the iteration process, we choose $M_0 \geq \|\bar P(\cdot, 0)\|_{L^\infty}$ and $k_i=M_0(2-2^{-i})$ for $i\ge 0$. This way, we have $k_i \geq M_0\ge \|\bar P_0\|_{L^\infty}$, thus, $\bar P^{(k_i)}(0)=0$.
Therefore, we can replace $\frac1{\theta T}$ by $0$ in \eqref{M0}, which becomes     
  \begin{align*}
 M_0 &\ge C \Big((\lambda T^{1/2} \vartheta)^{ \frac{1}{{\gamex_1}+1}} + \lambda^{1/\muex_6}  + (\lambda T^{1/2} \sup_{[0,T]}\norm{\Psi})^{ \frac{1}{\muex_6+1}} \Big) \\
&\quad \cdot\Big( \norm{\bar P}_{L^2(U\times (0,T) )}^{\frac{{\gamex_1}}{{\gamex_1}+1}}+ \norm{\bar P}_{L^2(U\times (0,T) )} + \norm{\bar P}_{L^2(U\times (0,T) )}^{\frac{\muex_6}{\muex_6+1}}\Big).
 \end{align*}
Therefore, instead of \eqref{firstM}, we can select 
\begin{align*}
 M_0 
&= \|\bar P(0)\|_{L^\infty} \\
&\quad +C \Big((\lambda T^{1/2} \vartheta)^{ \frac{1}{{\gamex_1}+1}}  + \lambda^{1/\muex_6}+ (\lambda T^{1/2} \sup_{[0,T]}\norm{\Psi})^{ \frac{1}{\muex_6+1}} \Big) 
\Big( \norm{\bar P}_{L^2(U\times (0,T) )}^{\frac{{\gamex_1}}{{\gamex_1}+1}}+ \norm{\bar P}_{L^2(U\times (0,T) )}\Big).
 \end{align*}
 Applying Lemma \ref{multiseq}, we have 
$\lim_{i \rightarrow \infty}Y_i = 0$
 and $\bar p(x,t)\le 2M_0$ a.e. in $B(x_0,\rho/4)\times(\theta T,T)$. Proceeding the proof as in Proposition \ref{local-L-infty}, we obtain the following result.  

 \begin{proposition}\label{newcont}  Assume the same as in Proposition \ref{cont:strictcond}. Then we have for any $T>0$ that
 \beq\label{Pinfty2}
 \sup_{[0,T]} \|\bar P\|_{L^\infty(U')}\le 2 \|\bar P(0)\|_{L^\infty}+C\, \mathcal C_{T}\Big( \norm{\bar P}_{L^2(U\times (0,T) )}^{\frac{{\gamex_1}}{{\gamex_1}+1}}+ \norm{\bar P}_{L^2(U\times (0,T) )}\Big),
 \eeq
where
 \beq\label{Cthe10}
    \mathcal  C_{T}= (\lambda_T T^{1/2} \vartheta_T)^{ \frac{1}{{\gamex_1}+1}}  + \lambda_T^\frac{1}{\muex_6}+\Big(\lambda_T T^{1/2}\sup_{[0,T]}\norm{\Psi}_{L^\infty}\Big)^{ \frac{1}{\muex_6+1}},
 \eeq
with $\lambda_T=\lambda_{0,T}$ and $\vartheta_T=\vartheta_{0,T}$ defined in Theorem \ref{cont:strictcond} for $T_0=0$.
 \end{proposition}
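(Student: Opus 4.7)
The plan is to revisit the proof of Proposition \ref{cont:strictcond} with two modifications: a purely spatial cut-off function, and a shifted choice of iteration levels $k_i$ that forces $\bar P^{(k_i)}(\cdot,0)\equiv 0$. These two changes together eliminate the $\zeta_t$-contribution that produced the $(\theta T)^{-1}$ factor, and in exchange introduce the initial data term $\|\bar P(0)\|_{L^\infty}$ into the final $L^\infty$-bound.

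First I would set $T_0=0$ and take $\zeta(x)\in C_c^\infty(U)$ with $\zeta\equiv 1$ on a ball centered at a generic $x_0\in U'$ of radius $\rho/4$, supported in the ball of radius $\rho/2\subset V$, and independent of $t$. Reworking the energy identity \eqref{EqPprime} with this $\zeta$, the term $I_1=\int_U|\bar P^{(k)}|^2 \zeta|\zeta_t|\,dx$ vanishes identically. Accordingly, the analogue of \eqref{Pkzeta} holds with the $|\zeta_t|$ contribution dropped, i.e.,
\begin{multline*}
\|\bar P^{(k)}\zeta\|_{L^{\muex_5}(Q_T)}\le C\lambda\Big[\Big(\int_0^T\!\!\int_U|\bar P^{(k)}|^2|\nabla\zeta|^2\,dx\,dt\Big)^{1/2}\\
+T^{1/2}\sup_{[0,T]}\|\Psi\|_{L^\infty}\Big(\iint_{Q_T\cap\mathrm{supp}\,\zeta}\chi_k\,dx\,dt\Big)^{1/2}
+T^{1/2}\vartheta\Big(\iint_{Q_T\cap\mathrm{supp}\,\zeta}\chi_k\,dx\,dt\Big)^{\frac12(1-1/\mu)}\Big],
\end{multline*}
valid provided $k\ge \|\bar P(\cdot,0)\|_{L^\infty}$ so that the zero initial term integrates out properly.

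Next I would run the De Giorgi iteration with shifted levels: fix $M_0\ge\|\bar P(\cdot,0)\|_{L^\infty}$ to be determined, and set $k_i=M_0(2-2^{-i})$ (so $k_i\uparrow 2M_0$ and $k_i\ge M_0$ for all $i\ge 0$). The increments $k_{i+1}-k_i=2^{-i-1}M_0$ are the same as before, so the estimates \eqref{more1}, \eqref{more2}, \eqref{more3} carry over verbatim. The resulting recursion for $Y_i=\|\bar P^{(k_i)}\|_{L^2(A_i)}$ has the same three-term form as in Proposition \ref{cont:strictcond}, but with the $(1+(\theta T)^{-1})^{1/2}$ factor replaced by $1$ because $\zeta_t=0$. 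Applying Lemma \ref{multiseq} with $m=3$ yields $\lim_{i\to\infty}Y_i=0$ provided
\[
M_0\ge C\Big((\lambda T^{1/2}\vartheta)^{\frac{1}{{\gamex_1}+1}}+\lambda^{1/\muex_6}+(\lambda T^{1/2}\sup_{[0,T]}\|\Psi\|_{L^\infty})^{\frac{1}{\muex_6+1}}\Big)\Big(\|\bar P\|_{L^2(U\times(0,T))}^{\frac{{\gamex_1}}{{\gamex_1}+1}}+\|\bar P\|_{L^2(U\times(0,T))}\Big),
\]
which, combined with the constraint $M_0\ge\|\bar P(\cdot,0)\|_{L^\infty}$, is met by
\[
M_0=\|\bar P(0)\|_{L^\infty}+C\,\mathcal C_T\Big(\|\bar P\|_{L^2(U\times(0,T))}^{\frac{{\gamex_1}}{{\gamex_1}+1}}+\|\bar P\|_{L^2(U\times(0,T))}\Big),
\]
with $\mathcal C_T$ as in \eqref{Cthe10}. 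From $\lim Y_i=0$ we deduce $\bar P(x,t)\le 2M_0$ a.e. on the ball of radius $\rho/4$ around $x_0$ for all $t\in(0,T)$; replacing $(p_1,p_2,\Psi)$ by their negatives gives the lower bound. Covering $U'$ by finitely many such balls and taking a supremum over $x_0$ yields \eqref{Pinfty2}.

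The only non-routine point is the check that the iteration really does survive without the $\zeta_t$ term. This comes down to verifying that, with our new choice of $k_i$, the starting level $k_0=M_0$ already dominates $\|\bar P(\cdot,0)\|_{L^\infty}$, so the ``free'' boundary term at $t=0$ in the integration-by-parts step is zero and the energy inequality closes up with a purely spatial cut-off; this is what removes the $(\theta T)^{-1/2}$ singularity and replaces it by $\|\bar P(0)\|_{L^\infty}$ in the final bound.
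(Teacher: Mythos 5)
Your argument matches the paper's own derivation of Proposition \ref{newcont}: take a purely spatial cut-off, shift the iteration levels to $k_i = M_0(2-2^{-i})$ with $M_0 \ge \|\bar P(0)\|_{L^\infty}$ so that $\bar P^{(k_i)}(\cdot,0)\equiv 0$, and observe that the absence of the $\zeta_t$ term replaces $(1+(\theta T)^{-1})^{1/2}$ by $1$ in the recursion and thus removes the short-time singularity at the cost of adding $\|\bar P(0)\|_{L^\infty}$ to $M_0$. The paper states exactly this in the paragraph preceding Proposition \ref{newcont}, so the approach is the same.
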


We will use Propositions \ref{cont:strictcond} and \ref{newcont} to obtain specific $L^\infty$-estimates of $\bar P$ in terms of initial and boundary data.
First, we introduce some quantities and parameters. 

Same as \eqref{b}, we define for $i=1,2$, 
\beqs
f_i(t)=\|\psi_i(t)\|_{L^\infty}^2+\|\psi_i(t)\|_{L^\infty}^{\frac{2-a}{1-a}} \quad\text{and}\quad
\tilde f_i(t)=\|\psi_{it}(t)\|_{L^\infty}^2+\|\psi_{it}(t)\|_{L^\infty}^{\frac{2-a}{1-a}}.
\eeqs

For $i=1,2$, we assume $f_i(t),\tilde f_i(t)\in C([0,\infty))$ and when needed $f_i(t)\in C^1((0,\infty))$.
Let 
\beqs 
A_i=\limsup_{t\to\infty} f_i(t)\quad \text{and}\quad \beta_i=\limsup_{t\to\infty} [f_i'(t)]^-,
\eeqs 
\beqs
\bar A=A_1+A_2\quad \text{and}\quad   \bar \beta=\beta_1+\beta_2.
\eeqs

Let $M_{f_i}(t)$, $i=1,2$, be a continuous increasing majorant  of $f_i(t)$ on $[0,\infty)$.  
Set
\beqs
\myB(t)= f_1(t)+ f_2(t),\quad   \ M_\myB(t)=M_{f_1}(t)+M_{f_2}(t)\quad\text{and}\quad
\tilde \myB(t)=\tilde f_1(t)+\tilde f_2(t).
\eeqs 

For initial data, set 
\beq\label{AB0}
A_0=\|\bar p_1(0)\|_{L^2}^2 +\|\bar p_2(0)\|_{L^2}^2  \quad \text{and} \quad B_0=J_H[p_1](0)+J_H[p_2](0).
\eeq 

We recall results from \cite{HI2}. 


\begin{theorem}[cf. \cite{HI2}, Lemma 5.5 and Theorem 5.6]\label{pgrowth}
{\rm (i)} For $t\ge 0$,
\begin{align}\label{pcont}
\norm{\bar P(t)}_{L^2}^2 \le \norm{\bar P(0)}_{L^2}^2 
+C\int_0^t\norm{\Psi(\tau)}_{L^\infty}^2\Lambda(\tau)^b d\tau,
\end{align}
where $b$ is defined in \eqref{ab}.
Consequently, for any $T>0$,
\begin{align}\label{pLip}
\sup_{[0,T]}\norm{\bar P}_{L^2}^2  \le \norm{\bar P(0)}_{L^2}^2 
+C\cdot  M_{1,T} \sup_{[0,T]}\norm{\Psi(t)}_{L^\infty}^2 ,
\end{align}
where $M_{1,T}=A_0+T+\int_0^T\big[ f_1(\tau)+f_2(\tau)\big] d\tau$.

{\rm (ii)} Assume the Degree Condition. Then for $t\ge 0$
\beq\label{psubt} 
\norm{\bar P(t)}_{L^2}^2 \le  e^{-d_2\int_0^t \Lambda^{-b}(\tau)d\tau} \norm{\bar P(0)}_{L^2}^2 
+\int_0^t e^{-d_2\int_\tau^t \Lambda^{-b}(\theta)d\theta}\norm{\Psi(\tau)}_{L^\infty}^2\Lambda(\tau)^{b} d\tau. \eeq
%
%
%

Moreover, if $\bar A<\infty$ and   
$\int_1^\infty(1+\int_{\tau-1}^\tau \tilde \myB(s)ds )^{-b} d\tau=\infty$
then
 \beq\label{psub-bounded} 
 \limsup_{t\to\infty} \norm{\bar P(t)}_{L^2}^2
\le C \limsup_{t\to\infty}\Big\{\norm{\Psi(t)}_{L^\infty}^2\Big(1+{\bar A}^\frac2{2-a}+\int_{t-1}^t \tilde \myB(\tau)d\tau\Big)^{2b}\Big\}.\eeq
\end{theorem}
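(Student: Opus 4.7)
The plan is to derive an energy estimate for $\bar P$ by testing \eqref{eq-1} against $\bar P$ itself, and then to distinguish the two assertions in terms of the amount of information we are willing to lose in the boundary integral. For \eqref{pcont}, I would multiply \eqref{eq-1} by $\bar P$, integrate over $U$, and integrate by parts against the flux condition to get
\begin{equation*}
\tfrac{1}{2}\tfrac{d}{dt}\|\bar P\|_{L^2}^2 + \int_U \bigl(K(|\nabla \bar p_1|)\nabla \bar p_1 - K(|\nabla \bar p_2|)\nabla \bar p_2\bigr)\cdot \nabla \bar P \, dx = -\int_\Gamma \Psi \bar P\, d\sigma,
\end{equation*}
where the volume-average term coming from \eqref{eq-1} drops out because $\int_U \bar P\, dx = 0$. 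The quadratic form on the left is bounded below using the monotonicity \eqref{mono1} by $(1-a)\int_U K(\xi)|\nabla \bar P|^2 dx$ with $\xi = |\nabla \bar p_1|\vee|\nabla \bar p_2|$.

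The main technical step is estimating the boundary integral in a way that introduces the weight $\Lambda^{b}$. I would combine the trace theorem with Poincaré--Wirtinger (valid for $\bar P$ since its spatial average vanishes) and a weighted Cauchy--Schwarz: writing $|\nabla \bar P| = K(\xi)^{1/2}|\nabla \bar P|\cdot K(\xi)^{-1/2}$ and using $K(\xi)^{-(2-a)/a}\le C(1+\xi)^{2-a}$ from \eqref{Kesta}, I obtain $\|\nabla \bar P\|_{L^{2-a}(U)}^2 \le C\Lambda^b \int_U K(\xi)|\nabla \bar P|^2 dx$. Hence
\begin{equation*}
\Bigl|\int_\Gamma \Psi\bar P\, d\sigma\Bigr| \le C\|\Psi\|_{L^\infty} \|\nabla \bar P\|_{L^{2-a}(U)} \le \tfrac{1-a}{2}\int_U K(\xi)|\nabla \bar P|^2 dx + C\|\Psi\|_{L^\infty}^2 \Lambda^{b}.
\end{equation*}
Absorbing the first term, integrating on $[0,t]$, and dropping the remaining coercive integral gives \eqref{pcont}. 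Estimate \eqref{pLip} then follows by pulling $\sup_{[0,T]}\|\Psi\|_{L^\infty}^2$ out and controlling $\int_0^T \Lambda^{b}d\tau \le \int_0^T \Lambda\, d\tau$ via \eqref{JHcom} and the energy estimate \eqref{p-bar-bound1} applied to each $p_i$.

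For part (ii), under the Degree Condition the embedding $W^{1,2-a}(U) \hookrightarrow L^2(U)$ is available, so combining it with the same weighted H\"older bound yields a lower bound of the form $\int_U K(\xi)|\nabla \bar P|^2 dx \ge c \Lambda^{-b}\|\bar P\|_{L^2}^2$. The earlier energy identity therefore upgrades to the differential inequality
\begin{equation*}
\tfrac{d}{dt}\|\bar P\|_{L^2}^2 + d_2 \Lambda^{-b}(t)\|\bar P\|_{L^2}^2 \le C\|\Psi\|_{L^\infty}^2 \Lambda^{b}(t),
\end{equation*}
from which \eqref{psubt} is read off by Gronwall's lemma. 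For the asymptotic bound \eqref{psub-bounded}, I would insert into \eqref{psubt} the uniform-in-time control $\limsup \Lambda(t)\le C(1+\bar A^{2/(2-a)}+\limsup \int_{t-1}^t \tilde B)$, which follows from \eqref{limsupH1} applied to each $p_i$, and then invoke an asymptotic Gronwall lemma of the type \ref{difflem2}; the divergence hypothesis $\int_1^\infty(1+\int_{\tau-1}^\tau \tilde B)^{-b}d\tau=\infty$ ensures the exponential prefactor annihilates the initial datum.

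The main obstacle is calibrating the weighted Cauchy--Schwarz on the boundary integral so that the exponent $b = a/(2-a)$ appears precisely, matching the scaling needed for the Gronwall argument in (ii); getting a larger exponent would not give the desired asymptotic conclusion, and a smaller one would fail to absorb into the coercive term.
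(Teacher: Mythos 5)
Your proposal is correct and follows what is essentially the standard energy-method route that underlies this cited result: test \eqref{eq-1} against $\bar P$ (the averaged forcing term vanishes by \eqref{pbar0}), coerce via the monotonicity \eqref{mono1}, and handle the boundary integral with the weighted H\"older inequality that trades $\|\nabla\bar P\|_{L^{2-a}}^2$ for $\Lambda^{b}\int_U K(\xi)|\nabla\bar P|^2\,dx$; this is exactly the degeneracy bookkeeping that produces the exponent $b=a/(2-a)$, and your remark at the end correctly explains why no other exponent would close the argument. The derivation of the differential inequality in part (ii) via Poincar\'e--Wirtinger and the embedding $W^{1,2-a}\hookrightarrow L^2$ under \eqref{deg}, followed by Gronwall and then Lemma~\ref{difflem2} with $h\equiv 1$, $g=d_2\Lambda^{-b}$, $f=C\|\Psi\|_{L^\infty}^2\Lambda^{b}$, is the right mechanism and does produce the exponent $2b$ appearing in \eqref{psub-bounded} (since $\limsup y\le\limsup hf/g$); you did not display that final elementary computation, but every ingredient is correctly in place. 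Two small clarifications worth recording explicitly if you write this up: (a) the passage $\int_0^T\Lambda^b\,d\tau\le\int_0^T\Lambda\,d\tau$ uses $\Lambda\ge1$ and $b<1$, and the bound $\int_0^T\Lambda\,d\tau\le CM_{1,T}$ follows from \eqref{JHcom} and \eqref{p-bar-bound1} applied to each $p_i$; and (b) since $\bar A<\infty$, the stated divergence hypothesis on $\int_1^\infty(1+\int_{\tau-1}^\tau\tilde\myB)^{-b}d\tau$ indeed yields $\int_T^\infty g\,dt=\infty$ after inserting the uniform bound $\limsup\Lambda(t)\le C(1+\bar A^{2/(2-a)}+\limsup\int_{t-1}^t\tilde\myB)$, so Lemma~\ref{difflem2} is legitimately invoked.
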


We now state the continuous dependence in interior $L^\infty$-norm.
We use similar quantities to $\Nf_{1,t}$ in \eqref{Nf1} and $\Nf_{2,t}$ \eqref{Nf2}, namely,
\beq\label{Nfbar} 
\bar\Nf_{1,T}= 1+\sum_{i=1,2} \sup_{[0,T]}\|\psi_i\|_{L^\infty} \quad\text{and}\quad
\bar\Nf_{2,T}= 1 + \int_0^T \tilde F(\tau)d\tau.
\eeq

\begin{theorem}\label{thm46}
 \asdc. 
Let $\mu$ be any number satisfying 
\beq
\mu>\frac {\muex_5}{{\muex_5}-2}\quad\text{and}\quad \mu\ge \frac{2-a}{2(1-a)}.
\eeq
For $T>0$, we have
\beq\label{contfinite10}
\begin{aligned}
&\sup_{[0,T]}\|\bar P\|_{L^\infty(U')} 
\le 2 \|\bar P(0)\|_{L^\infty}\\
&\quad + C L_{11}  M_{2,T} \Big(\|\bar P(0)\|_{L^2}  + \sup_{[0,T]}\norm{\Psi(t)}_{L^\infty}
+\big[ \|\bar P(0)\|_{L^2} +\sup_{[0,T]}\norm{\Psi(t)}_{L^\infty}\big]^{\frac { {\gamex_1}}{{\gamex_1}+1}}\Big),
\end{aligned}
\eeq
where ${\gamex_1}$ is defined by \eqref{etadef}, number $L_{11}>0$ depends on the initial data and is defined by \eqref{L13} below, number $M_{2,T}>0$ depends on the boundary data and is defined by \eqref{NT} below.
\end{theorem}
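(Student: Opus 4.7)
The plan is to apply the abstract $L^\infty$-iteration estimate of Proposition \ref{newcont} and substitute into each ingredient the a priori bounds already established in the previous sections. From \eqref{Pinfty2}--\eqref{Cthe10}, the only three quantities needing independent control are $\lambda_T$, $\vartheta_T$, and $\|\bar P\|_{L^2(U\times(0,T))}$; the factor $\sup_{[0,T]}\|\Psi\|_{L^\infty}$ already appears in the desired form. The first hypothesis $\mu>\muex_5/(\muex_5-2)$ is precisely what makes ${\gamex_1}\in(0,1)$ via \eqref{etadef}, so Proposition \ref{newcont} is legitimately applicable.

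For $\lambda_T=\sup_{[0,T]}\Lambda^{\muex_7}$, I would apply estimate \eqref{H-bound-0} of Theorem \ref{H-bound-lemma} to each $p_i$ and combine it with the coercivity $c_1\int_U|\nabla p_i|^{2-a}\le J_H[p_i]+c_3$ from \eqref{JHcom}. This bounds $\sup_{[0,T]}\int_U|\nabla p_i|^{2-a}$ by a sum of initial-data terms ($\|\bar p_{i,0}\|_{L^2}^2$ and $J_H[p_i](0)$) and boundary-data/$T$ terms ($(T{+}1)\sup f_i$ and $\int_0^T\tilde f_i$), each raised to $\muex_7$. For $\vartheta_T$, the second hypothesis $\mu\ge(2-a)/[2(1-a)]$ is exactly what ensures $s:=2\mu(1-a)\ge 2-a$, placing us in the range where Theorem \ref{cor312} applies; \eqref{grad39} applied with this $s$ (and with $V$ in place of $U'$) to each $p_i$ splits $\vartheta_T$ cleanly into an initial-data factor $L_1(s+a;[p_{i,0}])^{1/(2\mu)}$ times a boundary-data/$T$ factor that is a power of $(T{+}1)\bar\Nf_{1,T}$.

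The $L^2$-norm of $\bar P$ on $Q_T$ is handled by Theorem \ref{pgrowth}(i): combining \eqref{pLip} with the trivial estimate $\|\bar P\|_{L^2(Q_T)}\le T^{1/2}\sup_{[0,T]}\|\bar P\|_{L^2}$ yields
\[
\|\bar P\|_{L^2(Q_T)}\le T^{1/2}\bigl(\|\bar P(0)\|_{L^2}+C\,M_{1,T}^{1/2}\sup_{[0,T]}\|\Psi\|_{L^\infty}\bigr),
\]
which already has the form of the inner bracket in \eqref{contfinite10}. Substituting the three bounds into \eqref{Pinfty2} and \eqref{Cthe10}, the first summand $(\lambda_T T^{1/2}\vartheta_T)^{1/({\gamex_1}+1)}$ of $\mathcal C_T$ paired with $\|\bar P\|_{L^2(Q_T)}^{{\gamex_1}/({\gamex_1}+1)}$ produces the fractional power $\gamex_1/(\gamex_1+1)$ of $\|\bar P(0)\|_{L^2}+\sup_{[0,T]}\|\Psi\|_{L^\infty}$ visible on the right of \eqref{contfinite10}, while the remaining two summands of $\mathcal C_T$, paired with $\|\bar P\|_{L^2(Q_T)}^1$, contribute the first-power term.

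All factors depending only on the initial data (powers of $\|\bar p_{i,0}\|_{L^2}$, $\|\bar p_{i,0}\|_{L^\infty}$, $\|\nabla p_{i,0}\|_{L^{2-a}}$, and the $L_1(s+a;[p_{i,0}])$ quantities arising from $\vartheta_T$) will then be collected into the definition of $L_{11}$, while the remaining $T$- and boundary-data-dependent quantities (powers of $T{+}1$, of $\bar\Nf_{1,T}$, of $M_{1,T}$, and of the boundary integrals appearing in \eqref{H-bound-0}) will be assembled into $M_{2,T}$. No serious analytic obstacle arises: the genuine heavy lifting --- De Giorgi iteration and the interior $L^s$-gradient theory --- has already been packaged into Proposition \ref{newcont} and Theorems \ref{p-estimate}, \ref{cor312}, \ref{pgrowth}. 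The main work, and really the only obstacle, is pure bookkeeping: tracking the fractional exponents $1/(\gamex_1+1)$, $1/\muex_6$, $1/(\muex_6+1)$, and $\muex_7$ through the three summands of $\mathcal C_T$ so that the various $T^{1/2}$- and $\lambda_T$-powers distribute cleanly into $M_{2,T}$ without disturbing the $\gamex_1/(\gamex_1+1)$-collapse of the outer bracket in \eqref{contfinite10}.
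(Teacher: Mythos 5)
Your proposal is essentially correct and follows the paper's strategy: apply the De Giorgi iteration estimate, then plug the a priori bounds \eqref{H-bound-0}, \eqref{grad39} and \eqref{pLip} into $\lambda_T$, $\vartheta_T$ and $\|\bar P\|_{L^2(Q_T)}$, with your identification of what each hypothesis on $\mu$ is for being exactly right. Two minor bookkeeping notes: the paper actually deploys both Proposition \ref{cont:strictcond} (to obtain $\sup_{[1,T]}$ when $T>1$) and Proposition \ref{newcont} (for $T\le 1$) rather than only the latter, which is what produces the case-dependent exponent $\mu_{20}$ in $M_{2,T}$; and the first-power and $\gamex_1/(\gamex_1+1)$-power terms on the right of \eqref{contfinite10} emerge entirely from the factor $\|\bar P\|_{L^2(Q_T)}^{\gamex_1/(\gamex_1+1)}+\|\bar P\|_{L^2(Q_T)}$ after inserting \eqref{pLip}, not from any pairing with particular summands of $\mathcal C_T$ (the product expands into all cross terms) --- neither point affects the validity of your argument.
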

\begin{proof}
Many exponents will be needed in our proof and are defined here: 
\begin{align}
\label{mu34}
&\gamex_2=2(1-a)\mu\ge 2-a,
&&\gamex_3=\frac{\gamex_2}{2\mu(1-a)}=1,
\\
\label{mu56}
&\gamex_4=\frac1{2\mu}\Big(\frac{2\gamex_2}{2-a}-1\Big)=\frac{2(1-a)}{2-a}-\frac1{2\mu},
&&\gamex_5=\frac{\muex_7}{\muex_6} + \frac{2\gamex_4+1}{2({\gamex_1}+1)},\\
\label{mu89}
& \gamex_6=\frac{\muex_7(2-a)}{\muex_6(1-a)}+\frac{\gamex_3}{{\gamex_1}+1}.
\end{align}
We will use notation $\gamex_3$ in calculations below instead of its explicit value for the sake of generality which will be needed in section \ref{polycont}.
We prove \eqref{contfinite10} with $M_{2,T}$ explicitly given by
\beq\label{NT}
M_{2,T}= T^{\mu_{20}}  \bar\Nf_{1,T}^{\gamex_6+\frac{2-a}{2(1-a)}} \bar\Nf_{2,T}^\frac{\muex_7}{\muex_6},
\eeq
where the exponent $\mu_{20}$ is $\frac{{\gamex_1}}{2({\gamex_1}+1)}$ in case $T\le 1$ and is $\gamex_5+1$ in case $T> 1$.

We will apply Proposition \ref{cont:strictcond} for $T_0=0$ and Proposition  \ref{newcont}. 
Fix a subset $\setV$ of $U$ such that $U'\Subset V\Subset U$.
First, for $M_{1,T}$ in \eqref{pLip} we note that 
\beq\label{M1plus}
1+M_{1,T} \le C \Big\{ 1+\sum_{i=1}^2 \norm{\bar p_i(0)}_{L^2}^2 + (T+1) \Big[ (\sum_{i=1}^2 \sup_{[0,T]} \|\psi_i\|)^\frac{2-a}{1-a}+1\Big] \Big\}
\le C \ell_0 (T+1)\bar\Nf_{1,T}^\frac{2-a}{1-a},
\eeq
where $\ell_0=1+A_0$.
By \eqref{pLip} and \eqref{M1plus}, we have
 \begin{multline}\label{st0}
\norm{\bar P}_{L^2(U\times (0,T) )}^{\frac {\gamex_1} {{\gamex_1}+1}}  +\norm{\bar P}_{L^2(U\times (0,T) )}
\le C  \ell_0^{1/2} (T+1)^{1/2}\bar\Nf_{1,T}^\frac{2-a}{2(1-a)} \\
\cdot \Big (
[T^{1/2}(\|\bar P(0)\|_{L^2} + \sup_{[0,T]}\norm{\Psi(t)}_{L^\infty})]^{\frac { {\gamex_1}}{{\gamex_1}+1}}
+T^{1/2}(\|\bar P(0)\|_{L^2} + \sup_{[0,T]}\norm{\Psi(t)}_{L^\infty})
\Big).
\end{multline}
Second, we will estimate ${\mathcal C}_{T_0,T,\theta}$ in \eqref{Cthe3} with $T_0=0$, and $\mathcal C_T$ in \eqref{Cthe10}. 
To simplify our calculations, we will replace ${\mathcal C}_{0,T,\theta}$ in \eqref{Cthe3} by the following upper bound
 \beq\label{Cthesimple}
      {\mathcal C}_{T,\theta}= \lambda_T^\frac1{\muex_6} \Big\{ (T^{1/2} \vartheta_T)^{ \frac{1}{{\gamex_1}+1} } +  (1+ \frac1{(\theta T)^{1/2}})^\frac1{\muex_6}  + (T^{1/2}\sup_{[0,T]}\norm{\Psi}_{L^\infty})^{ \frac{1}{\muex_6+1}} \Big\},
 \eeq
and replace ${\mathcal C}_T$ in \eqref{Cthe10} by
 \beq\label{CTsimple}
      {\mathcal C}_{T}= \lambda_T^\frac{1}{\muex_6} \Big\{ 1+ (T^{1/2} \vartheta_T)^{ \frac{1}{{\gamex_1}+1}}  + (T^{1/2}\sup_{[0,T]}\norm{\Psi}_{L^\infty})^{ \frac{1}{\muex_6+1}} \Big\}.
 \eeq
We now find bounds for involved quantities in \eqref{Cthesimple} and \eqref{CTsimple}.
We have  from estimate \eqref{H-bound-0} that
\begin{align*}
\lambda_T=\sup_{t\in [0,T]} \Lambda(t) ^{\muex_7}
&\le  C\sum_{i=1}^2\Big\{ 1 +  \norm{\bar p_i(0)}_{L^2}^2 +  J_H[\bar p_i](0) + (T+1) \sup_{[0,T]} \|\psi_i\|^\frac{2-a}{1-a} +\int_0^T \tilde f_i(\tau)d\tau \Big \}^{\muex_7}.
\end{align*}
Therefore,
\beq\label{lamd}
\lambda_T\le C \ell_1 (T+1)^{\muex_7} \bar\Nf_{1,T}^\frac{\muex_7(2-a)}{1-a} \bar\Nf_{2,T}^{\muex_7},
\eeq
where  $\ell_1=(1 +A_0+B_0)^{2-a}$.
Applying  \eqref{grad39} to $s=\gamex_2$ and $\setV$ replacing $U'$, we have
\beq\label{gradInt20}
\vartheta_T
= \Big(\int_0^T \int_{\setV}\Big[ 1+ |\nabla \bar p_1|^{\gamex_2} + |\nabla \bar p_2|^{\gamex_2}\Big]dx dt\Big)^{\frac 1{2\mu}} 
 \le C \ell_2 (T+1)^{\gamex_4}\bar\Nf_{1,T}^{\gamex_3},
\eeq
where $\ell_2=\big\{\sum_{i=1,2} L_1\big(\gamex_2+a;[p_i(0)]\big)\big\}^\frac1{2\mu}$.
Also,  
\beq\label{Psisim}
\sup_{[0,T]} \norm{\Psi}_{L^\infty} \le \Big(\sum_{i=1}^2 \sup_{[0,T]} \norm{\psi_i}_{L^\infty}\Big) \le \bar\Nf_{1,T}.
\eeq
We denote
$D_T=\|\bar P(0)\|_{L^2}^{\frac { {\gamex_1}}{{\gamex_1}+1}} +\|\bar P(0)\|_{L^2} 
+ \sup_{[0,T]}\norm{\Psi(t)}_{L^\infty}^{\frac { {\gamex_1}}{{\gamex_1}+1}} + \sup_{[0,T]}\norm{\Psi(t)}_{L^\infty}.$

We consider $0<T\le 1$ first. By \eqref{st0} and \eqref{gradInt20},  we respectively have
 \beq\label{st10}
\norm{\bar P}_{L^2(U\times (0,T) )}^{\frac {\gamex_1} {{\gamex_1}+1}}  +\norm{\bar P}_{L^2(U\times (0,T) )}
\le C \ell_0^\frac12 T^\frac{{\gamex_1}}{2({\gamex_1}+1)}  \bar\Nf_{1,T}^\frac{2-a}{2(1-a)} D_T,
\eeq
\beq\label{gradInt}
\vartheta_T \le C \ell_2 \bar\Nf_{1,T}^{\gamex_3}.
\eeq
By \eqref{CTsimple}, \eqref{lamd}, \eqref{gradInt} and \eqref{Psisim}, we have
\beq\label{st30}
\begin{aligned}
 {\mathcal C}_{T}
&\le C  \ell_1^\frac1{\muex_6} (T+1)^\frac{\muex_7}{\muex_6} \bar\Nf_{1,T}^{\frac{\muex_7(2-a)}{\muex_6(1-a)}} \bar\Nf_{2,T}^\frac{\muex_7}{\muex_6}  
\Big\{ 1+ \ell_2^\frac{1}{{\gamex_1}+1} \bar\Nf_{1,T}^\frac{{\gamex_3}}{{\gamex_1}+1} + \ell_2^\frac{1}{\muex_6+1} \bar\Nf_{1,T}^\frac{1}{\muex_6+1} \Big\}
\le C  \ell_3  \bar\Nf_{1,T}^{\gamex_6} \bar\Nf_{2,T}^\frac{\muex_7}{\muex_6} ,
\end{aligned}
\eeq
where $\ell_3=\ell_1^\frac1{\muex_6} \ell_2^\frac{1}{{\gamex_1}+1}.$ 
Note that we used the facts $\gamex_3\ge 1$ and ${\gamex_1}<\muex_6<{\gamex_1}+1$.
Applying \eqref{Pinfty2}  with the use of \eqref{st10} and \eqref{st30}, we have
\beq\label{contfinite0}
\begin{aligned}
\sup_{[0,T]}  \|\bar P(t)\|_{L^\infty(U')}
  &\le 2 \|\bar P(0)\|_{L^\infty} +C \ell_0^\frac12 \ell_3 T^{\frac { {\gamex_1}}{2({\gamex_1}+1)}}\bar\Nf_{1,T}^{\gamex_6+\frac{2-a}{2(1-a)}} \bar\Nf_{2,T}^\frac{\muex_7}{\muex_6} \cdot D_T, 
 \end{aligned}
 \eeq
hence, obtaining \eqref{contfinite10} for $T\le 1$ with 
\beq\label{L13}
L_{11}=\ell_0^\frac12 \ell_3=\ell_0^\frac12\ell_1^\frac1{\muex_6} \ell_2^\frac{1}{{\gamex_1}+1}.
\eeq

Consider  $T>1$ now.  By \eqref{st0},
 \beq\label{st1}
\norm{\bar P}_{L^2(U\times (0,T) )}^{\frac {\gamex_1} {{\gamex_1}+1}}  +\norm{\bar P}_{L^2(U\times (0,T) )}\le C\ell_0^{1/2}  T  \bar\Nf_{1,T}^\frac{2-a}{2(1-a)} D_T.
\eeq
Using $\mathcal C_{T,\theta}$ in \eqref{Cthesimple} with bounds  \eqref{lamd}, \eqref{gradInt20} and \eqref{Psisim}, we have
\begin{align*}
 {\mathcal C}_{T,\theta}
&\le C \ell_1^\frac1{\muex_6} (T+1)^\frac{\muex_7}{\muex_6} \bar\Nf_{1,T}^\frac{\muex_7(2-a)}{\muex_6(1-a)}  \bar\Nf_{2,T}^\frac{\muex_7}{\muex_6} 
\Big\{
1+ T^{\frac 1 {2({\gamex_1}+1)} } \ell_2^\frac1{{\gamex_1}+1} (T+1)^\frac{\gamex_4}{{\gamex_1}+1} \bar\Nf_{1,T}^\frac{\gamex_3}{{\gamex_1}+1} 
 + T^{\frac 1 {2(\muex_6+1)} } \bar\Nf_{1,T}^\frac{1}{\muex_6+1} 
 \Big\}.
\end{align*}
Thus,  
\beq\label{st3}
 {\mathcal C}_{T,\theta}
\le C \ell_3 T^{\gamex_5} \bar\Nf_{1,T}^{\gamex_6}\bar\Nf_{2,T}^\frac{\muex_7}{\muex_6}.
\eeq
Therefore, combining \eqref{Pinfty1}, \eqref{st1} and \eqref{st3} yields
\beq\label{contfinite}
\sup_{[1,T]}\|\bar P\|_{L^\infty(U')} 
\le  C L_{11}  T^{\gamex_5+1}\bar\Nf_{1,T}^{\gamex_6+\frac{2-a}{2(1-a)} } \bar\Nf_{2,T}^\frac{\muex_7}{\muex_6}  \cdot D_T.
\eeq
Then combining \eqref{contfinite0} for $T=1$ with  \eqref{contfinite}, noting that $\gamex_5\ge \frac{{\gamex_1}}{2({\gamex_1}+1)}$, we obtain \eqref{contfinite10} for $T>1$.
The proof is complete.
\end{proof}


Now, we derive asymptotic estimates as $t\to\infty$. Let $t>2$. Applying Proposition \ref{cont:strictcond} to $T_0=t-1$,  $T=1$ and $\theta=1/2$, we have
\beq\label{bigPbar}
\|\bar P(t)\|_{L^\infty(U')} \le C  \tilde{\mathcal C}(t) \Big( \norm{\bar P}_{L^2(U\times (t-1,t) )}^{\frac {\gamex_1}{{\gamex_1}+1} }  +\norm{\bar P}_{L^2(U\times (t-1,t) )}  \Big),
\eeq
where 
 \beq\label{Clam}
 \tilde{\mathcal C}(t)= [\tilde \lambda(t)\tilde \vartheta(t)]^{ \frac{1}{{\gamex_1}+1}} + \tilde \lambda(t)^\frac1{\muex_6} +[\tilde \lambda(t) \sup_{[t-1,t]}\norm{\Psi}_{L^\infty}]^{ \frac{1}{\muex_6+1}}
 \eeq
with
\begin{align}
\label{lamtildefn}
&\tilde \lambda(t) = \sup_{\tau\in [t-1,t]} \Lambda(\tau)^{\muex_7},\\
\label{uptildef}
&\tilde \vartheta(t) = \Big[ \int_{t-1}^t\int_{V}\Big( |\nabla \bar p_1|^{2\mu(1-a)} + |\nabla \bar p_2|^{2\mu(1-a)}\Big) dx dt\Big]^\frac{1}{2\mu}.
\end{align}

Before going into specific estimates, we state a general result on the limit superior of $\|\bar P(t)\|_{L^\infty(U')}$ as $t\to\infty$.
It is of the same spirit as of the $L^2$-result \eqref{psub-bounded}.
 
 \begin{lemma}\label{limPprop}
\asdc.  Suppose 
  \beq\label{boundcond}
  \sup_{\tau\in[t-1,t]}\Lambda(\tau)\le \kappa_0 g(t),\quad \tilde{\mathcal C}(t)\le C B(t)
  \eeq
   for  sufficiently large $t$, with some functions $g(t),B(t)\ge 1$. 
Let $\kappa_1=d_2 \kappa_0^{-b}$, where $d_2$ is the positive constant in \eqref{psubt}.
If 
\beq\label{condlim1}
\lim_{t\to\infty} B(t)^\frac{2({\gamex_1}+1)}{{\gamex_1}}e^{-\kappa_1\int_{2}^t g(\tau)^{-b}d\tau}  =0
\quad\text{and}\quad
\lim_{t\to\infty}  g(t)^b\frac {B'(t)}{B(t)}  =0,
\eeq
then
\begin{align}\label{limsup00}
\limsup_{t\to\infty}  \|\bar P(t)\|_{L^\infty(U')}
&\le C \limsup_{t\to\infty} \Big\{  B(t) \Big( g(t)^{b} \norm{\Psi(t)}_{L^\infty} + \big[ g(t)^{b} \norm{\Psi(t)}_{L^\infty} \big]^\frac{\gamex_1}{{\gamex_1}+1}\Big)\Big\}.
\end{align}
 \end{lemma}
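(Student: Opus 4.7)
The plan is to combine the pointwise inequality \eqref{bigPbar} with the $L^2$-estimate \eqref{psubt} from Theorem \ref{pgrowth}, and then extract the asymptotic behavior via the differential-inequality lemma (Lemma \ref{difflem2}) in the same spirit as the proof of Theorem \ref{smallqbar}.

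First, I would start from \eqref{bigPbar}, substitute the hypothesis $\tilde{\mathcal C}(t)\le CB(t)$, and observe that it suffices to control $\sup_{[t-1,t]}\|\bar P(\tau)\|_{L^2}$. For this, apply \eqref{psubt} and use $\sup_{[t-1,t]}\Lambda(\tau)\le \kappa_0 g(t)$ to replace $\Lambda^{-b}$ in the exponents by $\kappa_0^{-b} g^{-b}$ and $\Lambda^{b}$ in the integrand by $\kappa_0^b g^b$, picking up the constant $\kappa_1=d_2\kappa_0^{-b}$. After taking the supremum over $\tau\in[t-1,t]$ (and absorbing the short interval $[t-1,t]$ into an $e^{\kappa_1}$ factor as in the proof of Theorem \ref{smallqbar}), one obtains, for $t$ large,
\begin{equation*}
\sup_{[t-1,t]}\|\bar P\|_{L^2}^2 \le C e^{-\kappa_1 \int_2^t g^{-b}(\tau)d\tau}\|\bar P(2)\|_{L^2}^2 + C\int_2^t e^{-\kappa_1\int_\tau^t g^{-b}(\theta)d\theta}\|\Psi(\tau)\|_{L^\infty}^2 g(\tau)^b d\tau.
\end{equation*}

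Next, I would substitute this into \eqref{bigPbar}, giving
\begin{equation*}
\|\bar P(t)\|_{L^\infty(U')}\le C B(t)\Big(Z(t)^{1/2}+Z(t)^{\gamma_1/(2(\gamma_1+1))}\Big),
\end{equation*}
where $Z(t)$ denotes the right-hand side of the $L^2$-estimate above. To extract the $\limsup$, I would apply Lemma \ref{difflem2} separately to the two terms $B(t)^2 Z(t)$ and $B(t)^{2(\gamma_1+1)/\gamma_1} Z(t)$, using the two hypotheses \eqref{condlim1}. The first condition kills the transient term involving $\|\bar P(2)\|_{L^2}$ (which is where the factor $B(t)^{2(\gamma_1+1)/\gamma_1}e^{-\kappa_1\int_2^t g^{-b}}$ naturally appears), while the second condition provides the comparison-with-derivative hypothesis required by Lemma \ref{difflem2} to evaluate the convolution integral asymptotically as $C \|\Psi(t)\|_{L^\infty}^2 g(t)^{2b}\cdot B(t)^\alpha$.

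The resulting bounds yield
\begin{equation*}
\limsup_{t\to\infty}\|\bar P(t)\|_{L^\infty(U')}\le C\limsup_{t\to\infty}\Big\{B(t)\, g(t)^b\|\Psi(t)\|_{L^\infty} + B(t)\big[g(t)^b\|\Psi(t)\|_{L^\infty}\big]^{\gamma_1/(\gamma_1+1)}\Big\},
\end{equation*}
which is exactly \eqref{limsup00}. The main subtlety will be the bookkeeping in applying Lemma \ref{difflem2}: one must verify that the two hypotheses in \eqref{condlim1} are precisely what is needed to, respectively, dominate the initial-data transient and pass to the limit inside the weighted convolution. The powers $\gamma_1/(\gamma_1+1)$ and $2(\gamma_1+1)/\gamma_1$ are chosen so that the exponent on $B$ matches what is produced by raising $Z(t)$ to the appropriate power, and the inequality $B\ge 1$ lets us absorb lower-order factors without difficulty.
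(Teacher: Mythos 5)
Your proposal reproduces the paper's own proof essentially verbatim: start from \eqref{bigPbar}, bound the $L^2$ sup over $[t-1,t]$ via the Gronwall-type estimate \eqref{psubt} combined with $\Lambda\le\kappa_0 g$ (giving the same exponential-convolution expression with the $e^{\kappa_1}$ shift), substitute into \eqref{bigPbar}, kill the transient via the first condition of \eqref{condlim1}, and evaluate the weighted convolution via Lemma \ref{difflem2} whose hypotheses are supplied by the second condition together with $\int_2^\infty g^{-b}=\infty$ (which is forced by the first condition and $B\ge 1$). Your reading of which condition controls which term and of the bookkeeping of the exponents $\gamex_1/(\gamex_1+1)$ and $2(\gamex_1+1)/\gamex_1$ is exactly what appears in the paper, so there is nothing to change.
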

\begin{proof} 
Let $T>2$ be sufficiently large such that \eqref{boundcond} holds for all $t>T$.
 By \eqref{psubt} and \eqref{boundcond}, we have for $t'>T$ that
\begin{align*}
 \norm{\bar P(t')}_{L^2}^2 
&\le  e^{-\kappa_1\int_T^{t'} g^{-b}(\tau)d\tau}  \norm{\bar P(T)}_{L^2}^2 \\
&\quad +e^{-\kappa_1\int_T^{t'} g^{-b}(\tau)d\tau} \int_T^{t'} e^{\kappa_1\int_T^\tau g^{-b}(\theta)d\theta}\norm{\Psi(\tau)}_{L^\infty}^2 g(\tau)^{b} d\tau. 
\end{align*}
Then similar to \eqref{supptint}, we have
\beq\label{supPtprime}
 \sup_{[t-1,t]}  \norm{\bar P(t')}_{L^2}^2  
\le C\Big(e^{-\kappa_1\int_T^{t} g^{-b}(\tau)d\tau}  \norm{\bar P(T)}_{L^2}^2  +  \int_T^t e^{-\kappa_1\int_\tau^t g^{-b}(\theta)d\theta} \norm{\Psi(\tau)}_{L^\infty}^2 g(\tau)^b d\tau \Big).
\eeq
Combining \eqref{bigPbar} with \eqref{boundcond} and estimate \eqref{supPtprime},  we obtain
\begin{multline*}
  \norm{\bar P(t)}_{L^\infty(U')}
\le C B(t) \Big\{ 
e^{-\kappa_1\int_T^{t} g^{-b}(\tau)d\tau}  \norm{\bar P(T)}_{L^2}^2  +  \int_T^t e^{-\kappa_1\int_\tau^t g^{-b}(\theta)d\theta} \norm{\Psi(\tau)}_{L^\infty}^2 g(\tau)^b d\tau 
\Big\}^{\varrho_1/2} \\
\quad + C B( t)  \Big\{ 
e^{-\kappa_1\int_T^{t} g^{-b}(\tau)d\tau}  \norm{\bar P(T)}_{L^2}^2  +   \int_T^t e^{-\kappa_1\int_\tau^t g^{-b}(\theta)d\theta} \norm{\Psi(\tau)}_{L^\infty}^2 g(\tau)^b d\tau 
\Big\}^{1/2},
\end{multline*}
where $\varrho_1=\frac{\gamex_1}{{\gamex_1}+1}$.  
Note that $\varrho_1<1$, then by condition \eqref{condlim1}, we have 
\beq\label{stillg}
\begin{aligned}
  \limsup_{t\to\infty} \norm{\bar P(t)}_{L^\infty(U')}
&\le C \limsup_{t\to\infty}  \Big\{ B(t)^\frac2{\varrho_1} \int_T^t e^{-\kappa_1\int_\tau^t g^{-b}(\theta)d\theta} \norm{\Psi(\tau)}_{L^\infty}^2 g(\tau)^b d\tau 
\Big\}^{\varrho_1/2}\\
&\quad + C \limsup_{t\to\infty}\Big\{ B(t)^2   \int_T^t e^{-\kappa_1\int_\tau^t g^{-b}(\theta)d\theta} \norm{\Psi(\tau)}_{L^\infty}^2 g(\tau)^b d\tau\Big\}^{1/2}.
\end{aligned}
\eeq
The first condition in \eqref{condlim1} and the fact that $B(t)\ge 1$ imply $\int_2^\infty g(\tau)^{-b}d\tau=\infty$.
With this and the second condition in \eqref{condlim1}, we apply Lemma~\ref{difflem2} to each limit in \eqref{stillg} and obtain
\begin{align*}
\limsup_{t\to\infty} \norm{\bar P(t)}_{L^\infty(U')}
&\le C \limsup_{t\to\infty} \Big( B(t)^{\frac 2{\varrho_1}}\norm{\Psi(t)}_{L^\infty}^2 g(t)^{2b}\Big)^{\varrho_1/2} \\
&\quad + C \limsup_{t\to\infty} \Big( B(t)^2 \norm{\Psi(t)}_{L^\infty}^2 g(t)^{2b} \Big)^{1/2},
\end{align*}
thus, \eqref{limsup00} follows.
\end{proof}

Between two required estimates in \eqref{boundcond} of Lemma \ref{limPprop}, the second one needs more work. Therefore, we focus on  estimating $\tilde {\mathcal C}(t)$ defined by \eqref{Clam}, which contains $\tilde \vartheta(t)$ given by \eqref{uptildef}. For this one, we have from relation \eqref{Kestn} that
\begin{align}
\nonumber
\tilde \vartheta(t) &\le C+C\Big(\int_{t-1}^t \int_{\setV}  K(|\nabla \bar p_1|)|\nabla \bar p_1|^{\gamex_2+a} + K(|\nabla \bar p_2|)|\nabla \bar p_2|^{\gamex_2+a}dx dt\Big)^\frac1{2\mu}
\\
\label{pregam} 
&\le C+C\Big(\int_{0}^t \int_{\setV}  K(|\nabla \bar p_1|)|\nabla \bar p_1|^{\gamex_2+a} + K(|\nabla \bar p_2|)|\nabla \bar p_2|^{\gamex_2+a}dx dt\Big)^\frac1{2\mu}.
\end{align}
By \eqref{pregam} and \eqref{Kgrad55},
\begin{align}\label{newgamtil}
\tilde \vartheta(t)\le C L_{12} \Bfun_1(t)^\frac{\muex_4(\gamex_2+a-2)}{2\mu}\Bfun_2(t)^\frac1{2\mu},
\end{align}
where $L_{12}=\big\{\sum_{i=1,2} L_3\big(\gamex_2+a;[p_i(0)]\big)\big\}^\frac1{2\mu}$, 
\beqs
\Bfun_1(t)=1+M_F(t)\quad\text{and}\quad 
\Bfun_2(t)=1+\int_0^t F(\tau)d\tau.
\eeqs

\begin{theorem}\label{newP}
 \asdc. Suppose $\bar A<\infty$. 
Define 
\beqs
\bar\Upsilon_3 = 1+\sup_{[0,\infty)}(\|\psi_1\|_{L^\infty}+\|\psi_2\|_{L^\infty}),\quad
\omega(t) =1+ \int_{t-2}^t \tilde F(\tau)d\tau,\quad
\Bfun_3(t)=\omega(t)^\frac{\muex_7}{\muex_6} \Bfun_2(t)^\frac1{2\mu({\gamex_1}+1)}.
\eeqs
Let $d_4=d_2 d_3^{-b}$, where $d_2$ is in \eqref{psubt} and $d_3$ is in \eqref{lamtilde} below.
If 
\beq\label{3limcond}
\lim_{t\to\infty} \Bfun_3(t)^\frac{2(\gamex_1+1)}{\gamex_1} e^{-d_4 \bar\Upsilon_3^{-\frac{2b}{1-a}}\int_2^t \omega(\tau)^{-b}d\tau}=0,\quad
\lim_{t\to\infty} (\omega^b(t))'=\lim_{t\to\infty} \frac{\omega^b(t)  F(t)}{\int_0^t F(\tau)d\tau}=0 ,
\eeq
then
\beq\label{limsup70}
\limsup_{t\to\infty}  \|\bar P(t)\|_{L^\infty(U')} 
\le C \Upsilon_4 
\limsup_{t\to\infty} \Big\{ \Bfun_3(t) \Big(  \omega(t)^b\norm{\Psi(t)}_{L^\infty}  + \big[ \omega(t)^b\norm{\Psi(t)}_{L^\infty} \big]^\frac{\gamex_1}{\gamex_1+1}\Big) \Big\},
\eeq
where $\Upsilon_4$ is defined by \eqref{c1def} below.
\end{theorem}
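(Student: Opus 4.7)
The plan is to apply Lemma \ref{limPprop} with carefully chosen comparison functions $g(t)$ and $B(t)$, and to verify that the hypotheses \eqref{boundcond} and \eqref{condlim1} of that lemma translate precisely into the hypotheses \eqref{3limcond} assumed here. Implicitly we work in the regime where $\bar\Upsilon_3<\infty$, since otherwise the constant $\Upsilon_4$ in the conclusion is infinite and \eqref{limsup70} is vacuous.

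First, to identify $g(t)$, I would set $g(t)=\omega(t)$ and invoke estimate \eqref{boundedlarge} of Theorem \ref{H-bound-lemma}. Since $\bar A=A_1+A_2<\infty$, for sufficiently large $\tau$ and $i=1,2$ one has $J_H[p_i](\tau)\le C(1+\bar A^{2/(2-a)}+\int_{\tau-1}^\tau \tilde f_i(s)\,ds)$, and combining with \eqref{JHcom} yields
\begin{equation*}
\sup_{\tau\in[t-1,t]}\Lambda(\tau)\le C(1+\bar A^{2/(2-a)})\,\omega(t).
\end{equation*}
Since $\bar A\le C\bar\Upsilon_3^{(2-a)/(1-a)}$, the constant $\kappa_0$ in \eqref{boundcond} can be taken as $d_3\eqdef C\bar\Upsilon_3^{2/(1-a)}$, so that $\kappa_1=d_2\kappa_0^{-b}=d_4\bar\Upsilon_3^{-2b/(1-a)}$, matching the exponential weight appearing in \eqref{3limcond}.

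Second, to identify $B(t)$, I would bound each of the three summands defining $\tilde{\mathcal{C}}(t)$ in \eqref{Clam}. The previous step gives $\tilde\lambda(t)\le C\omega(t)^{\muex_7}$. Under the finiteness of $\bar\Upsilon_3$, the function $F(t)$ is uniformly bounded and so $\Bfun_1(t)=1+M_F(t)\le C$; then \eqref{newgamtil} yields $\tilde\vartheta(t)\le C L_{12}\Bfun_2(t)^{1/(2\mu)}$, with constant absorbed into $\Upsilon_4$. Since $\sup_{[t-1,t]}\|\Psi\|_{L^\infty}\le\bar\Upsilon_3$, all three terms in $\tilde{\mathcal{C}}(t)$ are bounded by products of powers of $\omega(t)$ and $\Bfun_2(t)$, multiplied by an $\bar\Upsilon_3$-dependent constant. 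Comparing the exponents $1/({\gamex_1}+1)$, $1/\muex_6$, and $1/(\muex_6+1)$ of $\tilde\lambda$, and noting that ${\gamex_1}<\muex_6<\muex_6+1$ (by \eqref{etadef}) forces the ordering $1/(\muex_6+1)<1/({\gamex_1}+1)<1/\muex_6$, the dominant power of $\omega(t)$ in $\tilde{\mathcal{C}}(t)$ is $\muex_7/\muex_6$. Hence
\begin{equation*}
\tilde{\mathcal C}(t)\le C\,\Upsilon_4\,\Bfun_3(t),
\end{equation*}
where $\Upsilon_4$ is the constant collecting $L_{12}$, $\bar\Upsilon_3^{1/(\muex_6+1)}$, and related $\bar\Upsilon_3$-powers. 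This defines $B(t)=C\Upsilon_4\Bfun_3(t)$, which satisfies the second half of \eqref{boundcond}.

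Third, I would verify that \eqref{condlim1} reduces to \eqref{3limcond} with this choice. The first condition is immediate after noting $B(t)^{2({\gamex_1}+1)/{\gamex_1}}=C\Upsilon_4^{2({\gamex_1}+1)/{\gamex_1}}\Bfun_3(t)^{2({\gamex_1}+1)/{\gamex_1}}$. For the second, compute
\begin{equation*}
\frac{B'(t)}{B(t)}=\frac{\muex_7}{\muex_6}\frac{\omega'(t)}{\omega(t)}+\frac{1}{2\mu({\gamex_1}+1)}\frac{F(t)}{\Bfun_2(t)},
\end{equation*}
so $g(t)^b B'(t)/B(t)$ decomposes into a multiple of $\omega(t)^{b-1}\omega'(t)=(\omega^b)'/b$ and a multiple of $\omega(t)^b F(t)/\Bfun_2(t)$, both tending to $0$ by \eqref{3limcond}. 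Applying Lemma \ref{limPprop} directly yields \eqref{limsup70}. The main obstacle is the exponent bookkeeping in Step 2: one must verify the ordering of exponents so that $\omega^{\muex_7/\muex_6}$ is indeed the largest power among the three summands of $\tilde{\mathcal{C}}(t)$, and must carefully track all $\bar\Upsilon_3$-dependent factors so they consolidate into a single constant $\Upsilon_4$ independent of $t$.
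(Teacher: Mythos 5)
Your proposal is correct and follows essentially the same route as the paper's proof: bound $\sup_{[t-1,t]}\Lambda$ via \eqref{boundedlarge}, bound $\tilde{\mathcal C}(t)$ by $C\eta\Bfun_3(t)$ using \eqref{lamtilde} and \eqref{newgamtil}, and invoke Lemma \ref{limPprop}. The only cosmetic difference is that you absorb the $\bar\Upsilon_3^{2/(1-a)}$ factor into $\kappa_0$ rather than into $g(t)$ as the paper does (the paper sets $g(t)=\bar\Upsilon_3^{2/(1-a)}\omega(t)$, $\kappa_0=d_3$), which is an equivalent parameterization yielding the same $\kappa_1$ and the same exponential; your version in fact avoids the paper's final step of sweeping $\bar\Upsilon_3^{2b/(1-a)}$ out of $g(t)^b$ into $\Upsilon_4$, producing a marginally sharper constant. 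Two small notational slips worth tightening: you momentarily overload the symbol $d_3$ to mean $C\bar\Upsilon_3^{2/(1-a)}$ rather than the $\bar\Upsilon_3$-free constant of \eqref{lamtilde}, and your claim $\Bfun_1(t)\le C$ tacitly assumes $M_F$ is chosen bounded when $F$ is bounded (a natural choice, and the one the paper effectively uses, but worth stating).
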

\begin{proof}
Since $\bar A<\infty$, we have $\bar\Upsilon_3 <\infty$.
Then $$\bar A\le C (\bar\Upsilon_3^2+\bar\Upsilon_3^\frac{2-a}{1-a})\le C \bar\Upsilon_3^\frac{2-a}{1-a}<\infty .$$
By \eqref{boundedlarge}, there is $T_1>0$ such that for $\tau>T_1$, we have 
\beqs
\Lambda(\tau)\le C\Big(\bar\Upsilon_3^\frac2{1-a}+\int_{\tau-1}^\tau \tilde F(\tau)d\tau\Big),
\eeqs
hence, for $t>T_1+1$,
\beq\label{lamtilde}
  \sup_{\tau\in[t-1,t]}\Lambda(\tau)\le C\Big(\bar\Upsilon_3^\frac2{1-a}+\int_{t-2}^t \tilde F(\tau)d\tau\Big)
\le d_3 \bar\Upsilon_3^\frac2{1-a} \omega(t),\quad \tilde \lambda(t)\le C \bar\Upsilon_3^\frac{2\muex_7}{1-a} \omega(t)^{\muex_7},
\eeq
where $d_3$ is a positive constant.
Note also that $\norm{\Psi(t)}_{L^\infty}\le \bar\Upsilon_3$ for all $t\ge 0$.
Then by \eqref{Clam}, \eqref{lamtilde} and \eqref{newgamtil}, we have
\beq\label{Ctilbound}
 \tilde {\mathcal C}(t)
\le C  \bar\Upsilon_3^\frac{2\muex_7}{(1-a)\muex_6} \omega(t)^\frac{\muex_7}{\muex_6} \Big( 1 + L_{12}^\frac1{{\gamex_1}+1} \bar\Upsilon_3^\frac{\muex_4(\gamex_2+a-2)}{2\mu({\gamex_1}+1)} \Bfun_2(t)^\frac1{2\mu({\gamex_1}+1)}  + \bar\Upsilon_3^\frac1{\muex_6+1}\Big)\\
\eeq
Thus,
\beq\label{ctil2}
 \tilde {\mathcal C}(t)\le C \eta_1 \omega(t)^\frac{\muex_7}{\muex_6} \Bfun_2(t)^\frac1{2\mu({\gamex_1}+1)}=C \eta_1 \Bfun_3(t) ,
\eeq
where $\eta_1=L_{12}^\frac1{{\gamex_1}+1} \bar\Upsilon_3^\frac{2\muex_7}{(1-a)\muex_6}\big[  \bar\Upsilon_3^\frac{\muex_4(\gamex_2+a-2)}{2\mu({\gamex_1}+1)}   + \bar\Upsilon_3^\frac1{\muex_6+1}\big]$.
Using \eqref{lamtilde} and \eqref{ctil2}, we apply Lemma \ref{limPprop} with $g(t)=\bar\Upsilon_3^\frac{2}{1-a}\omega(t)$
and $B(t) =  \eta_1  \Bfun_3(t)$. Note that the last two limits in \eqref{3limcond} imply the second limit in \eqref{condlim1}.
As a result, we obtain from \eqref{limsup00} that
\begin{align*}
\limsup_{t\to\infty}  \|\bar P(\cdot,t)\|_{L^\infty}
&\le C  \limsup_{t\to\infty}\Big\{ \eta_1 \Bfun_3(t)
\Big[ \Big(  [\bar\Upsilon_3^\frac2{1-a}\omega(t)]^{b} \norm{\Psi(t)}_{L^\infty} \Big)^{\varrho_1} +  [\bar\Upsilon_3^\frac2{1-a}\omega(t)]^{b} \norm{\Psi(t)}_{L^\infty} \Big]
\Big\} .
\end{align*}
Hence \eqref{limsup70} follows  with 
\beq\label{c1def}
\Upsilon_4=  \bar\Upsilon_3^\frac{2b}{1-a} \eta_1=L_{12}^\frac1{{\gamex_1}+1} \bar\Upsilon_3^{\frac{2\muex_7}{(1-a)\muex_6}+\frac{2b}{1-a}}\big[  \bar\Upsilon_3^\frac{\muex_4(\gamex_2+a-2)}{2\mu({\gamex_1}+1)}   + \bar\Upsilon_3^\frac1{\muex_6+1}\big].
\eeq
The proof is complete.
\end{proof}

Next, we will treat the case $\bar A=\infty$. For that we recall some estimates from \cite{HI2}.

\begin{lemma}[cf. \cite{HI2}, Lemma 5.8]\label{lamda-pt-estimate}
Assume the Degree Condition and $\bar A=\infty$. Define
\begin{align*}
W_1(t) &= 1+M_\myB(t)^\frac2{2-a}+\int_{t-1}^t \tilde \myB(\tau)d\tau,\\
W_2(t) &=1+\bar \beta^\frac1{1-a}+\myB(t-1)^\frac2{2-a} +\myB(t)+\int_{t-1}^t \myB(\tau)+\tilde \myB(\tau)d\tau.
\end{align*}

{\rm (i)} There is $T_1>0$ such that   $\Lambda(t)\le CW_1(t)$ for all $t>T_1$.

{\rm (ii)} If $\bar \beta<\infty$ then there is $T_2>0$ such that $\Lambda(t)\le W_2(t)$ for all $t>T_2$.
\end{lemma}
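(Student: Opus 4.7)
The plan is to reduce both parts to estimating $J_H[p_i](t)$ for $i=1,2$, and then invoke the appropriate bounds already proved in Theorem \ref{H-bound-lemma}. Indeed, from the lower bound in \eqref{Hcompare}, namely $H(\xi)\ge c_1(\xi^{2-a}-1)$, we have
\begin{equation*}
\int_U |\nabla p_i(x,t)|^{2-a}\,dx \;\le\; c_1^{-1}\bigl(J_H[p_i](t) + c_1\,|U|\bigr),
\end{equation*}
and therefore
\begin{equation*}
\Lambda(t) \;\le\; C\Bigl(1 + J_H[p_1](t) + J_H[p_2](t)\Bigr).
\end{equation*}
Thus it suffices to control the two $J_H$ terms.

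For part (i), I apply \eqref{H-bound-b} from Theorem \ref{H-bound-lemma}(ii) to each $p_i$, obtaining, for $t\ge 1$,
\begin{equation*}
J_H[p_i](t) \;\le\; C\Bigl(1+\|\bar p_i(0)\|_{L^2}^2 + M_{f_i}(t)^{\frac{2}{2-a}} + \int_{t-1}^t \tilde f_i(\tau)\,d\tau\Bigr).
\end{equation*}
Summing over $i=1,2$ and using the elementary inequality $x^{\frac{2}{2-a}}+y^{\frac{2}{2-a}}\le C(x+y)^{\frac{2}{2-a}}$, we get
\begin{equation*}
\Lambda(t) \;\le\; C\Bigl(1 + A_0 + M_\myB(t)^{\frac{2}{2-a}} + \int_{t-1}^t \tilde \myB(\tau)\,d\tau\Bigr).
\end{equation*}
Since $\bar A=\infty$, at least one of $M_{f_i}(t)\to\infty$ as $t\to\infty$; consequently there exists $T_1>0$ such that $M_\myB(t)^{2/(2-a)}\ge A_0$ for all $t>T_1$, yielding $\Lambda(t)\le C\, W_1(t)$ for $t>T_1$, as desired.

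For part (ii), I combine \eqref{H-bound-6} with \eqref{unboundp}. Applying \eqref{H-bound-6} to each $p_i$ for $t\ge 1$ gives
\begin{equation*}
J_H[p_i](t) \;\le\; C\Bigl(\|\bar p_i(t-1)\|_{L^2}^2 + \sup_{[t-1,t]} f_i + \int_{t-1}^t \tilde f_i(\tau)\,d\tau\Bigr).
\end{equation*}
Since $\beta_i\le\bar\beta<\infty$, \eqref{unboundp} applies and yields, for $t$ larger than some threshold,
\begin{equation*}
\|\bar p_i(t-1)\|_{L^2}^2 \;\le\; C\Bigl(1+\beta_i^{\frac{1}{1-a}} + f_i(t-1)^{\frac{2}{2-a}}\Bigr).
\end{equation*}
The key observation — and the step I expect to be the main technical obstacle — is how to recast $\sup_{[t-1,t]} f_i$ in the form $f_i(t)+(\text{controllable terms})$ that appears in $W_2(t)$. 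By definition of $\beta_i$, for $t$ large enough we have $[f_i'(s)]^-\le \beta_i+1$ on $[t-1,t]$; then for any $\tau\in[t-1,t]$,
\begin{equation*}
f_i(\tau) \;=\; f_i(t) + \int_\tau^t \bigl[f_i'(s)\bigr]^-\,ds - \int_\tau^t \bigl[f_i'(s)\bigr]^+\,ds \;\le\; f_i(t) + (\beta_i+1),
\end{equation*}
so $\sup_{[t-1,t]} f_i \le f_i(t) + \beta_i + 1$. Putting everything together and summing over $i=1,2$,
\begin{equation*}
\Lambda(t) \;\le\; C\Bigl(1 + \bar\beta^{\frac{1}{1-a}} + \myB(t-1)^{\frac{2}{2-a}} + \myB(t) + \int_{t-1}^t \tilde\myB(\tau)\,d\tau\Bigr) \;\le\; C\,W_2(t),
\end{equation*}
where the last inequality uses that the integral $\int_{t-1}^t \myB(\tau)d\tau$ present in $W_2$ is non-negative (so it only enlarges the right-hand side). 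Choosing $T_2$ large enough to accommodate the thresholds coming from \eqref{unboundp} and the $\beta_i$-control on $[f_i']^-$ completes the argument. Apart from this bookkeeping of the negative part of $f_i'$, the rest of the proof is a direct assembly of previously established $H$-estimates.
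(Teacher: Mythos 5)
Your proof is correct and is the natural one given the tools assembled in the present paper: you reduce $\Lambda(t)$ to $J_H[p_1](t)+J_H[p_2](t)$ via \eqref{JHcom}, then invoke \eqref{H-bound-b} for part (i), and \eqref{H-bound-6} combined with \eqref{unboundp} for part (ii). The paper only cites \cite{HI2} for this lemma and does not reproduce the proof, so a direct comparison is not possible, but the route you take is exactly what Theorem \ref{H-bound-lemma} is set up to deliver, and all the individual steps check out: the observation that $\bar A=\infty$ forces $M_\myB(t)\to\infty$ (because $M_{f_i}$ is an \emph{increasing} majorant, so it dominates $\sup_{[0,t]}f_i$) is needed to absorb $A_0$ and is correctly identified as the place where the hypothesis $\bar A=\infty$ enters; and the control
$f_i(\tau)\le f_i(t)+\int_\tau^t[f_i'(s)]^-\,ds\le f_i(t)+\beta_i+1$
for $\tau\in[t-1,t]$ and $t$ large is exactly the right way to convert $\sup_{[t-1,t]}f_i$ into $f_i(t)+\mathrm{const}$. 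Two cosmetic remarks. First, you obtain $\Lambda(t)\le C\,W_2(t)$ rather than the stated $\Lambda(t)\le W_2(t)$ in part (ii); this discrepancy is immaterial here because the only place the lemma is used, \eqref{lamW}, already inserts a constant $d_5$. Second, your argument never uses the term $\int_{t-1}^t\myB(\tau)\,d\tau$ appearing in $W_2$; your supremum-based bound on $\sup_{[t-1,t]}f_i$ is a bit sharper than whatever produced that term in \cite{HI2}, and since the extra term is nonnegative your estimate is still $\le W_2$, so nothing is lost.
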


Let $W(t)$ be defined, in the general case, by 
\beqs
W(t)= 1+M_\myB(t)^\frac2{2-a}+\int_{t-2}^t \tilde \myB(\tau)d\tau,
\eeqs 
and, in case $\bar \beta<\infty$, by
\beqs
W(t)=1+\bar \beta^\frac1{1-a}+\sup_{[t-2,t]}\myB(t)^\frac2{2-a} +\int_{t-2}^t \tilde \myB(\tau)d\tau.
\eeqs
Then for large $t$, we have from Lemma \ref{lamda-pt-estimate} that
\beq\label{lamW}
\sup_{\tau\in[t-1,t]}\Lambda(\tau)\le d_5 W(t) 
\quad\text{and}\quad
\tilde \lambda(t)\le CW(t)^{\muex_7},
\eeq
where $d_5$ is a positive constant.
With \eqref{lamW}, we restate Lemma \ref{limPprop} as the following.

\begin{lemma}\label{propAbar}
 \asdc. Suppose $\bar A=\infty$ and $\tilde {\mathcal C}(t)\le C \tilde B(t)$ for sufficient large $t$, with some function $\tilde B(t)\ge 1$.  
Let $d_6=d_2 d_5^{-b}$, where $d_2$ is in \eqref{psubt} and $d_5$ is in \eqref{lamW}.
If
\beq\label{condlim15}
\lim_{t\to\infty} \tilde B(t)^\frac{2({\gamex_1}+1)}{{\gamex_1}} e^{-d_6\int_{2}^t W(\tau)^{-b}d\tau}  =0
\quad\text{and}\quad
\lim_{t\to\infty}  W(t)^b\frac {\tilde B'(t)}{\tilde B(t)}  =0,
\eeq
then
\beq\label{limsup50}
\begin{aligned}
\limsup_{t\to\infty}  \|\bar P(t)\|_{L^\infty(U')}
&\le C   \limsup_{t\to\infty}\Big\{  \tilde B(t) \Big(  W(t)^{b}  \norm{\Psi(t)}_{L^\infty}
+ \big[ W(t)^{b} \norm{\Psi(t)}_{L^\infty}\big]^\frac{\gamex_1}{{\gamex_1}+1}\Big)\Big\} .
\end{aligned}
\eeq
\end{lemma}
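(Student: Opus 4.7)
The plan is to apply Lemma \ref{limPprop} directly, with the two abstract functions $g(t)$ and $B(t)$ of that lemma specialized to the concrete quantities $W(t)$ and $\tilde B(t)$ of the present statement. All the work has essentially been done upstream: Lemma \ref{limPprop} takes as input a pointwise-in-time bound on $\sup_{[t-1,t]}\Lambda$ by $\kappa_0 g(t)$ and a bound on $\tilde{\mathcal C}(t)$ by $B(t)$; it then produces the asymptotic inequality \eqref{limsup00} with $\kappa_1 = d_2 \kappa_0^{-b}$. So the proof amounts to verifying that the hypotheses of Lemma \ref{limPprop} are met with $g(t)=W(t)$, $\kappa_0 = d_5$, and $B(t)=\tilde B(t)$, so that $\kappa_1 = d_2 d_5^{-b} = d_6$ as in the statement.

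First, I would invoke Lemma \ref{lamda-pt-estimate}: since $\bar A = \infty$, for sufficiently large $t$ we have $\Lambda(t) \le C W_1(t) \le C W(t)$ in the general case, and $\Lambda(t)\le W_2(t) \le CW(t)$ under the additional assumption $\bar\beta<\infty$. Taking supremum over $[t-1,t]$ and using the monotonicity (in time) of the majorants defining $W$, I would conclude the bound \eqref{lamW}, namely $\sup_{\tau\in[t-1,t]}\Lambda(\tau)\le d_5 W(t)$, for some constant $d_5>0$ and all $t$ beyond some threshold. This is precisely the first condition of \eqref{boundcond} in Lemma \ref{limPprop} with $g=W$ and $\kappa_0=d_5$. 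The second condition, $\tilde{\mathcal C}(t)\le C\tilde B(t)$, is exactly the standing hypothesis of the present lemma.

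Next, I would check that the two limit conditions \eqref{condlim15} of the present statement translate verbatim into the conditions \eqref{condlim1} required by Lemma \ref{limPprop}: with the substitution $g=W$, $B=\tilde B$, and $\kappa_1 = d_2 d_5^{-b}=d_6$, the exponential decay condition in \eqref{condlim1} becomes exactly the first limit in \eqref{condlim15}, and the logarithmic-derivative condition becomes the second one. At that point all the hypotheses of Lemma \ref{limPprop} are satisfied, and the conclusion \eqref{limsup00} reads, after substitution,
\begin{equation*}
\limsup_{t\to\infty}\|\bar P(t)\|_{L^\infty(U')}
\le C\limsup_{t\to\infty}\Big\{\tilde B(t)\Big(W(t)^{b}\|\Psi(t)\|_{L^\infty} + \bigl[W(t)^{b}\|\Psi(t)\|_{L^\infty}\bigr]^{\gamex_1/(\gamex_1+1)}\Big)\Big\},
\end{equation*}
which is precisely \eqref{limsup50}.

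Since this lemma is essentially a specialization of Lemma \ref{limPprop}, I do not anticipate any real obstacle. The only subtlety worth flagging is the cosmetic bookkeeping between the general case and the case $\bar\beta<\infty$: one must verify that the single definition of $W(t)$ given just before \eqref{lamW} majorizes both $W_1(t)$ and $W_2(t)$ of Lemma \ref{lamda-pt-estimate} (up to multiplicative constants), so that the pointwise bound \eqref{lamW} holds uniformly in both cases. This is immediate from the fact that $M_\myB$ is an increasing majorant of $\myB$ and that $\sup_{[t-2,t]}\myB \ge \myB(t-1)$, so no genuine analytical work is required beyond this verification.
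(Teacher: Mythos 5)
Your proposal is correct and is essentially identical to the paper's argument: the paper states Lemma \ref{propAbar} precisely as a restatement of Lemma \ref{limPprop} after the substitution $g=W$, $\kappa_0=d_5$, $B=\tilde B$, so that $\kappa_1=d_2 d_5^{-b}=d_6$ and \eqref{lamW} supplies the first hypothesis of \eqref{boundcond}. Your flagged verification that $W(t)$ dominates $\sup_{[t-1,t]}W_i(\tau)$ (up to constants) in both the general and $\bar\beta<\infty$ cases is indeed the only bookkeeping needed, and you handle it correctly.
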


Denote
\beq
\muex_{13}=\max\Big\{ \frac{\muex_7}{\muex_6} ,\frac{\muex_7}{\muex_6+1}+\frac{1-a}{2(\muex_6+1)}\Big \}\quad \text{and}\quad
\gamex_7=\frac{\muex_4(\gamex_2+a-2)}{2\mu}.
\eeq

\begin{theorem}\label{newthmAbar2}
  \asdc. Suppose $\bar A=\infty$ and $\int_0^\infty F(\tau)d\tau=\infty$.
Define
\beqs
\Bfun_4(t)=\int_0^t F(\tau)d\tau
\quad\text{and}\quad
 \Bfun_5(t)=M_F(t)^{\gamex_7} \Bfun_4(t)^\frac1{2\mu}.
 \eeqs
Let $d_6$ be defined as in Lemma \ref{propAbar}.
If 
\beq\label{newexpCond}
 \lim_{t\to\infty}  W(t)^{\frac {2\muex_{13}({\gamex_1}+1)}{{\gamex_1}}} \Bfun_5(t)^{\frac 2{{\gamex_1}}} e^{-d_6\int_2^{t} W^{-b}(\tau)d\tau}=0,
\eeq
\beq\label{newlemCond}
\lim_{t\to\infty}  (W^b(t))' =0,\quad \lim_{t\to\infty} W^b(t)\frac{M_F'(t)} {M_F(t)} =0 \quad\text{and}\quad  \lim_{t\to\infty} W^b(t)\frac{F(t)} {\int_0^t F(\tau)d\tau} =0,
\eeq
then
\beq\label{newlimsupP}
\begin{aligned}
&\limsup_{t\to\infty}  \|\bar P(t)\|_{L^\infty(U')}\\
&\le   C  \limsup_{t\to\infty} \Big\{ W(t)^{\muex_{13}}  \Bfun_5(t)^\frac{1}{{\gamex_1}+1} \Big(   W(t)^{b} \norm{\Psi(t)}_{L^\infty}
+ \big[ W(t)^{b}  \norm{\Psi(t)}_{L^\infty} \big]^\frac{{\gamex_1}}{{\gamex_1}+1}\Big) \Big\}.
\end{aligned}
\eeq
\end{theorem}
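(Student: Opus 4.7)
The plan is to mirror the structure of the proof of Theorem \ref{newP}, but replacing Lemma \ref{limPprop} by Lemma \ref{propAbar} and using the large-time control on $\Lambda(t)$ via $W(t)$ provided by Lemma \ref{lamda-pt-estimate}. The starting point is again \eqref{bigPbar}–\eqref{Clam} and the bound \eqref{newgamtil} for $\tilde\vartheta(t)$; the task reduces to estimating $\tilde{\mathcal C}(t)$ in the form $C\tilde B(t)$ with $\tilde B(t)$ compatible with the hypotheses \eqref{newexpCond}–\eqref{newlemCond}.

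First, by \eqref{lamW} one has, for $t$ sufficiently large, $\tilde\lambda(t)\le C W(t)^{\muex_7}$ and $\sup_{[t-1,t]}\Lambda(\tau)\le d_5 W(t)$. Since $f_i(t)$ bounds $\|\psi_i\|_{L^\infty}$ in terms of $F$ via $\|\psi_i\|_{L^\infty}\le C F(t)^{(1-a)/(2-a)}$, and since $M_F(t)\le CW(t)^{(2-a)/2}$ by the definition of $W(t)$ (both in the general case and the $\bar\beta<\infty$ case), one deduces
\[
\sup_{[t-1,t]}\|\Psi\|_{L^\infty}\le C W(t)^{(1-a)/2}.
\]
Applying \eqref{newgamtil} and using $\int_0^\infty F\,d\tau=\infty$ so that $\Bfun_2(t)\le C\Bfun_4(t)$ and $(1+M_F(t))\le C M_F(t)$ for large $t$, we get
\[
\tilde\vartheta(t)\le C L_{12}\,\Bfun_5(t).
\]

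Second, plugging these into \eqref{Clam} gives
\begin{align*}
\tilde{\mathcal C}(t)&\le C\bigl[W(t)^{\muex_7}\Bfun_5(t)\bigr]^{1/(\gamex_1+1)} + C W(t)^{\muex_7/\muex_6} + C\bigl[W(t)^{\muex_7}\cdot W(t)^{(1-a)/2}\bigr]^{1/(\muex_6+1)}\\
&\le C W(t)^{\muex_{13}}\Bfun_5(t)^{1/(\gamex_1+1)},
\end{align*}
where the last step uses $\muex_7/(\gamex_1+1)<\muex_7/\muex_6\le \muex_{13}$ (since $\gamex_1+1=\muex_6+1-1/\mu>\muex_6$) and absorbs the constant part into $\Bfun_5^{1/(\gamex_1+1)}\to\infty$. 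This is the main technical step: finding the smallest exponent $\muex_{13}$ of $W(t)$ that simultaneously dominates the three terms of $\tilde{\mathcal C}(t)$ after extracting the $\Bfun_5$ factor. Setting $\tilde B(t)=C W(t)^{\muex_{13}}\Bfun_5(t)^{1/(\gamex_1+1)}$, the first condition of \eqref{condlim15} becomes precisely \eqref{newexpCond}, while the logarithmic-derivative estimate
\[
\frac{\tilde B'(t)}{\tilde B(t)}=\muex_{13}\frac{W'(t)}{W(t)}+\frac{1}{\gamex_1+1}\Bigl[\gamex_7\frac{M_F'(t)}{M_F(t)}+\frac{F(t)}{2\mu\Bfun_4(t)}\Bigr]
\]
together with the identity $W^{b-1}W'=(W^b)'/b$ turns the second condition of \eqref{condlim15} into the three limits in \eqref{newlemCond}.

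Having verified the hypotheses of Lemma \ref{propAbar}, the conclusion \eqref{limsup50} applied to this $\tilde B(t)$ yields \eqref{newlimsupP} immediately. The main obstacle is purely bookkeeping: tracking which power of $W(t)$ arises from each of the three summands of $\tilde{\mathcal C}(t)$ and verifying that $\muex_{13}$ as defined in the theorem is the correct unifying exponent; the analytic machinery (monotonicity, $L^2$-decay via \eqref{psubt}, and the exponential Gronwall argument inside Lemma \ref{propAbar}) is already in place from the previous results.
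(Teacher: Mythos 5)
Your proposal is correct and follows essentially the same approach as the paper: bound $\tilde\lambda(t)$ via \eqref{lamW}, bound $\tilde\vartheta(t)$ via \eqref{newgamtil} (equivalently, \eqref{pregam} combined with \eqref{gpr3}), bound $\sup_{[t-1,t]}\|\Psi\|_{L^\infty}$ by $CW(t)^{(1-a)/2}$, assemble $\tilde{\mathcal C}(t)\le CW(t)^{\muex_{13}}\Bfun_5(t)^{1/(\gamex_1+1)}$ using $\gamex_1+1>\muex_6$, and invoke Lemma~\ref{propAbar}. One small imprecision: your stated intermediate bound $M_F(t)\le CW(t)^{(2-a)/2}$ does not in general hold in the $\bar\beta<\infty$ branch of the definition of $W(t)$ (there $W$ involves $\sup_{[t-2,t]}F$, not $M_F$, and the increasing majorant $M_F$ need not be dominated by a sup over a window of fixed length); what is actually needed, and what the paper uses, is the direct estimate $F(\tau)\le W(t)^{(2-a)/2}$ for $\tau\in[t-1,t]$, from which $\sup_{[t-1,t]}\|\Psi\|_{L^\infty}\le CW(t)^{(1-a)/2}$ follows in both cases — so your conclusion stands, only the intermediate justification should be phrased against $F$ on the window rather than against $M_F$ globally.
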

\begin{proof}
Let 
$\Bfun_6(t)=1+\sup_{[0,t]}(\|\psi_1\|_{L^\infty}+\|\psi_2\|_{L^\infty}).$
Then $\lim_{t\to\infty} \Bfun_6(t)=\infty$ and $\lim_{t\to\infty} \Bfun_4(t)=\infty$.
Note that $\Bfun_6(t)\le (1+M_F(t))^\frac{1-a}{2-a}$, and in both cases of definition of $W(t)$, we have
\beq\label{maxpsi}
\sup_{[t-1,t]}\|\Psi\|_{L^\infty}\le C W(t)^{\frac{1-a}{2-a}\frac{2-a}{2}}=CW(t)^\frac{1-a}2.
\eeq
We estimate $\tilde {\mathcal C}(t)$. By \eqref{pregam} and \eqref{gpr3}, we have for $t$ sufficiently large that
\begin{align}\label{newgamtil2}
 \tilde \vartheta(t) &\le C M_F(t)^{\frac{\muex_4(\gamex_2+a-2)}{2\mu}}\Bfun_4(t)^\frac1{2\mu}
= C  \Bfun_5(t).
\end{align}
By \eqref{Clam}, \eqref{lamW}, \eqref{newgamtil2} and \eqref{maxpsi},
\begin{align*}
\tilde  {\mathcal C}(t) 
&\le C  \Big\{ W(t)^\frac{\muex_7}{\muex_6} + \Bfun_5(t)^\frac1{{\gamex_1}+1} W(t)^\frac{\muex_7}{{\gamex_1}+1} + W(t)^\frac{\muex_7}{\muex_6+1} W(t)^\frac{1-a}{2(\muex_6+1)}\Big\}
\le C \Bfun_7(t),
\end{align*}
where $\Bfun_7(t)= W(t)^{\muex_{13}}\Bfun_5(t)^\frac{1}{{\gamex_1}+1}$.
We apply Lemma \ref{propAbar} with $\tilde B(t)=\Bfun_7(t)$.
The first condition in \eqref{condlim15} is replaced by
\beqs
 \lim_{t\to\infty}  ( W(t)^{\muex_{13}}\Bfun_5(t)^\frac{1}{{\gamex_1}+1} )^{\frac 2{\varrho_1}} e^{-d_6\int_2^{t} W^{-b}(\tau)d\tau}=0,
\eeqs
which is \eqref{newexpCond}. The second condition in \eqref{condlim15} is replaced by \eqref{newlemCond}.
Then we obtain \eqref{newlimsupP}  directly from \eqref{limsup50}.
\end{proof}

\begin{example}
Suppose that for $t$ sufficiently large, we have
\beqs
F(t)\le M_F(t)\le Ct^{\egexp_1},\quad W(t)\le Ct^{\egexp_2/b},
\eeqs
for some $\egexp_1>0$ and $0<\egexp_2<1$. 
Following the proof of Theorem \ref{newthmAbar2}, we can see that the statements still hold true if the functions $F(t)$, $M_F(t), W(t)$ are replaced by their upper bounds $Ct^{\egexp_1}$, $Ct^{\egexp_1}$, $Ct^{\egexp_2/b}$, respectively.
With such replacements, conditions \eqref{newexpCond} and \eqref{newlemCond} are met, and $\Bfun_5(t) =C t^{\egexp_3}$, where $\egexp_3=\frac{\muex_4(\gamex_2+a-2)\egexp_1+\egexp_1+1}{2\mu}$. 
Therefore, from \eqref{newlimsupP}, it follows
\beqs
\limsup_{t\to\infty}  \|\bar P(\cdot,t)\|_{L^\infty(U')}
\le C   \limsup_{t\to\infty} \Big( t^{\egexp_5} \norm{\Psi(t)}_{L^\infty} +  [t^{\egexp_4 } \norm{\Psi(t)}_{L^\infty}]^\frac{\gamex_1}{{\gamex_1}+1}\Big), 
\eeqs
where
$\egexp_4=\frac{\muex_4(\gamex_2+a-2)\egexp_1+\egexp_1+1}{2\mu{\gamex_1}}+\frac{\egexp_2\muex_{13}({\gamex_1}+1) }{b{\gamex_1}} +\egexp_2 $
and 
$\egexp_5= \frac{\muex_4(\gamex_2+a-2)\egexp_1+\egexp_1+1}{2\mu({\gamex_1}+1)}+\frac{\egexp_2\muex_{13} }b +\egexp_2 $.
\end{example}


\subsection{Results for pressure gradient}
\label{sec42}

In \cite{HI2}, the norm $\norm{ \nabla P(t) }_{L^{2-a}}$ is estimated for all $t>0$. Now we estimate
$\norm{\nabla P(t)}_{L^{s}(U')}$ and $\|\nabla P\|_{L^{s}(U'\times(0,T))}$ for any $s\in(2-a,2)$.

\begin{proposition}\label{PropGrad}
Let $\delta\in (0,a)$. 
Then for all $t>0$,
\begin{multline}\label{DG1}
\|\nabla P(t)\|_{L^{2-\delta}(U')}
\le C \|\bar P(t)\|_{L^2}^\frac12 \Big\{1+\sum_{i=1,2}\Big[ \| \bar p_{it}(t)\|_{L^2}^2 + \| \nabla p_i(t)\|_{L^{2-a}}^{2-a} 
+\|\psi_i(t)\|_{L^\infty}^2 \Big]\Big\}^\frac14\\
\cdot  \Big( 1 + \sum_{i=1,2} \int_{U'} |\nabla p_i(x,t)|^\frac{a(2-\delta)}{\delta}dx \Big)^\frac{\delta}{2(2-\delta)},
\end{multline}
and for any $T> 0$, 
\begin{multline}\label{InGradP}
\|\nabla P\|_{L^{2-\delta}(U'\times(0,T))}
\le C \|\bar P\|_{L^2(U\times(0,T))}^{1/2} \\
\cdot \Big[T+ \sum_{i=1,2} \Big(\norm{\bar p_{it}}_{L^2(U\times(0,T))}^2  +\int_0^T \int_U |\nabla p_{i}|^{2-a}dxdt+\int_0^T \norm{\psi_i(t)}_{L^\infty}^2dt\Big)  \Big]^{\frac {1}4}\\     
\cdot\Big[ T+\sum_{i=1,2}\int_{0}^T\int_{U'} |\nabla p_i|^\frac{a(2-\delta)}{\delta}dx dt\Big]^\frac{\delta}{2(2-\delta)}.
\end{multline}
Here, constant $C>0$ depends on $U$, $U'$ and $\delta$.
\end{proposition}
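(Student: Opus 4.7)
The plan is to reduce both inequalities to a single weighted $L^2$-bound for $\nabla P$ of the form $\int K(\xi)|\nabla P|^2\zeta^2\,dx$, where $\xi=|\nabla \bar p_1|\vee|\nabla \bar p_2|$ and $\zeta\in C_c^\infty(U)$ with $\zeta\equiv 1$ on $U'$, and then interpolate to $L^{2-\delta}$ via H\"older by writing
\begin{equation*}
|\nabla P|^{2-\delta}=\bigl[K(\xi)|\nabla P|^2\bigr]^{(2-\delta)/2}\cdot K(\xi)^{-(2-\delta)/2}
\end{equation*}
and controlling $K(\xi)^{-(2-\delta)/\delta}\le C(1+\xi)^{a(2-\delta)/\delta}$ via the degeneracy bound \eqref{Kesta}. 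The coercive weighted norm is the right device because, by the monotonicity \eqref{mono1}, it is exactly the object the equation naturally controls.

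For the weighted $L^2$-bound, I would test the equation \eqref{eq-1} (at fixed $t$, not time-integrated) against $\bar P\zeta^2$ and rearrange to isolate $\int(K_1\nabla\bar p_1-K_2\nabla\bar p_2)\cdot\zeta^2\nabla P\,dx$, which by \eqref{mono1} dominates $(1-a)\int K(\xi)|\nabla P|^2\zeta^2\,dx$. The remaining pieces are: the pairing $|\int\bar P_t\bar P\zeta^2|$, estimated by Cauchy--Schwarz as $\|\bar P_t\|_{L^2}\|\bar P\|_{L^2}$ with $\|\bar P_t\|_{L^2}\le\sum_i\|\bar p_{it}\|_{L^2}$; the cross term $2\int(K_1\nabla\bar p_1-K_2\nabla\bar p_2)\cdot\bar P\zeta\nabla\zeta\,dx$, where the bound $|K(s)s|\le C(1+s)^{1-a}$ from \eqref{Kesta} together with Cauchy--Schwarz produces
\begin{equation*}
\le C\|\bar P\|_{L^2}\Bigl(1+\sum_i\int_U|\nabla p_i|^{2(1-a)}\,dx\Bigr)^{1/2}\le C\|\bar P\|_{L^2}\Bigl(1+\sum_i\int_U|\nabla p_i|^{2-a}\,dx\Bigr)^{1/2},
\end{equation*}
the last inequality using $2(1-a)<2-a$; and the flux term, bounded by $C\|\Psi\|_{L^\infty}\|\bar P\|_{L^2}\le C\sum_i\|\psi_i\|_{L^\infty}\|\bar P\|_{L^2}$. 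Combining via the trivial inequality $A+B+C\le\sqrt{3}\sqrt{A^2+B^2+C^2}$ yields
\begin{equation*}
\int K(\xi)|\nabla P|^2\zeta^2\,dx\le C\|\bar P\|_{L^2}\Bigl\{1+\sum_i\bigl[\|\bar p_{it}\|_{L^2}^2+\|\nabla p_i\|_{L^{2-a}}^{2-a}+\|\psi_i\|_{L^\infty}^2\bigr]\Bigr\}^{1/2}.
\end{equation*}
Taking the square root and multiplying by the $K(\xi)^{-(2-\delta)/\delta}$ factor obtained from H\"older, raised to the power $\delta/[2(2-\delta)]$, gives \eqref{DG1}.

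For \eqref{InGradP}, the only change is to integrate the pointwise weighted $L^2$-bound over $(0,T)$ before taking square roots. Cauchy--Schwarz in $t$ applied to $\int_0^T\|\bar P(t)\|_{L^2}[\cdots]^{1/2}dt$ gives the factor $\|\bar P\|_{L^2(U\times(0,T))}$, with the square bracket promoting to the time-integrated quantity in the $1/4$-power. Then another H\"older in $(x,t)$ applied to $|\nabla P|^{2-\delta}=[K(\xi)|\nabla P|^2]^{(2-\delta)/2}K(\xi)^{-(2-\delta)/2}$ over $U'\times(0,T)$, with the pointwise degeneracy estimate $K(\xi)^{-(2-\delta)/\delta}\le C(1+|\nabla p_1|^{a(2-\delta)/\delta}+|\nabla p_2|^{a(2-\delta)/\delta})$, produces the final gradient factor in \eqref{InGradP}.

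The main technical difficulty is not conceptual but bookkeeping: the cross term coming from $\nabla(\bar P\zeta^2)=\zeta^2\nabla P+2\bar P\zeta\nabla\zeta$ is where one must be careful, because a naive estimate on $K_i\nabla\bar p_i$ would introduce an $\int|\nabla p_i|^{2(1-a)}$ power that does not match the target right-hand side. The observation that $2(1-a)<2-a$, which lets one crudely bound this by $1+\int|\nabla p_i|^{2-a}$, is what aligns the exponents with the available a priori control on $\|\nabla p_i\|_{L^{2-a}}$ from Theorem~\ref{H-bound-lemma}. Everything else is a direct application of Cauchy--Schwarz, Young's inequality, and the interpolation identity above.
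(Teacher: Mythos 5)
Your proof is correct and takes essentially the same route as the paper: test \eqref{eq-1} with $\bar P\zeta^2$, invoke the monotonicity \eqref{mono1} to extract the coercive term $\int K(\xi)|\nabla P|^2\zeta^2\,dx$, bound the time-derivative pairing, cut-off cross term, and flux term exactly as you describe (including the point that $|\nabla p_i|^{2(1-a)}\le 1+|\nabla p_i|^{2-a}$ to land on the $L^{2-a}$ quantity), and then interpolate to $L^{2-\delta}$ by H\"older using $K(\xi)^{-(2-\delta)/\delta}\le C(1+\xi)^{a(2-\delta)/\delta}$ from \eqref{Kesta}. The time-integrated estimate \eqref{InGradP} is obtained identically in both arguments: integrate the pointwise weighted $L^2$-bound in $t$, apply Cauchy--Schwarz in $t$, then H\"older in $(x,t)$.
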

\begin{proof}
Note that $\nabla \bar P =\nabla P$.
Let $\zeta=\zeta(x)$ be a cut-off function such that $\zeta$ vanishes in neighborhood of $\Gamma$.
Multiplying equation \eqref{eq-1} by $\bar P\zeta^2$ and integrating over $U$, using integration by parts,  we have
\begin{align*}
\int_U \bar  P_t \bar P \zeta^2 dx
&= -\int_U \Big(K(\nabla p_1|)\nabla p_1-K(|\nabla p_2|)\nabla p_2 \Big ) \cdot (\nabla  P \zeta^2+2\bar P \zeta\nabla \zeta) dx \\
&\quad  +\frac1{|U|}\int_\Gamma \Psi(x,t)d\sigma \int_U\bar  P\zeta^2 dx.
\end{align*}
Let $\xi(x,t)=|\nabla p_1|\vee |\nabla p_2|$. By the monotonicity \eqref{mono1} in Lemma \ref{quasimono-lem}, we obtain
\beq\label{pregradP}
\begin{aligned}
\int_U \bar  P_t \bar P \zeta^2 dx
&\le -(1-a)\int_U K(\xi)|\nabla P\zeta|^2  dx  + 2\int_U ( |\nabla p_1|+ |\nabla p_2|)^{1-a}|P| \zeta|\nabla\zeta|  dx\\
&\quad + \norm{\Psi(t)}_{L^\infty}\int_U |\bar  P| \zeta^2 dx.
\end{aligned}
\eeq
Let $\setV\Subset U$ such that $U'\Subset \setV$. We select $\zeta$ such that $\zeta\equiv 1$ on $U'$ and supp $\zeta\subset \setV$.
We obtain from \eqref{pregradP} that  
\begin{align*}
&(1-a)\int_U K(\xi)|\nabla P\zeta|^2  dx 
\le \int_{U_1} |\bar  P_t| |\bar P| dx  +C\int_{\setV} ( |\nabla p_1|+ |\nabla p_2|)^{1-a}|\bar P|dx+ C \norm{\Psi(t)}_{L^\infty} \norm{\bar P}_{L^2}\\
&\le C (\sum_{i=1,2} \norm{\bar p_{it}}_{L^2} )\norm{\bar P}_{L^2}+C\Big(\sum_{i=1,2} \int_U  |\nabla p_i|^{2-2a}dx\Big)^{1/2} \norm{\bar P}_{L^2}  +C \sum_{i=1,2} \norm{\psi_i(t)}_{L^\infty} \norm{\bar P}_{L^2}.
\end{align*}
Hence 
\beq\label{sub1}
\int_{U'} K(\xi)|\nabla P|^2  dx 
\le C \norm{\bar P}_{L^2}  \Big[ \sum_{i=1,2} \norm{\bar p_{it}}_{L^2} +\Big(\int_U (1+\sum_{i=1,2}|\nabla p_i|)^{2-a} dx\Big)^{1/2}
+\sum_{i=1,2}\norm{\psi_i(t)}_{L^\infty}\Big].
\eeq
By H\"older's inequality and property \eqref{Kesta}, we have
\beq\label{DG2}
\int_{U'} |\nabla P|^{2-\delta}  dx 
\le C \Big ( \int_{U'} K(\xi)|\nabla P|^2  dx \Big )^\frac{2-\delta}{2} \Big( \int_{U'} (1+|\nabla p_1|+|\nabla p_2|)^\frac{a(2-\delta)}{\delta}dx \Big)^\frac{\delta}{2}.
\eeq
Combining \eqref{DG2} with \eqref{sub1} yields 
\begin{multline*}
\| \nabla P(t)\|_{L^{2-\delta}(U')} \le C \|\bar P(t)\|_{L^2}^\frac12 \Big\{1 + \sum_{i=1,2}\Big[ \| \bar p_{it}(t)\|_{L^2}^2 + \int_U |\nabla p_i(x,t)|^{2-a} dx +\|\psi_i(t)\|_{L^\infty}^2 \Big]  \Big\}^\frac14\\
\cdot  \Big( \int_{U'} (1+|\nabla p_1(x,t)|+|\nabla p_2(x,t)|)^\frac{a(2-\delta)}{\delta}dx \Big)^\frac{\delta}{2(2-\delta)},
\end{multline*}
then we obtain \eqref{DG1}.
We now prove \eqref{InGradP}.
Integrating \eqref{sub1}  in $t$ over $[0,T]$ and applying H\"older's inequality yield
\begin{align*}
&\int_0^T \int_{U'} K(\xi)|\nabla P|^2  dx 
\le C \Big( \int_0^T \norm{\bar P(t)}_{L^2}^2 dt \Big)^{1/2} 
\cdot\Big( \int_0^T  \norm{\bar p_{1t}(t)}_{L^2}^2 + \norm{\bar p_{2t}(t)}_{L^2}^2 dt \\
&\quad  + \int_0^T \int_U (|\nabla p_1|+|\nabla p_2|)^{2-2a} dx dt +\int_0^T \norm{\Psi(t)}_{L^\infty}^2dt \Big)^{1/2}.
\end{align*}
Then
\beq\label{sub10}
\begin{aligned}
&\int_0^T \int_{U'} K(\xi)|\nabla P|^2  dx 
\le C \|\bar P\|_{L^2(U\times(0,T))} \\
&\cdot \Big[ \sum_{i=1,2} \Big(\norm{\bar p_{it}}_{L^2(U\times(0,T))}^2  +\int_0^T \int_U (1+|\nabla p_{i}|^{2-a})dxdt+\int_0^T \norm{\psi_i(t)}_{L^\infty}^2dt\Big)  \Big]^{\frac 12}.
\end{aligned}
\eeq
Again, by H\"older's inequality and property \eqref{Kesta}, we have
\beq\label{GradPclose2}
\int_{0}^T \int_{U'} |\nabla P|^{2-\delta}  dx dt
\le C \Big ( \int_{0}^T \int_{U'} K(\xi)|\nabla P|^2  dxdt \Big )^\frac{2-\delta}{2} \Big( \int_{0}^T\int_{U'} (1+|\nabla p_1|+|\nabla p_2|)^\frac{a(2-\delta)}{\delta}dx dt\Big)^\frac{\delta}{2}.
\eeq
Using \eqref{sub10} in \eqref{GradPclose2} and taking the power $1/(2-\delta)$, we obtain \eqref{InGradP}. 
\end{proof}

We now have explicit estimates in terms of initial and boundary data.

\begin{theorem}\label{GradThm1}
 \asdc.
 For $\delta\in(0,a)$, $0<t_0<1$ and $T>t_0$ we have
\beq\label{DG5}
\sup_{[t_0,T]} \|\nabla P(t)\|_{L^{2-\delta}(U')} \le C M_{3,t_0,T}(\|\bar P(0)\|_{L^2} +\sup_{[0,T]}\|\Psi\|_{L^\infty})^\frac12,
\eeq
where $M_{3,t_0,T}$ is defined in \eqref{N8def} below.
\end{theorem}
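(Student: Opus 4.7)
The starting point will be the pointwise-in-$t$ inequality \eqref{DG1} from Proposition \ref{PropGrad}, which factors $\|\nabla P(t)\|_{L^{2-\delta}(U')}$ as a product of three pieces: a $\tfrac12$-power of $\|\bar P(t)\|_{L^2}$, a $\tfrac14$-power of a quantity controlled by $\|\bar p_{it}(t)\|_{L^2}^2 + \|\nabla p_i(t)\|_{L^{2-a}}^{2-a} + \|\psi_i(t)\|_{L^\infty}^2$, and a $\tfrac{\delta}{2(2-\delta)}$-power of a local $L^{a(2-\delta)/\delta}$ integral of $|\nabla p_i|$. The exponent $\tfrac12$ on the first factor is exactly what will produce the $\tfrac12$-exponent on $\|\bar P(0)\|_{L^2}+\sup_{[0,T]}\|\Psi\|_{L^\infty}$ in the target inequality \eqref{DG5}; everything else gets absorbed into the constant $M_{3,t_0,T}$.

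First, I will bound the $L^2$-factor using Theorem \ref{pgrowth}(i): \eqref{pLip} yields
\[
\sup_{[0,T]}\|\bar P(t)\|_{L^2}\le C\,M_{1,T}^{1/2}\bigl(\|\bar P(0)\|_{L^2}+\sup_{[0,T]}\|\Psi\|_{L^\infty}\bigr),
\]
so after taking the square root this contributes $C M_{1,T}^{1/4}(\|\bar P(0)\|_{L^2}+\sup_{[0,T]}\|\Psi\|_{L^\infty})^{1/2}$. Next, for the middle factor the time-derivative term is handled by \eqref{Jpt-boundA0} in Theorem \ref{H-bound-lemma}(ii): for any $t\ge t_0>0$,
\[
\|\bar p_{it}(t)\|_{L^2}^{2}\le C\,t_0^{-1}L_5(t_0)+C\!\int_0^T\!\bigl(f_i(\tau)+\tilde f_i(\tau)\bigr)\,d\tau.
\]
This is precisely where the requirement $t_0>0$ enters the theorem, and where the $t_0^{-1}$ dependence in $M_{3,t_0,T}$ comes from. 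The gradient term $\|\nabla p_i(t)\|_{L^{2-a}}^{2-a}$ is bounded via \eqref{H-bound-0} together with \eqref{JHcom}, and the boundary term is trivial. The third (local integral) factor I control by \eqref{grad20pw} from Theorem \ref{thm312} with $s=\max\{2,a(2-\delta)/\delta\}$; when $\delta>2a/(2+a)$ so that $a(2-\delta)/\delta<2$, I first use H\"older's inequality on the bounded set $U'$ to interpolate that norm against the $L^2$-norm of $\nabla p_i$, which is then controlled by the $s=2$ case of \eqref{grad20pw}.

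Finally, I take the supremum over $t\in[t_0,T]$ of each bound. Since all three factors depend on $t$ only through quantities that are already majorized by constants built from $t_0$, $T$, initial data $p_1(0),p_2(0)$, and the boundary data quantities $\mathcal K_{1,T}$ and $\int_0^T\tilde F$, multiplying the three factors together gives
\[
\sup_{[t_0,T]}\|\nabla P(t)\|_{L^{2-\delta}(U')}\le C\,M_{3,t_0,T}\bigl(\|\bar P(0)\|_{L^2}+\sup_{[0,T]}\|\Psi\|_{L^\infty}\bigr)^{1/2},
\]
where $M_{3,t_0,T}$ is the explicit product $M_{1,T}^{1/4}$ times the $\tfrac14$-power of the bound on the middle factor times the $\tfrac{\delta}{2(2-\delta)}$-power of the gradient-integral bound.

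The main obstacle here is not analytic but organizational: tracking the precise dependence of $M_{3,t_0,T}$ on $t_0$ (through $t_0^{-1}L_5(t_0)$), on $T$ (through the polynomial-in-$T$ growth in \eqref{pLip}, \eqref{H-bound-0}, and \eqref{grad20pw}), and on initial data (through $\|\bar p_i(0)\|_{L^2}$, $\|\nabla p_i(0)\|_{L^{2-a}}$, and the $L_1(\cdot)$, $L_5(\cdot)$ quantities). No new estimate beyond those already established in Sections \ref{pressuresubsec}--\ref{Lpt-sec} and in \cite{HI2} is needed; the proof is a direct combination of Proposition \ref{PropGrad} with these priors.
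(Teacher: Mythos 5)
Your proposal is correct and follows essentially the same route as the paper: plug into \eqref{DG1}, bound the $L^2$-factor via \eqref{pLip}, the middle factor via \eqref{Jpt-boundA0} (which is where $t_0^{-1}$ enters) plus \eqref{H-bound-0}, and the local gradient integral via \eqref{grad20pw} with $s=\nuex_1=\max\{2,a(2-\delta)/\delta\}$. The only cosmetic difference is how you handle the case $a(2-\delta)/\delta<2$: you invoke H\"older to interpolate against the $L^2$-norm, whereas the paper simply uses the pointwise bound $|\nabla p_i|^{a(2-\delta)/\delta}\le 1+|\nabla p_i|^{\nuex_1}$ in \eqref{gradalot}; the two devices are interchangeable here.
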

\begin{proof}
We use estimate \eqref{DG1}. We bound $\|\bar P(t)\|_{L^2}$ by \eqref{pLip}:
\begin{equation}\label{pLip2}
\sup_{[0,T]}\norm{\bar P(t)}_{L^2}^2  \le C\cdot  M_{1,T}\Big(\norm{\bar P(0)}_{L^2}^2 
+ \sup_{[0,T]}\norm{\Psi(t)}_{L^\infty}^2 \Big),
\end{equation}
where $M_{1,T}=A_0+T+\int_0^T\big[ f_1(\tau)+f_2(\tau)\big] d\tau$.
We estimate time derivative by \eqref{Jpt-boundA0}, estimate $\int_U |\nabla p_i|^{2-a} dx$ by \eqref{H-bound-0}, then we have
\beq\label{pit}
\begin{aligned}
& \sum_{i=1,2}\Big[ \| \bar p_{it}(t)\|_{L^2}^2 + \int_U |\nabla p_i(x,t)|^{2-a} dx 
+\|\psi_i(t)\|_{L^\infty}^2 \Big]\\
&\le C\, t_0^{-1}\Big(\sum_{i=1,2} L_5(t_0;[p_i(0),\psi_i]) \Big) + (T+1)\bar\Nf_{1,T}^\frac{2-a}{1-a}+\bar\Nf_{2,T}\Big).
\end{aligned}
\eeq
Recalling $\nu_1=\max\{2,\frac{a(2-\delta)}{\delta}\}$, we apply \eqref{grad20pw} with $s=\nuex_1$ and obtain
\begin{multline}\label{gradalot}
 \sum_{i=1,2} \int_{U'} |\nabla p_i(x,t)|^\frac{a(2-\delta)}{\delta}dx 
\le \sum_{i=1,2} \int_{U'} 1+ |\nabla p_i(x,t)|^{\nu_1}dx \\
 \le C\Big[\sum_{i=1,2} L_2(\nuex_1;[p_i(0)])\Big]  (T+1)^{\frac{2(\nuex_1-2)}{2-a}+1} \bar\Nf_{1,T}^{\frac{\nuex_1-a}{1-a}}.
\end{multline}
Combining \eqref{pLip2}, \eqref{pit} and \eqref{gradalot} with \eqref{DG1}, we have \eqref{DG5} with
\beq\label{N8def}
 M_{3,t_0,T}
=(1+t_0^{-\frac14}) M_{1,T}^\frac14 \Big[\sum_{i=1,2} L_5(t_0;[p_i(0),\psi_i]) \Big]^\frac14
\Big[\sum_{i=1,2} L_2\big(\nuex_1;[p_i(0)]\big)\Big]^\frac{\delta}{2(2-\delta)}
(T+1)^{\gamex_8} 
\bar\Nf_{1,T}^{\gamex_9}
\bar\Nf_{2,T}^\frac14,
\eeq
where
$\gamex_8= \frac14+\frac{\delta}{2(2-\delta)}\big(\frac{2(\nuex_1-2)}{2-a}+1\big)$ and 
$\gamex_9= \frac{2-a}{4(1-a)}+\frac{\delta}{2(2-\delta)} \big(\frac{\nuex_1-a}{1-a}\big).$
The proof is complete.
\end{proof}

As $t\to\infty$, we have the following asymptotic estimate.

\begin{theorem}\label{GradThm2}
 \asdc. Let 
\beqs\bar\Upsilon_1 = 1+\sup_{[0,\infty)}F,\  \bar\Upsilon_2 = 1+\int_0^\infty F(t)dt
\quad \text{and} \quad
\bar\Ulim_2 = 1+\bar A^\frac2{2-a}+\limsup_{t\to\infty} \int_{t-1}^t \tilde F(\tau)d\tau.
\eeqs
If $\bar\Upsilon_1$, $\bar\Upsilon_2$ and $\bar\Ulim_2$ are finite then 
\beq\label{DG6}
\limsup_{t\to\infty} \|\nabla P(t)\|_{L^{2-\delta}(U')} \le C \Upsilon_5 \limsup_{t\to\infty}\|\Psi\|_{L^\infty}^{1/2}, 
\eeq
where $\Upsilon_5$ is defined by \eqref{U7} below.
\end{theorem}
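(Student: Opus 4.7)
The plan is to start from the pointwise-in-$t$ bound \eqref{DG1} of Proposition \ref{PropGrad} and take the limit superior as $t\to\infty$ of each factor on the right-hand side, using the hypotheses $\bar\Upsilon_1,\bar\Upsilon_2,\bar\Ulim_2<\infty$ together with the asymptotic estimates already proved in Section \ref{supestimate}. The structure parallels Theorem \ref{GradThm1}: one only needs to replace the finite-time majorants $M_{1,T},L_5,L_2$ by their asymptotic analogues.

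First I would observe that the hypothesis $\bar\Upsilon_1<\infty$ implies $\bar A<\infty$ (since $F=f_1+f_2$ dominates both $\|\psi_i\|_{L^\infty}^{(2-a)/(1-a)}$), so part (ii) of Theorem \ref{H-bound-lemma} and Theorem \ref{pgrowth} both apply. The finiteness of $\bar\Ulim_2$ together with the boundedness of $\int_{t-1}^t\tilde F(s)\,ds$ ensures the integral condition in Theorem \ref{pgrowth}(ii), so \eqref{psub-bounded} yields
\begin{equation*}
\limsup_{t\to\infty}\|\bar P(t)\|_{L^2}^2\le C\,\bar\Ulim_2^{\,2b}\,\limsup_{t\to\infty}\|\Psi(t)\|_{L^\infty}^2.
\end{equation*}
Next, \eqref{limsupH1} applied to $p_1$ and $p_2$ gives $\limsup J_H[p_i](t),\ \limsup\|\bar p_{it}(t)\|_{L^2}^2 \le C\bar\Ulim_1$, and by \eqref{JHcom} this controls $\limsup\int_U|\nabla p_i|^{2-a}dx$. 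Finally, $\bar\Upsilon_1$ and $\bar\Upsilon_2$ are exactly the quantities $\Upsilon_1,\Upsilon_2$ of Corollary \ref{corMM} applied to each $p_i$, hence \eqref{Kgrad60pw} with $s=\nuex_1=\max\{2,a(2-\delta)/\delta\}$ gives
\begin{equation*}
\limsup_{t\to\infty}\int_{U'}|\nabla p_i(x,t)|^{\nuex_1}dx\le C\,L_4(\nuex_1;[p_i(0)])\,\bar\Upsilon_1^{\muex_4(\nuex_1-2)}\bar\Upsilon_2,
\end{equation*}
which dominates $\limsup\int_{U'}|\nabla p_i|^{a(2-\delta)/\delta}dx$ after absorbing a constant.

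Plugging all four asymptotic bounds into \eqref{DG1} and using the subadditivity of $\limsup$ on a product (three of the factors tend to finite limits and only $\|\bar P(t)\|_{L^2}^{1/2}$ carries the small parameter $\|\Psi\|_{L^\infty}^{1/2}$) yields \eqref{DG6} with the explicit constant
\begin{equation*}
\Upsilon_5= \bar\Ulim_2^{\,b/2}\,\Big(1+\bar\Ulim_1\Big)^{1/4}\,\Big(\sum_{i=1,2}L_4(\nuex_1;[p_i(0)])\,\bar\Upsilon_1^{\muex_4(\nuex_1-2)}\bar\Upsilon_2\Big)^{\frac{\delta}{2(2-\delta)}}.
\end{equation*}

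The only delicate point is the interplay between \eqref{psub-bounded} and the multiplicative structure of \eqref{DG1}: because $\|\Psi\|_{L^\infty}$ appears both in the factor $\|\bar P\|_{L^2}^{1/2}$ and, implicitly, in the factor involving $\|\psi_i\|_{L^\infty}^2$, one must argue that the latter is bounded (by $\bar A$) independently of the smallness of $\Psi$, so that it contributes only to the constant $\Upsilon_5$ and does not spoil the power $\|\Psi\|_{L^\infty}^{1/2}$ on the right. This is automatic since $\|\psi_i(t)\|_{L^\infty}^2\le f_i(t)$ and $\limsup f_i\le\bar A<\infty$; the rest is bookkeeping of exponents, analogous to the computation of $M_{3,t_0,T}$ in Theorem \ref{GradThm1}.
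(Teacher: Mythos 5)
Your proposal is correct and follows essentially the same route as the paper's proof: take $\limsup$ of \eqref{DG1} and feed in \eqref{psub-bounded}, \eqref{limsupH1} (via \eqref{JHcom}), and \eqref{Kgrad60pw} with $s=\nuex_1$. Your resulting $\Upsilon_5$ carries $(1+\bar\Ulim_1)^{1/4}$ where the paper absorbs this into $\bar\Ulim_2^{1/4}$ (using that the two-solution analogue of relation \eqref{Urel} bounds the ``barred'' version of $\Ulim_1$ by $C\bar\Ulim_2$), but these differ only by a multiplicative constant.
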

\begin{proof}
 Taking limsup of \eqref{DG1} and making use the limits \eqref{psub-bounded}, \eqref{limsupH1} and estimate \eqref{Kgrad60pw} give
\begin{multline*}
 \limsup_{t\to\infty} \|\nabla P(t)\|_{L^{2-\delta}(U')} \le C \limsup_{t\to\infty}\|\Psi\|_{L^\infty}^{1/2}
\bar\Ulim_2^\frac b 2 \bar\Ulim_2^\frac14 \Big\{  \sum_{i=1,2} L_4(\nuex_1;[p_i(0)]) \bar\Upsilon_1^{\muex_4(\nuex_1-2)} \bar\Upsilon_2 \Big\}^\frac\delta{2(2-\delta)}.
\end{multline*}
Therefore, we obtain \eqref{DG6} with 
\beq\label{U7}
\Upsilon_5= \bar\Ulim_2^{\frac b 2 +\frac14}
\Big\{ \bar\Upsilon_1^{\muex_4(\nuex_1-2)}\bar\Upsilon_2  \sum_{i=1,2} L_4(\nuex_1;[p_i(0)])   \Big\}^\frac\delta{2(2-\delta)}.
\eeq
\end{proof}

\begin{remark}\label{rmk414}
Even though the limit estimate in \eqref{DG6} still depends on the initial data presented in $\Upsilon_5$, the smallness of the estimate can be controlled by the difference $\Psi(t)$ for large $t$. 
\end{remark}

The estimate in Theorem \ref{GradThm1} blows up when $t_0\to 0$.
To overcome this, we consider the Lebesgue norm in both $x$ and $t$.

\begin{theorem}\label{lem412}
 \asdc. Let $\delta\in(0,a)$. For any $ T> 0$, we have
 \beq\label{InGradPt0}
\|\nabla P\|_{L^{2-\delta}(U'\times(0,T))} \le C M_{4,T} \Big( \norm{\bar P(0)}_{L^2}^2+\int_0^T \norm{\Psi(t)}_{L^\infty}^2 dt \Big)^{\frac {1} 4},       
\eeq 
where $ M_{4,T}$ is defined in \eqref{hatN} below.
\end{theorem}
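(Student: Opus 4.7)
The strategy is to apply the space-time estimate \eqref{InGradP} of Proposition \ref{PropGrad} and to bound each of its three factors by quantities depending only on the initial and boundary data (not on a cut-off time $t_0$). Recall that \eqref{InGradP} reads
\begin{multline*}
\|\nabla P\|_{L^{2-\delta}(U'\times(0,T))}
\le C \|\bar P\|_{L^2(U\times(0,T))}^{1/2} \\
\cdot \Bigl[T+ \sum_{i=1,2}\Bigl(\norm{\bar p_{it}}_{L^2(U\times(0,T))}^{2}+\int_0^T\!\!\int_U |\nabla p_i|^{2-a}dx dt+\int_0^T \norm{\psi_i}_{L^\infty}^{2}dt\Bigr)\Bigr]^{1/4}\\
\cdot \Bigl[T+\sum_{i=1,2}\int_0^T\!\!\int_{U'}|\nabla p_i|^{\frac{a(2-\delta)}{\delta}}dx dt\Bigr]^{\frac{\delta}{2(2-\delta)}}.
\end{multline*}
So the task reduces to estimating each of the three bracketed quantities.

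For the first factor, I would use \eqref{pcont} to obtain $\sup_{[0,T]}\|\bar P(t)\|_{L^2}^2 \le \|\bar P(0)\|_{L^2}^2 + C\int_0^T \|\Psi(\tau)\|_{L^\infty}^2\Lambda(\tau)^b d\tau$, then estimate $\Lambda(\tau)^b$ crudely using $b<1$ and integrating the $L^{2-a}$-norms of $\nabla p_i$ via \eqref{JHcom} and \eqref{p-bar-bound1}. Equivalently and more simply, one may invoke \eqref{pLip} directly, multiply by $T$, and absorb the factor $M_{1,T}$ defined there. This yields
$\|\bar P\|_{L^2(U\times(0,T))}^2 \le C\,T\, M_{1,T}\bigl(\|\bar P(0)\|_{L^2}^2+\int_0^T\|\Psi\|_{L^\infty}^2 d\tau\bigr)$
(after dominating $\sup_{[0,T]}\|\Psi\|_{L^\infty}^2 \cdot T$ by the time integral of $\|\Psi\|_{L^\infty}^2$ up to constants involving $T$).

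For the second bracket, I appeal to \eqref{H-bound-0} of Theorem \ref{H-bound-lemma}(i), which gives both
$\|\bar p_{it}\|_{L^2(U\times(0,T))}^{2}$ and (through \eqref{JHcom} combined with \eqref{p-bar-bound1}) $\int_0^T\!\!\int_U |\nabla p_i|^{2-a}dx dt$, in terms of the initial data and $\int_0^T(f_i+\tilde f_i)d\tau$; the boundary term $\int_0^T\|\psi_i\|_{L^\infty}^2 d\tau$ is directly absorbed into $\bar\Nf_{1,T}$ and $\bar\Nf_{2,T}$. For the third bracket, since $\nu_1=\max\{2,a(2-\delta)/\delta\}\ge a(2-\delta)/\delta$, the integral $\int_0^T\!\!\int_{U'}|\nabla p_i|^{a(2-\delta)/\delta}dx dt$ is controlled, via H\"older's inequality in time and the spatial bound \eqref{grad39} applied with $s=\nu_1$, by $C L_1(\nu_1+a)(T+1)^{\frac{2\nu_1}{2-a}-1}\bar\Nf_{1,T}^{\frac{\nu_1}{1-a}}$.

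Putting these three bounds together in \eqref{InGradP} and collecting all prefactors into a single quantity
\begin{equation*}
M_{4,T}= C(T+1)^{\gamex_{10}}\,\Bigl[\sum_{i=1,2}(1+B_0+L_1(\nu_1+a;[p_i(0)]))\Bigr]^{\gamex_{11}}\bar\Nf_{1,T}^{\gamex_{12}}\,\bar\Nf_{2,T}^{1/4},
\end{equation*}
for suitable exponents $\gamex_{10},\gamex_{11},\gamex_{12}>0$ determined by tallying the powers from the three factors, yields \eqref{InGradPt0}. The main (nontrivial but routine) obstacle is bookkeeping: carefully tracking the time powers from \eqref{H-bound-0}, \eqref{p-bar-bound1}, and \eqref{grad39} so that the final exponents on $T+1$, $\bar\Nf_{1,T}$, and $\bar\Nf_{2,T}$ are internally consistent, and verifying that $\nu_1+a$ lies in the admissible range $[2,\infty)$ for the application of Theorem \ref{thm312}. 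No delicate cancellations are required; the heart of the argument, namely the monotonicity and the degenerate weighted embedding, has already been carried out in Proposition \ref{PropGrad}.
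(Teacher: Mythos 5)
Your primary route is exactly the paper's proof: apply \eqref{InGradP}, then bound the first factor through \eqref{pcont} with $\sup_\tau\Lambda(\tau)^b$ pulled out, bound the second bracket by \eqref{H-bound-0} and \eqref{p-bar-bound1}, bound the third by \eqref{grad39} with $s=\nuex_1$, and collect the prefactors into $M_{4,T}$; the bookkeeping you describe is precisely what produces \eqref{hatN}.

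One correction, though: the ``equivalent and simpler'' alternative you offer via \eqref{pLip} is not in fact available. After multiplying \eqref{pLip} by $T$ you are left with $T M_{1,T}\sup_{[0,T]}\|\Psi\|_{L^\infty}^2$, and there is no way to dominate $T\sup_{[0,T]}\|\Psi\|_{L^\infty}^2$ by $\int_0^T\|\Psi(t)\|_{L^\infty}^2\,dt$; the elementary inequality goes the other way, $\int_0^T\|\Psi\|_{L^\infty}^2\,dt \le T\sup_{[0,T]}\|\Psi\|_{L^\infty}^2$. This is precisely the reason the paper works from \eqref{pcont}: its right-hand side already carries the time integral $\int_0^T\|\Psi(\tau)\|_{L^\infty}^2\Lambda(\tau)^b\,d\tau$, from which the factor $\int_0^T\|\Psi\|_{L^\infty}^2\,dt$ appearing in \eqref{InGradPt0} falls out cleanly after extracting $\sup_\tau\Lambda(\tau)^b$, whereas \eqref{pLip} only controls things by the supremum of $\|\Psi\|_{L^\infty}$ and cannot reproduce the integral form on the right of \eqref{InGradPt0}. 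Stick with the \eqref{pcont} route; drop the parenthetical claim. (Also a minor slip: \eqref{grad39} is already a space-time estimate for $\int_0^T\int_{U'}|\nabla p|^s\,dx\,dt$, so no additional H\"older in time is needed for the third bracket.)
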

\begin{proof}
Applying  \eqref{grad39} to $s=\nuex_1\ge 2$ we have 
\begin{multline}\label{subs0}
T+\int_0^T\int_{U'} (1+|\nabla p_1|+|\nabla p_2|)^\frac{a(2-\delta)}{\delta}dx dt 
\le T+\int_0^T\int_{U'} (1+|\nabla p_1|+|\nabla p_2|)^{\nu_1}dx dt \\
\le C \Big[ \sum_{i=1,2} L_1(\nuex_1+a,[p_i(0)]) \Big]
(T+1)^{\frac {2\nuex_1}{2-a} - 1} \bar\Nf_{1,T}^{\frac{\nuex_1}{1-a}}.
\end{multline}
Inequality \eqref{H-bound-0} provides 
 \beqs
 \sum_{i=1}^2 \norm{\bar p_{it}}_{L^2(U\times(0,T))}^2 \le C(1+A_0+B_0) (T+1) \bar\Nf_{1,T}^\frac{2-a}{1-a} \bar\Nf_{2,T}.
 \eeqs
 Using  \eqref{p-bar-bound1} we see that
 \beqs
 T+\sum_{i=1}^2\int_0^T\int_U |\nabla p_i|^{2-a} dxdt \le C(1+A_0) (T+1) \bar\Nf_{1,T}^\frac{2-a}{1-a}.
 \eeqs
 Hence, 
 \beq\label{subs1}
T+ \sum_{i=1}^2\Big\{ \norm{\bar p_{it}}_{L^2(U\times(0,T))}^2+\int_0^T\int_U |\nabla p_i(x,t)|^{2-2a} dx dt +\int_0^T \norm{\psi_i(t)}_{L^\infty}^2 dt \Big\}
 \le C N_{1,T},
 \eeq
where $N_{1,T}=(1+A_0+B_0) (T+1) \bar\Nf_{1,T}^\frac{2-a}{1-a} \bar\Nf_{2,T}.$
According to \eqref{pcont} and \eqref{H-bound-0},
\beq\label{subs2}
\begin{aligned}
 \int_0^T  \norm{\bar P(t)}_{L^2}^2  dt
& \le  C T  \sup_{t\in [0,T]} \Lambda(t)^{b} \Big( \norm{\bar P(0)}_{L^2}^2 + \int_0^T \norm{\Psi(t)}_{L^\infty}^2  dt \Big)\\
&\le C T N_{1,T}^b \Big( \norm{\bar P(0)}_{L^2}^2 +\int_0^T \norm{\Psi(t)}_{L^\infty}^2  dt \Big).
\end{aligned}
\eeq
Combining \eqref{InGradP}, \eqref{subs0}, \eqref{subs1} and \eqref{subs2} we obtain 
 \begin{align*}
 &\|\nabla P\|_{L^{2-\delta}(U'\times(0,T))}
 \le C T^\frac{1}4 N_{1,T}^\frac{b}{4} \Big( \norm{\bar P(0)}_{L^2}^2 +\int_0^T \norm{\Psi(t)}_{L^\infty}^2  dt \Big)^\frac14  \cdot   N_{1,T}^\frac14  \\
&\quad   \cdot \Big\{\Big[ \sum_{i=1,2} L_1(\nuex_1+a,[p_i(0)]) \Big]
(T+1)^{\frac {2\nuex_1}{2-a} - 1} \bar\Nf_{1,T}^{\frac{\nuex_1}{1-a}}\Big\}^\frac{\delta}{2(2-\delta)}. 
 \end{align*}
Therefore, we obtain \eqref{InGradPt0} with
\beq\label{hatN}
M_{4,T} = (1+A_0+B_0)^\frac{b+1}4 \Big[ \sum_{i=1,2} L_1(\nuex_1+a,[p_i(0)]) \Big]^\frac{\delta}{2(2-\delta)}
 T^\frac14 (T+1)^{\gamex_{10}} \bar\Nf_{1,T}^{\gamex_{11}} \bar\Nf_{2,T}^\frac{b+1}4,
\eeq
where
$\gamex_{10}=\frac{\delta}{2(2-\delta)}\Big(\frac {2\nuex_1}{2-a} - 1\Big)+\frac{b+1}4$ and
$\gamex_{11}=\frac{\nuex_1}{1-a} \frac{\delta}{2(2-\delta)}+\frac{(b+1)(2-a)}{4(1-a)}$.
\end{proof}


\myclearpage
\section{Dependence on the Forchheimer polynomial}\label{polycont}

In this section we study the dependence of solutions of  IBVP \eqref{eqgamma} on the coefficients of the Forchheimer polynomial $g(s)$ in \eqref{gsa}.
Let $N\ge 1$, the exponent vector $\vec \alpha=(0,\alpha_1,\ldots,\alpha_N)$ and the boundary data $\psi(x,t)$  be fixed. 

Let ${\bf D}$ be a compact subset of $\{\vec a=(a_0,a_1,\ldots,a_N):a_0,a_N>0, a_1,\ldots,a_{N-1}\ge 0\}$.
Set 
$\hat \chi({\bf D})=\max\{\chi(\vec a):\vec a\in {\bf D}\}.$
Then $\hat \chi ({\bf D})$ is a number in $[1,\infty)$.

Let $g_1(s)=g(s,\vec a^{(1)})$ and $g_2(s)=g(s,\vec a^{(2)})$ be two functions of class FP($N,\vec \alpha$), where  $\vec a^{(1)}$ and $\vec a^{(2)}$ belong to  ${\bf D}$. For $k=1,2$, let $p_k=p_k(x,t;\vec a^{(k)})$ be the solution of \eqref{eqorig} and \eqref{BC} with $K=K(\xi,\vec{a})$ and the same boundary flux $\psi$.

Let $P=p_1-p_2$ and $\bar P=P-|U|^{-1}\int_U P dx$. 
Then
\beq\label{Erreq}
\bar P_t = \nabla \cdot (K (|\nabla \bar p_1|,\vec{a}^{(1)})\nabla \bar p_1 ) - \nabla \cdot (K (|\nabla \bar p_2|,\vec{a}^{(2)})\nabla \bar p_2 ) \quad \text {in }  U\times (0,\infty)
\eeq

As shown in \cite{HI1}, all constants $d_j$, $c_j$, $C_j$ and $C$ appearing in estimates in the previous sections when $\vec a$ varies among the vectors $\vec a^{(1)}$, $\vec a^{(2)}$, $\vec a^{(1)}\vee \vec a^{(2)}$ and $\vec a^{(1)}\wedge \vec a^{(2)}$, can be made dependent of $\hat \chi({\bf D})$, but independent of $\vec a$. 
We still denote them by $d_j$, $c_j$, $C_j$ and $C$, respectively, in this section.

Throughout this section, we continue to have $U'\Subset U$.
The flux-related quantities $f(t)$, $\tilde f(t)$, $M_f(t)$ $A$ and $\beta$ are defined in section \ref{supestimate}, from \eqref{b}
to \eqref{Abeta}. 
Also, $\Nf_{1,t}$ and $\Nf_{2,t}$ are defined in \eqref{Nf1} and \eqref{Nf2}, respectively.
Regarding initial data, $A_0$ is defined in \eqref{AB0}.
\subsection{Results for pressure}
\label{sec51}

We will estimate the interior $L^\infty$-norm for $\bar P(x,t)$ in terms of $\bar P(x,0)$ and $|\vec a^{(1)}-\vec a^{(2)}|$.
We start with the following general estimates which are counter parts of Propositions \ref{cont:strictcond} and \ref{newcont}.
\begin{proposition}\label{theo49}
We use the same assumptions and notation  as in Proposition \ref{cont:strictcond}. 

{\rm (i)} There exists a positive constant $C=C(U,\setV,U')$ such that 
\beq\label{Pcoe}
\sup_{[T_0+\theta T,T_0+T]}\|\bar P\|_{L^\infty(U')}\le C\widehat{\mathcal C}_{T_0,T,\theta}\Big(  \norm{\bar P}_{L^2(U\times (T_0,T_0+T) )}+\norm{\bar P}_{L^2(U\times (T_0,T_0+T) )}^{\frac {\gamex_1}{{\gamex_1}+1}}\Big),  
\eeq
where  
 \beq\label{Ctheta5}
 \widehat{\mathcal C}_{T_0,T,\theta}=  [\lambda(1+ (\theta T)^{-\frac1 2})]^{\frac 1\muex_6} + [\lambda  (T^{1/2}\vartheta_1+\vartheta_2)]^{\frac 1 {{\gamex_1}+1}},
\eeq
with $\lambda=\lambda_{T_0,T}$ defined in \eqref{lamdef2}, and  
 \begin{align}
\label{gam1}
 \vartheta_1&=\vartheta_{1,T_0,T} \eqdef   \Big(\int_{T_0}^{T_0+T}  \int_{\setV} |\nabla \bar p_1|^{2(1-a)\mu}+|\nabla \bar p_2|^{2(1-a)\mu} dxdt\Big)^\frac1{2\mu},\\
\label{gam2} 
\vartheta_2 &=\vartheta_{2,T_0,T} \eqdef  \Big(\int_{T_0}^{T_0+T} \int_{\setV} |\nabla \bar p_1|^{(2-a)\mu}+|\nabla \bar p_2|^{(2-a)\mu} dxdt\Big)^\frac1{2\mu}.
 \end{align}
 
{\rm (ii)} There exists a positive constant $C=C(U,\setV,U')$ such that 
\beq\label{Pcoe2}
\sup_{[0,T]}\|\bar P\|_{L^\infty(U')}\le 2\|\bar P(0)\|_{L^\infty} + C \widehat{\mathcal C}_{T}\Big(  \norm{\bar P}_{L^2(U\times (0,T) )}+\norm{\bar P}_{L^2(U\times (0,T) )}^{\frac {\gamex_1}{{\gamex_1}+1}}\Big)  ,
\eeq
where  
 \beq\label{Ctheta6}
 \widehat{\mathcal C}_{T}=  \lambda_T^{\frac 1\muex_6} + [\lambda_T  (T^{1/2}\vartheta_{1,T}+\vartheta_{2,T})]^{\frac 1 {{\gamex_1}+1}},
\eeq
with $\lambda_T=\lambda_{0,T}$, $\vartheta_{1,T}=\vartheta_{1,0,T}$ and  $\vartheta_{2,T}=\vartheta_{2,0,T}$.
 \end{proposition}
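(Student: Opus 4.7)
The plan is to run essentially the same De Giorgi iteration as in Proposition~\ref{cont:strictcond} and Proposition~\ref{newcont}, with two structural modifications dictated by the present setting: the two solutions share the same boundary flux $\psi$, so no $\Psi$-term survives, while the diffusion kernels $K(\cdot,\vec a^{(1)})$ and $K(\cdot,\vec a^{(2)})$ now differ. I would set $\bar P^{(k)} = \max\{\bar P - k, 0\}$, pick a space-time cut-off $\zeta(x,t) = \phi(x)\varphi(t)$, multiply \eqref{Erreq} by $\bar P^{(k)}\zeta^2$ and integrate over $U$. With no boundary contribution, the resulting energy identity features only a $\zeta_t$-absorption term, the nonlinear interaction $\int \bigl(K(|\nabla\bar p_1|,\vec a^{(1)})\nabla\bar p_1 - K(|\nabla\bar p_2|,\vec a^{(2)})\nabla\bar p_2\bigr)\cdot\nabla(\bar P^{(k)}\zeta^2)$, and the $\nabla\zeta$ cross-term handled exactly as the $I_3$-term in Proposition~\ref{cont:strictcond}.

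The decisive step is to invoke the perturbed monotonicity \eqref{quasimonotone} of Lemma~\ref{quasimono-lem} in place of \eqref{mono1}. Writing $\xi = |\nabla\bar p_1|\vee|\nabla\bar p_2|$, on the set $S_k(t)$ this bounds the nonlinear interaction below by $(1-a)\int K(\xi, \vec a^{(1)}\vee\vec a^{(2)})|\nabla\bar P^{(k)}|^2\zeta^2$ minus a perturbation controlled by $N\hat\chi(\mathbf D)|\vec a^{(1)}-\vec a^{(2)}|\int K(\xi,\vec a^{(1)}\wedge\vec a^{(2)})\xi|\nabla\bar P^{(k)}|\zeta^2$. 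Young's inequality then splits this perturbation into a piece absorbed into the coercive term and a residue of order $|\vec a^{(1)}-\vec a^{(2)}|^2 \int K(\xi,\cdot)\xi^2\chi_k\zeta^2$, which by \eqref{Kestn} is dominated by $C |\vec a^{(1)}-\vec a^{(2)}|^2 \int \xi^{2-a}\chi_k\zeta^2$. After integration in time and H\"older's inequality with exponents $\mu$ and $\mu/(\mu-1)$, the $I_3$-type cross-term produces $\vartheta_1^{2\mu}|A_{k,i}|^{1-1/\mu}$ and the new perturbation produces $\vartheta_2^{2\mu}|A_{k,i}|^{1-1/\mu}$, explaining the simultaneous presence of $\vartheta_1$ and $\vartheta_2$ in $\widehat{\mathcal C}_{T_0,T,\theta}$.

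I would next apply the weighted parabolic Sobolev embedding \eqref{Wemb} with weight $W = K(\xi, \vec a^{(1)}\vee\vec a^{(2)})$ to upgrade the energy inequality to an $L^{\muex_5}$-bound for $\bar P^{(k)}\zeta$, the resulting prefactor being exactly the $\lambda$ defined in \eqref{lamdef2}. The De Giorgi machinery then runs with $k_i = M_0(1-2^{-i})$, $t_i = \theta T(1-2^{-i})$, and cut-offs $\zeta_i$ constructed as in \eqref{der:cutoff}. Setting $Y_i = \|\bar P^{(k_i)}\|_{L^2(A_i)}$, I expect a recursion of the schematic form
\begin{equation*}
Y_{i+1} \le C\,8^i \Big\{ \lambda\bigl(1+(\theta T)^{-1/2}\bigr)^{1/2} M_0^{-\muex_6} Y_i^{1+\muex_6} + \lambda\bigl(T^{1/2}\vartheta_1 + \vartheta_2\bigr) M_0^{-1-\gamex_1} Y_i^{1+\gamex_1}\Big\},
\end{equation*}
so that an application of Lemma~\ref{multiseq} with $m=2$ and the choice of $M_0$ equal to (a multiple of) the right-hand side of \eqref{Pcoe} forces $Y_i \to 0$. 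This yields $\bar P \le M_0$ on the cylinder $[T_0+\theta T, T_0+T]$ and, by symmetry with $-\bar P$, establishes (i). Part (ii) follows by the same modification used in Proposition~\ref{newcont}: a purely spatial cut-off $\zeta = \zeta(x)$ combined with $k_i = M_0(2-2^{-i}) \ge \|\bar P(0)\|_{L^\infty}$ kills the $\zeta_t$-term and collapses the factor $(1+(\theta T)^{-1/2})$ to $1$, producing \eqref{Ctheta6}.

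The main obstacle is the bookkeeping across the perturbed monotonicity: the coercive bound is written with $K(\cdot, \vec a^{(1)}\vee\vec a^{(2)})$, while the perturbation uses $K(\cdot, \vec a^{(1)}\wedge\vec a^{(2)})$, and all constants in the Sobolev embedding \eqref{Wemb}, in \eqref{Kesta}, and in the iteration must be made uniform over $\vec a^{(1)}, \vec a^{(2)} \in \mathbf D$. This uniformity is precisely what the standing bound $\hat\chi(\mathbf D)<\infty$ recorded at the beginning of Section~\ref{polycont} supplies, and it is what allows the three distinct contributions ($\zeta_t$, $I_3$, perturbation) to be combined under the single prefactor $\lambda$ and produce the composite constant $\widehat{\mathcal C}_{T_0,T,\theta}$ of \eqref{Ctheta5}.
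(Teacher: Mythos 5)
Your proposal is correct and follows the paper's own proof essentially step for step: the same De Giorgi iteration in shrinking parabolic cylinders, the perturbed monotonicity \eqref{quasimonotone} in place of \eqref{mono1}, the weighted parabolic Sobolev embedding \eqref{Wemb} with $W=K(|\nabla\bar p_1|\vee|\nabla\bar p_2|,\vec a^{(1)}\vee\vec a^{(2)})$, and Lemma \ref{multiseq} to close the recursion, with part (ii) obtained by the same modification used for Proposition \ref{newcont}. The only superficial deviation is that you split the monotonicity perturbation by Young's inequality (producing $|\vec a^{(1)}-\vec a^{(2)}|^2$), while the paper simply bounds $|\nabla\bar P^{(k)}|\le 2(|\nabla\bar p_1|\vee|\nabla\bar p_2|)$ pointwise on $S_k$ (keeping the first power); since both are absorbed via the compactness bound $|\vec a^{(1)}-\vec a^{(2)}|\le C$, the distinction is immaterial for the stated estimate.
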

\begin{proof}
(i) Let $\zeta(x,t)$ be a cut-off function such that it is piecewise linear continuous in $t$, $\zeta(\cdot,0)=0$ and for each $t$, supp $\zeta(\cdot,t)\Subset \setV$.
Let $\bar P^{(k)}=\max\{\bar P-k,0\}$ and $\chi_k$ be characteristic function of the set $\{ (x,t): \bar P^{(k)}>0\}$.

Let $\xi=|\nabla \bar p_1|\vee |\nabla \bar p_2|$. 
 Multiplying equation \eqref{Erreq}  by $\bar P^{(k)} \zeta^2$ and integrating over $U$, we have
\begin{align*} 
&\frac12\ddt \int_U |\bar P^{(k)}\zeta|^2  dx- \int_U |\bar P^{(k)}|^2\zeta \zeta_t dx\\
&= \int_U ( K(|\nabla p_1|, \vec{a}^{(1)})\nabla \bar p_1 - K(|\nabla p_2|, \vec{a}^{(2)})\nabla \bar p_2  )\cdot  \nabla\bar P^{(k)}\zeta^2 dx\\
&\quad + \int_U ( K(|\nabla p_1|, \vec{a}^{(1)})\nabla \bar p_1 - K(|\nabla p_2|, \vec{a}^{(2)})\nabla \bar p_2  )\cdot [2P^{(k)}\zeta\nabla\zeta ]dx
= I_1+I_2.
\end{align*}
Let $\varepsilon>0$. For $I_2$, we use \eqref{Kesta} to estimate
\begin{align*}
I_2
&\le C \int_U (|\nabla \bar p_1|^{1-a}+|\nabla \bar p_2|^{1-a})|\bar P^{(k)}|\zeta|\nabla  \zeta| dx \\
&\le \varepsilon \int_{S_k} |\bar P^{(k)}\zeta|^2 dx 
+C\varepsilon^{-1}  \int_U \Big(|\nabla \bar p_1|+|\nabla \bar p_2| \Big )^{2(1-a)}  \chi_k |\nabla   \zeta|^{2}dx.
\end{align*}
For $I_1$, similar to estimate \eqref{oriI2} of term $I_2$ in Proposition \ref{cont:strictcond} but using monotonicity \eqref{quasimonotone} in place of \eqref{mono1}, we obtain
\begin{align*} 
I_1 \le -(1-a) \int_U K(\xi) |\nabla \bar P^{(k)}\zeta|^2dx + C|\vec a^{(1)}-\vec a^{(2)}| \int_U (|\nabla \bar p_1|^{1-a}+|\nabla \bar p_2|^{1-a})|\nabla \bar P^{(k)}| \zeta^2 dx.
\end{align*}
Similar to the way \eqref{I2} was derived from \eqref{oriI2}, one obtains for $\varepsilon>0$ that
\begin{align*}
I_1
&\le -  \frac {1-a} 2 \int_U  K(\xi)|\nabla (\bar P^{(k)} \zeta) |^2dx + C\int_U  |\bar P^{(k)} \nabla\zeta|^2dx \\
&\quad + C|\vec a^{(1)}-\vec a^{(2)}| \int_U (|\nabla \bar p_1|^{2-a}+|\nabla \bar p_2|^{2-a}) \chi_k \zeta^2 dx.
\end{align*}
Therefore,
\begin{align*}
&\frac12\ddt \int_U |\bar P^{(k)}\zeta|^2  dx+\frac{1-a}{2} \int_U K(\xi) |\nabla (\bar P^{(k)}\zeta)|^2dx\\
&\le \int_U |\bar P^{(k)}|^2\zeta |\zeta_t| dx+ C \int_U K(\xi)|P^{(k)}\nabla \zeta|^2 dx+ \varepsilon \int_{S_k} |\bar P^{(k)}\zeta|^2 dx \\
&\quad +C\varepsilon^{-1}  \int_U \Big(|\nabla \bar p_1|+|\nabla \bar p_2| \Big )^{2(1-a)}  \chi_k |\nabla   \zeta|^{2}dx
+ C|\vec a^{(1)}-\vec a^{(2)}| \int_U (|\nabla \bar p_1|^{2-a}+|\nabla \bar p_2|^{2-a}) \chi_k \zeta^2 dx. 
\end{align*}
Let $J=\sup_{[0,T]} \int_U |\bar P^{(k)}\zeta|^2  dx +\int_0^T \int_U  K(\xi)|\nabla (\bar P^{(k)}\zeta)|^2dxdt$.
Integrating the preceding inequality in time, taking supremum in $t$ over $[0,T]$, selecting $\varepsilon=1/(8T)$ and applying H\"older's inequality yield
\begin{align*}
&J \le C\int_0^T \int_U |\bar P^{(k)}|^2\zeta |\zeta_t| dx dt+C\int_0^T\int_U K(\xi)|P^{(k)}\nabla \zeta|^2 dxdt\\
& \quad +CT \Big\{\int_0^T  \int_U \Big(|\nabla \bar p_1|+|\nabla \bar p_2| \Big )^{2(1-a)\mu} |\nabla   \zeta|^{2\mu} dx dt \Big\}^{\frac 1\mu} \Big\{ \iint_{Q_T\cap{\rm supp}\zeta} \chi_k  dxdt\Big\}^{1-\frac 1\mu} \\
&\quad + C|\vec a^{(1)}-\vec a^{(2)}| \Big\{ \int_0^T \int_{S_k} \Big(|\nabla \bar p_1|+|\nabla \bar p_2| \Big )^{(2-a)\mu} \zeta^2 dx dt\Big\}^{\frac 1\mu} \Big\{ \int_0^T\int_U \chi_k \zeta^2 dxdt\Big\}^{1-\frac 1\mu}. 
\end{align*}
Note that $\vec a^{(i)}$ and $\vec a^{(i)}$ belong to the compact set ${\bf D}$, hence $ |\vec a^{(1)}-\vec{a}^{(2)}| \le C$. Then
\begin{align*}
J&\le C\int_0^T \int_U |\bar P^{(k)}|^2(|\zeta_t|+|\nabla \zeta| ) dx dt\\
& \quad +C  T \Big\{ \int_0^T  \int_U \Big(|\nabla \bar p_1|+|\nabla \bar p_2| \Big )^{2(1-a)\mu} |\nabla   \zeta|^{2\mu} dx dt \Big\}^{\frac 1\mu}  \Big\{ \iint_{Q_T\cap{\rm supp}\zeta} \chi_k  dxdt\Big\}^{1-\frac 1\mu}\\
& \quad +C \Big\{ \int_0^T \int_{S_k} \Big(|\nabla \bar p_1|+|\nabla \bar p_2| \Big )^{(2-a)\mu} \zeta^2 dx dt\Big\}^{\frac 1\mu}  \Big\{ \iint_{Q_T\cap{\rm supp}\zeta} \chi_k  dxdt\Big\}^{1-\frac 1\mu}. 
\end{align*}
Using inequality \eqref{Wemb} in Lemma \ref{ParaSob-3} with $W(x,t)=K(\xi(x,t))$, we have 
\begin{align*}
& \norm{P^{(k)}\zeta}_{L^{\mu_5}(Q_T)} \le C \lambda J 
\le C\lambda\Big\{ \Big( \int_0^T \int_U |\bar P^{(k)}|^2(|\zeta_t|+|\nabla \zeta| ) dx dt\Big)^{1/2} \\
 &\quad +T^{1/2}\Big[\int_0^T  \int_{\setV} \Big(|\nabla \bar p_1|+|\nabla \bar p_2| \Big )^{2(1-a)\mu}|\nabla   \zeta|^{2\mu} dx dt\Big]^{\frac 1{2\mu}}  \Big\{ \iint_{Q_T\cap{\rm supp}\zeta} \chi_k  dxdt\Big\}^{\frac12-\frac 1{2\mu}} \\
&\quad +\Big( \int_0^T\int_{\setV} (|\nabla \bar p_1|+|\nabla \bar p_2| )^{(2-a)\mu} dxdt \Big)^{\frac 1{2\mu}}   \Big\{ \iint_{Q_T\cap{\rm supp}\zeta} \chi_k  dxdt\Big\}^{\frac12-\frac 1{2\mu}}.
 \end{align*} 

Let $Y_i=\| \bar P^{(k_i)} \|_{L^2(A_i)}$. In the same way as in proof of Proposition \ref{cont:strictcond}  we find that 
  \begin{align*}
Y_{i+1}&\le C\lambda M_0^{-1+\frac 2 {\muex_5}}2^i
\Big\{ (1+(\theta T)^{-1} )^\frac 12 Y_i + T^{1/2}\vartheta_1M_0^{-1+\frac 1\mu}Y_i^{1-\frac 1\mu} + \vartheta_2 M_0^{-1+\frac 1\mu} Y_i^{1-\frac 1\mu}  \Big\} Y_i^{1-\frac 2 {\muex_5}}\\
&\le C \lambda2^i
\Big\{ M_0^{-\muex_6}(1+(\theta T)^{-1} )^\frac 12 Y_i^{1+\muex_6} +T^{1/2}\vartheta_1 M_0^{-1-{\gamex_1}} Y_i^{1+{\gamex_1}} +\vartheta_2 M_0^{-1-{\gamex_1}} Y_i^{1+{\gamex_1}}  \Big\}, 
\end{align*}
where the exponents ${\gamex_1},\muex_6$ are defined in Theorem \ref{cont:strictcond}.
Since $Y_0\le \norm{\bar P}_{L^2(U\times (0,T) )} $ we choose $M_0$ sufficiently large such that 
  \[
   \norm{\bar P}_{L^2(U\times (0,T) )}\le C\min\Big\{ ((\lambda (1+(\theta T)^{-1} )^\frac12 )^{-\frac 1\muex_6} M_0 , (\lambda T^{1/2} \vartheta_1)^{-\frac 1 {{\gamex_1}}} M_0^{\frac 1{\gamex_1}+1}, M_0^{\frac 1{\gamex_1}+1} (\lambda \vartheta_2)^{-\frac 1 {{\gamex_1}}} \Big  \},
\]
  thus 
  \begin{align*}
  M_0\ge C\max \Big\{  
  & (\lambda (1+(\theta T)^{-1} )^\frac12 )^{\frac 1\muex_6} \norm{\bar P}_{L^2(U\times (0,T) )},
    (\lambda T^{1/2} \vartheta_1)^{\frac 1 {{\gamex_1}+1}} \norm{\bar P}_{L^2(U\times (0,T) )}^{\frac {\gamex_1}{{\gamex_1}+1}} ,\\
  & (\lambda \vartheta_2)^{\frac 1 {{\gamex_1}+1}} \norm{\bar P}_{L^2(U\times (0,T) )}^{\frac{\gamex_1}{{\gamex_1}+1}} \Big\}.    
  \end{align*}
Using the fact that 
 $ (\lambda (1+(\theta T)^{-1} )^\frac12)^{\frac 1\muex_6}$, $(\lambda T^{1/2} \vartheta_1)^{\frac 1 {{\gamex_1}+1}}$, $(\lambda \vartheta_2)^{\frac 1 {{\gamex_1}+1}}$ are less than or equal to $C \widehat{\mathcal C}_{T_0,T,\theta}$, 
we then choose 
 \beqs
 M_0 =C \widehat{\mathcal C}_{T_0,T,\theta} \Big(  \norm{\bar P}_{L^2(U\times (0,T) )}+\norm{\bar P}_{L^2(U\times (0,T) )}^{\frac {\gamex_1}{{\gamex_1}+1}}\Big). 
 \eeqs
 Then applying Lemma~\ref{multiseq} for $m=3$ to sequence $\{Y_i\}$, and using the same argument as in Theorem \ref{cont:strictcond}, we obtain $|\bar p(x,t)|\le M_0$ in $U'\times (\theta T,T)$.  

(ii) Using same arguments as for deriving Proposition \ref{newcont} from the proof of Proposition \ref{cont:strictcond}, we can modify the proof in part (i) to obtain \eqref{Pcoe2}.
\end{proof}

We recall some results from \cite{HI2} which will be needed in subsequent developments.

\begin{theorem}[cf. \cite{HI2}, Theorems 5.16 and 5.17]\label{supgradthem}
{\rm (i)} For $0< T < \infty$, we have
\beq\label{coeffJsup1} \sup_{[0,T]}  \norm{\bar P(t)}_{L^2}^2 \le  \norm{\bar P(0)}_{L^2}^2 + C M_{5,T} |\vec a^{(1)}-\vec a^{(2)}| ,\eeq 
where $M_{5,T}=A_0+T+\int_0^T f(\tau)d\tau$.

{\rm (ii)} Assume the Degree Condition. 
Suppose $\sup_{[0,\infty)} f(t)<\infty$ and $\sup_{[1,\infty)} \int_{t-1}^t  \tilde f(\tau)d\tau<\infty$.
Then 
\beq\label{coeffJsup2} 
\sup_{[1,\infty)}  \norm{\bar P(t)}_{L^2}^2 \le  \norm{\bar P(0)}_{L^2}^2 + C \Upsilon_6^{b+1} |\vec a^{(1)}-\vec a^{(2)}| ,
\eeq 
\beq\label{coefflim}
\limsup_{t\to\infty}  \norm{\bar P(t)}_{L^2}^2 \le  C \Ulim_2^{b+1} |\vec a^{(1)}-\vec a^{(2)}| ,
\eeq
where $\Upsilon_6=1+A_0+\sup_{[0,\infty)} f^\frac2{2-a}(t)+\sup_{[1,\infty)} \int_{t-1}^t  \tilde f(\tau)d\tau$,
and $\mathcal A_2$ is defined by \eqref{Ulim2}.
\end{theorem}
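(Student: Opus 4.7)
The plan is to test equation \eqref{Erreq} against $\bar P$ itself. Since $\bar P_t = \nabla\cdot(K_1\nabla\bar p_1) - \nabla\cdot(K_2\nabla\bar p_2)$ and the flux boundary values agree ($\psi_1 = \psi_2 = \psi$), the boundary contributions cancel upon integration by parts, yielding
\begin{equation*}
\frac12\ddt\|\bar P\|_{L^2}^2 = -\int_U \bigl(K(|\nabla\bar p_1|,\vec a^{(1)})\nabla\bar p_1 - K(|\nabla\bar p_2|,\vec a^{(2)})\nabla\bar p_2\bigr)\cdot\nabla P\,dx.
\end{equation*}
Then I would apply the perturbed monotonicity \eqref{quasimonotone} with $y=\nabla\bar p_1$, $y'=\nabla\bar p_2$, and absorb the cross term via Young's inequality. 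Writing $\xi = |\nabla\bar p_1|\vee|\nabla\bar p_2|$ and using \eqref{Kestn}, together with the compactness of $\mathbf{D}$ (so that $|\vec a^{(1)}-\vec a^{(2)}|^2 \le C|\vec a^{(1)}-\vec a^{(2)}|$), I would arrive at the core differential inequality
\begin{equation}\label{Pcoreineq}
\ddt\|\bar P\|_{L^2}^2 + c\int_U K(\xi,\vec a^{(1)}\!\vee\vec a^{(2)})|\nabla P|^2 dx \le C|\vec a^{(1)}-\vec a^{(2)}|\int_U(1+\xi)^{2-a}dx.
\end{equation}

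For part (i), I would simply drop the non-negative second term on the left of \eqref{Pcoreineq}, integrate in time, and invoke \eqref{p-bar-bound1} (applied to each $p_k$, with the same $\psi$) to obtain $\int_0^T\!\int_U|\nabla p_k|^{2-a}dxdt \le C(A_0 + \int_0^T f(\tau)d\tau)$. Combined with the trivial $\int_0^T\!\int_U 1\,dxdt = |U|T$, this yields \eqref{coeffJsup1} with $M_{5,T} = A_0 + T + \int_0^T f(\tau)d\tau$.

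For part (ii), I would exploit the left-hand dissipative term in \eqref{Pcoreineq}. Since $\bar P$ has zero mean, Poincaré gives $\|\bar P\|_{L^{2-a}} \le C\|\nabla P\|_{L^{2-a}}$; under the Degree Condition, Sobolev embedding $W^{1,2-a}(U)\hookrightarrow L^2(U)$ then yields $\|\bar P\|_{L^2} \le C\|\nabla P\|_{L^{2-a}}$. Holder's inequality with the weight $K(\xi)$ and the lower bound $K(\xi)^{-1}\le C(1+\xi)^a$ from \eqref{Kesta} gives
\begin{equation*}
\|\nabla P\|_{L^{2-a}}^{2-a} \le C\Bigl(\int_U K(\xi)|\nabla P|^2 dx\Bigr)^{\!(2-a)/2}\Lambda(t)^{a/2},
\end{equation*}
and combining these produces the weighted Poincaré-type bound $\int_U K(\xi)|\nabla P|^2 dx \ge c\,\Lambda(t)^{-b}\|\bar P\|_{L^2}^2$, recalling $b = a/(2-a)$. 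Plugging into \eqref{Pcoreineq} gives
\begin{equation*}
\ddt\|\bar P\|_{L^2}^2 + d\,\Lambda(t)^{-b}\|\bar P\|_{L^2}^2 \le C|\vec a^{(1)}-\vec a^{(2)}|\,\Lambda(t),
\end{equation*}
and an integrating factor yields
\begin{equation*}
\|\bar P(t)\|_{L^2}^2 \le e^{-d\int_0^t\Lambda^{-b}d\tau}\|\bar P(0)\|_{L^2}^2 + C|\vec a^{(1)}-\vec a^{(2)}|\int_0^t e^{-d\int_\tau^t\Lambda^{-b}d\theta}\Lambda(\tau)d\tau.
\end{equation*}

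Finally I would substitute the a priori $\Lambda$-bounds. Under the hypothesis $\sup_{[0,\infty)}f<\infty$ and $\sup_{[1,\infty)}\int_{t-1}^t\tilde f\,d\tau<\infty$, Theorem \ref{H-bound-lemma}(ii)/(iii) and \eqref{JHcom} give $\Lambda(t)\le C\Upsilon_6$ for $t\ge 1$; the scalar integral bound $\int_0^t e^{-c\Upsilon_6^{-b}(t-\tau)}\Upsilon_6\,d\tau \le C\Upsilon_6^{b+1}$ then yields \eqref{coeffJsup2}. For \eqref{coefflim} I would apply Lemma \ref{difflem2} to pass the limsup through the Gronwall integral, using $\limsup_{t\to\infty}\Lambda(t)\le C\Ulim_2$ from \eqref{limsupH1}, which produces the factor $\Ulim_2^{b+1}$. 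The main obstacle is the Poincaré-Sobolev-H\"older chain: getting the sharp exponent $b$ on $\Lambda(t)^{-b}$ in the dissipation term (rather than a weaker power) is what makes the final estimate multiply by $\Upsilon_6^{b+1}$ and $\Ulim_2^{b+1}$ rather than larger powers, and it requires the Strict/Degree Condition in an essential way to apply the $W^{1,2-a}\hookrightarrow L^2$ embedding.
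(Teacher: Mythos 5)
Your reconstruction is correct and follows exactly the route that underlies the cited source: the core differential inequality you derive at \eqref{Pcoreineq}, once the weighted Poincar\'e step $\int_U K(\xi)|\nabla P|^2\,dx \ge c\,\Lambda(t)^{-b}\|\bar P\|_{L^2}^2$ is inserted, is precisely the inequality \eqref{coeffdP} that the present paper later imports verbatim as ``(5.53) in \cite{HI2}.'' The monotonicity step via \eqref{quasimonotone}, the absorption of the cross term by Young together with \eqref{Kesta} to produce the factor $\Lambda(t)$, and the Poincar\'e--Sobolev--H\"older chain under the Degree Condition all check out, as does the use of Lemma~\ref{difflem2} with $h\equiv 1$ for \eqref{coefflim}. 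One small point worth making explicit in part (ii): the pointwise bound $\Lambda(t)\le C\Upsilon_6$ only comes from Theorem~\ref{H-bound-lemma}(ii) for $t\ge 1$, so the Gronwall integral over $[0,1]$ is not covered by that bound; it is handled either by noting that $\int_0^1\Lambda(\tau)\,d\tau\le C\Upsilon_6$ via \eqref{p-bar-bound1}, or by starting the integrating-factor argument at $T_0=1$ and feeding in $\|\bar P(1)\|_{L^2}^2$ from part (i) (with $M_{5,1}\le C\Upsilon_6\le C\Upsilon_6^{b+1}$). Either way the claimed constant $\Upsilon_6^{b+1}$ is preserved, so the argument is complete.
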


We will take advantage of calculations in Theorem \ref{thm46}. However the exponents will be changed.
The new counterparts of exponents $\gamma_2,\gamma_3,\ldots,\gamma_6$ in \eqref{mu34}--\eqref{mu89} are
\begin{align*}
&\gamex_2'=(2-a)\mu,
&&\gamex_3'=\frac{\gamex_2'}{2\mu(1-a)}=\frac{2-a}{2(1-a)}\ge 1,\\
&\gamex_4'=\frac1{2\mu}\Big(\frac{2\gamex_2'}{2-a}-1\Big)=1-\frac1{2\mu},
&&\gamex_5'=\frac{\muex_7}{\muex_6}+ \frac{2\gamex_4'+1}{2({\gamex_1}+1)},\\
&\gamex_6'=\frac{\muex_7(2-a)}{\muex_6(1-a)}+\frac{\gamex_3'}{{\gamex_1}+1}.
\end{align*}
Firstly, we replace $\gamex_2$ by $\gamex_2'$ in estimate \eqref{gradInt20} to obtain
\beq\label{gradInt2}
\vartheta_{2,T}\le C (T+1)^{\gamex_4'}\Nf_{1,T}^{\gamex_3'}=C \ell_2'(T+1)^{\gamex_4'}\Nf_{1,T}^{\gamex_3'},
\eeq
where $\ell_2'=[\sum_{i=1,2} L_1(\gamex_2'+a;[p_i(0)])]^\frac1{2\mu}$.
Secondly, since $\gamex_4'\ge 1/(2\mu)$, it follows
\beq\label{upUp12}
\vartheta_{1,T}\le CT^\frac1{2\mu}+\vartheta_{2,T}\le C \ell_2'(T+1)^{\gamex_4'}\Nf_{1,T}^{\gamex_3'}.
\eeq
Thirdly, same as \eqref{lamd}, we have 
\beq\label{lamUp2}
\lambda_T\le C \ell_1' (T+1)^{\muex_7} \Nf_{1,T}^\frac{\muex_7(2-a)}{1-a} \Nf_{2,T}^{\muex_7},
\eeq
where $\ell_1'=1+A_0+\norm{\nabla p_1(0)}_{L^{2-a}}^{2-a}+\norm{\nabla p_2(0)}_{L^{2-a}}^{2-a}$.

For finite time intervals, we have the following estimates.

\begin{theorem}\label{thm54}
   \asdc.  
 
{\rm (i)} For $T\in(0, 1]$, we have
\beq\label{P-inter-t<1}
\sup_{[0,T]} \norm{\bar P}_{L^\infty(U')} \le  2\|\bar P(0)\|_{L^\infty} +C M_{6,T}\big ( \mathcal A+\mathcal A^{\frac {\gamex_1}{{\gamex_1}+1}}\big), 
\eeq
where $M_{6,T}$ is defined in \eqref{N5def} below and
$\mathcal A = \norm{\bar P(0)}_{L^2} +|\vec a^{(1)}-\vec a^{(2)}|^{1/2}$.

{\rm (ii)} For $T>1$, we have
\beq\label{contfinite40}
\begin{aligned}
\sup_{[1,T]}\|\bar P\|_{L^\infty(U')} 
&\le C M_{7,T} \Big (\mathcal A+\mathcal A^{\frac {\gamex_1}{{\gamex_1}+1}}\Big),
\end{aligned}
\eeq
where $M_{7,T}$ is defined in \eqref{N6def} below.
\end{theorem}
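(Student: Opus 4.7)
The plan is to parallel the proof of Theorem \ref{thm46} exactly, but replace the role played there by $\sup_{[0,T]}\|\Psi\|_{L^\infty}$ with $|\vec a^{(1)}-\vec a^{(2)}|^{1/2}$. The two ingredients are (a) Proposition \ref{theo49}, which converts an $L^2(U\times(0,T))$-bound of $\bar P$ into an interior $L^\infty$-bound through a coefficient $\widehat{\mathcal C}_{T_0,T,\theta}$ or $\widehat{\mathcal C}_T$ depending on whether the estimate needs to reach $t=0$; and (b) the integrated $L^2$-estimate \eqref{coeffJsup1} in Theorem \ref{supgradthem}, namely
\[
 \|\bar P\|_{L^2(U\times(0,T))}^{2}\le T\sup_{[0,T]}\|\bar P(t)\|_{L^2}^{2}\le CT\bigl(\|\bar P(0)\|_{L^2}^{2}+M_{5,T}\,|\vec a^{(1)}-\vec a^{(2)}|\bigr),
\]
which upon taking the square root yields
\[
 \|\bar P\|_{L^2(U\times(0,T))}\le CT^{1/2}(1+M_{5,T}^{1/2})\,\mathcal A.
\]
The quantities $\lambda_T$, $\vartheta_{1,T}$, $\vartheta_{2,T}$ appearing in $\widehat{\mathcal C}_{T,\theta}$ will be controlled by the a priori bounds \eqref{lamUp2}, \eqref{upUp12} and \eqref{gradInt2} (with $\gamex_2'=(2-a)\mu$ in place of $\gamex_2=2(1-a)\mu$, since $\vartheta_2$ involves the full $(2-a)$-exponent gradient norm).

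For part (i), $T\in(0,1]$, I would apply the inhomogeneous bound \eqref{Pcoe2} of Proposition \ref{theo49}(ii), which produces the boundary term $2\|\bar P(0)\|_{L^\infty}$. Substituting \eqref{lamUp2}, \eqref{upUp12}, \eqref{gradInt2} into \eqref{Ctheta6} and using $T\le 1$ to drop the $(T+1)$ factors gives a bound of the form $\widehat{\mathcal C}_T\le C\ell_1'^{1/\muex_6}(\ell_2')^{1/({\gamex_1}+1)}\,\bar\Nf_{1,T}^{\gamex_6'}\bar\Nf_{2,T}^{\muex_7/\muex_6}$. Inserting this and the $L^2$-bound above into \eqref{Pcoe2} yields \eqref{P-inter-t<1} with
\[
 M_{6,T}= T^{\frac{{\gamex_1}}{2({\gamex_1}+1)}}\ell_0^{1/2}\ell_1'^{1/\muex_6}(\ell_2')^{1/({\gamex_1}+1)}\,\bar\Nf_{1,T}^{\gamex_6'+\frac{2-a}{2(1-a)}}\bar\Nf_{2,T}^{\muex_7/\muex_6}\,(1+M_{5,T}^{1/2}),
\]
up to an absorbable factor; the $\mathcal A$ and $\mathcal A^{{\gamex_1}/({\gamex_1}+1)}$ terms arise directly from the second factor in \eqref{Pcoe2}.

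For part (ii), $T>1$, I would instead invoke the time-interior form \eqref{Pcoe} of Proposition \ref{theo49}(i) with $T_0=0$ and any fixed $\theta\in(0,1)$ (the value of $\theta$ only contributes an absolute constant since $T\ge 1$); this produces the supremum over $[\theta T,T]\supset[1,T]$ without the initial-$L^\infty$ term. The same computation as above, but now keeping the $(T+1)$ factors active, substitutes \eqref{lamUp2}, \eqref{upUp12}, \eqref{gradInt2} into \eqref{Ctheta5}; combined with $\|\bar P\|_{L^2(U\times(0,T))}\le CT(1+M_{5,T})^{1/2}\mathcal A$ from step (b) above, one gets \eqref{contfinite40} with
\[
 M_{7,T}=T^{\gamex_5'+1}\ell_0^{1/2}\ell_1'^{1/\muex_6}(\ell_2')^{1/({\gamex_1}+1)}\,\bar\Nf_{1,T}^{\gamex_6'+\frac{2-a}{2(1-a)}}\bar\Nf_{2,T}^{\muex_7/\muex_6}(1+M_{5,T}^{1/2}).
\]

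The work itself is essentially bookkeeping: all monotonicity and iteration machinery has been packaged into Proposition \ref{theo49}, and the key new $L^2$-estimate into Theorem \ref{supgradthem}(i). The only genuine care point I foresee is verifying that the exponent $\gamex_3'=(2-a)/(2(1-a))\ge 1$ allows the bound \eqref{upUp12} (so that $\vartheta_{1,T}$ is absorbed into $\vartheta_{2,T}$) and that the choice of $\mu\ge (2-a)/(2(1-a))$ remains compatible with the constraint $\mu>\muex_5/(\muex_5-2)$ needed for Proposition \ref{theo49}; this is exactly the admissible range used in Theorem \ref{thm46}, so no new condition on $\mu$ is introduced.
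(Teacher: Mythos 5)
Your overall strategy --- reusing Proposition~\ref{theo49} plus the $L^2$-estimate \eqref{coeffJsup1}, mirroring Theorem~\ref{thm46} with $|\vec a^{(1)}-\vec a^{(2)}|^{1/2}$ playing the role of $\sup\|\Psi\|_{L^\infty}$ --- is exactly the paper's approach, and part~(i) is essentially right (modulo harmless bookkeeping: the factor $(1+M_{5,T}^{1/2})$ you keep is already absorbed into $\Nf_{1,T}^{(2-a)/(2(1-a))}$ by the estimate \eqref{CalA}, and you should write $\Nf_{1,T},\Nf_{2,T}$ rather than $\bar\Nf_{1,T},\bar\Nf_{2,T}$ since here there is a single flux $\psi$).

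There is, however, a genuine error in part~(ii): you invoke \eqref{Pcoe} with $T_0=0$ and ``any fixed $\theta\in(0,1)$'' and claim this controls the supremum over $[\theta T,T]\supset[1,T]$. That inclusion is false: with $\theta$ fixed and $T$ large, $[\theta T,T]$ is a \emph{proper subset} of $[1,T]$ (e.g.\ $\theta=1/2$, $T=4$ gives $[2,4]\subsetneq[1,4]$), so you only bound the supremum over a shrinking tail of the interval. The paper's device is to let $\theta$ depend on $T$, specifically $\theta=1/T$, so that $\theta T=1$, the time window is exactly $[1,T]$, and the penalty factor $(\theta T)^{-1/2}$ in $\widehat{\mathcal C}_{T_0,T,\theta}$ equals $1$. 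That choice makes the ``absolute constant'' claim correct \emph{because} $\theta T=1$, not because $T\ge1$. As written, your argument does not establish \eqref{contfinite40} on all of $[1,T]$; replacing ``fixed $\theta$'' by ``$\theta=1/T$'' repairs it, after which the rest of the bookkeeping (the $T^{\gamex_5'+1}$ power, etc.) goes through as you describe.

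Separately, for $T>1$ the integrated $L^2$-bound should carry $T^{1/2}$ from Hölder in time (not $T$ as you wrote) multiplied by $(1+M_{5,T})^{1/2}$; the extra $T$-power in the paper's $M_{7,T}$ comes from $M_{5,T}$ itself growing like $T$, not from an extra $T^{1/2}$ in the Hölder step. This does not affect the form of the final estimate but is worth stating correctly.
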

\begin{proof}
(i) 
Let $T\in(0,1]$. 
Applying \eqref{Pcoe2} of Proposition \ref{theo49} gives
\begin{align*}
\|\bar P(t)\|_{L^\infty(U')}
&\le 2\|\bar P(0)\|_{L^\infty}+C \widehat{\mathcal C}_{T}  \Big\{ T^{1/2}\sup_{[0,t]}  \norm{\bar P}_{L^2} + (T^{1/2}\sup_{[0,t]} \norm{\bar P}_{L^2} )^{\frac {\gamex_1}{{\gamex_1}+1} }\Big\}\\
&\le 2\|\bar P(0)\|_{L^\infty}+C \widehat{\mathcal C}_{T}  T^{\frac {\gamex_1}{2({\gamex_1}+1)} } \Big\{ 1+\sup_{[0,t]}  \norm{\bar P}_{L^2}\Big\}.
\end{align*}
Using \eqref{gradInt2}, \eqref{upUp12} and \eqref{lamUp2} to estimate corresponding terms in \eqref{Ctheta6}, we have 
 \begin{align*}
 \widehat{\mathcal C}_{T}\le C \lambda_T^\frac{1}{\muex_6} (\vartheta_{1,T}+\vartheta_{2,T})^\frac1{{\gamex_1}+1}
= C  \ell_3' \Nf_{1,T}^{\gamex_6'} \Nf_{2,T}^\frac{\muex_7}{\muex_6},
\end{align*}
where $\ell_3'=(\ell_1')^\frac1{\muex_6} (\ell_2')^\frac1{{\gamex_1}+1}$.
Moreover, by \eqref{coeffJsup1} 
\beq\label{CalA}
\sup_{[0,T]}  \norm{\bar P}_{L^2}
\le C( \norm{\bar P(0)}_{L^2} +|\vec a^{(1)}-\vec a^{(2)}|^\frac12 ) \Big[1+A_0+\int_0^T f(\tau)d\tau\Big]^\frac12
\le C \mathcal A (1+A_0)^\frac12 \Nf_{1,T}^\frac{2-a}{2(1-a)}.
\eeq  
Combining the above, we obtain \eqref{P-inter-t<1} with  
\beq\label{N5def}
M_{6,T}= \ell_3' (1+A_0)^{1/2} T^\frac{{\gamex_1}}{2({\gamex_1}+1)} \Nf_{1,T}^{\gamex_6'+\frac{2-a}{2(1-a)}}\Nf_{2,T}^\frac{\muex_7}{\muex_6}. 
\eeq

(ii) Let $T>1$. We apply \eqref{Pcoe} with $T_0=0$ and $\theta T=1$. Note that
 \begin{align*}
 \widehat{\mathcal C}_{0,T,\theta}
\le C  \lambda_T^\frac{\muex_7}{\muex_6} ( 1+ [T^{1/2} (\vartheta_{1,T}+\vartheta_{2,T}) ]^\frac 1{{\gamex_1}+1}  )
\le C \ell_3' T^{\gamex_5'}\Nf_{1,T}^{\gamex_6'} \Nf_{2,T}^\frac{\muex_7}{\muex_6}.
\end{align*}
Then similar calculations to those for proving \eqref{contfinite} in Theorem \ref{thm46}  lead to 
\eqref{contfinite40} with
\beq\label{N6def}
M_{7,T}= (1+A_0)^{1/2} \ell_3' T^{\gamex_5'+1}  \Nf_{1,T}^{\gamex_6'+\frac{2-a}{2(1-a)}} \Nf_{2,T}^\frac{\muex_7}{\muex_6}. 
\eeq
The proof is complete.
\end{proof}

We now consider estimates when $t\to\infty$. We use the same notation as in subsection \ref{sec41}.
For $t\ge 2$, we apply Proposition \ref{theo49}(i) to the interval $[t-1,t]$, that is $T_0=t-1$ and $T=1$, and use $\theta = 1/2$. Then we have 
\beq\label{rawJs}
\|\bar P(t)\|_{L^\infty(U')} \le C \widehat{\mathcal C}(t) \Big\{ \sup_{[t-1,t]} \norm{\bar P}_{L^2}+\sup_{[t-1,t]} \norm{\bar P}_{L^2}^{\frac {\gamex_1}{{\gamex_1}+1} }\Big\},
\eeq
where  
 \beq\label{newCtil}
\widehat{\mathcal C}(t) = \tilde \lambda(t)^\frac{1}{\muex_6} (1+\widehat\vartheta(t))^\frac1{{\gamex_1}+1},
\eeq
with $\tilde\lambda(t)$ defined by \eqref{lamtildefn}, and
\beq\label{newuptil}
\widehat\vartheta(t) = 1+  \Big(\int_{t-1}^t\int_{\setV} |\nabla \bar p_1|^{(2-a)\mu}+|\nabla \bar p_2|^{(2-a)\mu} dxdt\Big)^\frac1{2\mu}.
\eeq
Note that $\widehat\vartheta(t)$ is the same as $\tilde\vartheta(t)$ in \eqref{uptildef} with exponent $\gamex_2'=(2-a)\mu$ replacing $\gamex_2=2(1-a)\mu$.

By (5.53) in \cite{HI2}, there is $d_7>0$ such that
\begin{align} \label{coeffdP}
\frac 12\ddt \int_U \bar P^2 dx \le - d_7 \Lambda^{-b}(t) \int_U \bar P^2 dx
+C|\vec a^{(1)}-\vec a^{(2)}|\Lambda(t).
\end{align}
Hence, for $t'\ge 1$
\beq\label{JbarP1}
\begin{aligned}
 \norm{\bar P(t')}_{L^2}^2
&\le e^{-d_7\int_1^{t'} \Lambda(\tau)^{-b}d\tau}  \norm{\bar P(1)}_{L^2}^2+C|\vec a^{(1)}-\vec a^{(2)}|\int_1^{t'} e^{-d_7\int_\tau^{t'} \Lambda(\theta)^{-b}d\theta}\Lambda(\tau)d\tau.
\end{aligned}
\eeq
Similar to \eqref{supPtprime},  we obtain
\beq\label{JbarP2}
\begin{aligned}
\sup_{[t-1,t]}  \norm{\bar P(t')}_{L^2}^2
&\le C e^{-d_7\int_0^{t} \Lambda(\tau)^{-b}d\tau}  \norm{\bar P(1)}_{L^2}^2 +C|\vec a^{(1)}-\vec a^{(2)}|\int_1^{t} e^{-d_7\int_\tau^{t} \Lambda(\theta)^{-b}d\theta}\Lambda(\tau)d\tau.
\end{aligned}
\eeq

Let $\Upsilon_2$ be defined as in Corollary \ref{corMM}, and
$$\Upsilon_3=1+\sup_{[0,\infty)} \norm{\psi}_{L^\infty},\quad \Upsilon_7 = 1+\sup_{[2,\infty)}\int_{t-2}^t \tilde f(\tau) d\tau.$$

We have the following result for unbounded time intervals.

\begin{theorem}\label{thmsupP}

\asdc. Suppose $\Upsilon_2$, $\Upsilon_3$ and $\Upsilon_7$ are finite numbers. 
 Then we have
 \beq\label{supPlarge}
\sup_{[2,\infty)} \|\bar P\|_{L^\infty(U')}\le C \Upsilon_8 ( \mathcal A+\mathcal A^{\frac {\gamex_1}{{\gamex_1}+1}}),
 \eeq
 where $\Upsilon_8$ is defined by \eqref{Up4def} below, and $\mathcal A$ is defined in Theorem \ref{thm54}.
  \end{theorem}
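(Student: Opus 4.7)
The plan is to apply the key inequality \eqref{rawJs} on each window $[t-1,t]$ with $t\ge 2$ and to control the three ingredients $\widehat{\mathcal C}(t)$, $\sup_{[t-1,t]}\|\bar P\|_{L^2}$, and the exponents uniformly in $t$ using the standing assumptions $\Upsilon_2,\Upsilon_3,\Upsilon_7<\infty$. These assumptions give, in particular, that $f(t)$ and $\tilde f(t)\in L^1_{\text{loc}}$ with finite tails, that $\sup_{[0,\infty)}\|\psi\|_{L^\infty}<\infty$, and that $A<\infty$ (even $A\le\Upsilon_3^{(2-a)/(1-a)}$).

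First I would bound $\tilde\lambda(t)=\sup_{[t-1,t]}\Lambda(\tau)^{\mu_7}$ uniformly. The quantity $\Lambda(\tau)$ is controlled by $J_H[p_1](\tau)+J_H[p_2](\tau)+1$ via \eqref{JHcom}, and by \eqref{H-bound-6} each $J_H[p_i](\tau)$ is bounded by $\|\bar p_i(\tau-1)\|_{L^2}^2+\sup_{[\tau-1,\tau]}f_i+\int_{\tau-1}^\tau\tilde f_i$. Using \eqref{p-bar-bound} (valid under the Degree Condition, hence also under SDC) together with the boundedness of $M_f$ and $\Upsilon_7$, each of these terms is uniformly bounded in $\tau\ge 1$. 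Hence $\sup_{t\ge 2}\tilde\lambda(t)\le C$ where $C$ depends only on $\Upsilon_3,\Upsilon_7$ and the initial data.

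Next I would bound $\widehat\vartheta(t)$ in \eqref{newuptil} uniformly. Since the assumptions imply that both quantities $\Upsilon_1,\Upsilon_2$ in \eqref{xgood} are finite (applied separately to $p_1,p_2$), Corollary~\ref{corMM} with $s=(2-a)\mu+a$ (together with the comparison \eqref{KgradsRel} of Theorem~\ref{cor312}) yields
\[
\int_0^\infty\int_{V}\bigl(|\nabla p_1|^{(2-a)\mu}+|\nabla p_2|^{(2-a)\mu}\bigr)\,dx\,dt<\infty,
\]
whence $\sup_{t\ge 2}\widehat\vartheta(t)\le C$. Combining with the previous step, $\sup_{t\ge 2}\widehat{\mathcal C}(t)\le C_1$ where $C_1$ depends on $\Upsilon_2,\Upsilon_3,\Upsilon_7$ and on the initial data through the constants $L_3,L_4$ appearing in Theorem~\ref{cor311}.

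Finally I would bound the $L^2$ term. By \eqref{coeffJsup2} of Theorem~\ref{supgradthem}, with $\Upsilon_6$ finite under our assumptions,
\[
\sup_{t\ge 1}\|\bar P(t)\|_{L^2}\le C\Upsilon_6^{(b+1)/2}\,\bigl(\|\bar P(0)\|_{L^2}+|\vec a^{(1)}-\vec a^{(2)}|^{1/2}\bigr)=C\Upsilon_6^{(b+1)/2}\mathcal A.
\]
Plugging the uniform bounds on $\widehat{\mathcal C}(t)$ and on $\sup_{[t-1,t]}\|\bar P\|_{L^2}$ into \eqref{rawJs} and taking $\sup_{t\ge 2}$ yields \eqref{supPlarge}, with $\Upsilon_8$ the product $C_1\cdot\Upsilon_6^{(b+1)/2}$ (up to $\gamma_1/(\gamma_1+1)$ exponents handled by the concavity $x\mapsto x^{\gamma_1/(\gamma_1+1)}$).

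The main technical obstacle is the uniform control of $\widehat\vartheta(t)$: the exponent $(2-a)\mu$ may be strictly larger than $2$, so bounding $\int_{t-1}^t\int_V|\nabla p_i|^{(2-a)\mu}$ independently of $t$ requires the global-in-time $L^s$ gradient estimate of Corollary~\ref{corMM}, which in turn hinges on the integrability $\Upsilon_2<\infty$. Once this is done, the remaining steps are routine combinations of the preceding estimates.
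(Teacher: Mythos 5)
Your overall strategy mirrors the paper's: apply \eqref{rawJs} on each window $[t-1,t]$, bound $\widehat{\mathcal C}(t)$ uniformly in $t$ under the hypotheses $\Upsilon_2,\Upsilon_3,\Upsilon_7<\infty$, control $\sup_{t\ge 1}\|\bar P(t)\|_{L^2}$ by $\mathcal A$, and combine. Your route to the $L^2$ bound through \eqref{coeffJsup2} is in fact more direct than the paper's combination of \eqref{JbarP2}, \eqref{supLg0} and \eqref{CalA}; that substitution is legitimate and arguably cleaner.

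However, the step where you control $\widehat\vartheta(t)$ contains a concrete error. You assert that Corollary \ref{corMM} together with \eqref{KgradsRel} yields
\[
\int_0^\infty\int_{V}\bigl(|\nabla p_1|^{(2-a)\mu}+|\nabla p_2|^{(2-a)\mu}\bigr)\,dx\,dt<\infty,
\]
but \eqref{KgradsRel} reads $\int_0^T\int_{U'}|\nabla p|^s\,dx\,dt\le CT+\int_0^T\int_{U'}K(|\nabla p|)|\nabla p|^{s+a}\,dx\,dt$; the $CT$ term diverges as $T\to\infty$, and it is not a harmless artifact --- the unweighted spacetime integral over $[0,\infty)$ is genuinely infinite in general (e.g.\ whenever $|\nabla p_i|$ stays of order one on a set of positive measure). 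What Corollary \ref{corMM} gives is finiteness of the \emph{$K$-weighted} integral $\int_0^\infty\int_{U'}K(|\nabla p_i|)|\nabla p_i|^{(2-a)\mu+a}\,dx\,dt$ under $\Upsilon_1,\Upsilon_2<\infty$. The quantity $\widehat\vartheta(t)$ in \eqref{newuptil} involves only the integral over $[t-1,t]$, a unit-length interval, and the comparison \eqref{Kestn} (i.e.\ $\xi^{2-a}\le CK(\xi)\xi^2+C$) then contributes only an additive constant, not a $T$-linear term; this is exactly what the paper's computation \eqref{pregam}--\eqref{newgamtil} does (with $\gamma_2'=(2-a)\mu$). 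So your conclusion $\sup_{t\ge 2}\widehat\vartheta(t)\le C$ is correct and the target estimate \eqref{supPlarge} survives, but the intermediate claim you rely on is false and must be replaced by the uniform bound on $\int_{t-1}^t\int_V|\nabla p_i|^{(2-a)\mu}$, established via the $K$-weighted global estimate plus the pointwise comparison. Until that substitution is made, the proof as written does not go through.
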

\begin{proof}  
Use \eqref{lamtilde} to bound $\Lambda(t)$ and $\tilde \lambda(t)$, we have for all $t\ge 1$ that
\beq\label{supLg0}
\sup_{\tau\in[t-1,t]}\Lambda(\tau) \le C \Upsilon_7 \Upsilon_3^\frac2{1-a},
\quad \tilde \lambda(t) \le C \Upsilon_7^{\muex_7} \Upsilon_3^\frac{2\muex_7}{1-a}.
\eeq
Moreover, using \eqref{newgamtil} to bound $\widehat\vartheta(t)$ with $\gamex_2'$ replacing $\gamex_2$, we obtain
\beq\label{boundnu}
\widehat\vartheta(t)\le C L'_{12} \Bfun_1(t)^\frac{\muex_4(\gamex_2'+a-2)}{2\mu}\Bfun_2(t)^\frac1{2\mu}
\le C  L'_{12} \Upsilon_3^\frac{\muex_4(\gamex_2'+a-2)}{2\mu} \Upsilon_2^\frac1{2\mu},
\eeq
where $L'_{12}=\{\sum_{i=1,2} L_3(\gamex_2'+a,[p_i(0)])\}^\frac1{2\mu}$.
It follows from \eqref{newCtil}, \eqref{supLg0} and \eqref{boundnu} that 
\beq\label{preCpsi}
\widehat{\mathcal C}(t) \le C  \eta_2,\quad \text{where }
\eta_2=\Upsilon_7^\frac{\muex_7}{\muex_6} \Upsilon_2^\frac1{2\mu({\gamex_1}+1)}  (L'_{12})^\frac{1}{{\gamex_1}+1} \Upsilon_3^{ \frac{2\muex_7}{\muex_6(1-a)}  +\frac{\muex_4(\gamex_2'+a-2)}{2\mu({\gamex_1}+1)} }.
\eeq
Then we have from \eqref{rawJs} and \eqref{preCpsi} that
\beq\label{rawJt}
\norm{\bar P(t)}_{L^\infty(U')}
\le C \eta_2 \Big\{ \sup_{[t-1,t]} \norm{\bar P}_{L^2}+\sup_{[t-1,t]} \norm{\bar P}_{L^2}^{\frac {\gamex_1}{{\gamex_1}+1} }\Big\}.
\eeq
By  \eqref{JbarP2} and \eqref{supLg0}, we have
\begin{align*}
\sup_{[t-1,t]}  \norm{\bar P}_{L^2}^2\le C   \norm{\bar P(1)}_{L^2}^2+ C|\vec a^{(1)}-\vec a^{(2)}|(\Upsilon_7 \Upsilon_3^\frac2{1-a})^{b+1}.
\end{align*}
We estimate $ \norm{\bar P(1)}_{L^2}$ by \eqref{CalA} with $T=1$ and obtain
\begin{align*}
\sup_{[t-1,t]}  \norm{\bar P}_{L^2}\le C  \eta_3 \mathcal A, \quad \text{where }\eta_3=(1+A_0)^{1/2} \Nf_{1,1}^\frac{2-a}{2(1-a)} + (\Upsilon_7 \Upsilon_3^\frac2{1-a})^\frac{b+1}2,
\end{align*}
with $\Nf_{1,1}$ defined by \eqref{Nf1}.
Combining this with \eqref{rawJt}, we obtain \eqref{supPlarge} with 
\beq\label{Up4def}
 \Upsilon_8 = \eta_2 \eta_3.
 \eeq
The proof is complete.
\end{proof}

For results as $t\to\infty$, we have:

\begin{theorem}\label{thm55}
Assume the same as in Theorem \ref{thmsupP}. Then
\beq\label{JbarP4}
\limsup_{t\to\infty} \|\bar P(t)\|_{L^\infty(U')}
\le C \Upsilon_9 \Big(|\vec a^{(1)}-\vec a^{(2)}| + |\vec a^{(1)}-\vec a^{(2)}|^\frac{{\gamex_1}}{{\gamex_1}+1}\Big)^{1/2},
\eeq
where $\Upsilon_9$ is defined by \eqref{Up10def} below.
\end{theorem}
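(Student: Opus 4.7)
The plan is to combine the local $L^\infty$-bound \eqref{rawJs}--\eqref{newuptil} (valid for $t\ge 2$) with the asymptotic $L^2$-estimate \eqref{coefflim} from Theorem \ref{supgradthem}(ii), exactly paralleling the argument of Theorem \ref{thmsupP} but replacing the finite-time $L^2$-bound \eqref{coeffJsup2} by the limit-superior version \eqref{coefflim}. Concretely, starting from
\[
\|\bar P(t)\|_{L^\infty(U')} \le C\,\widehat{\mathcal C}(t)\Big\{\sup_{[t-1,t]}\|\bar P\|_{L^2}+\sup_{[t-1,t]}\|\bar P\|_{L^2}^{\frac{\gamex_1}{\gamex_1+1}}\Big\},
\]
I would take $\limsup_{t\to\infty}$ on both sides and bound the two factors independently.

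For $\widehat{\mathcal C}(t)$, no new work is needed: under the assumptions $\Upsilon_2,\Upsilon_3,\Upsilon_7<\infty$, the calculation leading to \eqref{preCpsi} in Theorem \ref{thmsupP} already gives $\widehat{\mathcal C}(t)\le C\eta_2$ uniformly for $t\ge 2$, where $\eta_2$ depends only on $\Upsilon_2,\Upsilon_3,\Upsilon_7$, the constant $L_{12}'$ and the exponents $\muex_4,\muex_6,\muex_7,\gamma_1,\gamma_2'$. Thus $\limsup_{t\to\infty}\widehat{\mathcal C}(t)\le C\eta_2$.

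For the $L^2$-factor, \eqref{coefflim} gives $\limsup_{t\to\infty}\|\bar P(t)\|_{L^2}^2 \le C\,\mathcal A_2^{b+1}|\vec a^{(1)}-\vec a^{(2)}|$. The easy but necessary observation is that this passes to the sliding supremum: $\limsup_{t\to\infty}\sup_{[t-1,t]}\|\bar P\|_{L^2}^2 = \limsup_{t\to\infty}\|\bar P(t)\|_{L^2}^2$, a standard consequence of the definition of limit superior applied to the continuous function $t\mapsto \|\bar P(t)\|_{L^2}^2$. Hence
\[
\limsup_{t\to\infty}\sup_{[t-1,t]}\|\bar P\|_{L^2} \le C\,\mathcal A_2^{(b+1)/2}|\vec a^{(1)}-\vec a^{(2)}|^{1/2}.
\]
Multiplying the two asymptotic bounds and using subadditivity/monotonicity of $\limsup$ together with the elementary inequality $(x+x^{\gamex_1/(\gamex_1+1)})\le C(x+x^{\gamex_1/(\gamex_1+1)})$ applied to $x=|\vec a^{(1)}-\vec a^{(2)}|^{1/2}$, I obtain
\[
\limsup_{t\to\infty}\|\bar P(t)\|_{L^\infty(U')} \le C\,\eta_2\,\mathcal A_2^{(b+1)/2}\Big(|\vec a^{(1)}-\vec a^{(2)}|^{1/2}+|\vec a^{(1)}-\vec a^{(2)}|^{\frac{\gamex_1}{2(\gamex_1+1)}}\Big),
\]
which yields \eqref{JbarP4} with
\[
\Upsilon_9 \;=\; \eta_2\,\mathcal A_2^{(b+1)/2},
\]
i.e., $\Upsilon_9$ has the same structure as $\Upsilon_8$ but with the factor $\eta_3$ (which carried initial-data information through $A_0$ and $\Nf_{1,1}$) replaced by the purely asymptotic quantity $\mathcal A_2^{(b+1)/2}$.

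The proof is essentially bookkeeping; the one mildly delicate point is the last step, where one must present the final bound as a single factor of $(|\vec a^{(1)}-\vec a^{(2)}|+|\vec a^{(1)}-\vec a^{(2)}|^{\gamex_1/(\gamex_1+1)})^{1/2}$ rather than the sum of two separate terms, so some care with the algebraic identity $(u^{1/2})^{\gamex_1/(\gamex_1+1)}=u^{\gamex_1/[2(\gamex_1+1)]}$ and the inequality $a^{1/2}+b^{1/2}\le\sqrt{2}(a+b)^{1/2}$ is required to match the exponent $1/2$ on the outside in \eqref{JbarP4}. There is no substantive new PDE estimate to prove: the two asymptotic ingredients---the uniform bound on $\widehat{\mathcal C}(t)$ already obtained in Theorem \ref{thmsupP}, and the limit estimate \eqref{coefflim}---do all the work.
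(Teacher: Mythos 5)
Your proof is correct, and it takes a genuinely shorter route than the paper's. The paper re-derives the asymptotic $L^2$ bound from scratch: it starts from the differential inequality \eqref{coeffdP}, integrates it to get the Gronwall-type estimate \eqref{PbarJ3}, and then applies Lemma \ref{difflem2} (with $h\equiv 1$) to the resulting integral to extract the limit superior; it also keeps a time-dependent factor $\Bfun_7(t)^{\muex_7/\muex_6}$ inside $\widehat{\mathcal C}(t)$ and takes its limsup separately. You instead directly invoke the already-proved asymptotic $L^2$ estimate \eqref{coefflim} (whose hypotheses are indeed implied by $\Upsilon_3,\Upsilon_7<\infty$) together with the uniform bound $\widehat{\mathcal C}(t)\le C\eta_2$ already obtained in the proof of Theorem \ref{thmsupP}. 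Your one auxiliary observation -- that $\limsup_{t\to\infty}\sup_{[t-1,t]}\|\bar P\|_{L^2} = \limsup_{t\to\infty}\|\bar P(t)\|_{L^2}$ for the continuous map $t\mapsto \|\bar P(t)\|_{L^2}$ -- is correct and is exactly what makes the shortcut work. The elementary step $u^{1/2}+v^{1/2}\le \sqrt2\,(u+v)^{1/2}$ needed to package the final bound is also handled correctly. The one cosmetic difference is the resulting constant: you arrive at $\eta_2\Ulim_2^{(b+1)/2}$, whereas the paper's $\Upsilon_9=\eta_4\Ulim_2^{\muex_7/\muex_6+(b+1)/2}$ trades the $\Upsilon_7^{\muex_7/\muex_6}\Upsilon_3^{2\muex_7/(\muex_6(1-a))}$ factor hidden in $\eta_2$ for the slightly sharper asymptotic factor $\Ulim_2^{\muex_7/\muex_6}$; both are of the same qualitative form, so your bound is weaker by at most a benign constant but proves the same statement.
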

\begin{proof}
Define $\Bfun_7(t)=1+A^\frac2{2-a}+\int_{t-2}^t \tilde f(\tau)d\tau$.
Note that $\limsup_{t\to\infty} \Bfun_7(t)\le 2\Ulim_2$, where $\Ulim_2$ is defined by \eqref{Ulim2}.
 By \eqref{boundedlarge}, for $t$ sufficiently large we have
 \beq\label{Lamlim}
 \sup_{\tau\in[t-1,t]}\Lambda(\tau)\le  C \Bfun_7(t)
\quad \text{and}\quad
\tilde \lambda(t)\le C \Bfun_7(t)^{\muex_7}.
 \eeq
Using \eqref{Lamlim} and \eqref{boundnu} to estimate $\widehat{\mathcal C}(t)$ in \eqref{newCtil}, we derive from \eqref{rawJs} for large $t$ that
\beq\label{rawJg}
\|\bar P(t)\|_{L^\infty(U')} \le C \eta_4 \Bfun_7(t)^\frac{\muex_7}{\muex_6}  
\Big\{ \sup_{[t-1,t]} \norm{\bar P}_{L^2}+\sup_{[t-1,t]} \norm{\bar P}_{L^2}^{\frac {\gamex_1}{{\gamex_1}+1} }\Big\},
\eeq
where
$\eta_4=\big[ L'_{12} \Upsilon_2^\frac1{2\mu} \Upsilon_3^\frac{\muex_4(\gamex_2'+a-2)}{2\mu} \big]^\frac1{{\gamex_1}+1}.$
Similar to \eqref{supPtprime} and \eqref{JbarP2}, for large $T$ and $t>T$, we have 
\beq\label{PbarJ3}
\sup_{[t-1,t]}  \norm{\bar P}_{L^2}^2
\le C e^{-C\int_T^{t} \Bfun_7(\tau)^{-b}d\tau}  \norm{\bar P(T)}_{L^2}^2+C|\vec a^{(1)}-\vec a^{(2)}|\int_T^{t} e^{-C\int_\tau^{t} \Bfun_7(\theta)^{-b}d\theta} \Bfun_7(\tau)d\tau.
\eeq
Since $\Bfun_7(t)$ is bounded, one can easily see from \eqref{rawJg} and \eqref{PbarJ3} that
\begin{align*}
\limsup_{t\to\infty}  \|\bar P(t)\|_{L^\infty(U')}
&\le C \eta_4 \Big\{ \limsup_{t\to\infty} \Bfun_7(t)^\frac{\muex_7}{\muex_6} \Big\} \Big\{ 
|\vec a^{(1)}-\vec a^{(2)}|^\frac12 \Big[ \limsup_{t\to\infty} \int_T^{t} e^{-C\int_\tau^{t} \Bfun_7(\theta)^{-b}d\theta} \Bfun_7(\tau)d\tau\Big ]^\frac12\\
&\quad + |\vec a^{(1)}-\vec a^{(2)}|^\frac{{\gamex_1}}{2({\gamex_1}+1)}  \Big[\limsup_{t\to\infty}   \int_T^{t} e^{-C\int_\tau^{t} \Bfun_7(\theta)^{-b}d\theta} \Bfun_7(\tau)d\tau\Big]^\frac{{\gamex_1}}{2({\gamex_1}+1)}\Big\}.
\end{align*}
Applying Lemma \ref{difflem2} with $h\equiv 1$ to the last two limits, we obtain
\begin{align*}
\limsup_{t\to\infty}  \|\bar P(t)\|_{L^\infty(U')}
&\le C \eta_4 \Ulim_2^{\frac{\muex_7}{\muex_6}}\Big\{ 
|\vec a^{(1)}-\vec a^{(2)}|^\frac12 \Ulim_2^{\frac{b+1}{2}}
+ |\vec a^{(1)}-\vec a^{(2)}|^\frac{{\gamex_1}}{2({\gamex_1}+1)} \Ulim_2^{\frac{(b+1){\gamex_1}}{2({\gamex_1}+1)}}\Big\}.
\end{align*}
Then \eqref{JbarP4} follows with 
\beq\label{Up10def}
\Upsilon_9=\eta_4 \Ulim_2^{\frac{\muex_7}{\muex_6}+\frac{b+1}2}.
\eeq
The proof is complete.
\end{proof}

Similar to Remark \ref{rmk414}, estimate \eqref{JbarP4} shows that the smallness of  $\|\bar P(t)\|_{L^\infty(U')}$ as $t\to\infty$ can be controlled by $|\vec a^{(1)}-\vec a^{(2)}|$. 

\subsection{Results for pressure gradient}
\label{sec52}

We now study the dependence for pressure gradient.
The results are  parallel to those in subsection \ref{sec42}.
For $i=1,2$, we denote $H_i(\xi)=H(\xi,a^{(i)})$ defined in \eqref{Hxi}, and recall that the functional $J_H[\cdot]$ is defined by \eqref{J.def}.

\begin{proposition}\label{Grada}
 Let $\delta\in (0,a)$ and $U'\Subset \setV\Subset U$. There exists a constant $C>0$ depending on $U$, $U'$,$\setV$ and $\delta$ such that for any $t>0$, we have
\begin{multline}\label{DG9}
 \|\nabla P(t)\|_{L^{2-\delta}(U')} 
\le C\norm{\bar P(t)}_{L^2}^\frac12 \Big(1+\sum_{i=1,2}\norm{\bar p_{it}(t)}_{L^2}^2+\sum_{i=1,2} J_{H_i}[p_i](t)\Big)^\frac12 \\
\cdot \Big(1+ \sum_{i=1,2} \int_{\setV} |\nabla p_i(x,t)|^\frac{a(2-\delta)}{\delta}dx \Big)^\frac{\delta}{2(2-\delta)},
\end{multline}
and for any $T>0$, we have
\begin{multline}\label{GP}
\| \nabla P\|_{L^{2-\delta}(U'\times(0,T))} 
\le C\Big\{ \norm{\bar P}_{L^2(U\times(0,T))}^\frac12 \Big( T+\sum_{i=1,2}\norm{\bar p_{it}}_{L^2(U\times(0,T))}^2 + \sum_{i=1,2} \int_0^T J_{H_i}[p_i](t)dt\Big)^\frac14 \\
 +  |\vec a^{(1)}-\vec a^{(2)}|^\frac12 \Big(T+\sum_{i=1,2}\int_0^T J_{H_i}[p_i](t) dt\Big)^{\frac 1 2}\Big\}  
\cdot \Big\{ T+\sum_{i=1,2}\int_{0}^T\int_{\setV} |\nabla p_i(x,t)|^\frac{a(2-\delta)}{\delta}dx dt\Big\}^\frac{\delta}{2(2-\delta)}.
\end{multline}
\end{proposition}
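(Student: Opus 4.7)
The strategy mirrors that of Proposition \ref{PropGrad}, with the perturbed monotonicity \eqref{quasimonotone} of Lemma \ref{quasimono-lem} replacing the pure monotonicity \eqref{mono1} so as to generate the $|\vec a^{(1)}-\vec a^{(2)}|$ contribution explicitly. I would first fix a spatial cut-off $\zeta=\zeta(x)\in C_c^\infty(U)$ with $\zeta\equiv 1$ on $U'$ and $\mathrm{supp}\,\zeta\Subset \setV$, multiply equation \eqref{Erreq} by $\bar P\zeta^2$, integrate over $U$, and apply integration by parts (the cut-off removes any boundary contribution). Writing $\xi=|\nabla\bar p_1|\vee|\nabla\bar p_2|$ and applying \eqref{quasimonotone} with $y=\nabla\bar p_1$, $y'=\nabla\bar p_2$ to the $\nabla\bar P$ pairing, together with \eqref{Kesta} to bound $|K(|\nabla\bar p_i|,\vec a^{(i)})\nabla\bar p_i|\le C|\nabla p_i|^{1-a}$ in the $\bar P\zeta\nabla\zeta$ cross term, I arrive at an inequality of the form
\begin{equation*}
\int_U K(\xi)|\nabla P|^2\zeta^2\,dx\le C\|\bar P\|_{L^2}\Big[\sum_{i}\|\bar p_{it}\|_{L^2}+\Big(1+\sum_{i}J_{H_i}[p_i]\Big)^{1/2}\Big] + C|\vec a^{(1)}-\vec a^{(2)}|\int_U K(\xi)\,\xi\,|\nabla P|\,\zeta^2\,dx.
\end{equation*}

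I would then absorb the last (perturbation) term by Young's inequality, splitting it as $\tfrac{1-a}{4}\int K(\xi)|\nabla P|^2\zeta^2 + C|\vec a^{(1)}-\vec a^{(2)}|^2\int K(\xi)\xi^2\zeta^2$. The second factor is controlled by $K(\xi)\xi^2\le C\xi^{2-a}\le C(|\nabla p_1|^{2-a}+|\nabla p_2|^{2-a})$ via \eqref{Hcompare}, hence bounded by $C(1+\sum_i J_{H_i}[p_i])$ after integration. Since $\vec a^{(1)},\vec a^{(2)}\in{\bf D}$ compact, the factor $|\vec a^{(1)}-\vec a^{(2)}|^2$ is a harmless bounded constant and the resulting $J_{H_i}$ remainder is subsumed into the parenthetical factor already present on the RHS of \eqref{DG9}. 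The passage from the $K$-weighted estimate of $\nabla P$ to an unweighted $L^{2-\delta}(U')$ estimate is then effected by H\"older's inequality with exponents $2/(2-\delta)$ and $2/\delta$, using the pointwise bound $K(\xi)^{-1}\le C(1+\xi)^a$ from \eqref{Kesta}. This yields \eqref{DG9}.

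For the space-time estimate \eqref{GP} I would integrate the same $K$-weighted energy inequality over $(0,T)$ \emph{before} applying H\"older. Time-integration of $|\bar P_t||\bar P|$ yields $\|\bar P\|_{L^2(Q_T)}(\sum_i\|\bar p_{it}\|_{L^2(Q_T)})$, the $\bar P\zeta\nabla\zeta$ cross term yields $\|\bar P\|_{L^2(Q_T)}\cdot(T+\sum_i\int_0^T J_{H_i}\,dt)^{1/2}$ (Cauchy--Schwarz together with the elementary inequality $|\nabla p_i|^{2-2a}\le 1+|\nabla p_i|^{2-a}$), and the perturbation contribution is now retained as $|\vec a^{(1)}-\vec a^{(2)}|^2\int_0^T\sum_i J_{H_i}\,dt$. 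Taking a square root on the $K$-weighted bound splits $(A+B)^{1/2}\le A^{1/2}+B^{1/2}$ into two summands: a ``pure Cauchy'' part with prefactor $\|\bar P\|_{L^2(Q_T)}^{1/2}$ and a ``perturbation'' part with prefactor $|\vec a^{(1)}-\vec a^{(2)}|^{1/2}$, which is exactly the additive structure appearing in \eqref{GP}. A final H\"older step transfers the bound into $L^{2-\delta}(U'\times(0,T))$.

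The main obstacle is the careful book-keeping of the $|\vec a^{(1)}-\vec a^{(2)}|$-term obtained from \eqref{quasimonotone}. The Young splitting with the natural weight $K(\xi)^{1/2}$ makes it absorbable, but one must recognize at the right moment that $K(\xi)\xi^2$ is comparable to $|\nabla p_i|^{2-a}$ via \eqref{Hcompare} to avoid picking up spurious growth in the top-degree powers. This recognition is precisely what produces the clean form of \eqref{GP}: the $\|\bar P\|_{L^2}^{1/2}$ summand encodes the contribution from $\bar P_t$ and from the $\zeta$-commutator, while the $|\vec a^{(1)}-\vec a^{(2)}|^{1/2}$ summand encodes the cost of replacing one Forchheimer polynomial by another.
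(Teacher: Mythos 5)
Your route is essentially the paper's: multiply the difference equation by $\bar P\zeta^2$, apply the perturbed monotonicity \eqref{quasimonotone}, pass from the $K$-weighted gradient bound to $L^{2-\delta}$ by the same H\"older step \eqref{DG2}, and integrate in time for \eqref{GP}. The one place you diverge is in handling the quasi-monotonicity perturbation $|\vec a^{(1)}-\vec a^{(2)}|\int K(\xi)\xi|\nabla P|\zeta^2\,dx$. The paper bounds it directly, writing $|\nabla P|\le|\nabla p_1|+|\nabla p_2|\le 2\xi$ so that the integrand is controlled by $K(\xi)\xi^2\le C\xi^{2-a}\le C(|\nabla p_1|^{2-a}+|\nabla p_2|^{2-a})$; this keeps the exponent $|\vec a^{(1)}-\vec a^{(2)}|$ linear in the $K$-weighted estimate and produces $|\vec a^{(1)}-\vec a^{(2)}|^{1/2}$ after the square root, exactly as in \eqref{GP}. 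You instead absorb into the elliptic term by Young's inequality with the fixed weight $\tfrac{1-a}{4}$, which necessarily costs you $|\vec a^{(1)}-\vec a^{(2)}|^2$ on the remainder $\int K(\xi)\xi^2\zeta^2$; after taking a square root this becomes $|\vec a^{(1)}-\vec a^{(2)}|$, \emph{not} $|\vec a^{(1)}-\vec a^{(2)}|^{1/2}$ as you assert. This is an arithmetic slip rather than a flaw in strategy — your bound is in fact sharper, and since $\vec a^{(1)},\vec a^{(2)}\in{\bf D}$ compact you can always weaken $|\vec a^{(1)}-\vec a^{(2)}|\le C|\vec a^{(1)}-\vec a^{(2)}|^{1/2}$ to recover \eqref{GP} as stated — but you should make that weakening explicit rather than claiming the square root produces the $1/2$-power. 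For \eqref{DG9}, you and the paper both drop the $|\vec a^{(1)}-\vec a^{(2)}|$-factor by compactness, and your identification of $K(\xi)\xi^2$ with $|\nabla p_i|^{2-a}$ via \eqref{Hcompare}/\eqref{Km} to land on $J_{H_i}[p_i]$ matches the paper's use of the same facts.
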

\begin{proof}
Let $\zeta(x,t)$ be the same cut-off function as in Proposition \ref{PropGrad}.
Multiplying equation \eqref{Erreq} by $\bar P\zeta^2 $, integrating over $U$ and using integration by parts, we obtain
\begin{align*}
\int_U \bar  P_t \bar P \zeta^2 dx
&= -\int_U \Big(K(\nabla p_1|,\vec a^{(1)})\nabla p_1-K(|\nabla p_2|,\vec a^{(2)})\nabla p_2 \Big ) \cdot (\nabla  P \zeta^2+2\bar P \zeta\nabla \zeta) dx.
\end{align*}
Again, the fact  $\nabla \bar P =\nabla P$ was used in the above.
Let $\xi(x)=|\nabla p_1|\vee |\nabla p_2|$. By the monotonicity \eqref{quasimonotone},
\begin{align*}
&\int_U \bar  P_t \bar P \zeta^2 dx
\le -(1-a) \int_U K(\xi,\vec a^{(1)}\vee a^{(2)})|\nabla P\zeta|^2  dx \\
&\quad + C|\vec a^{(1)}-\vec a^{(2)}| \int_{U} (|\nabla p_1|^{1-a}+|\nabla p_2|^{1-a})|\nabla P|\zeta^2 dx
    +C\int_{U} ( |\nabla p_1|+ |\nabla p_2|)^{1-a}|\bar P|\zeta dx.        
\end{align*}
Hence,
\begin{align*}
&(1-a) \int_U K(\xi,\vec a^{(1)}\vee a^{(2)})|\nabla P\zeta|^2  dx
\le C \norm{\bar P_{t}}_{L^2}\norm{\bar P}_{L^2}\\
&\quad  +C\Big(\int_U ( |\nabla p_1|+ |\nabla p_2|)^{2-2a}dx\Big)^{1/2} \norm{\bar P}_{L^2} 
 + C|\vec a^{(1)}-\vec a^{(2)}| \int_U (|\nabla p_1|^{2-a}+|\nabla p_2|^{2-a})dx.
\end{align*}
By constructing appropriate function $\zeta(x)\in[0,1]$ with $\zeta\equiv 1$ on $V$, we have
\begin{multline}\label{s1}
\int_{\setV} K(\xi,\vec a^{(1)}\vee a^{(2)})|\nabla P|^2  dx 
\le \int_{U} K(\xi,\vec a^{(1)}\vee a^{(2)})|\nabla P|^2 \zeta^2 dx \\
\le C  \norm{\bar P}_{L^2} \Big\{  \norm{\bar p_{1t}}_{L^2}+ \norm{\bar p_{2t}}_{L^2}+\Big(\int_U ( |\nabla p_1|+ |\nabla p_2|)^{2-2a}dx\Big)^{1/2} \Big\} \\
 + C|\vec a^{(1)}-\vec a^{(2)}| \int_U (|\nabla p_1|^{2-a}+|\nabla p_2|^{2-a})dx.
\end{multline}
Using \eqref{DG2} with $K(\xi)=K(\xi,\vec a^{(1)}\vee a^{(2)})$ and \eqref{s1} we obtain 
\begin{multline*}
 \|\nabla P(t)\|_{L^{2-\delta}(U')} 
\le C\norm{\bar P(t)}_{L^2}^{1/2} \Big\{ \Big(1+\norm{\bar p_{1t}(t)}_{L^2}^2+\norm{\bar p_{2t}(t)}_{L^2}^2 + J_{H_1}[p_1]+J_{H_2}[p_2]\Big)^{1/4} \\
\quad  +  |\vec a^{(1)}-\vec a^{(2)}|^{1/2} \Big( 1+J_{H_1}[p_1]+J_{H_2}[p_2]  \Big)^{1/2}\Big\}  \Big( \int_{\setV} (1+|\nabla p_1|+|\nabla p_2|)^\frac{a(2-\delta)}{\delta}dx \Big)^\frac{\delta}{2(2-\delta)}.
\end{multline*}
Then simply due to $|\vec a^{(1)}-\vec a^{(2)}|\le C$, inequality \eqref{DG9} follows.

To prove \eqref{GP}, we have from \eqref{s1} that
\beq
\begin{aligned}
&\int_0^T\int_{\setV} K(\xi)|\nabla P|^2  dxdt \\
&\le C \Big(\int_0^T  \norm{\bar P}_{L^2}^2 dt\Big)^{1/2} \Big( \int_0^T \norm{\bar p_{1t}}_{L^2}^2+ \norm{\bar p_{2t}}_{L^2}^2 dt+ \int_0^T\int_U ( |\nabla p_1|+ |\nabla p_2|)^{2-2a}dx dt\Big)^{1/2} \\
&\quad  + C|\vec a^{(1)}-\vec a^{(2)}| \int_0^T \int_U (|\nabla p_1|^{2-a}+|\nabla p_2|^{2-a})dx dt.
\end{aligned}
\eeq
According to \eqref{GradPclose2},
\beq\label{s2}
\int_{0}^T \int_{U'} |\nabla P|^{2-\delta}  dx dt
\le C \Big ( \int_{0}^T \int_U K(\xi)|\nabla P\zeta|^2  dxdt \Big )^\frac{2-\delta}{2} \Big( \int_{0}^T\int_{\setV} (1+|\nabla p_1|+|\nabla p_2|)^\frac{a(2-\delta)}{\delta}dx dt\Big)^\frac{\delta}{2}.
\eeq
Using \eqref{s1} in \eqref{s2}, we obtain 
\begin{multline*}
\int_{0}^T \int_{U'} |\nabla P|^{2-\delta}  dx dt \\
\le C\Big\{\Big(\int_0^T \norm{\bar P}_{L^2}^2 dt\Big)^{1/2} \Big( \int_0^T \norm{\bar p_{1t}}_{L^2}^2+ \norm{\bar p_{2t}}_{L^2}^2 dt+ \int_0^T\int_U ( |\nabla p_1|+ |\nabla p_2|)^{2-2a}dx dt\Big)^{1/2} \\
  + |\vec a^{(1)}-\vec a^{(2)}| \int_0^T \int_U (|\nabla p_1|^{2-a}+|\nabla p_2|^{2-a})dx dt\Big\}^{\frac {2-\delta} 2}  \Big( \int_{0}^T\int_{\setV} (1+|\nabla p_1|+|\nabla p_2|)^\frac{a(2-\delta)}{\delta}dx dt\Big)^\frac{\delta}{2}.
\end{multline*}
Thus, we have \eqref{GP}.  
\end{proof}

\begin{theorem}\label{GradThm3}
\asdc.  For $\delta\in(0,a)$, $0<t_0<1$, and $T>t_0$ we have
\beq\label{DG7}
\sup_{[t_0,T]} \|\nabla P(t)\|_{L^{2-\delta}(U')} \le C M_{8,t_0,T}(\|\bar P(0)\|_{L^2} +|\vec a^{(1)}-\vec a^{(2)}|^{1/2})^\frac12,
\eeq
where $M_{8,t_0,T}$ is defined in \eqref{N9def} below.
\end{theorem}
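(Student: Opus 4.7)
The plan is to mimic closely the proof of Theorem \ref{GradThm1}, substituting the coefficient-comparison gradient estimate \eqref{DG9} from Proposition \ref{Grada} for the data-comparison estimate \eqref{DG1}, and the $L^2$-bound \eqref{coeffJsup1} of Theorem \ref{supgradthem} for the bound \eqref{pLip} used there. Since the boundary flux $\psi$ is now common to both solutions, the $\Psi$-terms of section \ref{datacont} drop out, and the only remaining sources of smallness on the right-hand side are $\|\bar P(0)\|_{L^2}$ and $|\vec a^{(1)}-\vec a^{(2)}|$.

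Concretely, I first apply \eqref{DG9} at $t\in[t_0,T]$ and estimate its three factors separately. The factor $\|\bar P(t)\|_{L^2}^{1/2}$ is controlled by \eqref{coeffJsup1}: taking one-fourth powers of $\|\bar P(t)\|_{L^2}^2 \le \|\bar P(0)\|_{L^2}^2 + C M_{5,T}|\vec a^{(1)}-\vec a^{(2)}|$ and invoking the subadditivity $(x+y)^{1/4}\le x^{1/4}+y^{1/4}$ gives
\begin{equation*}
\|\bar P(t)\|_{L^2}^{1/2}\le C M_{5,T}^{1/4}\bigl(\|\bar P(0)\|_{L^2}+|\vec a^{(1)}-\vec a^{(2)}|^{1/2}\bigr)^{1/2},
\end{equation*}
which is exactly the prefactor in \eqref{DG7}. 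The middle factor $\bigl(1+\sum_i \|\bar p_{it}(t)\|_{L^2}^2+\sum_i J_{H_i}[p_i](t)\bigr)^{1/2}$ is treated as in \eqref{pit}, using \eqref{Jpt-boundA0} for the time-derivative piece (which is responsible for the $t_0^{-1/2}$ blow-up) and \eqref{H-bound-0} together with \eqref{JHcom} for the $J_{H_i}$ piece. The last factor $\bigl(1+\sum_i \int_{\setV}|\nabla p_i|^{a(2-\delta)/\delta}\,dx\bigr)^{\delta/(2(2-\delta))}$ is handled by \eqref{grad20pw} with $s=\nuex_1=\max\{2,a(2-\delta)/\delta\}$, exactly as in \eqref{gradalot}.

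Absorbing the powers of $t_0$, $T+1$, and the single-flux quantities $\Nf_{1,T}$, $\Nf_{2,T}$ of \eqref{Nf1}--\eqref{Nf2} into a single constant produces $M_{8,t_0,T}$ of the same structural shape as $M_{3,t_0,T}$ in \eqref{N8def}, with $M_{5,T}^{1/4}$ in place of $M_{1,T}^{1/4}$, namely
\begin{equation*}
M_{8,t_0,T}=(1+t_0^{-1/4})\, M_{5,T}^{1/4} \Big[\sum_{i=1,2} L_5(t_0;[p_i(0),\psi])\Big]^{1/4}\Big[\sum_{i=1,2} L_2(\nuex_1;[p_i(0)])\Big]^{\frac{\delta}{2(2-\delta)}} (T+1)^{\gamex_8}\,\Nf_{1,T}^{\gamex_9}\,\Nf_{2,T}^{1/4},
\end{equation*}
with the same $\gamex_8,\gamex_9$ as in Theorem \ref{GradThm1}. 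I do not anticipate any substantial analytic obstacle: the argument is entirely parallel to Theorem \ref{GradThm1}, with the only item requiring attention being the bookkeeping of exponents needed to verify that the $\|\bar P(0)\|_{L^2}^{1/2}$ and $|\vec a^{(1)}-\vec a^{(2)}|^{1/4}$ contributions regroup cleanly under the single prefactor $(\|\bar P(0)\|_{L^2}+|\vec a^{(1)}-\vec a^{(2)}|^{1/2})^{1/2}$.
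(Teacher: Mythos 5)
Your approach is the same as the paper's: insert \eqref{coeffJsup1}, \eqref{Jpt-boundA0}, \eqref{H-bound-0}, and \eqref{gradalot} into \eqref{DG9} and collect terms, and there is no issue with those ingredients. However, the explicit formula you display for $M_{8,t_0,T}$ is inconsistent with your own prose and with \eqref{DG9}. The middle factor of \eqref{DG9} carries exponent $\tfrac12$ (not $\tfrac14$ as the middle factor of \eqref{DG1} does), and you say as much in words and even attribute a $t_0^{-1/2}$ blow-up to it. But the displayed formula still shows the $\tfrac14$ exponents inherited verbatim from $M_{3,t_0,T}$ in \eqref{N8def}: $(1+t_0^{-1/4})$, $\bigl[\sum_i L_5\bigr]^{1/4}$, $\Nf_{2,T}^{1/4}$, and the $\gamex_8$, $\gamex_9$ whose leading summands $\tfrac14$ and $\tfrac{2-a}{4(1-a)}$ also stem from that $\tfrac14$-power. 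All of those pieces should be doubled (to $\tfrac12$, $\tfrac12$, $\tfrac12$, $\tfrac12$, $\tfrac{2-a}{2(1-a)}$ respectively), which recovers the paper's $M_{8,t_0,T}$ in \eqref{N9def} — there the combination $t_0^{-1}\sum_i L_5 + (T+1)\Nf_{1,T}^{\frac{2-a}{1-a}} + \Nf_{2,T}$ is simply kept inside a single $\tfrac12$-power rather than distributed. This is a transcription slip, not a conceptual gap, but as stated your $M_{8,t_0,T}$ is too small and the inequality \eqref{DG7} would not follow with it.
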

\begin{proof}
 Using estimates \eqref{coeffJsup1}, \eqref{Jpt-boundA0}, \eqref{H-bound-0} and \eqref{gradalot} in \eqref{DG9}, we obtain \eqref{DG7} with
\begin{multline}\label{N9def}
 M_{8,t_0,T}=M_{5,T}^{1/4} \Big( t_0^{-1} \sum_{i=1,2} L_5(t_0;[p_i(0)]) + (T+1)\Nf_{1,T}^\frac{2-a}{1-a} +\Nf_{2,T}\Big)^{1/2}\\
\cdot \Big\{  \sum_{i=1,2}L_2(\nuex_1;[p_i(0)])  (T+1)^{\frac{2(\nuex_1-2)}{2-a}+1} \Nf_{1,T}^{\frac{\nuex_1-a}{1-a}}\Big\}^\frac{\delta}{2(2-\delta)}.
\end{multline}
\end{proof}

\begin{theorem}\label{GradThm4}
\asdc. 
Suppose $\Upsilon_1$, $\Upsilon_2$ (defined in \eqref{xgood}) and  $\Ulim_2$ (defined by \eqref{Ulim2}) are finite.
Then for any $\delta\in(0,a)$, we have
\beq\label{DG8}
\limsup_{t\to\infty} \|\nabla P(t)\|_{L^{2-\delta}(U')} \le C \Upsilon_{10} |\vec a^{(1)}-\vec a^{(2)}|^{1/4},
\eeq
where $\Upsilon_{10}$ is defined by \eqref{U8} below.
\end{theorem}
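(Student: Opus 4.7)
The plan is to follow the template of Theorem \ref{GradThm2}, replacing each boundary-data input by its Forchheimer-coefficient counterpart from this section. Starting from the interior gradient bound \eqref{DG9} in Proposition \ref{Grada}, I would take $\limsup_{t\to\infty}$ of the three factors on the right-hand side separately and then multiply the resulting bounds.

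The crucial step is controlling $\|\bar P(t)\|_{L^2}^{1/2}$ by the polynomial-dependence limit \eqref{coefflim} from Theorem \ref{supgradthem}(ii), which yields
$$\limsup_{t\to\infty} \|\bar P(t)\|_{L^2}^{1/2} \le C\,\Ulim_2^{(b+1)/4}\,|\vec a^{(1)}-\vec a^{(2)}|^{1/4}.$$
This is the sole source of the smallness factor $|\vec a^{(1)}-\vec a^{(2)}|^{1/4}$ and is really the whole point of the estimate. Note that the hypothesis $\bar A<\infty$ needed for \eqref{coefflim} is implied by $\Upsilon_1<\infty$, and the $\tilde f$-average condition is contained in the finiteness of $\Ulim_2$.

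For the middle factor $(1+\sum_{i}\|\bar p_{it}\|_{L^2}^2 + \sum_{i} J_{H_i}[p_i])^{1/2}$, I would invoke \eqref{limsupH1} of Theorem \ref{H-bound-lemma}(iii) applied to each $p_i$ (with its own coefficient vector $\vec a^{(i)}$), giving a uniform bound in terms of $\Ulim_1$; the relation \eqref{Urel} then bounds this by a constant multiple of $\Ulim_2$. For the last factor $(\int_{\setV}|\nabla p_i|^{a(2-\delta)/\delta}dx)^{\delta/(2(2-\delta))}$, I would apply Corollary \ref{corMM}, more precisely \eqref{Kgrad60pw}, with $s=\nu_1:=\max\{2,\,a(2-\delta)/\delta\}$ to each $p_i$; the hypotheses $\Upsilon_1,\Upsilon_2<\infty$ then give the time-uniform bound, independent of $|\vec a^{(1)}-\vec a^{(2)}|$.

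Multiplying the three contributions and setting
$$\Upsilon_{10} \eqdef \Ulim_2^{(b+1)/4}\,(1+\Ulim_2)^{1/2}\,\Bigl[\,\sum_{i=1,2}L_4(\nu_1;[p_i(0)])\,\Upsilon_1^{\muex_4(\nu_1-2)}\,\Upsilon_2\Bigr]^{\delta/(2(2-\delta))}$$
yields \eqref{DG8}. I do not expect any real obstacle: once \eqref{coefflim} is in hand, the argument is a direct assembly of already-proved asymptotic estimates, entirely parallel to Theorem \ref{GradThm2}. The only minor technical point is bookkeeping the exponents in $\Upsilon_{10}$ and verifying that every quantity that appears is finite under the stated hypotheses; this is immediate from \eqref{Urel} and the assumption $\Upsilon_1,\Upsilon_2,\Ulim_2<\infty$.
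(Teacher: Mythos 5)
Your proposal matches the paper's proof in all essentials: take $\limsup$ of \eqref{DG9}, control $\|\bar P(t)\|_{L^2}^{1/2}$ by \eqref{coefflim}, control the time-derivative-plus-$J_H$ factor by \eqref{limsupH1} together with $\Ulim_1\le C\Ulim_2$ from \eqref{Urel}, and control the gradient integral by \eqref{Kgrad60pw} with $s=\nu_1$; the paper writes $\Ulim_2^{(b+3)/4}$ where you have $\Ulim_2^{(b+1)/4}(1+\Ulim_2)^{1/2}$, which agree up to the implied constant since $\Ulim_2\ge1$. One small terminological slip: in Section 5 both solutions share the same flux $\psi$, so the hypothesis needed for \eqref{coefflim} is the finiteness of $\sup f$ and $\sup\int_{t-1}^t\tilde f\,d\tau$ (not $\bar A<\infty$, which is a Section 4 quantity for two distinct fluxes), though your observation that these follow from $\Upsilon_1<\infty$ and $\Ulim_2<\infty$ is exactly right.
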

\begin{proof}
Taking limsup of \eqref{DG9} and making use the limits \eqref{coefflim}, \eqref{limsupH1} and estimate \eqref{Kgrad60pw} with $s=\nuex_1$ yield
\beqs
 \limsup_{t\to\infty} \|\nabla P(t)\|_{L^{2-\delta}} 
\le C \Big(\Ulim_2^{b+1} |\vec a^{(1)}-\vec a^{(2)}|\Big)^{1/4} 
\Ulim_2^{1/2} \Big\{ \sum_{i=1,2} L_4(\nuex_1;[p_i(0)]) \Upsilon_1^{\muex_4(\nuex_1-2)} \Upsilon_2 \Big\}^\frac\delta{2(2-\delta)}.
\eeqs
Therefore, we obtain \eqref{DG8} with 
\beq\label{U8}
\Upsilon_{10}=\Ulim_2^{\frac {b+3}4} \Big\{ \Upsilon_1^{\muex_4(\nuex_1-2)} \Upsilon_2 \sum_{i=1,2}L_4(\nuex_1;[p_i(0)])  \Big\}^\frac\delta{2(2-\delta)}.
\eeq
\end{proof}

\begin{theorem}\label{lastthm}
\asdc.  
Let $\delta\in (0,a)$ and $T> 0$. Then there exists a constant $C>0$ depending on $U,U', \delta$ such that
\beq\label{GP2}
\| \nabla P\|_{L^{2-\delta}(U'\times(0,T))}
\le C  M_{9,T} \Big(  \norm{\bar P(0)}_{L^2}^{1/2} +|\vec a^{(1)}-\vec a^{(2)}|^{1/2} +|\vec a^{(1)}-\vec a^{(2)}|^{1/4}\Big), 
\eeq
where $M_{9,T}$ is defined by \eqref{N7def} below.
\end{theorem}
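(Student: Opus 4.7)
The plan is to apply estimate \eqref{GP} of Proposition \ref{Grada} and bound each of the four factors on its right-hand side by the previously established results, then collect the resulting exponents into the constant $M_{9,T}$.

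First, for the $L^2$-factor in $\bar P$, I will combine the trivial bound $\|\bar P\|_{L^2(U\times(0,T))}^2\le T\sup_{[0,T]}\|\bar P(t)\|_{L^2}^2$ with the stability estimate \eqref{coeffJsup1} from Theorem \ref{supgradthem}, yielding
\[
\|\bar P\|_{L^2(U\times(0,T))}^{1/2}\le C\,T^{1/4}\,M_{5,T}^{1/4}\Big(\|\bar P(0)\|_{L^2}^{1/2}+|\vec a^{(1)}-\vec a^{(2)}|^{1/4}\Big).
\]
This is the step that produces the exponent $1/4$ on $|\vec a^{(1)}-\vec a^{(2)}|$.

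Second, for the factor involving $\bar p_{it}$ and $J_{H_i}[p_i]$, I will use estimate \eqref{H-bound-0} of Theorem \ref{H-bound-lemma} applied to each $p_i$, which controls both $\|\bar p_{it}\|_{L^2(U\times(0,T))}^2$ and $J_{H_i}[p_i](t)$ pointwise in $t$ by $C(1+A_0+B_0)(t+1)\sup_{[0,t]}f_i+\int_0^t\tilde f_i$; integrating the $J_{H_i}$ bound over $[0,T]$ produces a factor of $T$. Together,
\[
T+\sum_{i=1,2}\Big(\|\bar p_{it}\|_{L^2(U\times(0,T))}^2+\int_0^T J_{H_i}[p_i](t)\,dt\Big)\le C(1+A_0+B_0)(T+1)^2\,\bar\Nf_{1,T}^{\frac{2-a}{1-a}}\bar\Nf_{2,T}.
\]
This gives the bound on the other half of the $\|\bar P\|_{L^2}^{1/2}$ term and also on the bracketed quantity multiplying $|\vec a^{(1)}-\vec a^{(2)}|^{1/2}$; the latter path produces the exponent $1/2$ on $|\vec a^{(1)}-\vec a^{(2)}|$, which will appear additively with the exponent $1/4$ term in the final inequality.

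Third, for the weighted Lebesgue factor in $\nabla p_i$, I set $\nu_1=\max\{2,a(2-\delta)/\delta\}$ (so that $\nu_1\ge 2-a$) and apply estimate \eqref{grad39} of Theorem \ref{cor312} with $s=\nu_1$ and $\setV$ in place of $U'$; this yields
\[
T+\sum_{i=1,2}\int_0^T\!\!\!\int_{\setV}|\nabla p_i|^{a(2-\delta)/\delta}\,dx\,dt
\le C\sum_{i=1,2}L_1(\nu_1+a;[p_i(0)])\,(T+1)^{\frac{2\nu_1}{2-a}-1}\bar\Nf_{1,T}^{\frac{\nu_1}{1-a}}.
\]

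Plugging these three bounds into \eqref{GP} yields \eqref{GP2} with
\begin{equation}\label{N7def}
\begin{aligned}
M_{9,T}&=(1+A_0+B_0)^{1/4}\,T^{1/4}(T+1)^{1/2}\,M_{5,T}^{1/4}\,\bar\Nf_{1,T}^{\frac{2-a}{4(1-a)}}\bar\Nf_{2,T}^{1/4}\\
&\quad\cdot\Big\{\sum_{i=1,2}L_1(\nu_1+a;[p_i(0)])\Big\}^{\frac{\delta}{2(2-\delta)}}(T+1)^{\frac{\delta}{2(2-\delta)}\big(\frac{2\nu_1}{2-a}-1\big)}\bar\Nf_{1,T}^{\frac{\delta\nu_1}{2(2-\delta)(1-a)}}.
\end{aligned}
\end{equation}
The main (and only) obstacle is the bookkeeping: the quantity $|\vec a^{(1)}-\vec a^{(2)}|$ enters through two different summands in \eqref{GP}, producing two different fractional powers ($1/4$ via the $\bar P$ route and $1/2$ directly), and one must verify that in each case the accompanying $T$- and data-dependent factors can be majorized by a common constant $M_{9,T}$ as written.
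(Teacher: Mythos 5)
Your overall plan is the same as the paper's: apply Proposition \ref{Grada} (estimate \eqref{GP}) and then control each factor on the right by the earlier $L^2$- and gradient estimates. The paper's route is slightly tighter in two places: it obtains $\norm{\bar P}_{L^2(U\times(0,T))}^2$ by neglecting the negative term in \eqref{coeffdP} and integrating twice (rather than going through $T\sup_{[0,T]}\norm{\bar P}_{L^2}^2$ and \eqref{coeffJsup1}), and it controls $\int_0^T J_{H_i}[p_i]\,dt$ directly from \eqref{p-bar-bound1} rather than taking the pointwise bound from \eqref{H-bound-0} and integrating, which costs you an extra power of $T$. Neither of these is an error on your part.

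There is, however, a genuine gap in your final bookkeeping. The right-hand side of \eqref{GP} is a \emph{sum} of two terms, in which the quantity
\[
T+\sum_{i=1,2}\Big(\|\bar p_{it}\|_{L^2(U\times(0,T))}^2+\int_0^T J_{H_i}[p_i]\,dt\Big)
\]
is raised to the power $\tfrac14$ in the first summand but to the power $\tfrac12$ in the second. If you call your bound for this quantity $\Theta_T$, the first summand is controlled by $T^{1/4}M_{5,T}^{1/4}\Theta_T^{1/4}(\|\bar P(0)\|_{L^2}^{1/2}+|\vec a^{(1)}-\vec a^{(2)}|^{1/4})$, while the second is controlled by $\Theta_T^{1/2}|\vec a^{(1)}-\vec a^{(2)}|^{1/2}$. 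Your proposed $M_{9,T}$ in \eqref{N7def} carries the factors $(1+A_0+B_0)^{1/4}$, $(T+1)^{1/2}$, $\bar\Nf_{1,T}^{\frac{2-a}{4(1-a)}}$, $\bar\Nf_{2,T}^{1/4}$ coming from $\Theta_T^{1/4}$ only; the second summand needs the squares of these exponents, and those are not majorized by the first-summand exponents (all of these quantities are $\ge 1$). Conversely, the first summand carries $T^{1/4}M_{5,T}^{1/4}$, which is absent from the second, so neither factor dominates the other and you cannot take just one of them. (As a side remark, $M_{5,T}=A_0+T+\int_0^T f$ need not be $\ge 1$, so pulling $M_{5,T}^{1/4}$ out in front of $\|\bar P(0)\|_{L^2}^{1/2}$ is also not correct as written.) The fix is what the paper does: bound $\|\bar P\|_{L^2(U\times(0,T))}^2$, $\norm{\bar p_{it}}_{L^2(U\times(0,T))}^2$, and $\int_0^T J_{H_i}\,dt$ all by a \emph{single} common quantity $N_{2,T}$, so that both summands of \eqref{GP} produce the same factor $N_{2,T}^{1/2}$, and then take $M_{9,T}=(T+1)^{1/4}N_{2,T}^{1/2}\cdot(\text{weighted-gradient factor})$.
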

\begin{proof}
Define $N_{2,T}=\sum_{i=1}^2 \norm{\bar p_i(0)}_{L^2}^2 + \sum_{i=1}^2 \|\nabla p_i(0)\|_{L^{2-a}}^{2-a }  +(T+1)\Nf_{1,T}^\frac{2-a}{1-a} +\Nf_{2,T}$.
From \eqref{p-bar-bound1} and \eqref{H-bound-0} we have
\beq\label{s3}
\sum_{i=1}^2 \norm{\bar p_{it}}_{L^2(U\times(0,T))}^2 + \sum_{i=1}^2\int_0^T\int_U |\nabla p_i(x,t)|^{2-a} dx dt \le C N_{2,T}.
\eeq
Neglecting the negative term on the right-hand side of \eqref{coeffdP} and integrating in $t$ twice yield 
 \beq\label{s4}
 \norm{\bar P}_{L^2(U\times(0,T))}^2 \le  C T \Big  (  \norm{\bar P(0)}_{L^2}^2 +|\vec a^{(1)}-\vec a^{(2)}| \int_0^T \Lambda(t)dt \Big)
\le  C T N_{2,T} \Big  (  \norm{\bar P(0)}_{L^2}^2 +|\vec a^{(1)}-\vec a^{(2)}|\Big).
 \eeq
Combining  \eqref{s3}, \eqref{s4}, \eqref{subs0} and \eqref{GP} we get
\begin{align*}
&\| \nabla P\|_{L^{2-\delta}(U'\times(0,T))} 
\le C \Big\{ T^{1/4} N_{2,T}^{1/4} ( \norm{\bar P(0)}_{L^2}^2 +|\vec a^{(1)}-\vec a^{(2)}|)^{1/4} \cdot N_{2,T}^{1/4}\\
&\quad +|\vec a^{(1)}-\vec a^{(2)}|^{1/2} N_{2,T}^{1/2}  \Big\}
 \cdot \Big\{\Big[ \sum_{i=1,2} L_1(\nuex_1+a;[p_i(0)]) \Big]
(T+1)^{\frac {2\nuex_1}{2-a} - 1}  \Nf_{1,T}^{\frac{\nuex_1}{1-a}}\Big\}^\frac\delta{2(2-\delta)}.
\end{align*}
Therefore, we obtain \eqref{GP2} with
\beq\label{N7def}
M_{9,T}=(T+1)^{\frac14}N_{2,T}^{1/2} \Big\{\Big[ \sum_{i=1,2} L_1(\nuex_1+a;[p_i(0)]) \Big]
(T+1)^{\frac {2\nuex_1}{2-a} - 1}  \Nf_{1,T}^{\frac{\nuex_1}{1-a}}\Big\}^\frac\delta{2(2-\delta)}.
\eeq
The proof is complete.
\end{proof}


\myclearpage

\appendix

\section{Auxiliaries}\label{Aux}

We recall a classical result on fast decaying geometry sequences.

\begin{lemma}[cf. \cite{LadyParaBook68}, Chapter II, Lemma 5.6]\label{oriseq} Let $\{Y_i\}_{i=0}^\infty$ be a sequence of non-negative numbers satisfying
\beq \label{oABi}
Y_{i+1}\le AB^i Y_i^{1+\mu},  \quad 
i =0,1,2,\cdots,
\eeq
where  $A>0$, $B>1$ and $\mu>0$.   
Then
\beq\label{Yiineq}
Y_i \le A^{\frac{(1+\mu)^i -1}{\mu}} B^{\frac{(1+\mu)^i -1}{\mu^2} - \frac{i}{\mu}}Y_0^{(1+\mu)^{i}}, 
 \quad 
i =0,1,2,\cdots.
\eeq
Consequently, if $Y_0\le  A^{-1/\mu}B^{-1/\mu^2}$ then  $\displaystyle{\lim_{i\to\infty} Y_i=0}$. 
\end{lemma}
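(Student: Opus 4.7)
The plan is to establish \eqref{Yiineq} by direct induction on $i$, and then obtain the convergence $Y_i\to 0$ as an immediate algebraic consequence once the smallness hypothesis on $Y_0$ is substituted into the closed-form bound. There is no analytic content beyond the recursion itself; the entire argument is bookkeeping of exponents.

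For the base case $i=0$, I would simply note that $(1+\mu)^0-1=0$, so both exponents of $A$ and $B$ in \eqref{Yiineq} vanish and the asserted inequality reduces to $Y_0\le Y_0$. For the inductive step, assuming \eqref{Yiineq} at stage $i$, I would apply the recursion \eqref{oABi} to write
\begin{equation*}
Y_{i+1}\le AB^i\bigl(Y_i\bigr)^{1+\mu}\le A\cdot A^{\frac{(1+\mu)^{i+1}-(1+\mu)}{\mu}}\,B^{i}\cdot B^{\frac{(1+\mu)^{i+1}-(1+\mu)}{\mu^2}-\frac{i(1+\mu)}{\mu}}\,Y_0^{(1+\mu)^{i+1}},
\end{equation*}
and then combine exponents. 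For the $A$-exponent, $1+\frac{(1+\mu)^{i+1}-(1+\mu)}{\mu}=\frac{(1+\mu)^{i+1}-1}{\mu}$, which matches \eqref{Yiineq} at step $i+1$. For the $B$-exponent, $i+\frac{(1+\mu)^{i+1}-(1+\mu)}{\mu^2}-\frac{i(1+\mu)}{\mu}$ simplifies (after clearing denominators and cancelling the $i\mu^2$ terms) to $\frac{(1+\mu)^{i+1}-(1+\mu)-i\mu}{\mu^2}=\frac{(1+\mu)^{i+1}-1}{\mu^2}-\frac{i+1}{\mu}$, again matching \eqref{Yiineq} at step $i+1$. This closes the induction.

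For the final assertion, under the hypothesis $Y_0\le A^{-1/\mu}B^{-1/\mu^2}$, substituting into \eqref{Yiineq} yields
\begin{equation*}
Y_i\le A^{\frac{(1+\mu)^i-1}{\mu}-\frac{(1+\mu)^i}{\mu}}\,B^{\frac{(1+\mu)^i-1}{\mu^2}-\frac{i}{\mu}-\frac{(1+\mu)^i}{\mu^2}}=A^{-1/\mu}B^{-1/\mu^2-i/\mu},
\end{equation*}
so that $Y_i\le A^{-1/\mu}B^{-1/\mu^2}\cdot B^{-i/\mu}$. Since $B>1$ and $\mu>0$, we have $B^{-i/\mu}\to 0$ as $i\to\infty$, giving $\lim_{i\to\infty}Y_i=0$.

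The only real pitfall is arithmetic: one must carefully track the two separate geometric-type sums that appear in the exponents of $A$ and $B$ under iteration. Since neither convexity, monotonicity, nor any auxiliary embedding is invoked, no single step is genuinely difficult; the whole proof is a two-line induction followed by a one-line substitution.
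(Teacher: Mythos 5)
Your proof is correct, and it is the standard argument: the paper cites this result from Ladyzhenskaya--Solonnikov--Uraltseva without reproducing the proof, and your two-stage argument (induction to obtain the closed-form bound \eqref{Yiineq}, then substitution of the smallness hypothesis on $Y_0$ to produce the decaying factor $B^{-i/\mu}$) is exactly the classical derivation. The exponent bookkeeping for both $A$ and $B$ checks out, and the final simplification $Y_i\le A^{-1/\mu}B^{-1/\mu^2}B^{-i/\mu}$ is right.
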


The following generalization is an important ingredient for our iteration technique used in this paper.

\begin{lemma}\label{multiseq} Let $\{Y_i\}_{i=0}^\infty$ be a sequence of non-negative numbers satisfying
\beq \label{mABi}
Y_{i+1}\le \sum_{k=1}^m A_k B_k^i  Y_i^{1+\mu_k}, \quad 
i =0,1,2,\cdots,
\eeq
where  $A_k>0$, $B_k>1$ and $\mu_k>0$ for $k=1,2,\ldots,m$.
Let $B=\max\{B_k : 1\le k\le m\}$ and $\mu=\min\{\mu_k : 1\le k\le m\}$. Then the following statement holds true.
\beq\label{Y0andD}
\text{If}\quad \sum_{k=1}^m  A_k Y_0^{\mu_k} \le B^{-1/\mu}
\quad\text{then } \lim_{i\to\infty} Y_i=0.
\eeq
In particular,
\begin{align}\label{Y0mcond}
\text{if}\quad  Y_0\le \min\{ (m^{-1} A_k^{-1} B^{-\frac 1 {\mu}})^{1/\mu_k} : 1\le k\le  m\}
\quad \text{then } \lim_{i\to\infty} Y_i=0.
\end{align} 
\end{lemma}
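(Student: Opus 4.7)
The plan is to establish $Y_i \le Y_0 q^i$ by induction for a suitably chosen $q \in (0,1)$, which immediately yields $Y_i \to 0$. Since $B = \max_k B_k > 1$ and $\mu = \min_k \mu_k > 0$, the natural candidate is
\[
q = B^{-1/\mu} \in (0,1),
\]
chosen precisely so that $Bq^\mu = 1$. The hypothesis $\sum_{k=1}^m A_k Y_0^{\mu_k} \le B^{-1/\mu}$ is designed exactly to make the induction close; I would first verify this and then derive \eqref{Y0mcond} at the end as an easy corollary (if $Y_0 \le (m^{-1}A_k^{-1}B^{-1/\mu})^{1/\mu_k}$ for each $k$, then each term in the sum is at most $m^{-1}B^{-1/\mu}$, so the sum bound of \eqref{Y0andD} holds).

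For the induction, the base $i=0$ is trivial. Assuming $Y_i \le Y_0 q^i$, estimate
\[
Y_{i+1} \le \sum_{k=1}^m A_k B_k^i Y_i^{1+\mu_k} \le B^i Y_0 q^i \sum_{k=1}^m A_k Y_0^{\mu_k} q^{i\mu_k}.
\]
Since $q < 1$ and $\mu_k \ge \mu$, we have $q^{i\mu_k} \le q^{i\mu}$, so that
\[
Y_{i+1} \le Y_0 q^i \cdot (Bq^\mu)^i \sum_{k=1}^m A_k Y_0^{\mu_k} = Y_0 q^i \sum_{k=1}^m A_k Y_0^{\mu_k} \le Y_0 q^i \cdot B^{-1/\mu} = Y_0 q^{i+1},
\]
using $Bq^\mu = 1$ and the hypothesis at the last two steps. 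This closes the induction, and since $q < 1$ we conclude $Y_i \to 0$.

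There is no real obstacle here; the only subtle point is the choice $q = B^{-1/\mu}$, which uses the largest $B_k$ (to dominate the prefactor $B_k^i$ uniformly in $k$) and the smallest $\mu_k$ (so that $q^{i\mu_k} \le q^{i\mu}$ holds simultaneously for all $k$). These two choices together make the balance $Bq^\mu = 1$ possible and match the exponent in the hypothesis. The rest is bookkeeping.
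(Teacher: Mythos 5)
Your proof is correct. Let me verify the key step: with $q = B^{-1/\mu}$ (so $0<q<1$ and $Bq^{\mu}=1$) and the inductive hypothesis $Y_i \le Y_0 q^i$, one gets
\begin{equation*}
Y_{i+1} \le \sum_{k=1}^m A_k B_k^i Y_i^{1+\mu_k}
\le B^i \sum_{k=1}^m A_k (Y_0 q^i)^{1+\mu_k}
= Y_0 q^i B^i \sum_{k=1}^m A_k Y_0^{\mu_k} q^{i\mu_k},
\end{equation*}
and since $0<q<1$ with $\mu_k \ge \mu$ gives $q^{i\mu_k}\le q^{i\mu}$, this is at most $Y_0 q^i (Bq^{\mu})^i \sum_k A_k Y_0^{\mu_k} \le Y_0 q^{i+1}$ under the hypothesis of \eqref{Y0andD}. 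Your derivation of \eqref{Y0mcond} from \eqref{Y0andD} (each summand $\le m^{-1}B^{-1/\mu}$) is also fine.

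You take a genuinely different route from the paper. The paper's proof proceeds in two stages: it first proves, by induction, that the sequence never rises above a fixed level $D>0$ (chosen so that $\sum_k A_k D^{\mu_k}=B^{-1/\mu}$); then it observes that, for $Y_j\le D$, the multi-term recursion \eqref{mABi} collapses to a \emph{single-term} recursion $Y_{j+1}\le \tilde A B^j Y_j^{1+\mu}$ with $\tilde A=\sum_k A_k D^{\mu_k}/D^{\mu}$, and invokes the classical Lemma \ref{oriseq} to conclude $Y_i\to 0$. This has the pedagogical virtue of explicitly reducing the multi-exponent case to the classical Ladyzhenskaya fast-convergence lemma. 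Your argument is self-contained and more direct: it bypasses the auxiliary level $D$ and the reduction to Lemma \ref{oriseq}, and in exchange it delivers a quantitative geometric decay rate $Y_i \le Y_0 B^{-i/\mu}$, which the paper's two-stage argument does not record explicitly (the paper's bound \eqref{Yitrue2} is of a similar flavor but appears only after the reduction). Both approaches exploit the same algebraic balance $Bq^{\mu}=1$, but yours makes that balance the engine of a single clean induction.
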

\begin{proof}
It is obvious that \eqref{Y0mcond} is a consequence of \eqref{Y0andD} when we require
 $A_k Y_0^{\mu_k} \le m^{-1}B^{-1/\mu}$ for each $k=1,2,\ldots,m$.
We now prove \eqref{Y0andD}. To make a clear presentation, we consider $m=2$. The proof can be robustly modified to cover general $m$.
 Therefore, we consider a sequence  $\{Y_i \}_{i=0}^\infty $  of non-negative numbers that satisfies   
\beqs Y_{i+1} \le B^i(A_1Y_i^{1+\mu_1} +A_2Y_i^{1+\mu_2} )\quad \text{for all }i\ge 0, 
\eeqs
where $A_1,A_2>0$, $B>1$ and $0<\mu_1\le \mu_2$, and $Y_0$ satisfies
\beq\label{Dmu200}
A_1Y_0^{\mu_1} + A_2Y_0^{\mu_2} \le B^{-\frac 1 {\mu_1}}.
\eeq 
Note, in this case, that $\mu=\mu_1$.

\noindent\textbf{Claim.} If there is $D>0$ such that 
\beq\label{Y02}
Y_0\le D ( A_1D^{\mu_1} +A_2D^{\mu_2} )^{-\frac 1 {\mu_1}} B^{-\frac 1 {\mu_1^2}}  \le D,
\eeq 
then 
\beq\label{YD2}
 Y_i\le D\quad \text{for all } i\ge 0,
\eeq 
and 
\beq
\label{Ylim2} \lim_{i\to\infty} Y_i=0.
\eeq 
\textit{Proof of the Claim.}
First, we prove \eqref{YD2} by induction.
By condition \eqref{Y02}, the inequality \eqref{YD2} holds for $i=0$.
Let $i\ge 0$. Assume
 $Y_j\le D$ for all  $0\le j\le i$.
We then have for $0\le j\le i$ that 
\beqs
\begin{aligned}
Y_{j+1} &\le B^j(A_1Y_j^{1+\mu_1} +A_2Y_j^{1+\mu_2} ) = B^j\Big[ A_1D^{1+\mu_1}  \Big(\frac{ Y_j}{D}\Big) ^{1+\mu_1} +A_2 D^{1+\mu_2}  \Big(\frac{ Y_j}{D}\Big)^{1+\mu_2} \Big]\\
         &\le  B^j  (A_1 D^{1+\mu_1}   +A_2D^{1+\mu_2}) \Big (\frac{ Y_j}{D}\Big)^{1+\mu_1} 
         \le  B^j\Big[ \frac{A_1D^{\mu_1}+A_2D^{\mu_2}}{D^{\mu_1}}\Big]    Y_j^{1+\mu_1}.
\end{aligned}        
\eeqs
Then 
$Y_{j+1}\le \tilde A B^j Y_j^{1+\mu_1}$ for all $0\le j\le i$, where $\tilde A = (A_1 D^{\mu_1}   + A_2 D^{\mu_1})/D^{\mu_1}$.
Note from \eqref{Y02} that
\beq\label{yj}
Y_0\le \tilde A^{-1/\mu_1}B^{-1/\mu_1^2} \le D. 
\eeq
Applying Lemma \ref{oriseq} to sequence $\{Y_0,\ldots,Y_{i+1}, 0, 0,\ldots\}$, we have
\begin{align*}
Y_{i+1} \le \tilde A^{\frac{(1+\mu_1)^{i+1} -1}{\mu_1}} B^{\frac{(1+\mu_1)^{i+1} -1}{\mu_1^2} - \frac{i+1}{\mu_1}}Y_0^{(1+\mu_1)^{i+1}}.
\end{align*}
This and the first inequality in \eqref{yj} give
\beq\label{Yitrue2}
Y_{i+1} \le \tilde A^{-1/\mu_1} B^{-1/\mu_1^2} B^{ - \frac{i+1}{\mu_1}}.
\eeq
It follows from this and the second inequality in \eqref{yj} that
$Y_{i+1}\le \tilde A^{-1/\mu_1} B^{-1/\mu_1^2} \le D $.
Thus we obtain \eqref{YD2} for $i+1$, and by the induction principle, statement \eqref{YD2} must hold true for all $i\ge 0$.
Since \eqref{YD2} now holds, the estimate \eqref{Yitrue2} also holds for all $i$, thus the limit \eqref{Ylim2} follows. 
The proof of Claim is complete.

By the virtue of the Claim, it suffices to find $D>0$ that satisfies two inequalities in \eqref{Y02}.
Let $D>0$ be the unique positive number such that $A_1 D^{\mu_1}+A_2 D^{\mu_1}=B^{-1/\mu_1}$. 
Obviously, $D ( A_1D^{\mu_1} +A_2D^{\mu_2} )^{-\frac 1 {\mu_1}} B^{-\frac 1 {\mu_1^2}}=D$.
Then the second inequality in \eqref{Y02} is satisfied.
Since \eqref{Dmu200} implies $D\ge Y_0$, the first inequality in \eqref{Y02} is also satisfied. Our proof is complete. 
\end{proof}

The following is a useful inequality in estimating limits superior of solutions.

\begin{lemma}\label{difflem2}
For $f(t)\ge 0$, $g(t),h(t)>0$, $y(t)\ge 0$,  $t>T$, let
\beqs
y(t)= h(t) \int_T^t e^{-\int_\tau^t g(\theta)d\theta} f(\tau) d\tau.
\eeqs 
Assume
\beq\label{expcond}
\int_T^\infty g(t)dt = \infty\quad\text{and}\quad \lim_{t\to\infty} \frac{h'(t)}{h(t)g(t)} =0.
\eeq
Then
\beq\label{ylim2}
\limsup_{t\to\infty} y(t)\le  \limsup_{t\to\infty} \frac{h(t)f(t)}{g(t)}.  
\eeq
\end{lemma}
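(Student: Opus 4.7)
My plan is to reduce the result to two standard estimates via the splitting trick, and to use a single algebraic identity that converts the perturbation ratio $h'/(hg)$ into a small deformation of the exponential decay rate $g$. Specifically, from $h(t)/h(\tau)=\exp(\int_\tau^t h'/h\,ds)$ I get
\begin{equation*}
\frac{h(t)}{h(\tau)}\,e^{-\int_\tau^t g(\theta)d\theta}
=\exp\Bigl(-\int_\tau^t g(s)\bigl(1-h'(s)/(h(s)g(s))\bigr)ds\Bigr),
\end{equation*}
so that once $|h'/(hg)|$ is small, the effective decay rate is still essentially $g$, and $\int_T^\infty g=\infty$ continues to force divergence of the exponent.

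Setting $L:=\limsup_{t\to\infty}h(t)f(t)/g(t)$, I may assume $L<\infty$ since otherwise \eqref{ylim2} is vacuous. Given $\varepsilon\in(0,1)$, by the hypotheses I pick $T_1>T$ so that $f(\tau)\le(L+\varepsilon)g(\tau)/h(\tau)$ and $|h'(\tau)/(h(\tau)g(\tau))|\le\varepsilon$ for all $\tau\ge T_1$. I then split $y(t)=I_1(t)+I_2(t)$ at $T_1$. The first piece reads $I_1(t)=h(t)e^{-\int_{T_1}^t g(\theta)d\theta}\cdot C_1$, where $C_1=\int_T^{T_1}e^{\int_\tau^{T_1}g}f\,d\tau$ is a fixed constant; applying the identity above with $\tau=T_1$ gives $h(t)e^{-\int_{T_1}^t g}=h(T_1)\exp(-\int_{T_1}^t g(1-h'/(hg))ds)$, whose exponent is bounded above by $-(1-\varepsilon)\int_{T_1}^t g(s)ds\to-\infty$, so $I_1(t)\to 0$.

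For the main piece, using the bound on $f$ and the same identity,
\begin{equation*}
I_2(t)\le (L+\varepsilon)\int_{T_1}^t g(\tau)\exp\Bigl(-\int_\tau^t g(s)(1-h'/(hg))ds\Bigr)d\tau.
\end{equation*}
Because $1-h'/(hg)\ge 1-\varepsilon>0$ on $[T_1,\infty)$, I would replace $g(\tau)$ by $(1-\varepsilon)^{-1}g(\tau)(1-h'(\tau)/(h(\tau)g(\tau)))$, which is (up to sign) exactly the $\tau$-derivative of the outer exponential; the integral then telescopes to $(1-\varepsilon)^{-1}(1-\exp(-\int_{T_1}^t g(1-h'/(hg))ds))\le(1-\varepsilon)^{-1}$. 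Combining the two estimates yields $\limsup_{t\to\infty}y(t)\le(L+\varepsilon)/(1-\varepsilon)$, and letting $\varepsilon\to 0^+$ gives \eqref{ylim2}.

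The step I expect to require the most care is the treatment of the prefactor $h(t)e^{-\int_{T_1}^t g}$ in $I_1$: the hypotheses control only the relative rate $h'/(hg)$ and say nothing directly about the size or even the monotonicity of $h$, so the decay cannot be read off $h$ alone. The identity above is exactly what converts this ambiguous factor into a genuinely decaying exponential whose exponent is forced to diverge by $\int_T^\infty g=\infty$; once that point is recognized, the rest of the argument is a routine integration-by-parts/telescoping computation.
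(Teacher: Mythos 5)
Your proposal is correct, but it takes a genuinely different route from the paper. The paper differentiates $y$, obtaining $y'=-(1-h'/(hg))gy+hf$, uses the hypothesis to pass to the differential inequality $y'\le -(1-\delta)gy+hf$ for large $t$, and then invokes an external comparison result (Lemma A.1(ii) of~\cite{HIKS1}) together with $\int^\infty g=\infty$ to conclude. You instead work directly with the integral representation: the identity $h(t)/h(\tau)=\exp\bigl(\int_\tau^t h'/h\bigr)$ absorbs the prefactor into the kernel, turning the decay rate into $g(1-h'/(hg))\ge(1-\varepsilon)g$; splitting at a large $T_1$ then kills the tail $I_1$ (because $\int^\infty g=\infty$) and the telescoping on $I_2$ gives the bound $(L+\varepsilon)/(1-\varepsilon)$. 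Both proofs hinge on the same observation and both give up a removable factor tending to $1$, but yours is self-contained, whereas the paper's relies on a lemma from an earlier work. One cosmetic slip: your constant $C_1$ should be $\int_T^{T_1}e^{-\int_\tau^{T_1}g}f\,d\tau$ (the sign in the exponent is negative, since $\int_\tau^t g=\int_\tau^{T_1}g+\int_{T_1}^t g$ for $\tau\le T_1\le t$); this does not affect the argument, since either way $C_1$ is a fixed finite constant and the decisive fact is that $h(t)e^{-\int_{T_1}^t g}\to 0$.
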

\begin{proof}
Let $\delta>0$. We have for $t>T$ that
\begin{align*}
 y'=\Big(-g+\frac{h'}{h}\Big) y + hf=-\Big(1-\frac{h'}{hg}\Big)gy + hf.
\end{align*}
By the second condition in \eqref{expcond}, for sufficiently large $t$, we have 
\begin{align*}
 y'\le -(1-\delta)gy + hf.
\end{align*}
With the first condition in \eqref{expcond}, we apply Lemma A.1(ii) in \cite{HIKS1} and obtain
\beqs
\limsup_{t\to\infty} y(t) \le \frac1{1-\delta}\limsup_{t\to\infty} \frac{h(t)f(t)}{g(t)}.
\eeqs
Letting $\delta\to 0$, we obtain \eqref{ylim2}.
\end{proof}


\def\cprime{$'$}

 \bibliographystyle{acm}

\end{document}